\definecolor{darkred}{rgb}{0.8,0,0.1}
\definecolor{darkgreen}{rgb}{0,0.8,0.2}
\newcommand{\titel}{Brown measures of deformed $L^\infty$-valued circular elements}
\numberwithin{equation}{section}
\newtheorem{theorem}{Theorem}[section]
\newtheorem{lemma}[theorem]{Lemma}
\newtheorem{proposition}[theorem]{Proposition}
\newtheorem{corollary}[theorem]{Corollary}
\theoremstyle{definition}
\newtheorem{remark}[theorem]{Remark}
\newtheorem{definition}[theorem]{Definition}
\newtheorem{example}[theorem]{Example}
\numberwithin{equation}{section}
\newcommand{\D}{\mathbb{D}}
\newcommand{\R}{\mathbb{R}}  
\C\renewcommand{\C}{\mathbb{C}}\else\newcommand{\C}{\mathbb{C}}\fi 
\renewcommand{\Im}{\mathrm{Im}\,} 
\renewcommand{\Re}{\mathrm{Re}\,} 
\renewcommand{\i}{\mathrm{i}} 
\newcommand{\N}{\mathbb{N}}  
\newcommand{\E}{\mathbb{E}}  
\newcommand{\eps}{\varepsilon} 
\newcommand*{\defeq}{\mathrel{\vcenter{\baselineskip0.5ex \lineskiplimit0pt\hbox{\scriptsize.}\hbox{\scriptsize.}}}=}
\newcommand{\pt}{\partial}
\DeclareMathOperator{\sign}{sign} 
\DeclareMathOperator{\supp}{supp}
\DeclareMathOperator{\linspan}{span}
\newcommand{\DD}{\mathbb{D}}
\renewcommand{\rm}{\mathrm} 
\newcommand{\bels}[2] {
        \begin{equation} \label{#1} \begin{split} 
                #2 
        \end{split} \end{equation}
        }
\newcommand{\bes}[1]{
        \begin{equation*}  \begin{split} 
                #1 
        \end{split} \end{equation*}
        }
\newcommand{\bbm}{\mathbbm} 
\renewcommand{\cal}{\mathcal} 
\newcommand{\scr}{\mathscr}  
\renewcommand{\frak}{\mathfrak} 
\newcommand{\ol}[1]{\overline{#1} \!\,} 
\newcommand{\wh}{\widehat}
\newcommand{\wt}{\widetilde}
\newcommand{\ee}{\mathrm{e}} 
\newcommand{\ii}{\mathrm{i}} 
\newcommand{\dd}{\mathrm{d}}
\newcommand{\p}[1]{({#1})}
\newcommand{\pb}[1]{\bigl({#1}\bigr)}
\newcommand{\pB}[1]{\Bigl({#1}\Bigr)} 
\newcommand{\pbb}[1]{\biggl({#1}\biggr)}
\newcommand{\sbb}[1]{\biggl[{#1}\biggr]}
\newcommand{\cb}[1]{\bigl\{{#1}\bigr\}}
\newcommand{\cB}[1]{\Bigl\{{#1}\Bigr\}}
\newcommand{\cbb}[1]{\biggl\{{#1}\biggr\}}
\newcommand{\abs}[1]{\lvert #1 \rvert}
\newcommand{\absbb}[1]{\bigg\lvert #1 \bigg\rvert}
\newcommand{\norm}[1]{\lVert #1 \rVert}
\newcommand{\avg}[1]{\langle #1 \rangle}
\newcommand{\avgb}[1]{\big\langle #1 \big\rangle}
\newcommand{\avgbb}[1]{\bigg\langle #1 \bigg\rangle}
\newcommand{\db}[1]{\llbracket #1 \rrbracket}
\newcommand{\scalar}[2]{\langle{#1} \mspace{2mu}, {#2}\rangle}
\newcommand{\scalarB}[2]{\Big\langle{#1} \,\mspace{2mu},\, {#2}\Big\rangle}
\newcommand{\scalarbb}[2]{\bigg\langle{#1} \,\mspace{2mu},\, {#2}\bigg\rangle}
\DeclareMathOperator{\diag}{diag}
\DeclareMathOperator{\Tr}{Tr}
\DeclareMathOperator{\re}{Re}
\DeclareMathOperator{\im}{Im}
\DeclareMathOperator*{\essinf}{ess\,inf}	
\DeclareMathOperator{\dist} {dist}                
\DeclareMathOperator{\spec}{Spec}						
\newcommand{\1} {\mspace{1 mu}}
\newcommand{\2} {\mspace{2 mu}}
\newcommand{\qq}[1]{\llbracket #1 \rrbracket} 
\newcommand{\mtwo}[2]
{
\left(
\begin{array}{cc}
#1 
\\
#2
\end{array}
\right)
}
\newcommand{\vtwo}[2]
{
\left(
\begin{array}{c}
#1 
\\
#2
\end{array}
\right)
}
 \newcommand{\linkdest}[1]{\Hy@raisedlink{\hypertarget{#1}{}}}
\def\blfootnote{\xdef\@thefnmark{}\@footnotetext}
\begin{document}
\blfootnote{Date: \today}
\blfootnote{Keywords:  Brown measure, matrix Dyson equation, non-Hermitian random matrix } 
\blfootnote{MSC2020 Subject Classifications: 46L54, 60B20, 15B52.}

\title{\vspace{-0.8cm}{\textbf{\titel}}} 
\author{Johannes Alt$^{\text{a,} }$\thanks{Funding from the Deutsche Forschungsgemeinschaft (DFG, German Research Foundation) under Germany's Excellence Strategy - GZ 2047/1, project-id 390685813 is gratefully acknowledged. \newline \hspace*{0.5cm} Email: \href{mailto:johannes.alt@iam.uni-bonn.de}{johannes.alt@iam.uni-bonn.de}} 
 \hspace*{0.5cm} \and \hspace*{0.5cm} Torben Krüger$^\text{b,}$\thanks{
Financial support 
from VILLUM FONDEN Young Investigator Award  (Grant No. 29369) is gratefully acknowledged.  
\newline \hspace*{0.5cm} Email: \href{mailto:torben.krueger@fau.de}{torben.krueger@fau.de}}}
\date{}

\maketitle

\vspace*{-0.2cm} 

\noindent \hspace*{3.54cm} $^\text{a}$Institute for Applied Mathematics, University of Bonn \\ 
\noindent \hspace*{3.54cm} $^\text{b}$Department of Mathematics, FAU Erlangen-Nürnberg \\

\begin{abstract} 
{We consider the Brown measure of $a+\frak{c}$, where $a$ lies in a commutative tracial von Neumann algebra $\cal{B}$ and $\frak{c}$ is a $\cal{B}$-valued circular element. Under certain regularity conditions on $a$ and the covariance of $\frak{c}$ this Brown measure has a density with respect to the Lebesgue measure on the complex plane which is real analytic apart from jump discontinuities at the boundary of its support. With the exception of finitely many singularities this one-dimensional spectral edge is real analytic.  We provide a full description of all possible edge singularities as well as all points in the interior, where the density vanishes. 
The edge singularities are classified in terms of their local edge shape while internal zeros of the density are classified in terms of the shape of the density locally around these points. We also show that each of these countably infinitely many  singularity types occurs for an  appropriate choice of $a$ when $\frak{c}$  is a standard circular element. The Brown measure of $a+\frak{c}$ arises as the  empirical spectral distribution of a diagonally deformed non-Hermitian random matrix with independent entries when its dimension tends to infinity.
}
\end{abstract}

\tableofcontents

\section{Introduction} 

The empirical spectral distribution of a non-Hermitian random matrix typically converges to a non-random  probability distribution $\sigma$, the limiting spectral measure, on the complex plane as its dimension tends to infinity. The most prominent instance of this phenomenon is the circular law, stating that the empirical eigenvalue distributions of a suitably   normalised  sequence of matrices $X_n \in \C^{n \times n}$ with centered i.i.d.\ entries converge to the uniform distribution on the complex unit disk $\D_1$ in the limit $n \to \infty$ \cite{Girko1984,bai1997} (see \cite{tao2010} for optimal moment conditions and \cite{BordenaveChafai2012} for a review). 

For the purpose of identifying the limiting spectral measure and computing mixed moments of $X_n$ and $X_n^*$ as $n \to \infty$, according to free probability theory the limit of the sequence $X_n$ is appropriately described    by a circular  element $\frak{c}$. This $\frak{c}$ is infinite dimensional operator  in a $W^*$-probability space $\cal{A}$ with faithful, tracial state  $\avg{\2\cdot\2}$ and satisfies 
$\lim_{n \to \infty} \frac{1}{n} \Tr p(X_n, X_n^*) = \avg{p(\frak{c},\frak{c}^*)}$ for all polynomials $p$ in two non-commutative variables. In agreement with the circular law, the uniform distribution on $\D_1$ is the Brown measure of $\frak{c}$, where the Brown measure is a generalisation of the spectral measure to non-normal operators \cite{Brown1986,HaagerupLarsen2000}. When a deterministic deformation matrix $A=A_n$ is added to $X=X_n$, the associated spectral distribution of the random matrix $A + X$ is asymptotically close to the Brown measure $\sigma_{A+ \frak{c}}$  of  the sum of an embedding of $A$ into the $W^*$-probability space and a circular element  $\frak{c}$ that is $\ast$-free from $A$ \cite{Sniady2002}. 
In this case, the support of  $\sigma_{A+ \frak{c}}$ coincides with the closure $\ol{\mathbb{S}}_{A}$, where $\mathbb{S}_a 
:= \{ \zeta \in \C \, : \, \avg{(a-\zeta)^{-1}(a^*-\ol{\zeta})^{-1})} > 1\}$.  This observation goes back to \cite{Khoruzhenko1996}. 

That $\ol{\mathbb{S}}_a$ is the support of the Brown measure of $a + \frak{c}$ for a deformation $a\in \cal{A}$, which is $\ast$-free from $\frak{c}$, was shown  in  \cite{BordenaveCaputoChafai2014}  for normal deformations  
and extended to general deformations in \cite{BordenaveCapitaine2016,Zhong2021}.  
Subsequently, the regularity of $\sigma_{a+\mathfrak c}$ has been analysed. The measure is absolutely continuous with respect to the Lebesgue measure on $\C$ 
\cite{BelinschiYinZhong2024} and the density is strictly positive and real analytic on $\mathbb{S}$ \cite{Zhong2021,HoZhong2023}.  
In  \cite{ErdosJi2023} it has been shown that at the edge of $\mathbb{S}_a$, the density typically possesses a jump discontinuity and that around its zeros within its support $\ol{\mathbb{S}}_a$  
it grows at most quadratically with a corresponding quadratic lower bound  either on a  two-sided cone or in a whole neighbourhood. 

In the present work, we refine the distinction between these three possibilities by  establishing a full classification of the  points of vanishing density  within $\ol{\mathbb{S}}_a$. The edge singularities, i.e.\ the zeros of the density in $\partial \ol{\mathbb S}_a$ are classified in terms of the local  shape of $\partial \ol{\mathbb S}_a$, while  zeros in the interior of $\ol{ \mathbb S}_a$ are classified in terms of the local growth of the density. For both cases countably infinite singularity types are possible. 
  Finally, we prove that each case  of these possible   singularity types does exist in the Brown measure 
$\sigma_{a + \mathfrak c}$ for a suitable choices of $a$. 

In addition to providing a clear picture of the regularity of the Brown measure  this full classification of the singularity types is important because of their potential to distinguish 
the universality classes of the local eigenvalue point processes near these singularities. 
To that end, it is insightful to review some known results in the analogous Hermitian model, the deformed Wigner matrices $A+H$, where $A$ is a deterministic real diagonal deformation to a sequence of $n \times n$-Wigner matrices $H$ with i.i.d. entries, up to  Hermitian symmetry constraints.   The spectral density for such matrices is well approximated for large dimensions by the spectral measure of $A +\frak{s}$, where $\frak{s}$ is a  semicircular element, $\ast$-free of $A$. 
 This follows from \cite{Voiculescu1986} and \cite{Voiculescu1991} if $H$ has Gaussian entries and for a general Wigner matrix from \cite{Dykema1993}.  
 The spectral measure of $A+ \mathfrak s$  has a density $\rho$ with respect to the Lebesgue measure on the real line which is real analytic wherever  it is positive. Under some regularity assumptions on the deformation, the  singularities of $\rho$, that form when the density vanishes, can only be of algebraic degree two or three \cite{Ajanki_QVE_Memoirs}. 
More precisely, they exhibit a one-sided square root growth at the spectral edge and a two-sided cubic root growth for singularities in the interior of the support of $\rho$.  
These singularity types determine the corresponding  universality classes of the local spectral statistics that are associated with the Airy \cite{MR3405746} and Pearcey process \cite{Cusp1EKS,Cusp2EKS}, respectively. 
For Gaussian entries this correspondence between the singularity type of the density and the local statistics had  already been established   for regular square root edges  without deformation in \cite{MR1257246,MR1385083} and subsequently  with deformation in \cite{MR3500269}, as well as for the cubic root cusp in \cite{MR1662382, BH,AM,MR3500269}.

Allowing the entries in the randomness $H =(h_{ij})_{i,j=1}^n$ to have differing distributions, introduces an additional structure designed to model systems with non-trivial underlying geometry. 
For the deformed  Wigner type matrices $A+H$ the asymptotic spectral density $\rho$ depends on the variance profile $R=(\E |h_{ij}|^2)_{i,j=1}^n$.  To define the limiting object that replaces the semicircular element from the i.i.d. case in this setting, the  additional structure of a conditional expectation $E:\cal{A} \to \cal{B}$ with values in the commutative subalgebra $\cal{B}=\C^n \subset \cal{A}$ with entrywise multiplication is introduced on the $W^*$-probability space. This is the setup of $\cal{B}$-valued free probability theory   \cite{Voiculescu1995}  and the operator  $\frak{s}$, for which $\rho$ is now the spectral density of $A + \frak{s}$,   is a $\cal{B}$-valued circular element whose covariance encodes the variance profile $R$  through the identity $Rb = E[ \frak{s}b \frak{s}]$ for $b \in \cal{B}$. 
The classification of singularities of $\rho$ in terms of the edge - cusp - dichotomy, as well as the associated appearance of the Airy -  and Pearcey - universality classes, persists in this more general model \cite{AEKS2018,Cusp1EKS,Cusp2EKS}. 

Introducing variance profiles in the non-Hermitian setup, require the same extension to a $\cal{B}$-valued probability space $(\cal{A}, \cal{B},E)$  with $\cal{B}=\C^n$. The empirical spectral measure of a random matrix  $X =(x_{ij})_{i,j=1}^n$ with centred independent entries of variances $s_{ij}:=\E\1 \abs{x_{ij}}^2$ is well approximated by the Brown measure of  a non-normal operator $\frak{c}$ that  is a $\C^n$-valued circular element and covariance $S=(s_{ij})_{i,j=1}^n$ \cite{AK_pseudo}.  Many entries may have variance zero, which is of particular interest when studying non-Hermitian band matrices \cite{Jain2021_non_Hermitian_random_band,Jana2022_nonHermitian_random_band}.
 Here the Brown measure is supported on a disk, radially symmetric and its density has  a jump at the edge, but is non-constant  on its support in general \cite{Cook2018,Altcirc}.  Adding  a diagonal  deformation $A=(a_i\delta_{ij})_{i,j=1}^n$   to  $X_n$ breaks the asymptotic radial symmetry of the Brown measure of $A + \frak{c}$ and the spectrum  concentrates  on an area in the complex plane that is contained in the asymptotic pseudospectrum, whose outer boundary was identified in   \cite{AEKN_Kronecker}. In the companion paper \cite{AK_pseudo} we show that the spectrum asymptotically fills the entire area enclosed by this boundary, i.e.\ for deformed random matrices with variance profile the support of the limiting spectral measure and the asymptotic pseudospectrum coincide. 

Recently, it has been shown that for non-Hermitian matrices $A + X$, where $X \in \C^{n \times n}$ has i.i.d. entries and $A$ is deterministic, the local universality classes correspond to the local shape of the limiting spectral density $\sigma$ as well in two cases. In the bulk, i.e.\  when $\sigma$ is positive, the local eigenvalue statistics coincides with the Ginibre bulk statistics. For Gaussian $X$ this was shown in \cite{zhang2024bulkuniversalitydeformedginues} and for non-Gaussian entries and 
$A=0$  under the four moment matching assumption in \cite{Tao_2015}  and without this assumption in \cite{maltsev2023bulkuniversalitycomplexnonhermitian,Osman_real,dubova2024bulkuniversalitycomplexeigenvalues}.  
At spectral edge points with a jump discontinuity of $\sigma$, the Ginibre edge   statistic emerges, which has been proved first for Gaussian matrices in \cite{liu2024repeatederfcstatisticsdeformed} and then for non-Gaussian entries in \cite{Cipolloni_2020} for $A=0$ and in \cite{CampbellCipolloniErdosJi2024} for nonzero deformations. 
This phenomenon is expected to generalise to matrices  $X$ with variance profile. In contrast to the Hermitian setting the class of  singularities  that emerge as points where the  limiting spectral density vanishes, or equivalently the zeros of the density of the Brown measure of $A + \frak{c}$ with a $\C^n$-valued circular element $\frak{c}$ inside its support, is much more rich. As we show in this work, it contains an infinite family of such possible singularity types both at the spectral edge as well as in the interior of the support. We conjecture that each local singularity type corresponds to a specific universality class for the local statistics of the corresponding random matrix ensemble. 
 
 Generalising from the setting of finite dimensional $\cal{B}=\C^n$, in this work we
  consider the general framework of deformed $\cal{B}$-valued circular elements $a+\frak{c}$, where $a \in \cal{B}$ is the deformation, $\frak{c}$ the $\cal{B}$-valued circular element,  $\cal{B} \subset \cal{A}$ is a commutative subalgebra of a  $W^*$-probability space $\mathcal A$ and $E \colon \mathcal A \to \mathcal B$ is a conditional expectation.   
 The covariance $S \colon \cal{B} \to \cal{B}$ of $\mathfrak c$ is defined through $S[b] = E[\mathfrak c b \mathfrak c^*]$ for all $b \in \mathcal B$ and $E[\mathfrak c b \mathfrak c]=0$. 
 Higher order $\cal{B}$-valued free cumulants of $\frak{c}$ and $\frak{c}^*$ vanish, making it a generalized circular element as introduced in \cite{Sniday2003_Multinomial}. $\cal{B}$-valued circular elements were previously analyzed in  \cite{MR2198797}   under the name $\cal{B}$-circular elements and were introduced in  \cite{MR2044226}.
 For the Brown measure $\sigma$ of $\frak{c}+a$ we provide a classification in terms of the solution to a non-linear equation on $\cal{B}$. Furthermore, we determine the regularity properties of this measure and describe its  singularities.  
We show that  under some regularity assumptions on the variance profile  the Brown measure  $\sigma$ admits a bounded probability density on the complex plane, which is  real analytic and strictly positive on an open domain $\mathbb{S}:= \{  \beta  < 0 \} \subset \C$ with boundary $\partial \mathbb{S}= \{  \beta  = 0 \}$, where $\beta : \C \to \R$ is a continuous function that is real analytic in a neighbourhood of $\partial \mathbb{S}$. From this we obtain that $\partial \mathbb{S}$ is a real analytic variety of dimension at most $1$.  The density of the Brown measure, also denoted by  $\sigma$, vanishes outside the closure of $\mathbb{S}$ and typically has a jump discontinuity at the spectral edge $\partial \mathbb{S}$, except  at critical points of $\beta$, where it vanishes continuously. 
 Consequently, we have generalised the previously known results \cite{Zhong2021,BelinschiYinZhong2024,ErdosJi2023} about the density $\sigma_{a + \mathfrak c}$, 
when  $\mathfrak c$ is a standard circular element, i.e.\ $S = \avg{\2\cdot\2}$ is the trace of the 
argument multiplied by the unit in $\mathcal B$.  
In this case our choice of $\beta$ simplifies to  $\beta(\zeta) = \avg{ \abs{a-\zeta}^{-2}}$, which coincides with the analogous quantity in  \cite{Khoruzhenko1996}.

Near the zeros or singularities of the density, we prove the following behaviour. 
A zero $\zeta$ lies either at the boundary of the support $\ol{\mathbb S}_a$ of the Brown measure, i.e.\ at the spectral edge, or in the interior of $\ol{\mathbb{S}}_a$. In the first case we analyse the local behavior of the level sets of $\beta$  around $\zeta$ to identify  the shape of the edge $\partial \mathbb{S}$ in a neighbourhood of  $\zeta$. In appropriately chosen coordinates $(x,y)$ of the plane this shape is of the form $x^2=y^{k+1}$,  forming an edge singularity of order $k \in \N$.
  In the latter case we classify the singularities in terms of the local shape of the density around its zero $\zeta$.  At these internal singularities the density has  the form $x^2 + y^{2k}$ with order $k \in \N$ in appropriately chosen coordinates~$(x,y)$.

\paragraph{Proof ideas:}  All properties of the Brown measure $\sigma$ and its relationship to $\beta$ established in this paper 
in the general setup are ultimately obtained by our analysis of a $\zeta$-dependent system of two coupled $\mathcal B$-valued equations, called \emph{Dyson equation}.  
It describes the diagonal entries $v_1(\zeta)$ and $v_2(\zeta)$ of the matrix in $\cal B^{2\times 2}$ obtained 
by applying the conditional expectation $E$ entrywise to the resolvent of the Hermitization of $a + \mathfrak c$.  
In the random matrix 
setup, the Hermitization idea goes back to \cite{Girko1984}. See e.g.\ \cite{BelinschiSniadySpeicher2018} 
for its use in the analysis of Brown measures.  
More explicitly, there are two unique\footnote{The existence and uniqueness of $v_1$ and $v_2$ in \eqref{eq:V_equations at eta=0} are proved in Corollary~\ref{cor:uniqueness_v_equations_eta=0} below.} positive functions $v_1,v_2: \mathbb{S} \to \cal{B}$, which satisfy  
\begin{equation} \label{eq:V_equations at eta=0} 
\frac{1}{ v_1(\zeta)}  =   Sv_2(\zeta)  +  \frac{\abs{\zeta-a}^2}{S^*v_1(\zeta) } \,,  \qquad 
\frac{1}{ v_2(\zeta)}  =   S^*v_1(\zeta) +  \frac{\abs{\zeta-a}^2}{ Sv_2(\zeta)}\,, 
\end{equation}  
for each $\zeta \in \mathbb S$. As $\zeta$ approaches  $ \partial \mathbb{S}$ the solution $(v_1(\zeta),v_2(\zeta))$ vanishes. 
If $\mathfrak c$ is a standard circular element, then the $\cal B$-valued system \eqref{eq:V_equations at eta=0} 
simplifies to a scalar-valued equation.
In either case, from $v_1$ the  probability density $\sigma$ inside $\mathbb{S}$ is derived through
\begin{equation} \label{eq:sigma_intro} 
\sigma(\zeta):=-\partial_{\zeta}\avgbb{\frac{ v_1(\zeta)(a-\zeta)}{\pi\2S^*v_1(\zeta)}}\,.
\end{equation} 
Taking the derivative in \eqref{eq:sigma_intro} yields a quadratic form of a non-symmetric operator. 
The main idea for the proof of strict positivity of $\sigma$ in the bulk regime, i.e.\ on $\mathbb{S}$, is to transform the formula for $\sigma$ 
into the quadratic form of a strictly positive operator. 
Near the spectral edge $\partial \mathbb{S}$, the behaviour of $\sigma$ is governed by the quantity $\beta$ from the definition of $\mathbb{S}$. 
In fact, 
 $\beta(\zeta)$ coincides locally around the spectral edge with the isolated eigenvalue of the non-symmetric operator $B_\zeta$ that is closest to zero, 
where $ B_\zeta : \cal{B}\to \cal{B}$ is defined through $B_\zeta f:= \abs{a-\zeta}^2\2f  - Sf$.
A consequence that we derive from this insight is that the jump height of the edge discontinuity of $\sigma$ at the spectral edge is proportional to $\abs{\partial_\zeta \beta}^2$. This requires a careful singular expansion of $v_1, v_2$ at the spectral edge, where the Dyson equation \eqref{eq:V_equations at eta=0} is unstable. A signature of this instability is that $B_\zeta $ is singular for $\zeta \in \partial \mathbb{S}$ and the main contributions to $v_1$ and $v_2$ near $\partial \mathbb{S}$ point into the singular eigendirections of $B_\zeta $. 
Owing to the dependence of $v_1$, $v_2$ and $\beta$ on $S$, 
treating non-constant $S$ and $a$ is a recurring challenge for the analysis in both regimes.  

Finally, we employ an iterative scheme to show that for each $k \in \N$ a finite-dimensional commutative subalgebra $\cal B \subset \cal A$ and a deformation $a \in \cal B$ exist, such that the Brown measure of the deformed standard circular element $a + \frak{c}$  exhibits an edge or internal singularity of  order $k$. 
To perform an iteration step within this scheme, 
we increase the dimension of $\cal{B}$ and add a small carefully chosen perturbation $h$ to a deformation $a$ within the additional dimensions, where $a$ generated a singularity in the Brown measure of $a + \frak{c}$ at the origin. Then by  tuning the parameters that determine $a$ and changing it to $\wh{a}\approx a$ we ensure that $\wh{a}+h$ now generates  a singularity at the origin with a higher order than the  one generated by $a$.

\section{Main results}

We consider an operator-valued probability space $(\mathcal A, E, \mathcal B)$. 
That is, $\mathcal A$ is a unital von Neumann algebra, $\mathcal B \subset \mathcal A$ is a von Neumann subalgebra  
with the same unit and $E \colon \mathcal A \to \mathcal B$ is a positive conditional expectation, i.e.\ 
$E[b] = b$ for all $b \in \mathcal B$, $E[b_1 \mathfrak a b_2] = b_1 E[\mathfrak a] b_2$ for all $\mathfrak a \in \mathcal A$ and 
$b_1$, $b_2 \in \mathcal B$ as well as $E[\mathfrak a]$ is positive for all positive $\mathfrak a \in \mathcal A$. 
We also assume that $\mathcal B$ is equipped with a faithful tracial state $\avg{\2\cdot\2}$ such that  $(\mathcal A, \avg{E[\,\cdot\,]})$ is a tracial $W^*$-probability space. 
We call an element $ \mathfrak{c} \in \cal{A}$
a \emph{$\cal{B}$-valued circular element} if it is centered, i.e. $E[\mathfrak{c}]=0$, all mixed free $\cal{B}$-cumulants\footnote{For the definition of the mixed free $\mathcal B$-cumulants, we refer to \cite[Definition~7, Chapter~9]{MingoSpeicherBook}.} 
of $\mathfrak{c}$ and $\mathfrak{c}^*$ of any order larger than $2$ vanish and if the second order cumulants
satisfy 
\bels{second order cumulant of c}{
E[\mathfrak{c}b\mathfrak{c}] =0\,, \qquad E[\mathfrak{c}^*b\mathfrak{c}^*] =0
}
for all $b \in \cal{B}$. 
Define the operators $S \colon \mathcal B \to \mathcal B$ and $S^* \colon \mathcal B \to \mathcal B$ through 
\begin{equation} \label{eq:def_S_S_star} 
S b := E[\mathfrak c b \mathfrak c^*] , \qquad S^* b:= E[\mathfrak c^* b \mathfrak c] 
\end{equation} 
for all $b \in \mathcal B$. Note that $S^*b = (S(b^*))^*$ for all $b \in \mathcal B$. 
We call $a+\frak{c}$ with some $a \in \cal{B}$ a \emph{deformed $\cal{B}$-valued circular element} with deformation $a$ 
 and covariance $S$. 

For $\mathfrak a \in \mathcal A$ the \emph{Brown measure} $\sigma_{\mathfrak a}$ of $\mathfrak a$ is 
the unique compactly supported probability measure on $\C$ with 
\begin{equation} \label{eq:Brown_measure} 
\int_{\C} \log \abs{\zeta - \xi}\sigma_{\mathfrak a}(\dd \xi) = \log D(\mathfrak a - \zeta) 
\end{equation} 
for all $\zeta \in \C$. Here, $D(\mathfrak a - \zeta)$ denotes the \emph{Fuglede-Kadison determinant} of 
$\mathfrak a - \zeta$. The Fuglede-Kadison determinant of an arbitrary $\mathfrak b \in \mathcal A$ is defined by 
\begin{equation} \label{eq:def_determinant} 
 D(\mathfrak b) := \lim_{\eps \downarrow 0} \exp( \avg{E[\log (\mathfrak b^* \mathfrak b + \eps)^{1/2}]} ) 
\in [0,\infty). 
\end{equation} 
Originally introduced in \cite{Brown1986}, the Brown measure was revived in \cite{HaagerupLarsen2000}. 
An introduction to the Brown measure and the Fuglede-Kadison determinant can be found for example in \cite[Chapter~11]{MingoSpeicherBook}. 

In the following we will consider the case when $\mathcal B$ is commutative. Therefore we may assume that $\mathcal B= L^\infty(\mathfrak X, \Sigma, \mu)$ is the space of $\mu$-almost everywhere bounded functions with respect to a probability measure $\mu$ on $(\mathfrak X, \Sigma)$ and $\avg{b} = \int_{\mathfrak X} b \2\dd\mu$ is the trace of  $b \in\cal{B}$ \cite[Chapter III, Theorem~1.18]{TakesakiBook}. 

We study the Brown measure of a deformed $\cal{B}$-valued circular element. To that end, fix $a \in \cal{B}$ and $\frak{c} \in \cal{A}$ be a  $\cal{B}$-valued circular element that satisfies \eqref{second order cumulant of c} and \eqref{eq:def_S_S_star}. 
Throughout this work we will assume that $S$ and $S^*$ from \eqref{eq:def_S_S_star} are represented by an integral kernel, i.e.\ that there is a measurable function $s \colon \mathfrak X \times \mathfrak X \to [0,\infty)$ such that 
\begin{equation} \label{eq:def_operators_S_and_S_star} 
(S u)(x) = \int_{\mathfrak X} s(x,y) u(y) \mu(\dd y), \qquad (S^* u)(x) = \int_{\mathfrak X} s(y,x) u(y) \mu(\dd y) 
\end{equation} 
for  $x \in \mathfrak X$ and $u \in \cal{B}$. 
Since $S$ and $S^*$ are positive operators and therefore, bounded, after possibly changing $s$ on a $\mu \otimes \mu$-nullset, we assume 
\bels{boundedness of S}{ \sup_{x \in \mathfrak X} \int_{\mathfrak X} s(x,y) \mu(\dd y) < \infty, 
\qquad \qquad \sup_{y \in \mathfrak X} \int_{\mathfrak X} s(x,y) \mu(\dd x) < \infty. }

Under the following regularity assumptions on $a$ and $s$, we prove detailed information about the Brown measure $\sigma = \sigma_{a + \mathfrak c}$ of the deformed $\mathcal B$-valued circular operator $a + \mathfrak c$. 
From here on and throughout the paper, we will impose some of the following assumptions. 
\newcounter{assumptions} 
\begin{enumerate}[label=\textbf{A\arabic*}]
\setcounter{enumi}{\value{assumptions}} 
\item \stepcounter{assumptions} \label{assum:flatness} 
\emph{Block-primitivity of $S$:} There is a constant $C>0$, a primitive matrix $Z=(z_{ij})_{i,j=1}^K \in \{0,1\}^{K 
 \times K}$ with $z_{ii}=1$ for each $i$ and a measurable partition $(I_i)_{i=1}^K$  of $\mathfrak X$   with $\min_i\mu(I_i)\ge \frac{1}{C}$, such that 
\[
\frac{1}{C}  s(x,y) \le  \sum_{i,j=1}^K z_{ij} \bbm{1}_{I_i \times I_j}(x,y) \le Cs(x,y)\,
\]
for all $x$, $y \in \mathfrak X$.  
\end{enumerate} 

We recall that a matrix $Z \in [0,\infty)^{K\times K}$ with nonnegative entries is called \emph{primitive} 
if there is an $L \in\N$ such that all entries of the power $Z^L$ are strictly positive. 
For $a$ and $s$ as above, we define the function 
$\Gamma_{a,s} \colon (0,\infty) \to (0,\infty)$ similarly to  \cite[Section~9]{AEK_Shape}  as 
\begin{equation} \label{eq:def_Gamma}  
\Gamma_{a,s}(\tau) := \pbb{\essinf_{x \in \mathfrak X} \int_{\mathfrak X} \frac{1}{(\tau^{-1} + \abs{a(x) - a(y)} + d_s(x,y))^2} \mu(\dd y)}^{1/2}, 
\end{equation}  
where $d_s(x,y) := \pb{\int_{\mathfrak X} ( \abs{s(x,q) - s(y,q)}^2 + \abs{s(q,x) - s(q,y)}^2 )\mu(\dd q)}^{1/2}$. 
Note that $\Gamma_{a,s}$ is strictly monotonically increasing.

\begin{enumerate}[label=\textbf{A\arabic*}]
\setcounter{enumi}{\value{assumptions}} 
\item 
\label{assum:Gamma} 
\emph{Data regularity:} The data $a$ and $s$ satisfy the regularity assumption 
\[\lim_{\tau \to \infty} \Gamma_{a,s}(\tau) = \infty  \,.\] 
\end{enumerate}

\begin{remark}\label{rmk:Holder-cont implies Gamma-condition}
The following list provides simple conditions that imply our assumptions \ref{assum:flatness} or \ref{assum:Gamma}. 
\begin{enumerate}[label=(\roman*)]
\item 
The condition \ref{assum:flatness} is for example satisfied if $S$ is bounded from below, i.e.\ when 
\[ 
\frac{1}{C} \leq s(x,y) \leq C 
\] 
for some constant $C>0$ and for all $x$, $y \in \mathfrak X$. 
\item If $\mathfrak X$ is a finite set then \ref{assum:Gamma} holds for any $a$ and $s$. 
\item 
In the  case $\mathfrak X=[0,1]$ and $\mu$ the Lebesgue-measure on $[0,1]$, Assumption~\ref{assum:Gamma} holds e.g.\ when $s$ and $a$ are Hölder-continuous with Hölder-exponent $\frac{1}{2}$. 
\item 
In fact it suffices that Hölder-continuity of $a$ and $s$ holds piecewise.
Let $I_1$, \ldots, $I_K$ be disjoint intervals in $[0,1]$ such that $I_1 \cup \ldots \cup I_K = [0,1]$. 
If $s \colon [0,1] \times [0,1] \to [0,\infty)$, $a\colon [0,1] \to \C$ are such that 
 $s|_{I_l \times I_k}$ and $a|_{I_l}$ are $\frac{1}{2}$-Hölder-continuous for every $l$, $k \in \{1, 2, \ldots, K\}$ then 
\ref{assum:Gamma} is satisfied.   
In particular, if $s$ satisfies \ref{assum:flatness} in addition, then \ref{assum:flatness} and \ref{assum:Gamma} both hold. 
\end{enumerate} 
\end{remark}

To classify the support of $\sigma$ 
we introduce the operator 
$B\equiv B_\zeta: \cal{B} \to \cal{B}$  given by 
\begin{equation} \label{eq:def_B} 
B_\zeta:= D_{\abs{a-\zeta}^2}-S\,,
\end{equation} 
where $D_u \colon \cal{B} \to \cal{B}, x \mapsto ux$ denotes the multiplication operator on $\cal{B}$ 
with $u \in \mathcal B$. 
Since $B$ maps real-valued functions to real-valued functions,  
we obtain a function $\beta \colon \C \to \R$ defined through 
\begin{equation} \label{eq:def_beta} 
\beta(\zeta) := \inf_{x \in \cal B_+} \sup_{y \in \cal B_+} \frac{\scalar{x}{B_\zeta \2y}}{\scalar{x}{y}}\end{equation} 
for $\zeta \in \C$, where the infimum and supremum are taken over $\cal B_+:=\{x \in \cal{B}: x>0\}$ 
 and $ \scalar{u_1}{u_2} := \avg{\ol{u}_1 u_2}$ for all $u_1$, $u_2 \in \mathcal B$. 
The definition of $\beta$ is motivated by the Birkhoff-Varga formula for the spectral radius of a matrix with positive entries \cite{Birkhoff_Varga1958}. 
In terms of $\beta$ we define the set 
\begin{equation} \label{eq:def_mathbb_S} 
 \mathbb S := \{ \zeta \in \C \colon \beta(\zeta ) < 0 \}\,, 
\end{equation} 
whose closure coincides with $\supp \sigma$, as stated  in the next theorem. 
We will see in Proposition~\ref{prp:S characterisation}~\ref{continuity of beta} that $\beta$ is a continuous function and therefore $\mathbb{S}$ is an open set.  
Moreover, $\beta$ is real analytic in a neighbourhood of $\partial \mathbb{S}$, see Corollary~\ref{crl:beta as eigenvalue of B} below.

\begin{theorem}[Properties of $\sigma$] \label{thr:properties_sigma_general} 
Let  $a \in \mathcal B=L^\infty(\mathfrak X, \Sigma, \mu)$ and  $\frak c$ be a $\cal B$-valued circular element such that $S$ and $S^*$ from \eqref{eq:def_S_S_star} satisfy \eqref{eq:def_operators_S_and_S_star}  with  $s$ fulfilling   \ref{assum:flatness} and \ref{assum:Gamma}. 
\begin{enumerate}[label=(\roman*)] 
\item \label{item:prop_sigma_ii} With respect to the Lebesgue measure, the Brown measure $\sigma$  of $a + \frak c$   has a bounded density on $\C$, which we also denote by $\sigma$, i.e.\ 
$\sigma(\dd \zeta) = \sigma(\zeta) \dd^2 \zeta$. 
\item \label{item:prop_sigma_iii}  The density $\zeta \mapsto \sigma(\zeta)$ is strictly positive on $\mathbb{S}$ and admits a real analytic extension to an open  neighbourhood of $\ol{\mathbb{S}}$. 
\item \label{item:prop_sigma_iv} $\supp \sigma = \overline{\mathbb{S}}$  and this set is bounded. Furthermore $\spec(D_a) \subset \mathbb{S}$. 
\item \label{item:prop_sigma_v} $\partial \mathbb{S}=\{ \zeta \in\C \colon \beta(\zeta) = 0\}$ and it is a real analytic variety of (real) dimension at most $1$. 
\item \label{item:prop_sigma_vi} The unique continuous extension $\sigma \colon \overline{\mathbb{S}} \to [0,\infty)$ of the density $\sigma|_{\mathbb S}$ to $\overline{\mathbb{S}}$ satisfies  $\sigma(\zeta) = g(\zeta)\abs{\partial_{\zeta} \beta(\zeta)}^2$  
for all $\zeta \in \partial \mathbb{S}$, where $g :\partial \mathbb{S} \to (0,\infty)$ is a strictly positive function that can be extended to a real analytic function on a neighbourhood of $\partial \mathbb{S}$.  
\end{enumerate} 
\end{theorem}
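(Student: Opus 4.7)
All parts are derived from an analysis of the Dyson equation \eqref{eq:V_equations at eta=0} together with the spectral structure of the operator $B_\zeta$ from \eqref{eq:def_B}. I use as inputs the existence, uniqueness and positivity of $(v_1,v_2)$ on $\mathbb{S}$, the representation \eqref{eq:sigma_intro}, the continuity of $\beta$ and its real analyticity on a neighbourhood of $\partial\mathbb{S}$, and the local identification of $\beta(\zeta)$ near $\partial\mathbb{S}$ with the isolated smallest-modulus eigenvalue of $B_\zeta$, all of which are provided by earlier sections of the paper.

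\emph{Parts (i)--(ii): analyticity, positivity and boundedness of the density.} The implicit function theorem applied to \eqref{eq:V_equations at eta=0} on $\cal{B}_+\oplus\cal{B}_+$ shows that $(v_1,v_2)$ depends jointly real analytically on $\zeta\in\mathbb{S}$, so through \eqref{eq:sigma_intro} the density $\sigma$ is real analytic on $\mathbb{S}$. Uniform upper bounds on the ratios $v_1/S^*v_1$ and $v_2/Sv_2$ derived from \eqref{eq:V_equations at eta=0} under \ref{assum:flatness} yield the boundedness stated in (i). For strict positivity in (ii), I would carry out $\partial_\zeta$ in \eqref{eq:sigma_intro} to write
\[ \sigma(\zeta)=\scalarb{w(\zeta)}{L(\zeta)w(\zeta)} \]
with $w(\zeta)\in\cal{B}\oplus\cal{B}$ built from $\partial_\zeta v_1,\partial_\zeta v_2$ and a non-symmetric operator $L$, and then symmetrise $L$ by conjugation with a diagonal factor built from $v_1$ and $v_2$ (the positivisation step announced in the proof ideas); the resulting self-adjoint operator is strictly positive thanks to the primitive block structure in \ref{assum:flatness}, giving $\sigma>0$ on $\mathbb{S}$.

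\emph{Parts (iii)--(iv): support and boundary.} For $|\zeta|>2\norm{a}_\infty+2\norm{S}^{1/2}$ the operator $B_\zeta\geq\tfrac{1}{2}|a-\zeta|^2-\norm{S}$ is strictly positive, hence $\beta(\zeta)>0$ and $\ol{\mathbb{S}}$ is bounded. The inclusion $\spec(D_a)\subset\mathbb{S}$ follows from a Perron-type argument: for $\zeta\in\spec(D_a)$ one constructs a trial $x\in\cal{B}_+$ approximately localised where $|a-\zeta|$ is small, and block-primitivity \ref{assum:flatness} guarantees that $S^*x$ uniformly majorises $|a-\zeta|^2 x$, witnessing $\beta(\zeta)<0$ in \eqref{eq:def_beta}. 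Continuity of $\beta$ gives $\mathbb{S}=\{\beta<0\}$ open and $\partial\mathbb{S}\subseteq\{\beta=0\}$; the reverse inclusion combines real analyticity of $\beta$ near $\partial\mathbb{S}$ with $\beta\to+\infty$ at infinity, ruling out isolated interior zeros. Real analyticity and non-triviality of $\beta$ on a neighbourhood $U$ of $\partial\mathbb{S}$ then imply that $\{\beta=0\}\cap U$ is a proper real analytic subvariety of $U\subset\R^2$, hence of real dimension at most $1$. The identity $\supp\sigma=\ol{\mathbb{S}}$ combines (ii) on $\mathbb{S}$ with the vanishing of $\sigma$ off $\ol{\mathbb{S}}$, the latter being direct from \eqref{eq:sigma_intro} since $(v_1,v_2)\equiv 0$ there. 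The real analytic extension of $\sigma|_{\mathbb{S}}$ claimed in (ii) is obtained by analytically continuing $(v_1,v_2)$ through the singular regime via a Lyapunov--Schmidt reduction along the one-dimensional near-null mode of $B_\zeta$.

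\emph{Part (v) and main obstacle.} This is the delicate boundary asymptotics. Fix $\zeta_0\in\partial\mathbb{S}$ and let $\ell,r\in\cal{B}_+$ be the Perron left and right null eigenvectors of $B_{\zeta_0}$, which exist by \ref{assum:flatness}. The plan is to insert the ansatz
\[ v_1(\zeta)=\alpha(\zeta)\2\ell+w_1(\zeta),\qquad v_2(\zeta)=\alpha(\zeta)\2r+w_2(\zeta), \]
with $w_1,w_2$ in the spectral complement of the null mode and of higher order in $\alpha$, into \eqref{eq:V_equations at eta=0}, and to project the resulting identity onto the null mode. The projected equation reduces to a scalar identity of the form $c_1(\zeta)\alpha(\zeta)^2=-c_2(\zeta)\beta(\zeta)+O(\alpha^3)$ with real analytic, strictly positive coefficients $c_1,c_2$, so $\alpha(\zeta)^2\sim (c_2/c_1)\abs{\beta(\zeta)}$ to leading order. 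Substituting back into \eqref{eq:sigma_intro} and using $\partial_\zeta\alpha\sim\tfrac{1}{2\alpha}\partial_\zeta\abs{\beta}$ yields $\sigma(\zeta)\to g(\zeta_0)\abs{\partial_\zeta\beta(\zeta_0)}^2$ with $g$ real analytic and strictly positive on a neighbourhood of $\partial\mathbb{S}$, as claimed. The main obstacle is precisely this singular expansion: controlling the complementary components $w_1,w_2$ and the error in the scalar reduction uniformly in $\zeta$ requires quantitative spectral gaps for $B_\zeta$ above its smallest eigenvalue, which in the operator-valued setting with non-constant $S$ crucially rests on the combination of block-primitivity \ref{assum:flatness} and the data regularity \ref{assum:Gamma}.
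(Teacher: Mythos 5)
Your high-level architecture tracks the paper's closely (symmetrising the quadratic form for bulk positivity, a Lyapunov--Schmidt reduction along the null mode of $B_\zeta$ for the edge), but two steps are substantially underspecified and would fail as written.

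First, your argument that $\spec(D_a)\subset\mathbb{S}$ does not actually produce strict negativity of $\beta$. If $\zeta\in\spec(D_a)$ then $\essinf|a-\zeta|=0$, so for each $\eps>0$ there is $x\geq 0$ with $|a-\zeta|^2x\leq\eps x$, and testing \eqref{eq:def_beta} with this $x$ gives $\beta(\zeta)\leq\eps-\inf_{y>0}\scalar{x}{Sy}/\scalar{x}{y}\leq\eps$. Since $\inf_y\scalar{x}{Sy}/\scalar{x}{y}$ need not be bounded away from zero uniformly as $\eps\downarrow 0$ (the set where $|a-\zeta|$ is small may shrink, so $\avg{x}/\norm{x}_\infty\to 0$ and block-primitivity only gives $z_{ii}=1$, not a uniform lower bound on $S$), this trial-function argument yields only $\beta(\zeta)\leq 0$. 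Upgrading to $\beta(\zeta)<0$ is genuinely harder: the paper first establishes the scaling $|\zeta-a|\sim 1$ whenever $\avg{v_1}$ is small (Lemma~\ref{lem:v_scaling}, which uses the full Dyson-equation bounds under \ref{assum:flatness} and \ref{assum:Gamma}), deduces that $\rho_\zeta(0)>0$ whenever $\zeta\in\spec(D_a)$, and then invokes the characterisation $\mathbb{S}=\{\rho_\zeta(0)>0\}$ from Proposition~\ref{prp:S characterisation}.

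Second, your proof of $\{\beta=0\}\subseteq\partial\mathbb{S}$ is not sound. You invoke ``real analyticity of $\beta$ near $\partial\mathbb{S}$ with $\beta\to+\infty$ at infinity, ruling out isolated interior zeros,'' but $\beta\to+\infty$ at infinity does not preclude a local minimum of $\beta$ at level zero in the interior of $\{\beta\geq 0\}$; such a $\zeta_0$ would lie in $\{\beta=0\}$ but not in $\partial\mathbb{S}$. The paper rules this out via a nontrivial computation: it shows that whenever $\beta(\zeta)=0$ and $\partial_\zeta\beta(\zeta)=0$, one has $\Delta\beta(\zeta)<0$ (Lemma~\ref{lem:beta_zero_and_critical_point}), by rewriting $\partial_\zeta\partial_{\ol\zeta}\beta$ as minus a strictly positive quadratic form involving $1-K^*K$ on the orthogonal complement of the Perron eigenvector of $K$. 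Without an argument of this type, $\partial\mathbb{S}=\{\beta=0\}$ (and in fact $\mathbb{S}_0=\ol{\mathbb{S}}$, hence $\supp\sigma=\ol{\mathbb{S}}$) does not follow. A smaller but related inaccuracy: it is not $(v_1,v_2)$ that continues analytically across $\partial\mathbb{S}$ --- they vanish identically outside and have a square-root singularity at the edge --- but rather $\vartheta^2\approx-\beta\,\avg{\ell b}/\avg{\ell b(S^*\ell)(Sb)}$, and hence $\sigma$ (which depends on $\vartheta$ only through even powers and on $\ell,b,B$), that extends analytically; Proposition~\ref{prp: Boundary values of sigma} works directly with this even dependence.
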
 

The \hyperlink{proof_thr:properties_sigma_general}{proof of Theorem~\ref{thr:properties_sigma_general}} 
is given at the end of Section~\ref{sec:Properties of the Brown measure} below.

Our next results describe in detail the edge behaviour of $\sigma$ at points $\zeta_0 \in \partial\mathbb{S}$. Typically we observe a sharp drop of the density $\sigma$ at the edge $\partial\ol{\mathbb{S}}$. In fact, by Theorem~\ref{thr:properties_sigma_general} \ref{item:prop_sigma_vi} this is the case if and only if $\partial_\zeta \beta(\zeta_0)$ does not vanish. 

\begin{definition}[Regular edge and singular points]
Let $\zeta_0 \in \partial \mathbb{S}$ such that $\partial_\zeta \beta(\zeta_0) \ne 0$. Then we say that $\zeta_0$ is a \emph{regular edge point}.  We denote the set of these points by $\rm{Reg}$ and its complement by $\rm{Sing}:= \partial \mathbb{S} \setminus \rm{Reg}$, which is called the set of \emph{singular points} for $\sigma$. 
\end{definition}

To give a short statement of the  classification of the singular  points for $\sigma$ we introduce the following notion. 

\begin{definition}[Singularity types] \label{def:singularity type}
Let $\alpha: U \to \C$ be a real analytic function on an open set $U \subset \C$ and $\zeta_0 \in U$. We say that $\alpha$ is of singularity type $p(x,y)$ at $\zeta_0$ for a polynomial $p: \R^2 \to \C$ if there is a real analytic diffeomorphism $\Phi: V \to U_0$ from an open neighbourhood $V$ of $0 \in \C$ to an open neighbourhood $U_0$ of $\zeta_0 
=\Phi(0)$, such that 
\[
\alpha(\Phi(x + \ii y)) = p(x,y) \,, \qquad x + \ii y \in V\,.
\]
\end{definition}

In the sense above $\beta$ is of singularity type $x$ at any regular edge point. 
By Theorem~\ref{thr:properties_sigma_general} the edge $\partial \supp \sigma$ is locally analytically diffeomorphic  to a line and $\sigma$ has a jump discontinuity along this line.

\begin{theorem}[Classification of edge and internal  singularities]\label{thr:Sing classification} Let  $a \in \mathcal B$ and  $s$ satisfy \ref{assum:flatness} and \ref{assum:Gamma}. 
The set of singular  points for $\sigma$ admits a partition 
\bels{Sing partition}{
\rm{Sing}= \bigcup_{k=1}^\infty\pb{\rm{Sing}_{k}^{\rm{int}}\cup \rm{Sing}_{k}^{\rm{edge}}}\cup \rm{Sing}^{\rm{int}}_{\infty}\,.
}
Here $\rm{Sing}\setminus \rm{Sing}_{\infty}$ is a finite set, while $\rm{Sing}_{\infty}$ is either empty or 
a finite disjoint union of closed real analytic paths without self-intersections. 
The different singularity types are characterised by the following properties: 
\begin{enumerate}[label=\arabic*)] 
\item Internal singularities of order $k$: Any $\zeta \in \rm{Sing}_{k}^{\rm{int}}$ is an isolated point of $\C \setminus \mathbb S$ and $\sigma$ is of singularity type $x^2 + y^{2k}$ at $\zeta$. 
\item Edge singularities of order $k$:  Any $\zeta \in \rm{Sing}_{k}^{\rm{edge}}$ lies in  $\partial \ol{\mathbb S}$ and  $\beta$ is of singularity type $ y^{1+k}-x^2$ at $\zeta$. 
\item Internal singularities of order $\infty$:  Any $\zeta \in \rm{Sing}^{\rm{int}}_{\infty}$  lies in the interior of $\ol{\mathbb S}$ and $\sigma$ is of singularity type $x^2$ at $\zeta$. 
\end{enumerate}
\end{theorem}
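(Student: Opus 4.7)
The plan is to exploit the real analyticity of $\beta$ on an open neighbourhood of $\partial \mathbb{S}$ (Corollary~\ref{crl:beta as eigenvalue of B}) and its characterisation there as the eigenvalue of $B_\zeta$ closest to zero, together with a singular expansion of the solution $(v_1, v_2)$ to the Dyson equation \eqref{eq:V_equations at eta=0} which controls $\sigma$ through \eqref{eq:sigma_intro}. For the global decomposition \eqref{Sing partition}, I would observe that $\rm{Sing} = \{\beta = 0\}\cap\{\partial_\zeta \beta = 0\}$ is a compact real analytic subvariety of the neighbourhood on which $\beta$ is analytic, and stratify it. The $0$-dimensional stratum yields a finite set $\rm{Sing}\setminus\rm{Sing}_\infty$ by compactness, while the $1$-dimensional stratum consists of finitely many smooth real analytic arcs; any arc not terminating on a $0$-stratum point must close up into a loop without self-intersections, producing $\rm{Sing}_\infty$.

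Each isolated $\zeta_0 \in \rm{Sing}\setminus\rm{Sing}_\infty$ I would classify according to whether $\zeta_0 \in \partial\overline{\mathbb{S}}$ (edge) or $\zeta_0 \in \Int(\overline{\mathbb{S}})$ (internal). In the edge case the curve $\{\beta = 0\}$ has a cusp or transverse self-intersection at $\zeta_0$; Weierstrass preparation combined with a Newton-polygon analysis of the non-negative leading form adjacent to $\mathbb{S}$ provides real analytic coordinates in which $\beta$ assumes the normal form $y^{k+1} - x^2$. In the isolated internal case, $-\beta$ is a non-negative real analytic function with an isolated zero at $\zeta_0$; a Morse-type lemma adapted to non-negative analytic functions with an isolated zero brings $-\beta$ into the form $x^2 + y^{2k}$, and the corresponding normal form of $\sigma$ is extracted by expanding $(v_1,v_2)$ in the singular eigendirections of $B_\zeta$ at $\zeta_0$ and tracking the leading behaviour through \eqref{eq:sigma_intro}. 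For an arc $\gamma \subset \rm{Sing}_\infty$, $\gamma$ lies in $\Int(\overline{\mathbb{S}})$ with $\beta \equiv 0$ along it, so $\beta \leq 0$ on both sides and a transverse Taylor expansion gives $\beta(x,y) = -h(x,y)\,x^2$ with $h>0$ real analytic in coordinates where $\gamma = \{x=0\}$; the same singular expansion of $(v_1,v_2)$ transversely to $\gamma$ yields $\sigma(x,y) = u(x,y)\,x^2$ with $u>0$, placing every point of $\gamma$ in $\rm{Sing}_\infty^{\rm{int}}$.

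The main obstacle is the passage from the normal form of $\beta$ to that of $\sigma$ at isolated internal singularities: the identity $\sigma = g\,|\partial_\zeta \beta|^2$ from Theorem~\ref{thr:properties_sigma_general}\ref{item:prop_sigma_vi} is only valid on the one-dimensional set $\partial \mathbb{S}$, which in a neighbourhood of an isolated internal singular point degenerates to the single point $\{\zeta_0\}$ and therefore pins down $\sigma$ only at $\zeta_0$ itself, not in a two-dimensional neighbourhood. To access the full two-dimensional normal form of $\sigma$, I would redo the singular expansion of the Dyson equation directly at $\zeta_0$, aligning the leading parts of $v_1$ and $v_2$ with the kernel and cokernel of $B_{\zeta_0}$ and keeping enough higher-order terms in the expansion to read off the Newton polygon of $\sigma$ from that of $\beta$. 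Matching these two combinatorial structures is precisely what forces the order parameter $k$ to be an integer and rules out all singularity types other than the ones listed in the statement.
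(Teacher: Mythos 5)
Your global skeleton---stratify the analytic variety $\rm{Sing}=\{\beta=0\}\cap\{\partial_\zeta\beta=0\}$ into a finite 0-dimensional part and closed analytic arcs by compactness---matches the paper, and your treatment of the edge case and the arc case is in the right spirit (the paper's Lemma~\ref{lmm:possible singuarity types} is essentially the Weierstrass/Newton-polygon argument you invoke, packaged as a higher-order Morse lemma for real analytic functions with a critical point and strictly positive Laplacian). But your handling of the isolated internal singularities has a genuine gap, and you correctly sense where it is: passing from the normal form of $\beta$ to that of $\sigma$. Your proposed repair---redo the singular Dyson expansion and ``match Newton polygons'' of $\beta$ and $\sigma$---is not carried out, and it is not clear it can be made rigorous in that form, because the relation between $v_1,v_2$ and $\sigma$ via \eqref{eq:sigma_intro} involves a $\partial_\zeta$ derivative that does not simply push Newton polygons forward.

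The paper avoids the issue entirely by decoupling $\sigma$ from $\beta$: it never tries to deduce the two-dimensional normal form of $\sigma$ from that of $\beta$. Instead, Proposition~\ref{prp: Boundary values of sigma} establishes that at any $\zeta_0\in\rm{Sing}$ the analytic extension of $\sigma$ satisfies $\partial_\zeta\sigma(\zeta_0)=0$, and gives the explicit formula \eqref{Delta sigma} for $\Delta\sigma(\zeta_0)$ as a manifestly positive quantity (one term is a modulus squared, the other is $(\Delta\beta(\zeta_0))^2$ times a positive weight, and Lemma~\ref{lem:beta_zero_and_critical_point} already gives $\Delta\beta(\zeta_0)<0$, hence $\Delta\sigma(\zeta_0)>0$). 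With $\partial_\zeta\sigma(\zeta_0)=0$, $\Delta\sigma(\zeta_0)>0$ and $\sigma(\zeta_0)=0$ in hand, one applies Lemma~\ref{lmm:possible singuarity types} directly to $\sigma$ itself and uses nonnegativity of $\sigma$ in a full neighbourhood of the interior point $\zeta_0$ to rule out the types $x^2-y^K$ and $x^2+y^{2n+1}$, leaving exactly $x^2+y^{2n}$ or $x^2$. This is simpler, avoids the Newton-polygon matching, and it is precisely the missing ingredient in your argument: without $\Delta\sigma(\zeta_0)>0$ you cannot apply any Morse-type lemma to $\sigma$, and without a quantitative two-dimensional link between $\beta$ and $\sigma$ you cannot import the exponent $k$ from $\beta$. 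I would also flag that the ``Morse-type lemma adapted to non-negative analytic functions with an isolated zero'' you invoke for $-\beta$ is the wrong hypothesis: the paper's lemma needs only $\Delta f>0$ at a critical point (which follows from Lemma~\ref{lem:beta_zero_and_critical_point}), and non-negativity alone is neither available for $-\beta$ near an edge point nor sufficient to produce the diffeomorphism.
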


The \hyperlink{proof_thr:Sing classification}{proof of Theorem~\ref{thr:Sing classification}} is presented 
at the end of Section~\ref{sec:Properties of the Brown measure} below. 

\begin{remark}
The internal singularities $\rm{Sing}_{k}^{\rm{int}}\cup  \rm{Sing}^{\rm{int}}_{\infty}$  are classified by their local behaviour of the density $\sigma$. 
The edge singularities $\rm{Sing}_{k}^{\rm{edge}} $ are equivalently characterised by the local shape of the spectral edge $\partial \supp \sigma$, namely for $\zeta \in \rm{Sing}_{k}^{\rm{edge}}$ there is a local  analytic diffeomorphism $\Psi\colon U \to V$ from an open neighbourhood $V$ of $\zeta_0$ to an open  $U \subset \C$ with $\Psi(\zeta_0)= 0$ such that 
\[
\Psi( \supp \sigma \cap U) = \{x+\ii y \in \C: x^2 -y^{1+k}\ge 0\}\cap V
\]
\end{remark}

Finally we show that all singularities allowed by the classification in Theorem~\ref{thr:Sing classification} do occur in the simple case when $\frak{c}$ is a standard circular element and $\frak{X}=[0,1]$ with the Lebesgue measure. A standard circular element satisfies $E[\frak{c}^* b \frak{c}]=E[\frak{c} b \frak{c}^*] = \avg{b}$ for all $b \in \mathcal B$. 

\begin{theorem}[Existence of all singularity types]
\label{thr:all_singularities_appear}
For each of the sets $\rm{Sing}_{k}^{\rm{int}}$, $\rm{Sing}_{k}^{\rm{edge}}$  with $k \in \N$ and  $\rm{Sing}^{\rm{int}}_{\infty}$,  there is a choice $a \in L^\infty[0,1]$ such that for the Brown measure of $a + \frak{c}$, with $\frak{c}$ a standard circular element,  this set is not empty. With the exception of $\rm{Sing}^{\rm{int}}_{\infty}$, the deformation $a$ can be chosen to have finite image. 
\end{theorem}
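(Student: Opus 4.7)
Because $\mathfrak c$ is a standard circular element, $Sb = S^*b = \langle b\rangle$ and the two-component Dyson equation \eqref{eq:V_equations at eta=0} admits a common solution $v_1 = v_2 = m(m^2 + |a-\zeta|^2)^{-1}$ where the scalar $m = m(\zeta) \geq 0$ is determined by the single constraint $\langle (m^2 + |a-\zeta|^2)^{-1}\rangle = 1$. As $\zeta \to \partial\mathbb{S}$, $m\to 0$ and the constraint collapses to $\phi(\zeta) := \langle |a-\zeta|^{-2}\rangle = 1$; in particular $\partial\mathbb S = \{\phi = 1\}$, and $\beta$ and $1-\phi$ have the same sign and the same zero set. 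Therefore, to produce a chosen singularity type for $\sigma$ at a base point $\zeta_0$ it suffices to construct $a$ such that the real analytic function $\phi-1$ has the corresponding local normal form at $\zeta_0$. Translating the normal form of $\phi-1$ into the normal form of $\sigma$ or of $\partial\supp\sigma$ required by Theorem~\ref{thr:Sing classification} is then a direct computation from Theorem~\ref{thr:properties_sigma_general}\ref{item:prop_sigma_vi} and the explicit formulas above.

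For the finite-image constructions, parametrise $a = \sum_{i=1}^n a_i \bbm{1}_{I_i}$ with weights $p_i := \mu(I_i)$, so that $\phi(\zeta) = \sum_i p_i |a_i - \zeta|^{-2}$ is a rational function of $(\re\zeta, \im \zeta)$ with $3n-1$ real parameters. I would argue by induction on the order $k$. For the base case $k = 1$: the two-atom choice $a \in \{+1,-1\}$ with equal weights satisfies $\phi(0) = 1$ and $\phi(\zeta)-1 = 3(\re\zeta)^2 - (\im\zeta)^2 + O(|\zeta|^3)$, an indefinite non-degenerate quadratic equivalent to the edge normal form $y^2-x^2$; the symmetric triple $a_k = e^{2\pi \ii k/3}$ ($k=0,1,2$) with weights $\tfrac{1}{3}$ gives $\phi(0) = 1$ and $\phi(\zeta)-1 = |\zeta|^2 + O(|\zeta|^3)$, a positive-definite quadratic producing an isolated point of $\C \setminus \mathbb S$ of internal type $x^2+y^2$. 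For the inductive step, given an order-$k$ configuration at $\zeta_0 = 0$, add one new atom $(a_{n+1}, p_{n+1})$ of small weight $\eps$ and simultaneously perturb the existing data by $O(\eps)$. The new real parameters enter the Taylor coefficients of $\phi$ at $0$ through the explicit expansion of $|a_i-\zeta|^{-2}$, and the implicit function theorem applied to the map from this enlarged parameter space to the leading obstruction coefficient lets one cancel that coefficient while preserving $\phi(0) = 1$ and the sidedness of $\{\phi \ge 1\}$ that distinguishes edge from internal, thereby promoting the order-$k$ normal form to the order-$(k+1)$ one.

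The remaining case $\rm{Sing}^{\rm{int}}_\infty$ cannot be realised by any finite-image $a$, since the zero set of a nonconstant rational function cannot lie in the interior of its positivity region along a nontrivial arc without forcing $\phi \equiv 1$ on an open set. Instead, take $\mathfrak X = [0,1]$ with Lebesgue measure and distribute $a$ uniformly on two concentric circles of radii $r_1 < r_2$ with respective weights $p_1$ and $p_2 = 1-p_1$; concretely, set $a(x) = r_1 e^{2\pi \ii x/p_1}$ on $[0,p_1]$ and $a(x) = r_2 e^{2\pi \ii(x-p_1)/p_2}$ on $(p_1,1]$. By rotational symmetry $\phi(\zeta) = F(|\zeta|^2)$, and on the annulus $t = |\zeta|^2 \in (r_1^2, r_2^2)$ one has $F(t) = p_1/(t-r_1^2) + p_2/(r_2^2-t)$, which is strictly convex with unique minimum value $(\sqrt{p_1}+\sqrt{p_2})^2/(r_2^2-r_1^2)$. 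Choosing $r_2^2 - r_1^2 = (\sqrt{p_1}+\sqrt{p_2})^2$ makes this minimum equal to $1$, so $\phi - 1$ vanishes quadratically transversally along the entire circle $|\zeta|^2 = t^*$, which sits in the interior of $\overline{\mathbb S}$ and realises a nonempty component of $\rm{Sing}^{\rm{int}}_\infty$.

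The main obstacle is the inductive step. It reduces to a transversality statement: at each order $k$, the map from the enlarged parameter space (the existing $3n-1$ parameters together with the three new ones from the added atom) to the obstruction coefficient in the jet of $\phi-1$ at $\zeta_0$ must be submersive at the base configuration. This is a combinatorial-algebraic assertion about higher derivatives of Poisson-like kernels $\sum p_i |a_i-\zeta|^{-2}$, whose coefficients are highly nonlinear polynomials in the atomic data. Verifying it requires a judicious choice of the new atom (placed away from all existing atoms and chosen so that its contribution to the target Taylor coefficient is linearly independent of the already-realised directions), while simultaneously controlling the global behaviour of $\phi-1$ to avoid creating spurious additional singularities in $\overline{\mathbb S}$.
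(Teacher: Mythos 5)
Your overall architecture matches the paper's: reduce to the explicit scalar condition $f(\zeta)=\avg{|a-\zeta|^{-2}}=1$, show the singularity type of $f-1$ determines those of $\beta$ and $\sigma$, then construct finitely-supported spectral measures realising each normal form by induction on the order. Your base cases are correct (the two-atom $\{\pm 1\}$ computation gives $3x^2-y^2$; the symmetric triple gives $x^2+y^2$), and your two-circle construction for $\rm{Sing}^{\rm{int}}_\infty$ is a mild generalisation of the paper's circle-plus-atom example and does work. However, there are two genuine gaps.

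First, you assert that "translating the normal form of $\phi-1$ into the normal form of $\sigma$ or of $\partial\supp\sigma$ is a direct computation from Theorem~\ref{thr:properties_sigma_general}\ref{item:prop_sigma_vi}". This is not direct. Two real-analytic functions with identical zero sets and sign can have different singularity types (compare $x^2+y^2$ and $(x^2+y^2)^2$), so the fact that $\beta$ and $1-\phi$ agree in sign does not by itself carry the normal form across. The paper handles this with Lemma~\ref{lmm:singularity types sigma f and beta}, which exploits the differential relation \eqref{partial f and partial beta} linking $\partial_\zeta \beta$ to $\partial_\zeta f$ and, crucially, the mirror symmetry $f(\zeta)=f(-\ol{\zeta})$ through Corollary~\ref{crl:singularity with symmetry}. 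That symmetry lets one read off the order $K$ and sign $\tau$ from the $\partial_y^k$-derivatives alone and transfer them between $f-1$, $-\beta$, and $\sigma$ by repeatedly differentiating \eqref{partial beta for circular} and \eqref{sigma and beta for circular}. Your three-atom base case at cube roots of unity and your general inductive configurations are not required to be symmetric about the imaginary axis, so this mechanism is unavailable. (The three-atom example can be rotated into a symmetric position, but you would need to say so and then carry the symmetry through the induction.)

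Second, and more seriously, the inductive step is stated but not proved. You correctly identify the key issue — at each order one must cancel the next obstruction coefficient in the $y$-jet of $f$ while preserving the lower ones, and this requires a submersion of the parameter map — and you stop there. This is where the paper's technical work actually lives: it introduces the symmetric molecules $g_a$ and $f_a$ (Lemma~\ref{lem:properties_f_a}, \eqref{derivatives of fa}--\eqref{derivatives of ga}), reduces to a discrete truncated moment problem $\re\sum c_i a_i^{2k+1}=0$ (resp.\ a more elaborate system for odd powers involving a fixed atom at $-\ii$), and solves it by placing the new atom at $z_n(r)=\omega/r$ with weight $c_n(r)\propto r^{2n-1}$ so that its contribution dominates exactly the top coefficient as $r\downarrow 0$ while being a small perturbation of the lower ones. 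The implicit function theorem is applied with invertibility certified by a Vandermonde determinant in the $a_i^2$ (even case) resp.\ $a_i$ (odd case), not by an appeal to genericity. The even and odd $y$-powers are treated by genuinely different ansätze, which your uniform "add one small atom" scheme does not reflect. One point you worry about unnecessarily: spurious additional singularities elsewhere in $\ol{\mathbb S}$ are irrelevant, since the theorem only asserts that the given $\rm{Sing}$-class is nonempty.
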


The \hyperlink{proof_thr:all_singularities_appear}{proof of Theorem~\ref{thr:all_singularities_appear}} can be 
found just before Section~\ref{subsec:Singularity types of even order}.

\begin{remark}[Circular element with general deformation] \label{rmk:general_deformation}
Although not directly covered by our results stated above, the classification of singularities from Theorem~\ref{thr:Sing classification} also hold for the
Brown measure of $a + \mathfrak c$, where $a$ is a general operator in a tracial von Neumann algebra 
and $\mathfrak c$ is a standard circular element, which is $\ast$-free from $a$. 
We note that $a$ can be non-normal in general. 
We define $\mathbb{S} = \{ \zeta \in \C \colon f(\zeta) > 1\}$ with  
\[
f(\zeta):= \lim_{\eta \downarrow 0} \avgbb{\frac{1}{(a-\zeta)(a-\zeta)^* + \eta^2}}\,
\]
for $\zeta \in \C$.  
Then the support  of the Brown measure $\sigma$ is $\ol{\mathbb S}$ \cite{Zhong2021} and the measure has a density on $\C$ \cite{BelinschiYinZhong2024}, which is real analytic and strictly positive inside $\mathbb{S}$ \cite{Zhong2021}.  
Under the additional assumption 
\begin{equation} \label{eq:assumption_spec_a_subset_mathbb_S} 
 \spec(a) \subset \mathbb{S}, 
\end{equation}
the dichotomy between regular edge points, at which the density has a jump discontinuity, and singular points of $\sigma$ was established in \cite{ErdosJi2023}. The singular  points $\zeta \in \rm{Sing}$ are called quadratic  edges in \cite{ErdosJi2023} and are classified by satisfying $f(\zeta)=1$ and $\partial_\zeta f(\zeta)=0$, while regular edge points $\zeta \in \rm{Reg}$ satisfy $f(\zeta)=1$ and $\partial_\zeta f(\zeta)\ne 0$. Here we provide a classification of the singular points $\zeta \in \rm{Sing}$ by showing that $\rm{Sing}$ is a disjoint union of the sets $ \rm{Sing}_{k}^{\rm{int}}$, $ \rm{Sing}_{k}^{\rm{edge}}$ with $k \in \N$ and $ \rm{Sing}^{\rm{int}}_{\infty}$ as in \eqref{Sing partition}, where these sets are defined as in Theorem~\ref{thr:Sing classification} with $\beta := 1 - f$. 
 We provide the proof for this classification in  Section~\ref{sec:circular_element_general_deformation} below. 
\end{remark}

\subsection{Notations}

We now introduce some notations used throughout. 
We write $\qq{n} \defeq \{ 1, \ldots, n\}$ for $n \in \N$. 
For $r >0$, we denote by $\mathbb{D}_r \defeq \{ z \in \C \colon \abs{z} < r \}$ the disk of radius $r$ around the origin in $\C$ and 
by $\dist(x,A) :=\inf\{ \abs{x-y} \colon y \in A \}$ the Euclidean distance of a point $x \in \C$ from a set $A \subset \C$. 

We use the convention that $c$ and $C$ denote  generic constants that may depend on the model parameters, but are otherwise     
 uniform in all other parameters, e.g.\ $n$, $\zeta$, etc.. 
For two real scalars $f$ and $g$ 
we write $f \lesssim g$ and $g \gtrsim f$ if  $f \leq C g$ for such a constant $C>0$. 
In case $f \lesssim g$ and $f \gtrsim g$ both hold,  we write $f \sim g$. 
If the constant $C$ depends on a parameter $\delta$ that is not a model parameter, we write $\lesssim_\delta$, $\gtrsim_\delta$ and 
$\sim_\delta$, respectively. 
The notation for inequality up to constant is also used for self-adjoint matrices/operators $f$ and $g$, where $f \leq C g$ is interpreted in the sense of  quadratic forms. 
For complex $f$  and $g \geq 0$  we write $f = O(g)$ in case $\abs{f} \lesssim g$. 
Analogously $f = O_\delta(g)$ expresses the fact  $\abs{f}\lesssim_\delta g$. 

In addition to $L^\infty$, we introduce the usual $L^p$ spaces on $(\mathfrak X, \cal A, \mu)$. 
We denote them by $L^p:=L^p(\mathfrak X, \cal{A}, \mu)$ and the corresponding norms by $\norm{\2\cdot \2}_p$. 
For functions  $u_1$, $u_2 \in L^2$, we define their  scalar product as 
\[
 \scalar{u_1}{u_2} := \avg{\ol{u}_1 u_2}\,.
\]

\section{Examples} \label{sec:examples}

In this section, we present several examples that illustrate some of the different singularity types appearing 
in Theorem~\ref{thr:Sing classification}.  
Throughout this section, we choose $\mathfrak X = [0,1]$, $\mu$ the Lebesgue-measure on $[0,1]$ and $s$ constant on 
$[0,1]^2$. 
If $s \equiv t$ is constant on $[0,1]^2$ then it is easy to see that $\beta(\zeta) = \frac{1}{t} - \int_0^1 \frac{\dd x}{\abs{a(x) - \zeta}^2}$ for all $\zeta \in \C$ and therefore,  
as has already been derived in \cite{Khoruzhenko1996,tao2010,BordenaveCaputoChafai2014}, 
\begin{equation} \label{eq:formula_S_iid} 
 \mathbb{S} = \bigg\{ \zeta \in \C \colon \int_0^1 \frac{\dd x}{\abs{a(x)-\zeta}^2} > \frac{1}{t}  \bigg\}. 
\end{equation} 

\begin{example}[Simplest edge singularity of $\partial \mathbb{S}$] \label{example:x2_minus_y2} 
We choose $s \equiv 1$ and 
$a \colon [0,1] \to \C$ with $a(x_1) =1$ and $a(x_2) = -1$ for all $x_1 \in [0,1/2)$ and $x_2 \in [1/2,1]$. 
In that case, we obtain from \eqref{eq:formula_S_iid} that 
\[ \mathbb{S} = \{ \zeta \in \C \colon \abs{1-\zeta}^{-2} + \abs{1+ \zeta}^{-2} >1 \}. \] 
As we will see in Lemma~\ref{lmm:Examples exist} \ref{lmm:Examples 1} below, this yields an edge singularity of order $1$, i.e.\ in 
$\mathrm{Sing}_1^{\mathrm{edge}}$ at zero. 
The boundary of $\mathbb{S}$ and the eigenvalues of a sampled corresponding random matrix model are depicted 
in Figure~\ref{subfig:x2_minus_y2}.  This example  also appeared in \cite[Example~5.2]{BianeLehner2001}.  
\end{example} 

\begin{example}[Simplest internal singularity of $\sigma$] \label{example:x2_plus_y2}
With the choices $s \equiv 1/4$ and 
$a \colon [0,1] \to \C$ with $a(x_1) = (1 + \ii)/\sqrt{2}$ , $a(x_2) = (1 - \ii)/\sqrt{2}$, $a(x_3) = (-1 + \ii)/\sqrt{2}$ and $a(x_4) = -(1 + \ii)/\sqrt{2}$ for all $x_1 \in [0,1/4)$, $x_2 \in [1/4,1/2)$, $x_3 \in [1/2,3/4)$ and 
$x_4 \in [3/4,1]$, \eqref{eq:formula_S_iid} yields 
\[ \mathbb{S} = \{ \zeta \in \C \colon  \abs{\zeta -(1 + \ii)/\sqrt{2}}^{-2} + \abs{\zeta -(1 - \ii)/\sqrt{2}}^{-2} 
+ \abs{\zeta +(1 - \ii)/\sqrt{2}}^{-2} + \abs{\zeta +(1 + \ii)/\sqrt{2}}^{-2}  > 1 \}.\] 
Lemma~\ref{lmm:Examples exist} \ref{lmm:Examples 1} below will demonstrate that this gives rise to an internal singularity of order $1$, i.e.\ in  
$\mathrm{Sing}_2^{\mathrm{int}}$ at zero. 
Figure~\ref{subfig:x2_plus_y2} shows the boundary of $\mathbb{S}$ and the eigenvalues of a 
sampled corresponding random matrix model.  We note that a  similar example was given in 
\cite[Example~3.1(d)]{ErdosJi2023}.  
\end{example} 

\begin{figure}[h!]  
\subfloat[Example~\ref{example:x2_minus_y2} \label{subfig:x2_minus_y2}]{ 
\includegraphics[width=.49\textwidth]{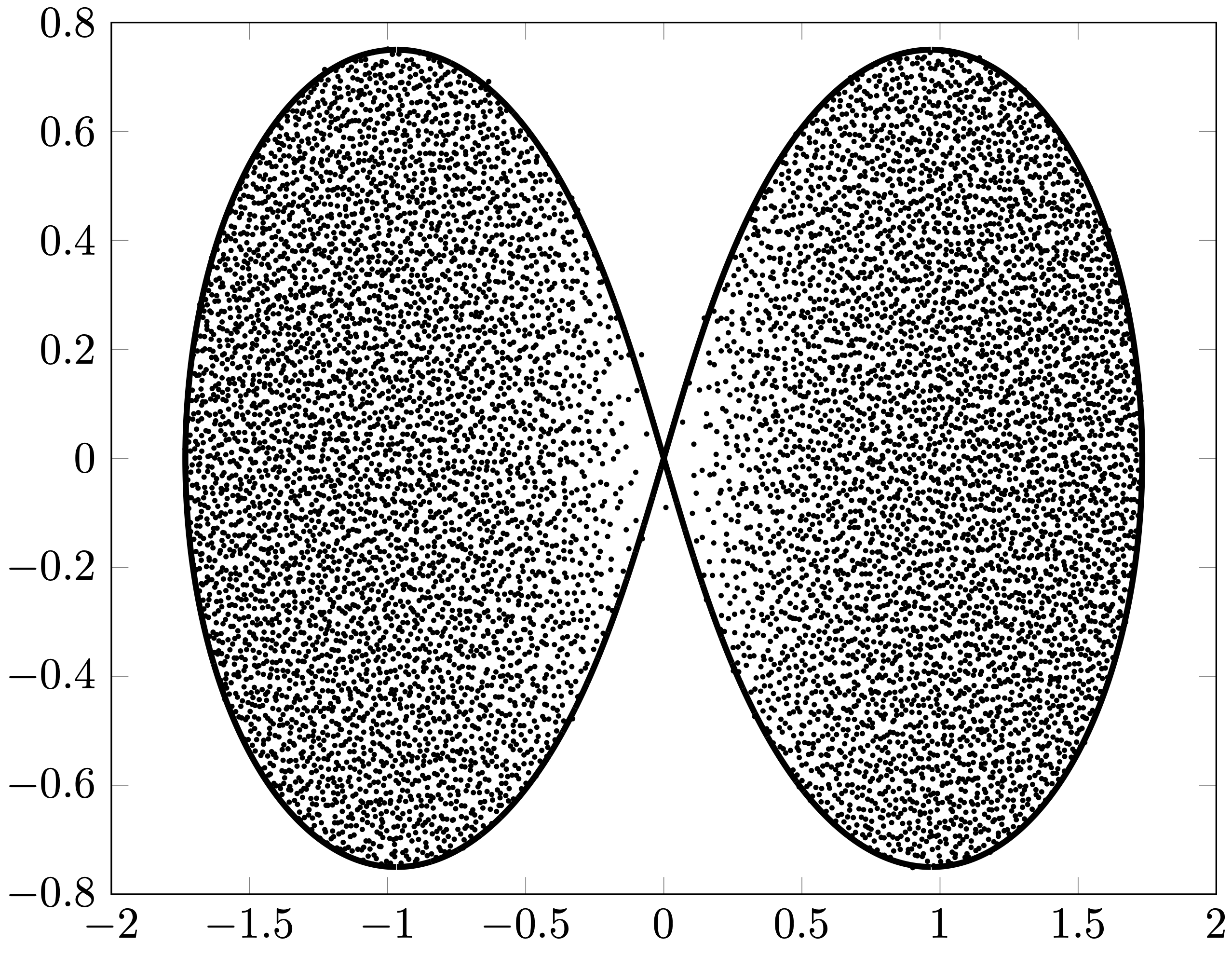}
} 
\hspace*{.02\textwidth}  
\subfloat[Example~\ref{example:x2_plus_y2} \label{subfig:x2_plus_y2}]{
\includegraphics[width=.49\textwidth]{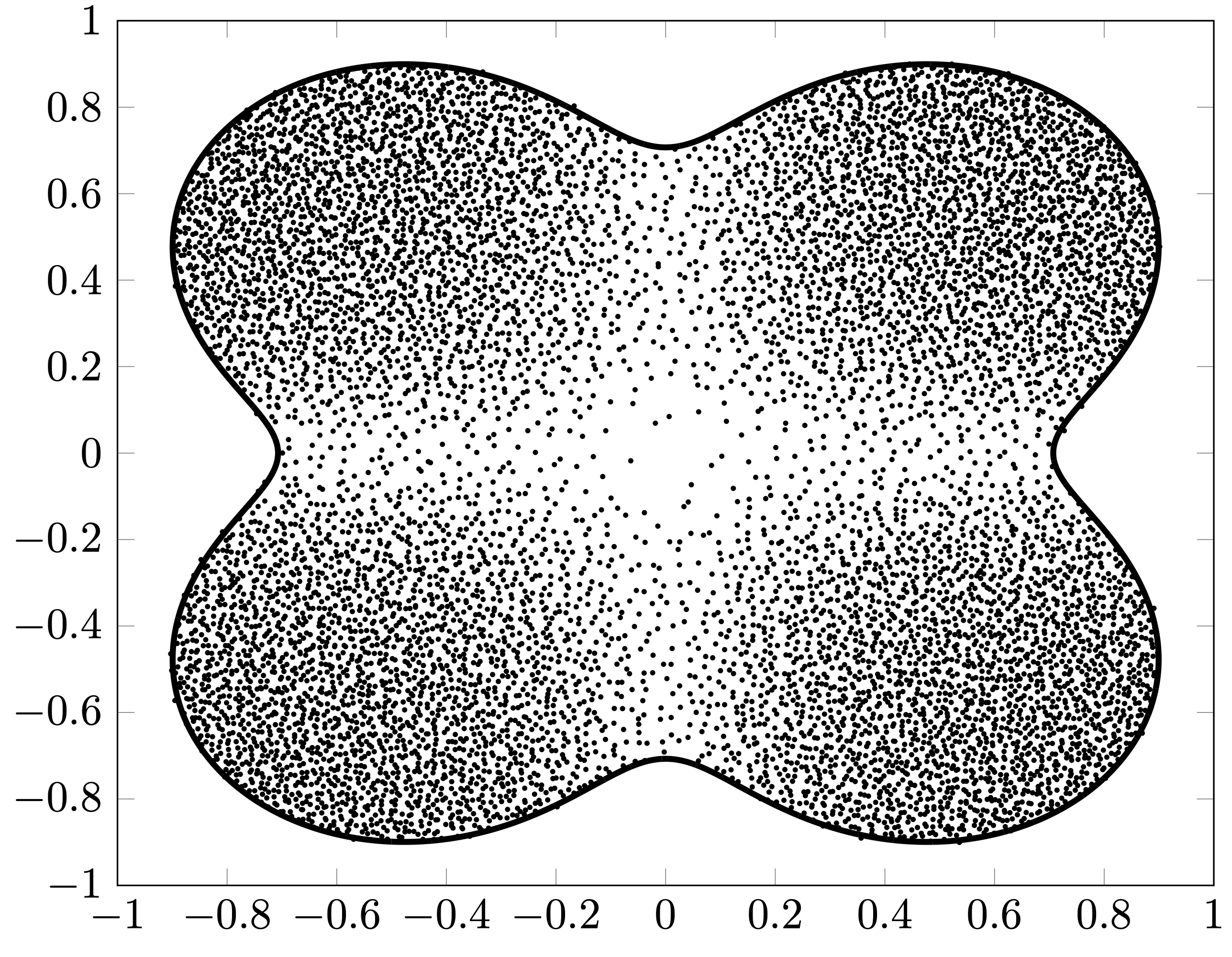}
} 
\caption{The solid black lines in subfigures (\protect\subref{subfig:x2_minus_y2}) and (\protect\subref{subfig:x2_plus_y2}) show the 
boundary of $\mathbb{S}$ from Examples~\ref{example:x2_minus_y2} and \ref{example:x2_plus_y2}, respectively. 
The black dots are the eigenvalues of a sample of $A + X/\sqrt{n}$, where $X$ is an $n\times n$ matrix with i.i.d.\ $N(0,1)$ standard real normal distributed entries, $n=10000$ and $A=(\diag(a(i/n))\delta_{ij})_{i,j=1}^n$ is 
a diagonal matrix and $a$ is chosen as in Examples~\ref{example:x2_minus_y2} and \ref{example:x2_plus_y2}, respectively.} 
 \label{fig:simple} 
\end{figure} 

\begin{example}[One-sided edge cusp of $\partial \mathbb{S}$]  \label{example:x2_y3} 
Let $s \equiv t:= \frac{2}{3}(20-7\sqrt{7})$ be constant on $[0,1]^2$, $\delta = (-17 + 7\sqrt{7})/8$ and 
$a \colon [0,1] \to \C$ with $a(x_1) = 1$, $a(x_2) = -1$ and $a(x_3) = \ii$ for all $x_1 \in [0,1/(2+ \delta))$,  
$x_2 \in [1/(2 + \delta),2/(2+\delta))$ and $x_3 \in [2/(2+\delta),1]$.
We note that \eqref{eq:formula_S_iid} yields 
\[ \mathbb{S} = \bigg\{ \zeta \in \C \colon \frac{1}{\abs{1-\zeta}^2} + \frac{1}{\abs{1+ \zeta}^2} + 
\frac{\delta}{\abs{\zeta- \ii}^2} > \frac{2 + \delta}{t} \bigg\}. \]  
With $y_0 = (\sqrt{7} - 2)/3$, a simple calculation shows that $\beta$ is of singularity type $x^2 + y^3$ at $\ii y_0$ and, thus, $\ii y_0$ is an edge singularity of order $2$. 
For this example, a plot analogous to the previous examples is presented in Figure~\ref{subfig:x2_y3}. 
\end{example}

\begin{example}[Two-sided edge cusp]  \label{example:x2_y4} 
We set $s \equiv t := 4$ be constant on $[0,1]^2$ and 
 $a\colon [0,1] \to \C$,  with $a(x_1) = \sqrt{3} + \ii$, $a(x_2) = \sqrt{3} - \ii$, $a(x_3) = - \sqrt{3} + \ii$ 
and $a(x_4) = - \sqrt{3} - \ii$ for all $x_1 \in [0,1/4)$, $x_2 \in [1/4,1/2)$, $x_3 \in [1/2,3/4)$ 
and $x_4 \in [3/4,1]$.  We conclude from \eqref{eq:formula_S_iid} that 
\[ \mathbb{S} = \bigg\{ \zeta \in \C \colon \frac{1}{\abs{\sqrt{3} + \ii-\zeta}^2} + \frac{1}{\abs{\sqrt{3} - \ii-\zeta}^2} + \frac{1}{\abs{- \sqrt{3} + \ii-\zeta}^2}  + \frac{1}{\abs{- \sqrt{3} - \ii-\zeta}^2}  > 1 \bigg\}. 
\] 
Using Lemma~\ref{lmm:singularity types sigma f and beta} below,  $-\beta$ and hence the boundary 
of $ \mathbb{S}$ has a singularity of type $x^2 - y^4$ at zero, making this an edge singularity of order $2$. 
Figure~\ref{subfig:x2_y4} shows a plot of this example analogous to the previous figures. 
\end{example} 

We refer to \cite[Example~2.6 and Figure~1]{AEKN_Kronecker} for more examples in the spirit of the previous examples.

\begin{figure}[ht]
\subfloat[Example~\ref{example:x2_y3} \label{subfig:x2_y3}]{
\includegraphics[width=.49\textwidth]{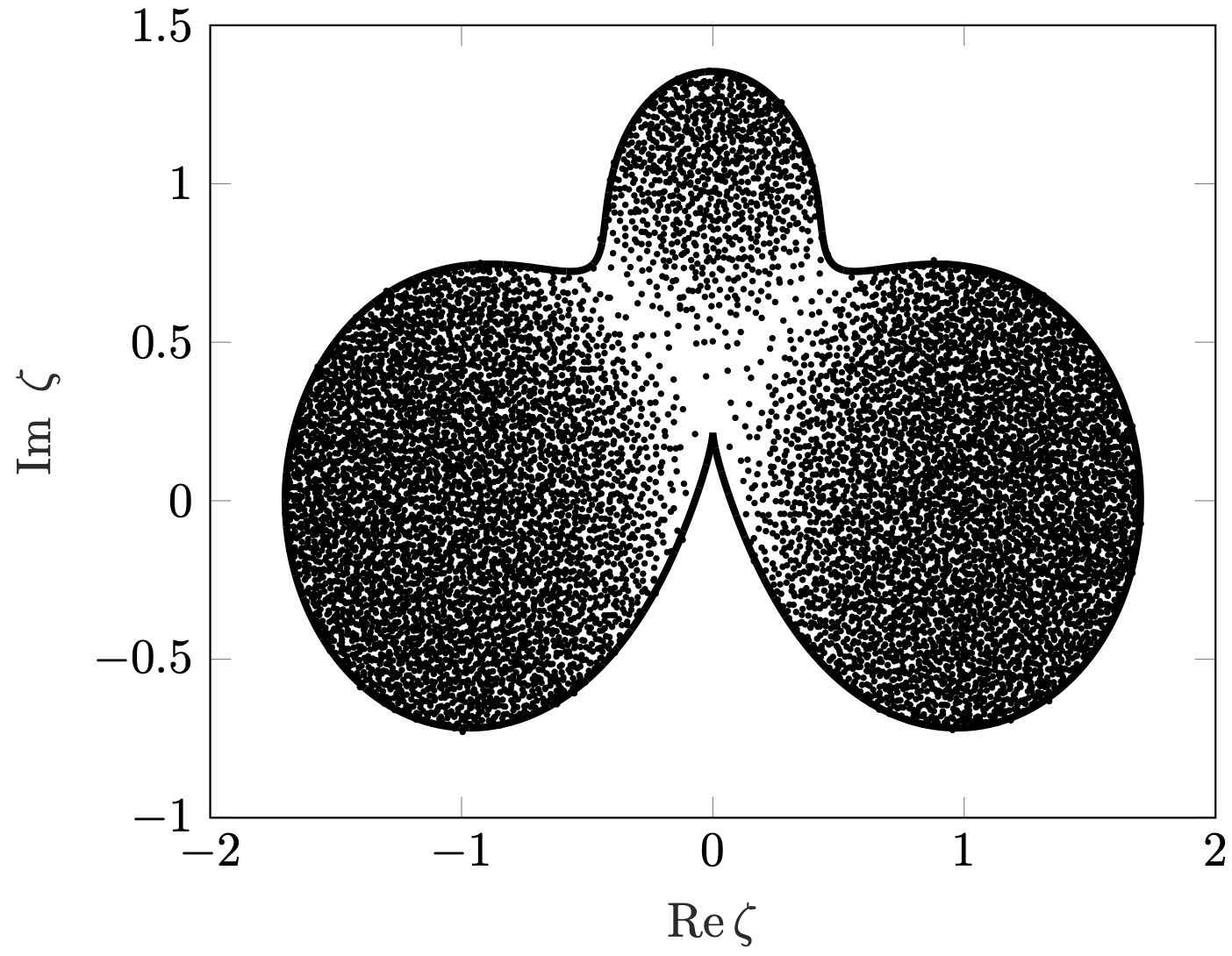}
} 
\hspace*{.02\textwidth}  
\subfloat[Example~\ref{example:x2_y4} \label{subfig:x2_y4}]{
\includegraphics[width=.49\textwidth]{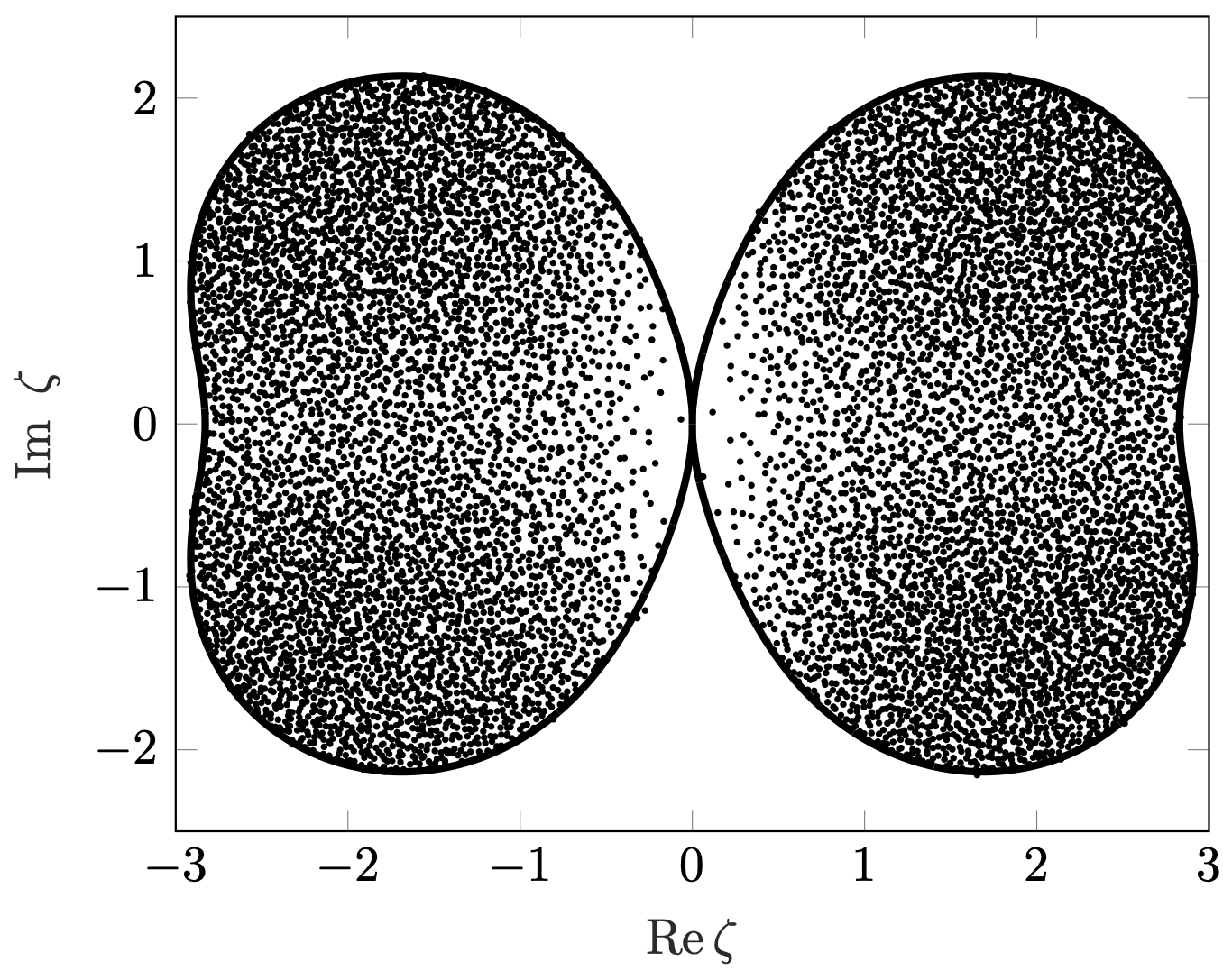}
} 
\caption{The solid black lines in subfigures (\protect\subref{subfig:x2_y3}) and (\protect\subref{subfig:x2_y4}) show the 
boundary of $\mathbb{S}$ from Examples~\ref{example:x2_y3} and \ref{example:x2_y4}, respectively. 
The black dots are the eigenvalues of a sample of $A + X/\sqrt{n}$, where $X$ is an $n\times n$ matrix with i.i.d.\ $N(0,1)$ standard real normal distributed entries, $n=10000$ and $A=(\diag(a(i/n))\delta_{ij})_{i,j=1}^n$ is 
a diagonal matrix and $a$ is chosen as in Examples~\ref{example:x2_y3} and \ref{example:x2_y4}, respectively.} 
\label{fig:one} 
\end{figure}

\begin{example}[Internal singularity of type $\rm{Sing}^{\rm{int}}_{\infty}$] \label{example:x2} 
We choose $s \equiv 1$ on $[0,1]^2$ and  
\[ a \colon [0,1] \to \C, \qquad x \mapsto \begin{cases} \sqrt{2} \ee^{4\pi \ii x} & \text{ if } x \in [0,1/2], \\ 0 & \text{ if } x \in (1/2,1]. \end{cases} \] 
As will be proved in Section~\ref{subsec:Singularity types of infinite order} below,  in this setup 
\begin{equation} \label{eq:example_x2_mathbb_S} 
\mathbb S = \big\{ \zeta \in \C \colon 0 \leq \abs{\zeta}^2 < 1 \text{ or } 1 < \abs{\zeta}^2 < {(3 + \sqrt{5})/2} \big\}  
\end{equation} 
and each point of $\partial \mathbb{D}_1$ belongs to $\rm{Sing}_\infty^{\rm{int}}$. 
Moreover, it is shown in Section~\ref{subsec:Singularity types of infinite order} that, for each $\zeta \in \C$, 
\begin{equation} \label{eq:sigma_for_x2_example} 
 \sigma(\zeta) = \frac{1}{\pi} \bigg( 1 - \frac{2}{2+x + \frac{x}{(2x-1)^3}} \bigg( 1 + \frac{1}{x^2} \bigg) \bigg) \mathbf{1} (\zeta \in \mathbb{S}), 
\end{equation} 
where $x =x(\abs{\zeta}^2) \in (0,\infty)$ is the unique positive solution of $\frac{1}{x} + \frac{1}{\sqrt{1 + 4x+x^2-8\abs{\zeta}^2 + 3}}=2$.  
The boundary of $\mathbb{S}$ and the sampled eigenvalues are depicted in Figure~\ref{fig:two}.  
\end{example}

\begin{figure}[h!]
\begin{center} 
\includegraphics[width=.49\textwidth]{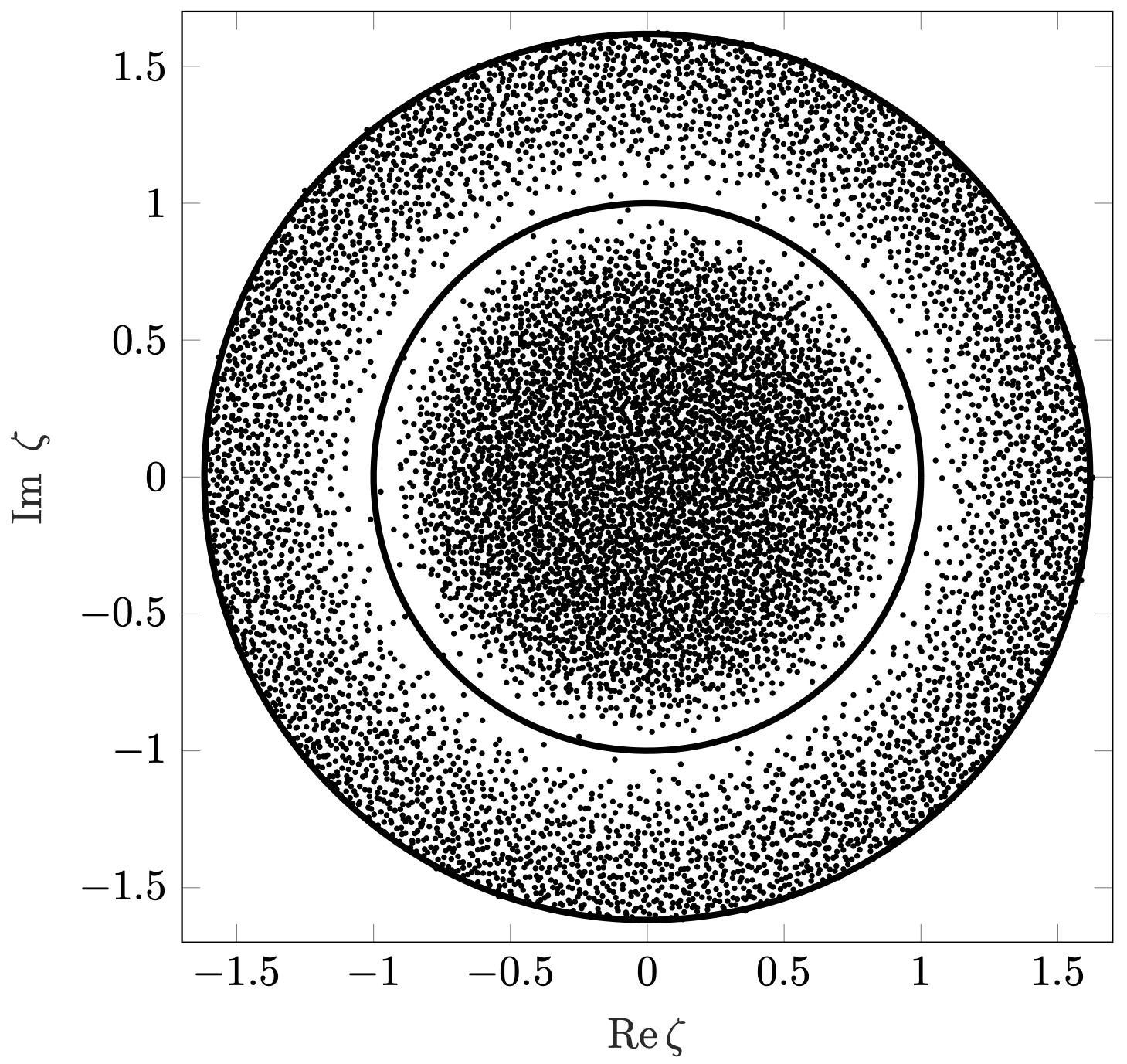} 
\end{center} 
\caption{The solid lines in this figure show the 
boundary of $\mathbb{S}$ from Example~\ref{example:x2}. 
The black dots show the eigenvalues of a sample of $A + X/\sqrt{n}$, where $X$ is an $n\times n$ matrix with i.i.d.\ $N(0,1)$ standard real normal distributed entries, $n=10000$ and $A=(\diag(a(i/n))\delta_{ij})_{i,j=1}^n$ is 
a diagonal matrix and $a$ is chosen as in Example~\ref{example:x2}.} 
\label{fig:two} 
\end{figure}

\section{Brown measure, Hermitization and Dyson equation}

In this section, we derive the Dyson equation for the Cauchy transform of the Hermitization of the deformed operator-valued circular element  $a + \mathfrak c$. This Hermitization itself is a family of operator-valued semicircular 
elements indexed by the spectral parameter $\zeta \in \C$ and the associated Dyson equation is our main tool for analyzing  the Brown measure of  $a + \mathfrak c$. 

Throughout the following, let $(\mathcal A, E, \mathcal B)$ be an operator-valued probability space with positive conditional expectation  $E \colon \mathcal A \to \mathcal B$ such that $(\mathcal A, \avg{E[\,\cdot\,]})$ is a tracial $W^*$-probability space. 
We assume that $\mathcal B \subset \mathcal A$ is a commutative von Neumann subalgebra with trace $\avg{\2\cdot\2}$ containing the unit of $\mathcal A$. 
We will also work on the operator-valued probability space $(\mathcal A^{2\times 2}, \mathrm{id} \otimes E, \mathcal B^{2\times 2})$, for which we employ the identifications $\mathcal A^{2\times 2}= \C^{2 \times 2} \otimes \mathcal A$ and  $\mathcal B^{2\times 2}= \C^{2 \times 2} \otimes \mathcal B$.

For any $\mathfrak c \in \mathcal A$, $a \in \mathcal B$ and $\zeta \in \C$, we  \emph{hermitise} $\mathfrak c + a - \zeta$ 
and  apply $\mathrm{id} \otimes E$ to the resolvent of the Hermitization at $w \in \C$ with $\Im w>0$. 
We define 
\begin{equation} \label{eq:def_M_semicircular} 
M(\zeta,w ) = \mathrm{id} \otimes E \bigg[\begin{pmatrix} - w & \mathfrak c + a - \zeta \\ (\mathfrak c +a - \zeta)^* & - w \end{pmatrix}^{-1} \bigg]. 
\end{equation} 
Here, $w$ and $\zeta$ are interpreted as the constant functions on $\mathfrak X$ with the respective value. 
Note that the inverse is well defined as $\Im w>0$ and the operator whose inverse is taken in \eqref{eq:def_M_semicircular} is self-adjoint for $w = 0$. 
Furthermore, the imaginary part $\Im M(\zeta,w) =\frac{1}{2\ii} ( M(\zeta,w) - M(\zeta,w)^*)$ is positive definite.

For the next lemma, we define $\Sigma \colon \mathcal B^{2\times 2} \to \mathcal B^{2\times 2}$ through 
\begin{equation} \label{eq:def_Sigma} 
 \Sigma \bigg[ \begin{pmatrix} r_{11} & r_{12} \\ r_{21} & r_{22} \end{pmatrix} \bigg] 
= \begin{pmatrix} S r_{22} & 0 \\ 0 & S^* r_{11} \end{pmatrix} 
\end{equation} 
for all $r_{11}$, $r_{12}$, $r_{21}$, $r_{22} \in \mathcal B$, where $S$ and $S^*$ are as in \eqref{eq:def_S_S_star}.

\begin{lemma} \label{lmm:circular_element_Dyson}
Let $\mathfrak c \in \mathcal A$ be a $\cal{B}$-valued circular element satisfying \eqref{second order cumulant of c} and \eqref{eq:def_S_S_star}. 
Then 
 $\begin{pmatrix} 0 & \mathfrak{c} \\ \mathfrak{c}^* &0 \end{pmatrix}$ is an operator-valued semicircular 
element in $(\mathcal A^{2\times 2}, \mathrm{id} \otimes E, \mathcal B^{2\times 2})$ 
with covariance $\Sigma$ from \eqref{eq:def_Sigma}. 
Moreover, $M(\zeta,w)$ from \eqref{eq:def_M_semicircular} with such $\mathfrak c$
 satisfies 
\begin{equation} \label{general MDE} 
- M(\zeta,w)^{-1} = \begin{pmatrix} w & \zeta - a \\  \ol{\zeta - a} & w \end{pmatrix} + \Sigma[M(\zeta,w)] 
\end{equation} 
for any $a \in \mathcal B$, $w \in \C$ with $\Im w >0$ and $\zeta \in \C$. 
\end{lemma}

The identity \eqref{general MDE} is called \emph{Dyson equation}. 
Under the constraint that $\Im M(\zeta, w) $ is positive 
definite, the Dyson equation has a unique solution \cite[Theorem~2.1]{HeltonRashidiFarSpeicher2007}. 
In particular, $M(\zeta,w)$ from \eqref{eq:def_M_semicircular} coincides with this solution. For constant function $s$ the identity \eqref{general MDE} has appeared in \cite{BordenaveCaputoChafai2014} and its analog for elliptic operators in \cite{Zhong2021}.

\begin{proof}
An easy computation using the properties of the operator-valued circular element $\mathfrak c$, in particular, 
\eqref{second order cumulant of c} and \eqref{eq:def_S_S_star}, 
yields  
the required $\mathcal B^{2\times 2}$-valued free cumulants of $\mathfrak s = \begin{pmatrix} 0 & \mathfrak c \\ \mathfrak c^* &0 \end{pmatrix}$ and shows that $\mathfrak s$ is an $\mathcal B^{2\times 2}$-valued semicircular element, see e.g.\ \cite[Definition~10~2) in Chapter 9]{MingoSpeicherBook}. 

Owing to \eqref{second order cumulant of c} and \eqref{eq:def_S_S_star}, 
the covariance of $\mathfrak s$ satisfies $E[\mathfrak s b \mathfrak s] = \Sigma[b]$ for all $b \in \mathcal B^{2\times 2}$ 
with $\Sigma$ from \eqref{eq:def_Sigma}. 
Since the $R$-transform of an operator-valued semicircular element coincides with its covariance, 
 see e.g.\ \cite[Theorem~11.~4) in Chapter~9]{MingoSpeicherBook}, $M(\zeta,w)$ as defined in \eqref{eq:def_M_semicircular} satisfies \eqref{general MDE} with $\Sigma$ from \eqref{eq:def_Sigma} by \cite[Theorem~11.~1) in Chapter~9]{MingoSpeicherBook}. 
\end{proof}

We  now express the Brown measure of $\mathfrak c + a$ through $M$ from \eqref{eq:def_M_semicircular}
or, equivalently, the solution to \eqref{general MDE}. 
To that end, it suffices to consider $w = \ii \eta$ with $\eta \in (0,\infty)$ and we  see that 
the Dyson equation can be simplified in this case. 
We consider two coupled equations for functions $v_1, v_2 \in \cal{B}$ with $v_1>0$ and $v_2>0$, namely  
\begin{subequations} \label{eq:V_equations} 
\begin{align} 
\frac{1}{ v_1} & =  \eta +Sv_2+  \frac{\abs{\zeta-a}^2}{\eta +S^*v_1} \,, \label{eq:v1} \\ 
\frac{1}{ v_2} & =  \eta +S^*v_1+  \frac{\abs{\zeta-a}^2}{\eta +Sv_2}\,, \label{eq:v2} 
\end{align}
\end{subequations} 
for all $\eta >0$ and $\zeta \in \C$. We also call \eqref{eq:V_equations} \emph{Dyson equation}. 
Given $v_1$ and $v_2$ from \eqref{eq:V_equations}, we introduce $y$ defined by 
\begin{equation} \label{eq:B_relation} 
y:=\frac{v_1\2( \bar a-\bar \zeta)}{\eta + S^*v_1}=\frac{v_2\2(\bar a-\bar \zeta)}{\eta + Sv_2}\,, 
\end{equation}
where the second step follows from 
\begin{equation} \label{eq:v2_St_v1_equals_v1_S_v2} 
v_2(\eta + S^*v_1)=v_1(\eta + Sv_2)\,.
\end{equation}
Indeed, for the proof of the last identity, 
we multiply \eqref{eq:v1} by $v_1 v_2 (\eta + S^*v_1)$ and \eqref{eq:v2} by $v_1 v_2(\eta + Sv_2)$ and conclude \eqref{eq:v2_St_v1_equals_v1_S_v2} from the resulting relations.

The next lemma relates the solutions to \eqref{eq:V_equations} and \eqref{general MDE} and implies existence and uniqueness 
to the one of \eqref{eq:V_equations}.

\begin{lemma} \label{lem:existence_uniqueness}
For any $\zeta \in \C$ and $\eta >0$, the following holds. 
\begin{enumerate}[label=(\roman*)] 
\item \label{item:sol_V_implies_sol_M} 
Let $(v_1,v_2) \in \mathcal B^2$ be a solution of \eqref{eq:V_equations} with $v_1>0$ and $v_2 >0$. 
Set  
\begin{equation} \label{eq:structure_M} 
 M := \begin{pmatrix} \ii v_1 & \ol{y} \\ y & \ii v_2 \end{pmatrix}\in \mathcal B^{2\times 2}
\end{equation} 
with $y$ as in \eqref{eq:B_relation}. 
Then $M$ solves \eqref{general MDE} with $w = \ii \eta$ and $\Im M$ is positive definite.  
\item \label{item:sol_M_implies_sol_V} 
Let $M = M(\zeta,\ii \eta)$ be the solution of \eqref{general MDE} such that $\Im M$ is positive definite. 
Then there are $v_1$, $v_2 \in \mathcal B$ such that $v_1>0$, $v_2>0$, $M$ satisfies \eqref{eq:structure_M}  
with $y$ as in \eqref{eq:B_relation} and $(v_1,v_2)$ satisfy \eqref{eq:V_equations}. 
\end{enumerate} 
In particular, there are unique $v_1$, $v_2 \in \mathcal B$ such that $v_1 >0$ and $v_2>0$ and \eqref{eq:V_equations} holds. 
\end{lemma} 

\begin{proof} 
The proofs of \ref{item:sol_V_implies_sol_M} and \ref{item:sol_M_implies_sol_V} are easy computations. 
Therefore, the existence and uniqueness of the solution to \eqref{eq:V_equations} follows from the existence and uniqueness 
of the solution to \eqref{general MDE}, which is a special case of the general existence and uniqueness result 
\cite[Theorem~2.1]{HeltonRashidiFarSpeicher2007}. 
\end{proof} 
When evaluated on the imaginary axis at $w=\ii \eta$ with $\eta>0$,  the representation \eqref{eq:def_M_semicircular} of the solution to the Dyson equation \eqref{general MDE} with positive definite imaginary part implies the trivial bound  
\begin{equation}\label{eq:trivial bound} 
\norm{M(\zeta,\ii \eta) } \leq \eta^{-1}. 
\end{equation}
Indeed, we note that the argument of $ \mathrm{id} \otimes E$ in \eqref{eq:def_M_semicircular} with $w = \ii \eta$ 
is bounded in norm by $\eta^{-1}$ as the resolvent of a self-adjoint operator. 
Therefore, $\norm{M(\zeta,\ii\eta)} \leq \eta^{-1}$ follows from $\norm{E} = \norm{E(1)} = 1$ 
due to the positivity of $E$ and $E(1) =1$. 
This completes the proof of \eqref{eq:trivial bound}.

Using the solution to \eqref{eq:V_equations} we  characterise the Brown measure in the following proposition  as the Laplacian on $\C$ of the function
 $-\frac{1}{2\pi} W$
defined through 
\begin{equation} \label{eq:def_L} 
  W  (\zeta) := \int_0^\infty \bigg(\avg{v_1(\zeta, \eta)} - \frac{1}{1 + \eta} \bigg) \dd \eta 
\end{equation} 
for each $\zeta \in \C$, where $v_1$ is the first function in the solution pair $(v_1,v_2)$ of the Dyson equation~\eqref{eq:V_equations}. 
If \ref{assum:flatness} holds then the integral in \eqref{eq:def_L} exists in the Lebesgue sense, which follows from 
Lemma~\ref{lmm:Integrating v} below. The  proposition below therefore connects the Brown measure of $a+ \mathfrak c$ to $v_1$.

\begin{proposition}[Characterisation of Brown measure]\label{prp:Brown_measure_construction} 
Let $\sigma$ be the Brown measure of the deformed $\cal{B}$-valued circular element $a + \frak{c}$  with 
$a \in \mathcal B$, \eqref{second order cumulant of c} and 
\eqref{eq:def_S_S_star}. 
Furthermore, let $v_1,v_2>0$ be the solutions of  \eqref{eq:V_equations}. 
If $S$ and $s$ satisfy \eqref{eq:def_operators_S_and_S_star} and \ref{assum:flatness}, respectively, then the following holds.   
\begin{enumerate}[label=(\roman*)]
\item  \label{item:prop_const_i} The Brown measure $\sigma$ is the unique probability measure on $\C$ such that 
\begin{equation} \label{eq:sigma_L_identity} 
 \int_{\C} f(\zeta ) \sigma(\dd \zeta) = -\frac{1}{2\pi} \int_{\C} \Delta f(\zeta)  W  (\zeta ) \dd^2 \zeta 
\end{equation} 
for all $f \in C_0^2 (\C)$, where $\dd^2 \zeta$ denotes the Lebesgue measure on $\C$.  
\item  \label{item:prop_const_ii} The  measure $\sigma$ is also uniquely defined by the identity
\bels{sigma formula in terms of y explicit}{
\int_{\C} f(\zeta ) \sigma(\dd \zeta) = \lim_{\eta \downarrow 0}\frac{1}{ \pi}  \int_{\C} \partial_{{\zeta}}f(\zeta)\avgbb{ \frac{v_1(\zeta, \eta)\2(a- \zeta)}{\eta + S^*v_1(\zeta, \eta)}} \dd^2 \zeta 
}
for all $f \in C_0^1 (\C)$. 
\end{enumerate}
\end{proposition}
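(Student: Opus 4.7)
The plan is to realise the Brown measure $\sigma$ of $\mathfrak a := a+\mathfrak c$ via Girko's Hermitisation procedure, tying it to the pair $(v_1,v_2)$ solving \eqref{eq:V_equations}. Introduce the self-adjoint Hermitisation $\Hf_\zeta := \begin{pmatrix} 0 & \mathfrak a-\zeta \\ (\mathfrak a-\zeta)^* & 0 \end{pmatrix}\in M_2(\cal A)$ and the block expectation $\Mf(\zeta,\eta) := (E\otimes\mathrm{id}_{M_2})[(\ii\eta-\Hf_\zeta)^{-1}]$ for $\eta>0$. The vanishing second-order cumulants $E[\mathfrak c b\mathfrak c] = 0 = E[\mathfrak c^* b\mathfrak c^*]$ from \eqref{second order cumulant of c} make the $M_2(\cal B)$-valued covariance operator of the Hermitisation block-diagonal; combined with the uniqueness in Lemma~\ref{lem:existence_uniqueness}, this identifies the diagonal entries of $\Mf$ as $\ii v_j$ (with $v_j$ solving \eqref{eq:V_equations}) and the off-diagonal entry as $m_{12} = (a-\zeta)v_1/(\eta+S^*v_1)$. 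Evaluating the normalised trace of the $11$- and $12$-blocks in the Schur-complement expression for $(\ii\eta-\Hf_\zeta)^{-1}$, one arrives at the key identities
\[
\avg{v_1(\zeta,\eta)} = \eta\,\tau\!\left[\bigl((\mathfrak a-\zeta)(\mathfrak a-\zeta)^*+\eta^2\bigr)^{-1}\right], \quad \avgbb{\frac{v_1(a-\zeta)}{\eta+S^*v_1}} = \tau\!\left[\bigl((\mathfrak a-\zeta)(\mathfrak a-\zeta)^*+\eta^2\bigr)^{-1}(\mathfrak a-\zeta)\right],
\]
using the push-through identity $(A^*A+\eta^2)^{-1}A^* = A^*(AA^*+\eta^2)^{-1}$ and the cyclicity of $\tau$.

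For part (i), substitute the first identity into \eqref{eq:def_L} and apply the spectral representation $\log x = \int_0^\infty(\tfrac{1}{1+t}-\tfrac{1}{t+x})\,\dd t$ to the positive operator $(\mathfrak a-\zeta)^*(\mathfrak a-\zeta)+\eps$. With the substitution $t=\eta^2$, a cutoff $\int_0^M$ whose divergence as $M\to\infty$ cancels exactly against $\int_0^M(1+\eta)^{-1}\dd\eta$, and the limit $\eps\downarrow 0$ (the absolute convergence of \eqref{eq:def_L}, and thereby the existence of $L$, being ensured by Lemma~\ref{lem:L_and_L_eps} under \ref{assum:flatness}), one obtains
\[
L(\zeta) = -\tfrac{1}{2}\tau\!\left[\log\bigl((\mathfrak a-\zeta)^*(\mathfrak a-\zeta)\bigr)\right] = -\log D(\mathfrak a-\zeta).
\]
The defining identity \eqref{eq:Brown_measure} together with $\Delta\log|\zeta-\xi| = 2\pi\delta_\xi$ then yields $\Delta L = -2\pi\sigma$ in $\cal D'(\C)$, which is precisely \eqref{eq:sigma_L_identity} after pairing against $f\in C_0^2(\C)$; uniqueness of the probability measure $\sigma$ so characterised follows because $C_0^2(\C)$ is weak-$*$ dense in $C_0(\C)$.

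Part (ii) is obtained from (i) by a single integration by parts using $\Delta = 4\partial_\zeta\partial_{\bar\zeta}$, reducing \eqref{eq:sigma_L_identity} to $\int f\,\dd\sigma = \tfrac{2}{\pi}\int \partial_\zeta f\cdot\partial_{\bar\zeta}L\,\dd^2\zeta$. Differentiating the $\eta$-regularised quantity $L_\eta(\zeta) := \int_\eta^\infty(\avg{v_1(\zeta,s)}-\tfrac{1}{1+s})\,\dd s$, equal to $-\tfrac{1}{2}\tau[\log((\mathfrak a-\zeta)^*(\mathfrak a-\zeta)+\eta^2)]$ up to a $\zeta$-independent function of $\eta$, with respect to $\bar\zeta$, and invoking the second trace identity above, gives
\[
\partial_{\bar\zeta}L_\eta(\zeta) = \tfrac{1}{2}\avgbb{\frac{v_1(\zeta,\eta)(a-\zeta)}{\eta+S^*v_1(\zeta,\eta)}}.
\]
Substituting this and exchanging the $\eta\downarrow 0$ limit with the $\dd^2\zeta$-integral via dominated convergence -- using the uniform $L^\infty$-bound on $v_1$ from Lemma~\ref{lem:Gamma_implies_bounded_solution} and the compact support of $\partial_\zeta f$ -- yields \eqref{sigma formula in terms of y explicit}; uniqueness is immediate from density of $C_0^1(\C)$. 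The principal technical obstacle is precisely this uniform control of $v_1(\zeta,\eta)$ in $\zeta$ as $\eta\downarrow 0$, especially near $\spec(a) \subset \mathbb{S}$ where $v_1\to 0$, together with the absolute convergence of the defining integral for $L$; both are furnished by assumption \ref{assum:flatness} through the cited lemmas.
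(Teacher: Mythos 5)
Your approach is genuinely different from the paper's, and for the most part it works, but there are two points that deserve attention.

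\textbf{Comparison of routes.} The paper first proves that $-L$ is subharmonic by establishing the positivity $-\partial_{\bar\zeta}\avg{y} > 0$ (Proposition~\ref{pro:derivative_y_positive}); this produces a positive measure $\sigma$ directly from $L$, which the paper then shows has total mass~$1$ via Green's identity, and finally identifies with the Brown measure through the relation $-L = \log D(a+\mathfrak c - \cdot)$, obtained by invoking Lemma~\ref{lmm:circular_element_Dyson} and a cited computation. You invert this order: take the Brown measure as given by Brown's theorem, derive $L = -\log D(\mathfrak a - \cdot)$ directly from the Hermitisation trace formulas (Schur complement, push-through, and the spectral representation of $\log$), and then apply $\Delta$ in distribution to the potential $\int\log\abs{\zeta-\xi}\,\sigma(\dd\xi) = -L(\zeta)$. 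This is shorter and more self-contained in the Fuglede--Kadison direction, and bypasses the subharmonicity argument entirely. What the paper's route buys in exchange is Proposition~\ref{pro:derivative_y_positive} and formula~\eqref{rep of sigma via Y}, which are reused later (e.g.\ in Proposition~\ref{pro:Strict positivity of Brown measure}); your route does not produce these byproducts. Both are valid derivations of the proposition as stated. You are also right that the cancellation of cutoffs between $\int_0^M (1+\eta)^{-1}\dd\eta$ and $\tfrac12\int_0^{M^2}(1+t)^{-1}\dd t$ works, since their difference tends to~$0$ as $M\to\infty$.

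\textbf{Two issues in part (ii).} First, you cite Lemma~\ref{lem:Gamma_implies_bounded_solution} to bound the integrand uniformly, but that lemma requires assumption \ref{assum:Gamma}, which is \emph{not} hypothesized in Proposition~\ref{prp:Brown_measure_construction} (only \ref{assum:flatness} is). The bound you need is on $\avg{\bar y}$, not on $\norm{v_1}_\infty$, and this follows from Lemma~\ref{lem:hilbert_schmidt} alone, since $\abs{\avg{\bar y}} \le \avg{\abs{y}^2}^{1/2} \lesssim 1$ under \ref{assum:flatness}. Second, your integration by parts starts from the formula with $\partial_{\bar\zeta}L$ as a function and then exchanges limit and integral via dominated convergence; but $L$ is a priori only continuous, and pointwise $\eta\downarrow 0$ convergence of $\avg{\bar y(\zeta,\eta)}$ for a.e.~$\zeta$ is not available without \ref{assum:Gamma}. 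The clean fix --- which is what the paper does --- is to integrate by parts at fixed $\eps>0$ against the smooth $L_\eps$, and then take $\eps\downarrow 0$ in the resulting integral of $\Delta f\cdot L_\eps$ using the \emph{uniform} convergence $L_\eps\to L$ from Lemma~\ref{lem:L_and_L_eps}(ii). This delivers exactly the statement~\eqref{sigma formula in terms of y explicit} with the limit outside the integral, which is what the proposition claims, and needs no pointwise limit of the integrand. With these two repairs your argument is complete.
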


The \hyperlink{proof:pro:definition_sigma}{proof of Proposition~\ref{prp:Brown_measure_construction}} 
is presented at the end of Section~\ref{subsec:Representation of Brown measure} below. 
We conclude this section with a short overview of the contents of the remaining paper. The next section provides a detailed analysis of the solution $(v_1,v_2)$ of the Dyson equation as well as a characterisation of $\mathbb{S}$ 
from \eqref{eq:def_mathbb_S}. 
Section~\ref{sec:Properties of the Brown measure} is devoted to the translation of these insights to the 
analysis of the Brown measure. 
We establish the existence of all singularity types in Section~\ref{sec:existence_singularity_types}.

\section{The Dyson equation and its solution} \label{sec:general_setup} 

In this section we analyse the solution $v_1, v_2$ to the Dyson equation \eqref{eq:V_equations}. We begin by establishing bounds on the solution. Then we  determine stability and regularity properties.  Finally we use these insights to  characterise the set $\mathbb{S}$ from \eqref{eq:def_mathbb_S} and expand  $v_1$ and $v_2$ around edge points $\zeta_0 \in \partial \mathbb{S}$. 
 We start with some useful identities and a priori estimates. 

As $S$ and $S^*$ both act on $\mathcal B = L^\infty(\mathfrak X,\mu)$ as integral operators in the form \eqref{eq:def_operators_S_and_S_star}, the identity \eqref{eq:v2_St_v1_equals_v1_S_v2} implies  $\avg{v_2 S^* v_1} = \avg{v_1 Sv_2}$ and thus
\begin{equation} \label{eq:avg_v1_equals_avg_v2}
\avg{v_1} =\avg{v_2}\,.
\end{equation} 
We conclude from \eqref{eq:V_equations} and \eqref{eq:B_relation} that 
\begin{equation} \label{eq:v_simple_bounds} 
 v_1 \leq \eta^{-1}, \qquad v_2 \leq \eta^{-1}, \qquad \abs{y} \leq \abs{a - \zeta} \eta^{-2} 
\end{equation} 
for all $\zeta \in\C$ and $\eta >0$. Furthermore, we have the identity
\begin{equation} \label{eq:y_identities} 
y= \frac{v_1(\bar a-\bar \zeta)}{\eta +S^*v_1} =
\frac{1}{a-\zeta}(1- v_1( \eta +Sv_2))
=\frac{1}{a-\zeta}- \frac{v_1v_2}{\ol{y}}
\end{equation} 
for any $\zeta \in \C\setminus \spec(D_a)$. 
Here, $\spec(D_a)$ denotes the spectrum of $D_a$ considered as multiplication operator $\mathcal B \to \mathcal B$, which coincides with the essential range of $a$.

Throughout the remainder of this section, we assume that $s$ satisfies \ref{assum:flatness}. This implies
\begin{equation} \label{eq:Sprimitiv} 
\bbm{1}_{I_i}S w \sim  \bbm{1}_{I_i} \sum_{j=1}^Kz_{ij}\avg{w\bbm{1}_{I_j}} \,, \qquad  \bbm{1}_{I_i}S^* w \sim \bbm{1}_{I_i} \sum_{j=1}^Kz_{ji}\avg{w\bbm{1}_{I_j}}
\end{equation} 
 for all $w \in \mathcal B$ with $w \geq 0$ and all $i \in \db{K}$.  

\subsection{Bounds on the solution}

This subsection contains  bounds on $v_1$ and $v_2$ under varying assumptions on $s$ and $a$.

\paragraph{Lower bound on $v_1$ and $v_2$} 
We start with some preliminary bounds that require $a \in \mathcal B$ and the boundedness of the function $s$. 

\begin{lemma}[Preliminary bounds on $v_1$ and $v_2$] \label{lem:v_bound_preliminary}
If for some constant $C>0$,  $s(x,y) \leq C$ for all $x$, $y \in \mathfrak X$ then 
\begin{equation} \label{eq:preliminary_bounds_v} 
  \frac{\eta}{\eta^2 + 1 +\abs{\zeta}^2} \lesssim_C v_i \leq \frac{1}{\eta}
\end{equation} 
uniformly for all $i=1$, 2, $\zeta \in \C$ and $\eta >0$. 
\end{lemma} 

\begin{proof} 
The positivity of $v_i$ and $\eta$ in \eqref{eq:V_equations} imply $1/v_i \geq \eta$. Hence, the upper bounds 
in \eqref{eq:preliminary_bounds_v} follow. 

As $s(x,y) \leq C$ for all $x$, $y \in \mathfrak X$, we conclude from \eqref{eq:V_equations} 
and the upper bound $v_2 \leq \eta^{-1}$ that 
\[ \frac{1}{v_1} \leq \eta + C\eta^{-1} + \eta^{-1} 2 ( \norm{a}_\infty^2 + \abs{\zeta}^2), \] 
which implies the missing lower bound in \eqref{eq:preliminary_bounds_v}. 
\end{proof} 

We continue by showing that $v_1$ and $v_2$ are bounded from below by their average. 

\begin{lemma} \label{lem:Saveraging} 
If $s$ satisfies \ref{assum:flatness} then 
\begin{equation} \label{eq:vLowerBound} 
v_1 \gtrsim \avg{v_1} \,, \qquad v_2 \gtrsim \avg{v_2} 
\end{equation} 
uniformly for all $\zeta \in \C$ and $\eta >0$. As a consequence $S^*$ and $S$ act averaging on $v_1$ and $v_2$, respectively, i.e. 
\begin{equation} \label{eq:Saveraging} 
S^*v_1 \sim \avg{v_1} \,, \qquad Sv_2 \sim \avg{v_2} \,.
\end{equation} 
\end{lemma}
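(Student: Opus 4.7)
The plan is to exploit \ref{assum:flatness}, specifically \eqref{eq:Sprimitiv}, in order to reduce the pointwise statement to a finite-dimensional comparability property of the block averages $\alpha_j := \avg{v_1 \mathbf{1}_{I_j}}$ and $\beta_j := \avg{v_2 \mathbf{1}_{I_j}}$ for $j \in \qq{K}$. By \eqref{eq:Sprimitiv}, we have $(S^*v_1)(x) \sim \sum_k z_{kj}\alpha_k$ and $(Sv_2)(x) \sim \sum_k z_{jk}\beta_k$ for $x \in I_j$, so, since $z_{jj}=1$, the averaging statement \eqref{eq:Saveraging} is equivalent to the pairwise comparability $\alpha_i \sim \alpha_j$ and $\beta_i \sim \beta_j$ for all $i,j$.

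To handle uniformity in $\zeta \in \C$, I would split the analysis into two regimes. In the \emph{large regime} $|\zeta| \geq 2\|a\|_\infty$ one has $|\zeta - a(x)| \sim |\zeta|$ uniformly in $x$; iterating \eqref{eq:V_equations} together with the boundedness of $S$ and $S^*$ then yields the two-sided bound $v_1(x) \sim v_2(x) \sim \eta/|\zeta|^2$ uniformly in $x$, from which both \eqref{eq:vLowerBound} and \eqref{eq:Saveraging} follow at once. In the complementary \emph{bounded regime} $|\zeta| < 2\|a\|_\infty$, the quantity $|\zeta - a|^2 \leq (|\zeta|+\|a\|_\infty)^2$ is uniformly bounded by a model-dependent constant, and, by dropping appropriate nonnegative terms in \eqref{eq:V_equations}, I would derive the pointwise estimates
\[
v_1 \leq (Sv_2)^{-1}, \qquad v_1 \geq \frac{\eta}{\eta^2 + \eta\, Sv_2 + (|\zeta|+\|a\|_\infty)^2},
\]
together with the symmetric bounds for $v_2$. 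Integrating against $\mathbf{1}_{I_j}$ and applying \eqref{eq:Sprimitiv} converts these into the inequalities $\alpha_j \sum_k z_{jk}\beta_k \lesssim 1$, $\beta_j \sum_k z_{kj}\alpha_k \lesssim 1$, and matching $x$-independent lower bounds on $v_1$ and $v_2$.

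The key step is then to combine these inequalities with the primitivity of $Z$ (there exists $L \in \N$ such that $Z^L$ is strictly positive entrywise) via a path-iteration argument: propagating the upper and lower bounds along directed paths of length at most $L$ in the graph encoded by $Z$ yields $\alpha_i \sim \alpha_j$ and $\beta_i \sim \beta_j$, uniformly in $\eta$ and in $\zeta$ within the bounded regime, which gives \eqref{eq:Saveraging}. Substituting \eqref{eq:Saveraging} back into \eqref{eq:V_equations} produces $v_1(x) \sim (\eta + \avg{v_1})/((\eta+\avg{v_1})^2 + |\zeta - a(x)|^2)$; combined with the uniform upper bound $\avg{v_1} \lesssim 1$ (obtained from $v_1\, Sv_2 \leq 1$ together with the already-established lower bound $Sv_2 \gtrsim \avg{v_2}$) and the boundedness of $|\zeta - a|$ in this regime, one concludes $v_1(x) \gtrsim \eta + \avg{v_1} \geq \avg{v_1}$, and the bound for $v_2$ is symmetric. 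The main obstacle is the path-iteration step, where the upper and lower bounds must be propagated along directed paths in $Z$ of length up to $L$ while keeping the resulting constants uniform in $\eta > 0$.
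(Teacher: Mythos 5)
Your high-level strategy (reduce to comparability of the block averages $\alpha_j,\beta_j$ and iterate via primitivity of $Z$) is the right way to think about the problem, but the elementary estimates you propose to feed into the path-iteration do not close the argument, and the gap is exactly where you flag it. From the Dyson equation you can only get product upper bounds of the type $\alpha_j \sum_k z_{jk}\beta_k \lesssim 1$ and an $\eta$-weighted lower bound $\alpha_j(1 + \sum_k z_{jk}\beta_k) \gtrsim \eta$; the latter degenerates as $\eta \downarrow 0$. The upper bounds control $\alpha_i\beta_k$ only when $z_{ik}=1$, not for pairs $(i,k)$ connected by a length-$2$ path $k \to j \to l$ with $z_{kl}=0$, so they do not give a uniform bound on $\bar\alpha_j\bar\beta_j := \big(\sum_k z_{kj}\alpha_k\big)\big(\sum_l z_{jl}\beta_l\big)$. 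Consequently, when you rewrite the Dyson equation block-wise as $\alpha_j \sim (\eta+\bar\alpha_j)/\big(D_j + (\eta+\bar\alpha_j)(\eta+\bar\beta_j)\big)$, you cannot conclude $\alpha_j \gtrsim \bar\alpha_j$ without first knowing $(\eta+\bar\alpha_j)(\eta+\bar\beta_j) \lesssim 1$, and this is precisely the bound the path-iteration is supposed to deliver. So the argument is circular.

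The paper breaks this circularity by a structurally different mechanism: it identifies $(v_1,v_2)$ as the minimiser of a convex functional $J_\eta$ with an entropy (logarithmic) term, and the resulting variational inequality $J_\eta(v_1,v_2)\le J_\eta(1,1) \lesssim 1$ yields an $\eta$-uniform lower bound $\nu_j := \avg{v_1}_j\avg{v_2}_j \ge \exp\!\big(-C/\avg{v_1(\eta+Sv_2)}_j\big)$. This is what excludes $\avg{(\eta + S^*v_1)(\eta + Sv_2)}_i \gg 1$ for any block $i$: if it were large, the Dyson equation would force $\nu_i \ll 1$ while simultaneously $\avg{v_1(\eta+Sv_2)}_i \sim 1$, contradicting the entropy bound. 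Only with $\avg{(\eta + S^*v_1)(\eta + Sv_2)}_i \lesssim 1$ in hand does the Dyson equation give $v_1 \gtrsim S^*v_1$ and $v_2 \gtrsim Sv_2$, after which the primitivity iteration you describe finishes the proof. Your reduction to the regimes $|\zeta|\lesssim 1$, $\eta \lesssim 1$ and the final iteration step match the paper; what is missing is a substitute for the variational/entropy input that delivers uniformity in $\eta$.
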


The following proof is based on identifying $(v_1,v_2)$ as solution to a variational problem. 
In a similar context, such strategy was also used in \cite[Section~6.2 and Appendix~A.4]{Ajanki_QVE_Memoirs} 
and \cite[Lemmas~3.11 and~3.17]{AltGram}. \

\begin{proof}
For the proof we rephrase \eqref{eq:V_equations} as a variational problem. This will help us to make use of the assumption \ref{assum:flatness}.  
For $\eta >0$, we define  $J_{\eta}\colon \cal{B}_+ \times \cal{B}_+ \to \R \cup \{ + \infty\}$, a functional which is minimised by $(v_1,v_2)$,  by 
\[
J_\eta(x_1,x_2):= \avg{x_1 {S}x_2} +\eta \avg{x_1+x_2}- \avgb{\p{1-u_\eta}\log x_1} -\avgb{\p{1-u_\eta}\log x_2}  \,,
\]
where 
\[
u_\eta:= \frac{|a-\zeta|^2\1v_1}{\eta+S^* v_1} =\frac{|a-\zeta|^2\1v_2}{\eta +S v_2}\,
\]
by \eqref{eq:v2_St_v1_equals_v1_S_v2}.  
Note that $1-u_\eta = v_1(\eta + Sv_2) \in \cal{B}_+$ by \eqref{eq:V_equations}. 

Next, we prove that $(x_1,x_2)=(v_1(\eta),v_2(\eta))$ is a  minimiser for $J_\eta$. Indeed, the functional $J_\eta$  satisfies the lower bounds
\bes{
J_\eta(x_1, x_2)  &\ge  \avg{\eta x_1- \norm{u_\eta}_\infty\bbm{1}(x_1 \ge 1)\log x_1}+ \avg{\eta x_2- \norm{u_\eta}_\infty\bbm{1}(x_2 \ge 1)\log x_2}
\\
&\qquad- (1-\norm{u_\eta}_\infty)(\avg{\log x_1}+\avg{\log x_2})
\\
&\ge
\norm{u_\eta}_\infty\beta_1\pbb{\eta \frac{\alpha_1}{\beta_1}-  \log\frac{\alpha_1}{\beta_1} } + (1-\norm{u_\eta}_\infty)(\eta\avg{x_1} -\avg{\log x_1})
\\
&\qquad 
+\norm{u_\eta}_\infty\beta_2\pbb{\eta \frac{\alpha_2}{\beta_2}-  \log\frac{\alpha_2}{\beta_2} } + (1-\norm{u_\eta}_\infty)(\eta\avg{x_2} -\avg{\log x_2})
 \,,
}
with $\alpha_i:=\avg{x_i\bbm{1}(x_i \ge 1)}$ and $\beta_i:=\avg{\bbm{1}(x_i \ge 1)}$. 
Here, in the second step, we applied Jensen's inequality with the expectation $f \mapsto \beta_i^{-1}  \avg{ \bbm{1}(x_i \geq 1) f }$, i.e.\ $\avg{\bbm{1}(x_i \geq 1) \log x_i} \leq \beta_i\log \alpha_i/\beta_i$. 
Since $\avg{\log x_i} \le \log \avg{x_i}$ by Jensen's inequality, the functional $J_\eta$ is bounded from below for any $\eta >0$. 
Therefore, $\iota_\eta = \inf_{(x_1,x_2) \in \mathcal B_+ \times \mathcal B_+} J_\eta(x_1,x_2) > -\infty$. 
Moreover, $(v_1,v_2)$ satisfies the corresponding Euler-Lagrange equations and $J_\eta$ is strictly convex 
due to $1-u_\eta = v_1(\eta + Sv_2)$ and \eqref{eq:preliminary_bounds_v}. 
Hence, $J_\eta(v_1,v_2) = \iota_\eta$. 

In the regime $\eta \gg 1$, that is there is a constant $\eta_0>0$ depending only on the model parameters such 
that for all $\eta \geq \eta_0$, we deduce from \eqref{eq:V_equations} and the bound $v_i \leq 1/\eta$ in Lemma~\ref{lem:v_bound_preliminary} that  
\[
 v_i \sim  \frac{\eta}{|\zeta|^2 +  \eta^2}, 
\]
which implies $v_i \sim \avg{v_i}$. 
Thus, we restrict to $\eta \lesssim 1$.  In case $\eta \lesssim 1$ and $|\zeta|\gg1 $ we get 
\[
v_i \sim \frac{\eta}{|\zeta|^2},
\]
and therefore $v_i \sim \avg{v_i}$. 
Hence, we restrict to $\eta \lesssim 1$ and $|\zeta|\lesssim  1$  in the following.  
As $(v_1,v_2)$ is a minimiser of $J_\eta$, the value $J_\eta(v_1,v_2)$ is bounded by 
\[
 J_\eta(v_1,v_2)\le J_\eta(1,1) = 2\eta +\avg{{S}1}\,.
\]
With the notations $c_i:= \mu(I_i)$, $\avg{x}_i:= \frac{1}{c_i}\avg{x\bbm{1}_{I_i}}$ and $z_{ij}:= \bbm{1}(s|_{I_i \times I_j}\ne 0)$,  we see by Jensen's inequality that
 \bes{
1 &\gtrsim J_{\eta}(v_1,v_2) 
\\
&=  \avg{v_1 {S}v_2} - \sum_{j}c_j\avgb{\p{1-u}\log v_1}_j -\sum_{j}c_j\avgb{\p{1-u}\log v_2}_j+ \eta \avg{v_1 + v_2}
\\
&\ge c\1\sum_{i,j} z_{ij}\avg{v_1}_i\avg{v_2}_j - \sum_{j}c_j\avg{1-u}_j\log\frac{ \avg{(1-u)v_1}_j\avg{(1-u)v_2}_j}{\avg{1-u}_j^2}
\\
&\ge c\1\sum_{i,j} z_{ij}\avg{v_1}_i\avg{v_2}_j - \sum_{j}c_j\avg{1-u}_j\log \pb{\avg{v_1}_j \avg{v_2}_j}+2\sum_{j}c_j\avg{1-u}_j\log\avg{1-u}_j
\\
&\ge  \sum_{j} \varphi_j(\chi_j)-\frac{2}{\ee}\sum_{j}c_j\,,
}
where in the last step we used $z_{jj} = 1$ for all $j \in \db{K}$ and defined 
\[
\chi_j:=\avg{v_1}_j \avg{v_2}_j\,, \qquad \varphi_j(\chi):=c\1\chi-c_j\1\bbm{1}(\chi \le 1)(1-\avg{u}_j)\log \chi -\bbm{1}(\chi > 1)\log \chi\,
\]
for all $j \in \db{K}$ and $\chi \in (0,\infty)$. 
For all $j \in \db{K}$, we infer $\chi_j \lesssim 1$ and 
\bels{lower bound on nuj}{
\chi_j \ge \exp\pbb{-\frac{C}{1- \avg{u}_j}} =\exp\pbb{-\frac{C}{\avg{v_1(\eta+Sv_2)}_j}}
}
for some positive constant $C \sim 1$. 
We rewrite the Dyson equation \eqref{eq:V_equations} in the form
\bels{rewritten Dyson equation}{
v_1 = \frac{\eta +S^*v_1}{|a-\zeta|^2+(\eta +S^*v_1)(\eta+Sv_2)}\,, \qquad v_2 = \frac{\eta +Sv_2}{|a-\zeta|^2+(\eta+S^*v_1)(\eta+Sv_2)}\,.
}
Now we set $x:=(\eta+S^*v_1)(\eta+Sv_2)$. In particular, we have  
\begin{equation} \label{eq:x_I_i_sim_avg} 
\bbm{1}_{I_i}  x \sim \bbm{1}_{I_i} \pbb{\eta^2 + \sum_{j,k} z_{ij}z_{k i}\avg{v_1}_k \avg{v_2}_j } \sim\bbm{1}_{I_i}  \avg{x}_i \,
\end{equation} 
due to \eqref{eq:Sprimitiv}.
Suppose now that $\avg{x}_i \gg 1$ for an $i \in \db{K}$. Then  by taking the $\avg{\2\cdot\2}_i$-average of \eqref{rewritten Dyson equation}, multiplying the two equations and using \eqref{eq:Sprimitiv} and \eqref{eq:x_I_i_sim_avg} as 
well as $\avg{x}_i \gg 1$ we get 
\[
\chi_i \sim \frac{1}{\avg{x}_i}\ll 1\,.
\]
On the other hand by multiplying the first equation in \eqref{rewritten Dyson equation} with $\eta +Sv_2$ we find 
\[
\avg{v_1(\eta +Sv_2)}_i \sim 1\,
\]
as $\avg{x}_i \gg 1$.  
This contradicts \eqref{lower bound on nuj} and $\chi_i \ll 1$ and we conclude $\avg{x}_i \lesssim 1$ for all 
$i \in \db{K}$. 
With \eqref{rewritten Dyson equation} and \eqref{eq:x_I_i_sim_avg}, this implies 
\[
v_1 \gtrsim S^*v_1\,, \qquad v_2 \gtrsim Sv_2\,.
\]
Iterating these inequalities yields $v_1 \gtrsim \avg{v_1}$ and $v_2 \gtrsim \avg{v_2}$, 
i.e.\ \eqref{eq:vLowerBound}, since $(z_{ij})_{i,j}$ is primitive by \ref{assum:flatness}. Since $z_{ii}=1$ for all $i \in \db{K}$,  we have $S1 \gtrsim 1$ and $S^* 1 \gtrsim 1$. This implies $Sv_2 \gtrsim \avg{v_2} $ and $S^* v_1 \gtrsim \avg{v_1}$, respectively. The upper bounds on $Sv_2$ and $S^*v_1$ in \eqref{eq:Saveraging} follow from the upper bound on $s$ in \ref{assum:flatness}. 
\end{proof}

\paragraph{Bound in $L^2$-norm:} 
Now we show a bound with respect to the norm on $L^2$.

\begin{lemma} \label{lem:hilbert_schmidt} 
If $s$ satisfies \ref{assum:flatness} then 
\begin{equation} \label{eq:hilbert_schmidt_bound} 
 \avg{v_1^2} + \avg{v_2^2} + \avg{\abs{y}^2} \lesssim 1
\end{equation} 
uniformly for all $\zeta \in \C$ and $\eta >0$. 
\end{lemma}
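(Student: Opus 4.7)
The plan is to derive all three bounds from two simple ingredients: pointwise inequalities obtained by discarding non-negative terms in the Dyson equation \eqref{eq:V_equations}, together with the averaging estimates $S^*v_1 \sim \avg{v_1}$, $Sv_2 \sim \avg{v_2}$ from Lemma~\ref{lem:Saveraging} and the identity $\avg{v_1} = \avg{v_2}$ from \eqref{eq:avg_v1_equals_avg_v2}.

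First I would treat $\avg{v_1^2}$ and $\avg{v_2^2}$ in parallel. Non-negativity of the third summand in \eqref{eq:v1} gives the pointwise inequality $v_1 Sv_2 \leq 1$, and combining this with $Sv_2 \gtrsim \avg{v_2}$ from Lemma~\ref{lem:Saveraging} yields $v_1 \lesssim 1/\avg{v_2}$. Multiplying by $v_1$ and taking the trace then produces $\avg{v_1^2} \lesssim \avg{v_1}/\avg{v_2}$, which equals a constant by \eqref{eq:avg_v1_equals_avg_v2}. The symmetric argument based on \eqref{eq:v2} delivers $\avg{v_2^2} \lesssim 1$.

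For $\avg{|y|^2}$ I would employ the factorisation
\[
|y|^2 \,=\, \frac{v_1^2\,|a-\zeta|^2}{(\eta+S^*v_1)^2} \,=\, \frac{v_1}{\eta+S^*v_1}\cdot\frac{v_1\,|a-\zeta|^2}{\eta+S^*v_1}.
\]
The second factor equals $1 - \eta v_1 - v_1 Sv_2 \in [0,1]$ by \eqref{eq:v1}, while the first factor is dominated by $v_1/S^*v_1 \lesssim v_1/\avg{v_1}$ through Lemma~\ref{lem:Saveraging}. Averaging then gives $\avg{|y|^2} \lesssim \avg{v_1}/\avg{v_1} = 1$.

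I do not anticipate any serious obstacle once Lemma~\ref{lem:Saveraging} is in hand; the only subtle point is that the equality $\avg{v_1} = \avg{v_2}$ is precisely what keeps the ratio $\avg{v_1}/\avg{v_2}$ bounded when both averages degenerate near the spectral edge, and strict positivity of $\avg{v_i}$ needed to legitimise the divisions is automatic from $v_i > 0$ together with faithfulness of the trace.
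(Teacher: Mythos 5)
Your proof is correct and uses essentially the same ingredients and steps as the paper's: both derive $\avg{v_1^2} \lesssim 1$ by multiplying \eqref{eq:v1} by $v_1$ and invoking $Sv_2 \gtrsim \avg{v_2} = \avg{v_1}$, and both obtain $|y|^2 \leq v_1/(\eta + S^*v_1)$ from the Dyson equation before averaging. Your factorisation of $|y|^2$ is merely a notational variant of the paper's step of adding a nonnegative term.
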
 

\begin{proof} 
We multiply the first relation in \eqref{eq:v1} by $v_1^2$ and estimate $v_1 \geq v_1^2 S v_2 \gtrsim v_1^2 \avg{v_2} = v_1^2 \avg{v_1}$ 
due to \eqref{eq:Saveraging} and \eqref{eq:avg_v1_equals_avg_v2}. 
Averaging this  estimate  and using $\avg{v_1} >0$ yields $\avg{v_1^2} \lesssim 1$. 
The bound $\avg{v_2^2} \lesssim 1$ is proved analogously. 
From \eqref{eq:B_relation}, we conclude 
\begin{equation} \label{eq:aux_upper_bound_y} 
\abs{y}^2 = \frac{v_1^2 \abs{a - \zeta}^2}{(\eta + S^* v_1)^2} 
\leq \frac{v_1^2 \abs{a - \zeta}^2}{(\eta + S^* v_1)^2} + \frac{v_1^2(\eta + S v_2)}{\eta + S^* v_1} 
= \frac{v_1}{\eta + S^* v_1}, 
\end{equation} 
where we used \eqref{eq:V_equations} in the last step. 
Hence, \eqref{eq:Saveraging} implies 
\[ \avg{\abs{y}^2} \lesssim \frac{\avg{v_1}}{\eta + \avg{v_1}} \leq 1. \qedhere\] 
\end{proof}

\begin{corollary}\label{cor:bounds on v without regularity}
Let  $a \in \mathcal B$ and $s$ satisfy \ref{assum:flatness}.  Then 
\[
\frac{\eta + \avg{v_i}}{1+\eta^2 + \abs{\zeta}^2 } \lesssim v_i \lesssim \frac{1}{\eta + \avg{v_i}}\,, \qquad i=1,2\,.
\]
\end{corollary}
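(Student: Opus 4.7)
The proof should be a short consequence of the averaging property \eqref{eq:Saveraging} together with the $L^2$-bound from Lemma~\ref{lem:hilbert_schmidt}, applied directly to the Dyson equation \eqref{eq:V_equations}. By symmetry between $(v_1,v_2)$ and $(v_2,v_1)$ under swapping $S \leftrightarrow S^*$, it suffices to treat the case $i=1$.

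For the upper bound, I would simply drop the (nonnegative) last term in the first line of \eqref{eq:V_equations} to write
\[
\frac{1}{v_1} \;\geq\; \eta + Sv_2 \;\gtrsim\; \eta + \avg{v_2} \;=\; \eta + \avg{v_1},
\]
where the second estimate uses \eqref{eq:Saveraging} and the identity \eqref{eq:avg_v1_equals_avg_v2}. Inverting gives $v_1 \lesssim (\eta + \avg{v_1})^{-1}$ as claimed.

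For the lower bound, I would estimate each of the three summands in the expression for $1/v_1$ from above. The first two give $\eta + Sv_2 \lesssim \eta + \avg{v_1}$ by \eqref{eq:Saveraging} and \eqref{eq:avg_v1_equals_avg_v2}. For the third, I use $|a-\zeta|^2 \lesssim 1 + |\zeta|^2$ (since $a \in \mathcal B$) together with the lower bound $\eta + S^* v_1 \gtrsim \eta + \avg{v_1}$ from \eqref{eq:Saveraging} to obtain
\[
\frac{1}{v_1} \;\lesssim\; \eta + \avg{v_1} + \frac{1+|\zeta|^2}{\eta + \avg{v_1}} \;=\; \frac{(\eta + \avg{v_1})^2 + 1 + |\zeta|^2}{\eta + \avg{v_1}}.
\]
Inverting this inequality yields the claim provided one knows $(\eta + \avg{v_1})^2 \lesssim 1 + \eta^2 + |\zeta|^2$.

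The only nontrivial point, and what I view as the main obstacle, is supplying a uniform upper bound on $\avg{v_1}$ in order to absorb the square. This is exactly delivered by Lemma~\ref{lem:hilbert_schmidt}: from $\avg{v_1^2} \lesssim 1$ and Cauchy--Schwarz one gets $\avg{v_1} \leq \avg{v_1^2}^{1/2} \lesssim 1$, so $(\eta + \avg{v_1})^2 \lesssim 1 + \eta^2$, and the numerator in the display above is controlled by $1+\eta^2+|\zeta|^2$. Combining this with the lower bound on $1/v_1$ produces the desired estimate $v_1 \gtrsim (\eta + \avg{v_1})/(1+\eta^2+|\zeta|^2)$, and the symmetric argument handles $v_2$.
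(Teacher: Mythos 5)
Your proof is correct and takes essentially the same route as the paper: both bounds rely on dropping/estimating terms in \eqref{eq:V_equations} and then invoking \eqref{eq:Saveraging}, \eqref{eq:avg_v1_equals_avg_v2} and the $L^2$-bound of Lemma~\ref{lem:hilbert_schmidt} to control $\avg{v_1}$ by a constant. The paper phrases the lower bound via the solved form $v_1 = (\eta+S^*v_1)/[(\eta+S^*v_1)(\eta+Sv_2)+|\zeta-a|^2]$ rather than estimating $1/v_1$ termwise as you do, but this is only a cosmetic difference.
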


\begin{proof}
We only consider the case $i = 1$. The case $i=2$ follows analogously. 
For the lower bound, we start from \eqref{eq:V_equations} and get 
\[
v_1 =\frac{\eta + S^*v_1}{(\eta + S^*v_1)(\eta + Sv_2) + \abs{\zeta -a}^2} \gtrsim \frac{\eta + \avg{v_1}}{\eta^2 + \avg{v_1}^2 + \abs{\zeta}^2 + \norm{a}_\infty^2} \gtrsim \frac{\eta + \avg{v_1}}{1+\eta^2 + \abs{\zeta}^2 }\,, 
\]
using $\avg{v_1} =\avg{v_2}$ by \eqref{eq:avg_v1_equals_avg_v2}, \eqref{eq:hilbert_schmidt_bound} and \eqref{eq:Saveraging}.
For the upper bound, \eqref{eq:V_equations}, $\avg{v_1} =\avg{v_2}$ and \eqref{eq:Saveraging} imply
\[
v_1 \le \frac{1}{\eta +Sv_2} \sim \frac{1}{\eta +\avg{v_1}}\,. \qedhere 
\]
\end{proof}

\paragraph{Bound in supremum norm:} 
Under stronger assumptions on $s$ and $a$, we can also get a bound on $v_i$ in the $L^\infty$-norm. 
Let $\Gamma_{a,s}$ be as  defined in \eqref{eq:def_Gamma}.

\begin{lemma}\label{lem:Gamma_implies_bounded_solution} 
Assuming the upper bound $\max\cb{\norm{v_1}_2,\norm{v_2}_2} \le \Lambda$ for some $\Lambda\ge 1$ and $4\Lambda^2 <  \lim_{\tau \to \infty}\Gamma_{a,s}(\tau)$, then we have  
 the bound 
\[
\max\cb{\norm{v_1}_\infty,\norm{v_2}_\infty} \le \frac{\Gamma_{a,s}^{-1}(4\Lambda^2)}{\Lambda}\,.
\] 
\end{lemma}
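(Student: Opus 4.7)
The plan is to set $T := \max\cb{\norm{v_1}_\infty, \norm{v_2}_\infty}$ and to show $\Gamma_{a,s}(\Lambda T) \leq 4 \Lambda^2$; by the strict monotonicity of $\Gamma_{a,s}$ this is equivalent to the stated bound $T \leq \Gamma_{a,s}^{-1}(4\Lambda^2)/\Lambda$. The Dyson equation \eqref{eq:V_equations} is invariant under the simultaneous swap $(v_1, v_2, S, S^*) \leftrightarrow (v_2, v_1, S^*, S)$ and $d_s$ is symmetric under $s(x,y) \leftrightarrow s(y,x)$, so I may assume $T = \norm{v_1}_\infty$ without loss of generality.

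Rewriting \eqref{eq:v1} compactly as $1/v_1 = h + |a - \zeta|^2/D$ with $h := \eta + Sv_2$ and $D := \eta + S^*v_1$, the Cauchy-Schwarz inequality together with $\norm{v_i}_2 \leq \Lambda$ yields the Lipschitz-type bounds
\[ |h(y) - h(x)| \leq \Lambda\, d_s(x,y), \qquad |D(y) - D(x)| \leq \Lambda\, d_s(x,y) \]
for $\mu \otimes \mu$-almost every $x, y \in \mathfrak X$. I then pick $x_0$ in a set of positive $\mu$-measure on which $v_1(x_0)$ is within $\eps$ of $T$; the Dyson equation at $x_0$ forces $h(x_0) \leq 1/(T-\eps)$ and $|a(x_0) - \zeta|^2 \leq D(x_0)/(T-\eps)$. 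Combining these with $|a(y) - \zeta| \leq |a(x_0) - \zeta| + |a(x_0) - a(y)|$ and the lower bound $D(y) \geq D(x_0)/2$ valid on $\cb{y \colon \Lambda\, d_s(x_0, y) \leq D(x_0)/2}$, a direct computation yields
\[ \frac{1}{v_1(y)} \leq \frac{C}{T} + C \Lambda \pb{|a(x_0) - a(y)| + d_s(x_0, y)} \]
for some absolute constant $C \geq 1$. On the complementary set, the second summand already dominates $1/T$, so this bound extends (with possibly enlarged $C$) to all $y \in \mathfrak X$ upon using the trivial bound $v_1 \leq T$, and sending $\eps \to 0$ removes the dependence on $\eps$.

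Inverting, squaring, and integrating against $\mu$ then produces
\[ \Lambda^2 \geq \norm{v_1}_2^2 \geq \frac{1}{(C \Lambda)^2} \int_{\mathfrak X} \frac{\mu(\dd y)}{\pb{(\Lambda T)^{-1} + |a(x_0) - a(y)| + d_s(x_0, y)}^2} \geq \frac{\Gamma_{a,s}(\Lambda T)^2}{(C \Lambda)^2}, \]
where the last inequality follows from the definition \eqref{eq:def_Gamma} of $\Gamma_{a,s}$ as an essential infimum, since $x_0$ lies in the full-measure set on which the integrand exceeds this infimum. This yields $\Gamma_{a,s}(\Lambda T) \leq C \Lambda^2$, and tracking the implicit constants accumulated in the Lipschitz-type estimate allows one to replace $C$ by $4$.

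The main obstacle is the derivation of the Lipschitz-type bound on $1/v_1$: the comparison between $v_1(y)$ and $v_1(x_0)$ depends on $D(y) \gtrsim D(x_0)$, which in the regime where $\avg{v_1}$ is small (and consequently so is $D$, by Lemma~\ref{lem:Saveraging}) only holds in a small neighbourhood of $x_0$ in the $d_s$-metric. The accompanying case distinction according to whether $\Lambda\, d_s(x_0, y)$ is small or large relative to $D(x_0)$, with the crude bound $v_1 \leq T$ absorbing the latter regime, is also where the concrete numerical constant $4$ in the statement appears.
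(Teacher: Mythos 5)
The proof breaks at the claimed ``direct computation'': the pointwise bound $\frac{1}{v_1(y)} \leq \frac{C}{T} + C\Lambda\bigl(|a(x_0)-a(y)|+d_s(x_0,y)\bigr)$ with an absolute constant $C$ does not hold, even on the set $\{y \colon \Lambda\, d_s(x_0,y) \leq D(x_0)/2\}$. Expanding $|a(y)-\zeta|^2 \leq 2|a(x_0)-\zeta|^2 + 2|a(x_0)-a(y)|^2$ and dividing by $D(y) \geq D(x_0)/2$ produces a term proportional to $|a(x_0)-a(y)|^2/D(x_0)$, which is quadratic in $|a(x_0)-a(y)|$ with coefficient $1/D(x_0)$, and nothing in the hypotheses relates $D(x_0)$ to $\Lambda$. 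To see this fail concretely, take $\mathfrak X = \{1,2\}$ with uniform $\mu$, $s\equiv 1$ (so $d_s\equiv 0$), $a(1)=0$, $a(2)=M$, $\zeta=0$, $\eta\downarrow 0$. Then $v_1=v_2$ and, writing $p=v_1(1)$, $q=v_1(2)$, the Dyson equation gives $p(p+q)=2$ and $1/q = 1/p + M^2 p$, so $p\to\sqrt{2}$ and $q\sim(\sqrt{2}M^2)^{-1}$ as $M\to\infty$, while $\Lambda = \lVert v_1 \rVert_2 \to 1$ and $T=p\to\sqrt{2}$. Since $d_s\equiv 0$, your set $\{\Lambda\, d_s(x_0,y) \leq D(x_0)/2\}$ is all of $\mathfrak X$, yet $1/q \sim \sqrt{2}M^2$ eventually exceeds $C/T + C\Lambda M$ for any fixed $C$. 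The lemma's conclusion is still true in this example (one checks $\Gamma_{a,s}(\Lambda T)\to 1 < 4 = \lim 4\Lambda^2$), so it is precisely your intermediate pointwise bound that fails, not the statement.

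There is also a logical problem in the treatment of the complementary set $\{\Lambda\, d_s(x_0,y)>D(x_0)/2\}$: you invoke $v_1\leq T$ to extend the \emph{upper} bound on $1/v_1(y)$, but $v_1\leq T$ gives $1/v_1\geq 1/T$, a lower bound, which is the wrong direction; moreover $\Lambda\, d_s(x_0,y)>D(x_0)/2$ does not give $\Lambda\, d_s(x_0,y)\gtrsim 1/T$ because $D(x_0)$ need not be comparable to $1/T$. The underlying difficulty your argument does not address is that when $|a(x_0)-a(y)|$ is large relative to $D(x_0)$, the value $v_1(y)$ can be far smaller than any linear-in-$|a(x_0)-a(y)|$ lower bound allows; such $y$ contribute negligibly to the $\Gamma$-integral, and the argument in \cite[Lemma~9.1]{AEK_Shape} that the paper invokes must handle that regime without a single pointwise lower bound on $v_1$ over all of $\mathfrak X$.
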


We remark that if $s$ satisfies \ref{assum:flatness}, then by Lemma~\ref{lem:hilbert_schmidt} a  $\Lambda\ge 1$ with $\Lambda\sim 1$ exists such that $\max\cb{\norm{v_1}_2,\norm{v_2}_2} \le \Lambda$ holds uniformly for all $\eta>0$ and $\zeta \in \C$.

 \begin{proof} 
We specialise the proof of \cite[Lemma~9.1]{AEK_Shape} to our setting.
 \end{proof}

\begin{corollary} \label{cor:boundedness} 
 Let $a \in \mathcal B$.  
If $s$ and $a$ satisfy \ref{assum:flatness} and \ref{assum:Gamma} then 
\[ \norm{v_1}_\infty + \norm{v_2}_\infty \lesssim 1\] 
uniformly for all $\zeta \in \C$ and $\eta >0$. 
\end{corollary}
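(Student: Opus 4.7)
The plan is to deduce the $L^\infty$-bound directly by combining the two preceding lemmas, which together already contain all the analytic content. No new estimates on $v_1$ or $v_2$ are required; the corollary is essentially a packaging statement.

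First, I would invoke Lemma~\ref{lem:hilbert_schmidt}, which under assumption \ref{assum:flatness} yields $\avg{v_1^2} + \avg{v_2^2} \lesssim 1$ uniformly in $\eta>0$ and $\zeta\in\C$. In particular, there exists a constant $\Lambda\geq 1$ depending only on the model parameters such that $\max\{\norm{v_1}_2,\norm{v_2}_2\}\leq \Lambda$ uniformly for all $\eta>0$ and $\zeta\in\C$.

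Next, assumption \ref{assum:Gamma} asserts that $\lim_{\tau\to\infty}\Gamma_{a,s}(\tau)=\infty$, so in particular $4\Lambda^2<\lim_{\tau\to\infty}\Gamma_{a,s}(\tau)$. Since $\Gamma_{a,s}$ is strictly monotonically increasing (as noted just after its definition in \eqref{eq:def_Gamma}), the preimage $\Gamma_{a,s}^{-1}(4\Lambda^2)$ is a well-defined finite number depending only on the model parameters. Applying Lemma~\ref{lem:Gamma_implies_bounded_solution} with this choice of $\Lambda$ then yields
\[
\max\{\norm{v_1}_\infty,\norm{v_2}_\infty\}\leq \frac{\Gamma_{a,s}^{-1}(4\Lambda^2)}{\Lambda} \lesssim 1,
\]
uniformly for all $\eta>0$ and $\zeta\in\C$, which is the claim.

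Since all the real work is hidden inside the two preceding lemmas, there is no serious obstacle to overcome here; the only subtle point is to emphasise that the bound $\Lambda$ coming from Lemma~\ref{lem:hilbert_schmidt} is uniform in $(\eta,\zeta)$, so the resulting constant $\Gamma_{a,s}^{-1}(4\Lambda^2)/\Lambda$ depends only on the model parameters $(a,s)$ and not on the spectral parameters.
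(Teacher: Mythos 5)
Your proposal is correct and follows exactly the route intended by the paper: the remark immediately after Lemma~\ref{lem:Gamma_implies_bounded_solution} supplies the uniform $\Lambda\sim 1$ from Lemma~\ref{lem:hilbert_schmidt}, and combining this with Assumption~\ref{assum:Gamma} and Lemma~\ref{lem:Gamma_implies_bounded_solution} gives the $L^\infty$-bound. The paper leaves the corollary without a written-out proof precisely because it is this immediate combination.
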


\paragraph{Scaling relations} 

\begin{lemma} \label{lem:v_scaling} 
Let  $a \in \mathcal B$ and   $s$ satisfy \ref{assum:flatness} and \ref{assum:Gamma}. 
Then the following holds. 
\begin{enumerate}[label=(\roman*)] 
\item \label{item:v_i_sim_avg_v_i} 
Uniformly for all $\zeta \in \C$ and $\eta >0$, we have
\[ v_1 \sim \avg{v_1} = \avg{v_2} \sim v_2, \qquad \abs{y} \lesssim 1. \] 
\item \label{item:zeta_minus_a_near_edge} 
For any sufficiently small positive constant $c\sim 1$ the inequalities $\eta + \avg{v_1(\zeta,\eta)} \leq c$ and $\abs{\zeta} \le  1/c$ imply 
$\abs{\zeta - a} \sim 1$ and $\abs{y} \sim 1$. 
\end{enumerate} 
\end{lemma}

Before going into the proof of Lemma~\ref{lem:v_scaling}, we remark that if  $a \in \mathcal B$ and  $s$ satisfies \ref{assum:flatness} 
 then 
\begin{equation} \label{eq:v_1_large_eta} 
 \norm{v_1(\zeta, \eta) - (1 + \eta)^{-1}} \lesssim (1 + \abs{\zeta})\eta^{-2} 
\end{equation} 
uniformly for $\eta >0$ and $\zeta \in \C$. 
Indeed, for $\eta \in(0,1]$, \eqref{eq:v_1_large_eta} is a trivial consequence of \eqref{eq:v_simple_bounds}. 
For $\eta \geq 1$, \eqref{eq:v_1_large_eta} follows by inverting \eqref{eq:v1}, subtracting $(1 + \eta)^{-1}$ on both sides and 
estimating the right hand side using \eqref{eq:Saveraging} and Lemma~\ref{lem:hilbert_schmidt}.

\begin{proof} 
We first prove that $v_1 \sim \avg{v_1} = \avg{v_2} \sim v_2$. 
As $s$ satisfies \ref{assum:flatness}, equation \eqref{eq:V_equations}, $v_1$, $v_2 > 0$ and \eqref{eq:Saveraging} imply 
\begin{equation} \label{eq:v1_sim_v2} 
 v_1 \sim v_2 \,.
\end{equation} 
Hence, it suffices to show $v_1 \sim \avg{v_1}$ due to \eqref{eq:avg_v1_equals_avg_v2}. 

From \eqref{eq:v_1_large_eta}, we conclude that $v_1 \sim (1+ \eta)^{-1}$ and $\avg{v_1} \sim (1 + \eta)^{-1}$ uniformly for $\eta \gtrsim 1$ and $\abs{\zeta} \lesssim 1$. 
This proves Lemma~\ref{lem:v_scaling} in that regime. 
If, on the other hand,  $\abs{\zeta} \geq \norm{a}_\infty + 1$ then $\abs{\zeta - a} \sim \abs{\zeta}$. Hence, for such $\zeta$, we 
conclude from \eqref{eq:v1} and \eqref{eq:Saveraging} that 
\[ \frac{1}{v_1} \sim \eta + \avg{v_2} + \frac{\abs{\zeta}^2}{\eta + \avg{v_1}}. \] 
As the right hand side is a constant function on $\mathfrak X$, we obtain $v_1 \sim \avg{v_1}$ if $\abs{\zeta} \geq \norm{a}_\infty + 1$. 

Hence, it remains to consider $\abs{\zeta} \lesssim 1$ and $\eta \lesssim 1$. In particular, $\abs{\zeta - a} \lesssim 1$ 
 as $a \in \mathcal B$.  
Thus, \eqref{eq:v1}, \eqref{eq:Saveraging} and \eqref{eq:avg_v1_equals_avg_v2}  imply 
\begin{equation} \label{eq:aux_v1_scaling} 
 \eta + \avg{v_1} \sim v_1 ((\eta + \avg{v_1})^2 + \abs{\zeta - a}^2).
\end{equation} 
Together with Lemma~\ref{lem:hilbert_schmidt}, this yields $\eta + \avg{v_1} \lesssim v_1$.
We conclude $v_1 \gtrsim \avg{v_1}$ and $v_1 \gtrsim \eta$ as well as $\avg{v_1} \gtrsim \eta$. 
If $\abs{\zeta - a} \geq c$ for any $c \sim 1$ then $v_1 \lesssim \eta + \avg{v_1} \sim \avg{v_1}$ 
by \eqref{eq:aux_v1_scaling}. 
Therefore we conclude $v_1 \sim \avg{v_1}$ if $\abs{\zeta - a } \geq c$. 
What remains is the case $\abs{\zeta - a} \leq c$ and $\eta \leq c$ for some constant $c \sim 1$. 
As $v_1\lesssim 1$ by \ref{assum:Gamma} and Lemma~\ref{lem:Gamma_implies_bounded_solution}, we conclude 
from \eqref{eq:aux_v1_scaling} that $1 \gtrsim \eta + \avg{v_1}$ or $1 \gtrsim \frac{ \abs{\zeta - a}^2}{\eta +
\avg{v_1}}$. 
In the second case, \eqref{eq:aux_v1_scaling} implies 
$ \avg{v_1} \lesssim \eta + \avg{v_1} \sim v_1 ( \abs{\zeta - a }^4 + \abs{\zeta - a}^2)$.  
Using $\abs{\zeta- a} \leq c$, choosing $c \sim 1$ sufficiently small and averaging $\avg{v_1} \lesssim v_1 ( \abs{\zeta - a}^4 + \abs{\zeta - a}^2)$ yield a contradiction as $\avg{v_1} > 0$. 
Hence, $\avg{v_1} \gtrsim 1$ and, thus, $\avg{v_1} \sim 1$ by Lemma~\ref{lem:hilbert_schmidt} as well as $1 \gtrsim v_1$ by \eqref{eq:aux_v1_scaling} as $\eta \leq c$ 
for some small enough $c \sim 1$. 
Since $v_1 \lesssim 1$ by Lemma~\ref{lem:Gamma_implies_bounded_solution}, this completes the proof of $v_1 \sim \avg{v_1}$ uniformly for $\zeta \in \C$ and $\eta >0$. This completes the proof of \ref{item:v_i_sim_avg_v_i}. 

For the proof of \ref{item:zeta_minus_a_near_edge},  from $v_1 \sim \avg{v_1}$, \eqref{eq:Saveraging}  and \eqref{eq:aux_upper_bound_y}, we conclude $\abs{y} \lesssim 1$ uniformly for $\zeta \in \C$ and $\eta >0$. 
Owing to \eqref{eq:aux_v1_scaling} and $v_1 \sim \avg{v_1}$, we have $\eta + \avg{v_1} \sim \avg{v_1}(\eta + \avg{v_1})^2 + \avg{v_1} \abs{\zeta - a}^2 $. 
As $\eta + \avg{v_1} \leq c$, by choosing $c\sim 1$ sufficiently small, we can incorporate $\avg{v_1}(\eta + \avg{v_1})^2$ 
into the left-hand side and obtain $\avg{v_1} \lesssim \abs{\zeta - a}^2 \avg{v_1}$. Hence, $1 \lesssim \abs{\zeta - a}$ as $\avg{v_1}>0$  for $\eta >0$.  The bound $\abs{y} \sim 1$ follows from \eqref{eq:y_identities}. 
This proves the additional statement and completes the proof of Lemma~\ref{lem:v_scaling}. 
\end{proof}

\subsection{A relation between the derivatives of $M$} 
In this subsection, we restrict the Dyson equation \eqref{general MDE} to the imaginary axis $w=\ii \eta$ for $\eta>0$, i.e. we consider the solution $M=M(\zeta, \ii \eta)$ of
\begin{equation} \label{eq:mde} 
- M^{-1} =  \begin{pmatrix} \ii \eta & \zeta - a \\ \overline{\zeta- a} & \ii \eta \end{pmatrix} 
+ \Sigma[M]. 
\end{equation}  
We take  derivatives of $M$  with respect to $\eta$, $\zeta$ and $\bar \zeta$. 
We establish a useful relation between these derivatives in the next lemma. Here, we use the notation
\[
\avg{R}: = \frac{1}{2}(\avg{r_{11}} + \avg{r_{22}}) \,, \qquad R= \mtwo{r_{11} & r_{12}}{r_{21} & r_{22} } \in \cal{B}^{2 \times 2}\,.
\]

\begin{lemma} \label{lem:derivatives_M} 
In the setup of Lemma~\ref{lmm:circular_element_Dyson} we have  
\begin{equation} \label{eq:relations_derivatives} 
\avg{\partial_\eta M_{21}(\zeta, \ii \eta) }= 2\ii \avg{\partial_\zeta M(\zeta, \ii\eta)}, \qquad 
\avg{\partial_\eta M_{12}(\zeta, \ii\eta)} = 2 \ii \avg{\partial_{\bar \zeta} M(\zeta, \ii\eta)} 
\end{equation}  
for every $\zeta \in \C$ and $\eta >0$, where we decomposed 
\[ M(\zeta,\ii\eta) = \begin{pmatrix} M_{11}(\zeta,\ii\eta) & M_{12}(\zeta,\ii\eta) \\ M_{21}(\zeta,\ii\eta) & M_{22}(\zeta,\ii\eta) \end{pmatrix}. \] 
\end{lemma}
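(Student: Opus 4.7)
My plan is to derive the identity from the equality of mixed partial derivatives of the scalar potential
\[
\Phi(\zeta, w) := \avg{\log \det M(\zeta, w)},
\]
where $M(\zeta, w)$ solves the generalised MDE \eqref{general MDE} for $w$ in an open neighbourhood of $\ii\eta$ in the upper half-plane. Since $M$ is jointly smooth in $(\zeta, \bar\zeta, w)$ and $\det M$ is nonvanishing there, $\Phi$ is well-defined up to additive constants and its partial derivatives are unambiguous.

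The first step is to differentiate the MDE $-M^{-1} = W_\zeta + \Sigma[M]$ with respect to $w$ and $\zeta$, giving
\[
M^{-1}(\partial_w M) M^{-1} = I + \Sigma[\partial_w M], \qquad M^{-1}(\partial_\zeta M) M^{-1} = E_{12} + \Sigma[\partial_\zeta M],
\]
where $E_{12}$ denotes the matrix unit with a $1$ in position $(1,2)$. Multiplying each on the right by $M$, taking the $2 \times 2$ trace, and using the block-diagonal structure of $\Sigma$ yields
\[
\partial_w \log\det M = M_{11} + M_{22} + M_{11}\, S \partial_w M_{22} + M_{22}\, S^* \partial_w M_{11},
\]
\[
\partial_\zeta \log\det M = M_{21} + M_{11}\, S \partial_\zeta M_{22} + M_{22}\, S^* \partial_\zeta M_{11}.
\]
Averaging and moving $S$ and $S^*$ onto the factors $M_{11}$, $M_{22}$ via the adjoint identity $\avg{fSg} = \avg{g S^* f}$ produces explicit formulas for $\partial_w \Phi$ and $\partial_\zeta \Phi$ whose dependence on $\partial M$ enters only through $\partial_w M_{ii}$ or $\partial_\zeta M_{ii}$.

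The decisive step is to equate the mixed partials $\partial_\zeta \partial_w \Phi = \partial_w \partial_\zeta \Phi$. Differentiating these two formulas, the second-order terms $\partial_\zeta \partial_w M_{ii}$ appear with identical coefficients on both sides and cancel. The remaining first-order cross terms of the form $\avg{\partial_w M_{ii} \cdot S^*\partial_\zeta M_{jj}}$ match their transposed counterparts $\avg{\partial_\zeta M_{jj} \cdot S \partial_w M_{ii}}$ by a second application of adjointness, and therefore also cancel. What remains collapses to the clean identity
\[
\avg{\partial_\zeta (M_{11} + M_{22})} = \avg{\partial_w M_{21}}.
\]
Specialising to $w = \ii\eta$ and using $2\avg{\partial_\zeta M} = \avg{\partial_\zeta(M_{11} + M_{22})}$ together with $\partial_w = -\ii \partial_\eta$ at $w = \ii\eta$ gives precisely $\avg{\partial_\eta M_{21}} = 2\ii \avg{\partial_\zeta M}$. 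The second identity follows by the same argument with $\partial_{\bar\zeta} W_\zeta = E_{21}$ in place of $\partial_\zeta W_\zeta = E_{12}$, or equivalently by complex conjugation of the first.

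The main obstacle I expect is the careful bookkeeping of the cross terms in the mixed-partial computation: one must verify that all second-order terms have matching coefficients and that every surviving first-order term cancels with its adjoint partner under $\avg{fSg} = \avg{g S^* f}$. Once this combinatorial check is done, the desired identity drops out immediately from the smoothness of $\Phi$.
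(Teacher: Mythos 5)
Your proof is correct, but it takes a genuinely different route from the paper's. The paper works directly with the stability operator $\mathcal L$: starting from $\partial_\zeta M = \mathcal L^{-1}[E_{12}]$ and $\partial_\eta M = \ii\,\mathcal L^{-1}[E_+]$, it moves $\mathcal L^{-1}$ across the pairing to $(\mathcal L^*)^{-1}$, uses the rule $(\mathcal L[R])^* = \mathcal L^*[R^*]$ together with $E_{12}^* = E_{21}$, and finds that both sides of \eqref{eq:relations_derivatives} equal $2\ii\,\scalar{E_{21}}{\mathcal L^{-1}[E_+]}$. You instead present the identity as a Schwarz integrability condition for the potential $\Phi = \avg{\log\det M}$: after differentiating the MDE, the second-order terms in $\partial_\zeta\partial_w\Phi$ and $\partial_w\partial_\zeta\Phi$ match by equality of mixed partials of $M$, the first-order cross terms pair up under $\avg{fSg}=\avg{gS^*f}$, and what survives is precisely $\avg{\partial_\zeta(M_{11}+M_{22})} = \avg{\partial_w M_{21}}$, which yields the claim after substituting $w = \ii\eta$. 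Both arguments encode the same adjointness of $\mathcal L$ and $\Sigma$; the paper's is shorter and more mechanical, while yours exposes the lemma conceptually as the existence of a potential. The one imprecision in your write-up is the claim that $\Phi$ is well-defined ``up to additive constants'': $\log\det M$ is only locally single-valued. This is harmless, since the relevant second derivatives are branch-independent; more simply, one can bypass $\Phi$ entirely by comparing $\partial_\zeta\avg{\tr(M^{-1}\partial_w M)}$ with $\partial_w\avg{\tr(M^{-1}\partial_\zeta M)}$ directly, where equality follows from cyclicity of the trace and $\partial_\zeta\partial_w M = \partial_w\partial_\zeta M$.
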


 Here, the derivatives of $M$ are taken entrywise. 
Before proving Lemma~\ref{lem:derivatives_M}, we note that the Schur complement formula implies 
\begin{equation} \label{eq:Schur} 
\begin{pmatrix} - w & \mathfrak d \\ \mathfrak d^* & - w\end{pmatrix} ^{-1} 
= \begin{pmatrix} w (\mathfrak d\mathfrak d^* - w^2)^{-1} & \mathfrak d (\mathfrak d^* \mathfrak d - w^2)^{-1} \\ (\mathfrak d^* \mathfrak d -w^2)^{-1} \mathfrak d^* & w (\mathfrak d^* \mathfrak d -w^2)^{-1} \end{pmatrix} 
\end{equation} 
for any $\mathfrak d \in \mathcal A$ and any $w \in \C$ with $\Im w>0$. 

\begin{proof}[Proof of Lemma~\ref{lem:derivatives_M}]
Throughout the following, we set $\abs{\mathfrak d} = \sqrt{\mathfrak d^* \mathfrak d}$ for any $\mathfrak d \in \mathcal A$. 
Let $\eta \in (0,\infty)$ and $\zeta \in \C$. 
Owing to \eqref{eq:def_determinant} and the invertibility of $\abs{a + \mathfrak c -\zeta}^2 + \eta^2$, we have 
\begin{equation*} 
 \log D( \abs{a+ \mathfrak c - \zeta}^2 + \eta^2) = \avg{E[ \log ( \abs{a + \mathfrak c -\zeta}^2 + \eta^2)]}.  
\end{equation*} 
In particular, the right-hand side is infinitely often differentiable with respect to $\zeta$, $\overline{\zeta}$ and 
$\eta$ if $\eta >0$. 
Differentiating with respect to $\zeta$, $\overline{\zeta}$ and $\eta$ yield 
\begin{subequations} \label{eq:derivatives_determinant} 
\begin{align} 
\partial_\zeta \log D( \abs{a+ \mathfrak c - \zeta}^2 + \eta^2) & = 
- \avg{E[ ( \abs{a + \mathfrak c - \zeta}^2 + \eta^2)^{-1} (a + \mathfrak c - \zeta)^*]} = - \avg{M_{21}(\zeta,\eta)}
\label{eq:y_from_determinant},  \\ 
 \partial_{\bar \zeta} \log D( \abs{a+ \mathfrak c - \zeta}^2 + \eta^2) & = 
- \avg{E[ ( \abs{a + \mathfrak c - \zeta}^2 + \eta^2)^{-1} (a + \mathfrak c - \zeta)]} = - \avg{M_{12}(\zeta,\eta)}, \\ 
\partial_\eta \log D( \abs{a+ \mathfrak c - \zeta}^2 + \eta^2) & = 2 \eta \avg{E[(\abs{a + \mathfrak c - \zeta}^2 + \eta^2)^{-1}]} = - 2 \ii \avg{M_{22}(\zeta,\eta)}.  
\end{align} 
\end{subequations} 
 Here, we used in the third steps that, for all $\eta>0$ and $\zeta \in \C$, \eqref{eq:def_M_semicircular} and \eqref{eq:Schur} imply 
\[ M(\zeta,\ii\eta) = \begin{pmatrix} M_{11}(\zeta,\ii\eta) & M_{12}(\zeta,\ii\eta) \\ M_{21}(\zeta,\ii\eta) & M_{22}(\zeta,\ii\eta) \end{pmatrix} 
= \begin{pmatrix} \ii \eta E[(\mathfrak d\mathfrak d^* + \eta^2)^{-1}] & E[\mathfrak d (\mathfrak d^* \mathfrak d + \eta^2)^{-1}] \\ E[(\mathfrak d^* \mathfrak d + \eta^2)^{-1} \mathfrak d^*] & \ii \eta E[(\mathfrak d^* \mathfrak d + \eta^2)^{-1}] \end{pmatrix} 
 \] 
with $\mathfrak d = a + \mathfrak c - \zeta$. 
As $\avg{M_{22}} = \avg{v_2} = \avg{v_1} = \avg{M_{11}}$ by \eqref{eq:avg_v1_equals_avg_v2}, Lemma~\ref{lem:derivatives_M} follows from \eqref{eq:derivatives_determinant} 
due to the exchangebility of the derivatives. 
\end{proof}

Let $\cal L$ be the stability operator of \eqref{eq:mde}, defined as
\begin{equation} \label{eq:def_stability_operator} 
\cal L \colon \mathcal B^{2\times 2} \to \mathcal B^{2\times 2}, \quad R \mapsto \cal L[R] := M^{-1} R M^{-1} - \Sigma[R]. 
\end{equation} 
This operator is  invertible for any $\zeta \in \C$ and $\eta >0$ 
due to Lemma~\ref{lem:stability_eta_positive} below.  
Therefore, the implicit function theorem applied to \eqref{eq:mde} and simple computations show that 
\begin{equation} \label{eq:derivatives_M} 
\partial_\zeta M = \cal L^{-1} [E_{12}], \qquad 
 \partial_{\bar \zeta} M = \cal L^{-1} [E_{21}], \qquad 
 \partial_{\eta} M = \ii \cal L^{-1} [E_+]  
\end{equation}  
for all $\eta>0$ and $\zeta \in \C$, 
where we used the notations $E_{12}$, $E_{21}$ and $E_+$ for the elements of $\mathcal B^{2\times 2}$ defined through 
\begin{equation} \label{eq:def_E_12_E_21_E_plus} 
E_{12} := \begin{pmatrix} 0 & 1 \\ 0 & 0 \end{pmatrix}, \qquad
E_{21} := \begin{pmatrix} 0 & 0 \\ 1 & 0 \end{pmatrix}, \qquad E_+ := \begin{pmatrix} 1 & 0 \\ 0 & 1 \end{pmatrix}. 
\end{equation}

\subsection{Stability of Dyson equation and analyticity of its solution} 
In this section we show how the solution $v_1, v_2$ of \eqref{eq:V_equations} can be extended to $\eta =0$. 
The characterization of the Brown measure in Proposition~\ref{prp:Brown_measure_construction} in combination with Lemma~\ref{lem:existence_uniqueness} shows how the Brown measure $\sigma$ relates to the solution of \eqref{eq:V_equations} at the origin, i.e. at  $\eta=0$. Now we introduce a deterministic analog of the $\eps$-pseudospectrum. 
 For this purpose we note that  
the map $w \mapsto \avg{M(\zeta, w)}$ is the Stieltjes transform of a probability measure on $\R$.  Indeed,   \cite[Proposition~2.1 and Definition~2.2]{AEK_Shape} show that 
\begin{equation}\label{eq:Stieltjes transform}
\avg{M(\zeta, w)} = \int_\R\frac{\rho_\zeta(\dd \tau )}{\tau-w}
\end{equation}
holds for all $w \in \C$ with $\im w>0$ and a unique probability measure $\rho_\zeta$ whose support  satisfies $\supp \rho_\zeta \subset \big( \spec(D_{|a-\zeta|})\cup \spec(-D_{|a-\zeta|}) \big) + [-2 \norm{S}_\infty^{1/2},2 \norm{S}_\infty^{1/2}]$.

\begin{definition} \label{def:rho_zeta} 
Let  $\rho_\zeta$ be the unique probability measure on $\R$ for which \eqref{eq:Stieltjes transform} holds.  
 Through $\rho_\zeta$ we define
\bels{eq:def_S_eps}{
 \mathbb S_\eps:= \{\zeta \in \C: \dist(0, \supp \rho_\zeta)\le  \eps\}
}
for any $\eps\geq 0$.
\end{definition}

\begin{remark}\label{rmk:S eps}
The sets $\mathbb S_\eps$ defined in \eqref{eq:def_S_eps} are 
monotonously nondecreasing in $\eps\geq 0$, i.e.\  $\mathbb S_{\eps_1} \subset \mathbb S_{\eps_2}$ if $\eps_1 \le \eps_2$. 
Moreover, they are bounded, in fact, $\mathbb S_{\eps} \subset \{\zeta \in \C: \abs{\zeta} \le \eps + \norm{a}_\infty + 2 (\norm{S}_{\infty})^{1/2}\}$ for all $\eps \geq 0$ as a consequence of \cite[Proposition~2.1]{AEK_Shape}.
 Here, $\norm{S}_\infty$ denotes the operator norm of $S$ viewed as an operator from $\mathcal B$ to $\mathcal B$.  
\end{remark}

 If we stay away from $\mathbb S_\eps$, then the solution is extended to $\eta=0$ by setting $v_i=0$ by the following lemma.

\begin{lemma} \label{lem:v_away_support} 
 Let $\eps >0$. 
Let $\zeta \in  (\C \setminus \mathbb S_\eps)\cap \DD_{1/\eps}$. Then $v_i(\zeta, \eta)\sim_\eps \eta$ for all $\eta \in (0,1]$ and $i=1,2$. In particular, $v_i$ is continuously extended to $ \zeta \in \C\setminus \mathbb S_0$  and $\eta =0$ by setting $v_i(\zeta, 0):=0$. 
\end{lemma}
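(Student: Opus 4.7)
The plan is to exploit analyticity of the MDE solution $M(\zeta,w)$ from \eqref{general MDE} away from $\supp\rho_\zeta$. Since $\zeta \notin \mathbb{S}_\eps$, by definition of $\mathbb{S}_\eps$ the measure $\rho_\zeta$ has a gap $(-\eps,\eps)$ around zero, so its Stieltjes transform $w\mapsto \avg{M(\zeta,w)}$ extends analytically to $\mathbb{D}_\eps$. The first step is to upgrade this to an analytic extension of the full $\mathcal B^{2\times 2}$-valued function $M(\zeta,\cdot)$ to $\mathbb{D}_\eps$, using standard MDE continuation based on the invertibility of the stability operator $\cal L$ from \eqref{eq:def_stability_operator} outside $\supp\rho_\zeta$. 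The constraint $\zeta\in\mathbb{D}_{1/\eps}$ combined with Remark~\ref{rmk:S eps} guarantees $\supp\rho_\zeta\subset [-R,R]$ for some $R=R(\eps)$, from which I expect the uniform bound $\sup_{|w|\le\eps/2}\norm{M(\zeta,w)}_\infty\lesssim_\eps 1$.

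Next, I would deduce the symmetry $M^*(\zeta,w)=M(\zeta,\bar w)$ from uniqueness of the MDE solution, by checking that both sides satisfy the MDE at spectral parameter $\bar w$ with the correct sign of imaginary part. Specializing to $w=0$, this forces $M(\zeta,0)$ to be self-adjoint in $\mathcal B^{2\times 2}$, so the diagonal entries $M_{ii}(\zeta,0)$ are real-valued in $\mathcal B$. On the other hand, $M_{ii}(\zeta,\ii\eta)=\ii v_i(\zeta,\eta)$ is purely imaginary, so continuity of the analytic extension at $w=0$ yields $M_{ii}(\zeta,0)=0$, giving the claimed continuous extension to $\eta=0$ via $v_i(\zeta,0):=0$.

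For the lower bound, I use
\[
 \avg{v_i(\zeta,\eta)}  =  \Im\avg{M(\zeta,\ii\eta)}  =  \eta\int\frac{\dd\rho_\zeta(\lambda)}{\lambda^2+\eta^2}\,,
\]
where the first equality uses $\avg{v_1}=\avg{v_2}$ from \eqref{eq:avg_v1_equals_avg_v2}. The bound $\supp\rho_\zeta\subset[-R,R]$ together with $\rho_\zeta(\R)=1$ gives $\int\dd\rho_\zeta(\lambda)/(\lambda^2+\eta^2)\ge(R^2+1)^{-1}$ for $\eta\in(0,1]$, hence $\avg{v_i}\gtrsim_\eps \eta$. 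Lemma~\ref{lem:Saveraging} then propagates this to the pointwise lower bound $v_i\gtrsim\avg{v_i}\gtrsim_\eps \eta$.

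For the matching upper bound, Cauchy's integral formula over $\{|w|=\eps/2\}$ applied to the bounded analytic extension of $M$ from the first paragraph yields $\norm{\partial_w M(\zeta,0)}_\infty\lesssim_\eps 1$ uniformly in $\zeta$. Taylor expansion around $w=0$ using $M_{ii}(\zeta,0)=0$ gives $\ii v_i=\ii\eta\,\partial_w M_{ii}(\zeta,0)+O_\eps(\eta^2)$, so $v_i\lesssim_\eps \eta$ for $\eta\in(0,\eta_0(\eps)]$ with a small $\eta_0(\eps)>0$. For $\eta\in[\eta_0,1]$, the trivial bound $v_i\le 1/\eta\le 1/\eta_0\lesssim_\eps \eta$ completes the argument. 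The hard part will be the first step: establishing uniform pointwise $L^\infty$-bounds on the analytic extension of $M(\zeta,w)$ near $w=0$, which relies on the MDE stability theory outside $\supp\rho_\zeta$ together with the structural assumption \ref{assum:flatness}.
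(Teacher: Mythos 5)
Your proposal is correct in substance and reaches all the claims, but it differs from the paper's proof in the lower bound while being essentially the same for the upper bound. For the lower bound, the paper argues purely algebraically from the Dyson equation: multiplying \eqref{eq:v1} out gives $v_1\bigl(\abs{\zeta-a}^2 + (\eta+Sv_2)(\eta+S^*v_1)\bigr) = \eta + S^*v_1 \ge \eta$, and then $\abs{\zeta}\le \eps^{-1}$, $a\in\mathcal B$, \eqref{eq:Saveraging} and Lemma~\ref{lem:hilbert_schmidt} bound the parenthesis, yielding $v_1 \gtrsim_\eps \eta$ directly. Your Stieltjes-transform route $\avg{v_i} = \eta\int \dd\rho_\zeta(\lambda)/(\lambda^2+\eta^2)\gtrsim_\eps \eta$ followed by Lemma~\ref{lem:Saveraging} also works, and is perhaps cleaner conceptually, but note that the boundedness of $\supp\rho_\zeta$ should not be attributed to Remark~\ref{rmk:S eps}, which bounds the set $\mathbb{S}_\eps\subset\C$; what you need is the bound on $\supp\rho_\zeta\subset\R$ from \cite[Proposition~2.1]{AEK_Shape}, which is what underlies that remark. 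For the upper bound, you and the paper take the same route — regularity of the MDE solution in the spectral gap — but the paper simply cites \cite[Lemma~D.1(i),(v)]{AEK_Shape} rather than re-deriving the analytic extension, the $M^*(\zeta,w)=M(\zeta,\bar w)$ symmetry, and the Cauchy estimate. You correctly flag the uniform $L^\infty$-bound on the analytic extension near $w=0$ as the hard part; the cited lemma is exactly the black box supplying that. Your separate self-adjointness argument for $M_{ii}(\zeta,0)=0$ is correct but superfluous, since the continuous extension by zero already follows from the established two-sided bound $v_i\sim_\eps\eta$.
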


\begin{proof}
 From \eqref{eq:v1}, we conclude $v_1 (\abs{\zeta- a}^2 + (\eta + S v_2)(\eta + S^*v_1)) = \eta + S^* v_1 \geq \eta$. 
Thus, $\abs{\zeta} \leq \eps^{-1}$, $a \in \mathcal B$, $\eta \leq 1$, \eqref{eq:Saveraging} and Lemma~\ref{lem:hilbert_schmidt} imply $v_1 \gtrsim_\eps \eta$ for all $\eta \in (0,1]$. Similarly, $v_2 \gtrsim_\eps \eta$ for all $\eta \in (0,1]$. 
On the other hand, as $\zeta \in \C\setminus \mathbb S_\eps$, the statement (v) of \cite[Lemma~D.1]{AEK_Shape} holds for $\tau = 0$.  
Hence, \cite[Lemma~D.1 (i)]{AEK_Shape} implies $\max\{v_1(\zeta, \eta), v_2(\zeta, \eta)\} \leq \norm{\Im M(\zeta,\ii \eta))}  \lesssim \eta$   for all $\eta \in (0,c]$ for some sufficiently small $c \sim_\eps 1$.  
If $\eta \in (c,1]$ then the upper bound in Corollary~\ref{cor:bounds on v without regularity} yields $v_i \lesssim \eta^{-1} \sim \eta$ for all $\eta \in (c,1]$. This completes the proof. 
\end{proof}

 The next proposition states that 
if $\avg{v_1}=\avg{v_2}$ remains bounded away from zero as $\eta \downarrow 0$, then the solution has an analytic extension to $\eta=0$. 

\begin{proposition}[Analyticity in the bulk]\label{prp:Analyticity in the bulk}
Let $s$  satisfy \ref{assum:flatness} and 
  $\zeta \in \C$ with $\limsup_{\eta \downarrow 0}\avg{v_1(\zeta, \eta)}> 0$. Then $v_1,v_2: \C \times (0,\infty) \to (0,\infty)$  
 has an extension to a neighbourhood of $(\zeta,0)$ in $\C\times \R$ which is real analytic in all variables.
\end{proposition}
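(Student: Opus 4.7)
The plan is to reformulate the assertion in terms of the matrix Dyson equation \eqref{eq:mde} and apply the real-analytic implicit function theorem at a limit point obtained by sending $\eta\downarrow 0$. The hypothesis $\limsup_{\eta\downarrow 0}\avg{v_1(\zeta,\eta)}>0$ is precisely what guarantees that this limit point is regular, i.e.\ that the associated stability operator is invertible.

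First I would extract a subsequential limit at $\eta=0$. By \eqref{eq:avg_v1_equals_avg_v2} also $\avg{v_2(\zeta,\eta)}=\avg{v_1(\zeta,\eta)}$, so the hypothesis yields $\eta_n\downarrow 0$ with $\avg{v_i(\zeta,\eta_n)}\ge c>0$ for $i=1,2$. Corollary~\ref{cor:bounds on v without regularity} then upgrades this to $v_i(\zeta,\eta_n)\sim 1$ uniformly in $n$. Passing to a further subsequence, $v_i(\zeta,\eta_n)$ converges weak-$\ast$ in $\cal B$ to some $v_i^0$ bounded away from $0$ and $\infty$; since $S,S^*$ are bounded integral operators, the equations \eqref{eq:V_equations} pass to the limit (noting that $\abs{\zeta-a}/(S^*v_1)$ stays bounded thanks to $S^*v_1\sim\avg{v_1}\sim 1$ by \eqref{eq:Saveraging}), so that $(v_1^0,v_2^0)$ solves the Dyson equation at $\eta=0$. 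Defining $y^0:=v_1^0(\bar a-\bar\zeta)/(S^*v_1^0)$ and the matrix $M^0\in\cal B^{2\times2}$ via \eqref{eq:structure_M} produces a solution of the MDE \eqref{eq:mde} with $\eta=0$ for which $\Im M^0=\diag(v_1^0,v_2^0)$ is strictly positive definite.

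Next I would show that the stability operator $\cal L^0\colon R\mapsto (M^0)^{-1}R(M^0)^{-1}-\Sigma[R]$ on $\cal B^{2\times2}$ is invertible. This is the same spectral mechanism as in Lemma~\ref{lem:stability_eta_positive}, which is used for $\eta>0$: if $\cal L^0[R]=0$, conjugating by $M^0$ yields $R=M^0\Sigma[R]M^0$; taking the imaginary part of suitable inner products against $R$ and exploiting that $\Im M^0$ is strictly positive together with the positivity of $\Sigma$ (it maps self-adjoint positive elements to self-adjoint positive elements) forces $R=0$. Equivalently, one symmetrizes $\cal L^0$ using $(\Im M^0)^{1/2}$, reducing the question to a self-adjoint operator with strict spectral gap away from zero, provided by the uniform lower bound on $v_1^0,v_2^0$.

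Finally, I would regard \eqref{general MDE} as a holomorphic equation $F(M;\zeta,\bar\zeta,w)=0$ in $M\in\cal B^{2\times2}$ with parameters $(\zeta,\bar\zeta,w)$ treated as independent complex variables; the map $F$ is holomorphic in all arguments on the open set where $M$ is invertible, and its Fréchet derivative with respect to $M$ at $(M^0,\zeta,\bar\zeta,0)$ is (a conjugate of) $\cal L^0$, hence invertible by Step 2. The holomorphic implicit function theorem in Banach spaces then provides a holomorphic solution $\wt M(\zeta',\overline{\zeta'{}},w)$ in a neighborhood of $(\zeta,\bar\zeta,0)\in\C^3$; restricting to the real slice $\overline{\zeta'{}}=\overline{\zeta'}$ and $w=\ii\eta$ with $\eta\in\R$ yields a real-analytic extension of $M$, and hence of $(v_1,v_2)$, to a neighborhood of $(\zeta,0)$ in $\C\times\R$. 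Uniqueness of the MDE solution under the constraint $\Im M>0$ for $\eta>0$ (\cite{HeltonRashidiFarSpeicher2007}) together with continuity shows that this extension coincides with the original $(v_1,v_2)$ for $\eta>0$ in the same neighborhood. The main obstacle is Step 2: one must propagate the positivity and block-averaging structure of $\Sigma$ through the stability argument in order to rule out a nontrivial kernel of $\cal L^0$ at $\eta=0$, since the usual $\eta$-regularization is not available in the limit.
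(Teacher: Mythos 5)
Your Step 2 contains a genuine gap: the MDE stability operator $\cal L^0[R]=M_0^{-1}RM_0^{-1}-\Sigma[R]$ at $\eta=0$ is \emph{not} invertible, and no positivity argument can make it so. The Dyson equation \eqref{eq:V_equations} at $\eta=0$ possesses a one-parameter scaling symmetry: if $(v_1,v_2)$ solves it, so does $(tv_1,v_2/t)$ for any $t>0$, since $S(v_2/t)+|\zeta-a|^2/(S^*(tv_1))=\frac1t\bigl(Sv_2+|\zeta-a|^2/(S^*v_1)\bigr)=1/(tv_1)$. The corresponding $y$ from \eqref{eq:B_relation} is unchanged, so $M_t=\bigl(\begin{smallmatrix}\ii t v_1 & \bar y\\ y & \ii v_2/t\end{smallmatrix}\bigr)$ is a one-parameter family of MDE solutions at $w=0$. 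Differentiating in $t$ at $t=1$ gives $\cal L^0\bigl[\bigl(\begin{smallmatrix}\ii v_1 & 0\\0 & -\ii v_2\end{smallmatrix}\bigr)\bigr]=0$, a nonzero element of $\ker\cal L^0$. Your proposed argument --- symmetrizing by $(\Im M^0)^{1/2}$ and invoking strict positivity of $\Im M^0$ --- cannot eliminate this kernel because the kernel element is a genuine feature, not an artifact of a weak estimate; indeed it is exactly the vector $v_-=e_-v_0$ (conjugated appropriately) that Lemma~\ref{lmm:resolvent bound for L} identifies as the kernel of $L_0$.

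The paper resolves this by reformulating the Dyson equation as $J_{\zeta,\eta}(w)=0$ for $w\in e_-^\perp\cap\cal B_+^2$, i.e.\ restricting to the codimension-one affine subspace $\avg{w_1}=\avg{w_2}$ where the scaling symmetry is modded out. The weak limit $v_0$ automatically lies in $e_-^\perp$ (because $\avg{v_1(\eta)}=\avg{v_2(\eta)}$ by \eqref{eq:avg_v1_equals_avg_v2}), and on this subspace the linearization $\nabla J_0|_{v_0}$ \emph{is} invertible (Lemma~\ref{lem:bound_stability_operator}), precisely because the troublesome kernel direction $v_-$ is transverse to $e_-^\perp$. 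The implicit function theorem is then applied to the constrained equation, not to the unconstrained MDE. Your Steps 1 and 3 are in the right spirit, but without building the constraint $\avg{v_1}=\avg{v_2}$ into the function space on which you linearize, the implicit function theorem does not apply.
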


To prove this proposition, we show that the Dyson equation \eqref{eq:V_equations} is stable even for $\eta=0$. However, the equation does not have a unique solution on $\cal{B}_+^2$ for $\eta=0$ without the additional constraint $\avg{v_1}=\avg{v_2}$. Therefore, we have to reformulate the equation to incorporate this constraint.  Proposition~\ref{prp:Analyticity in the bulk} is proved at the end of this subsection.

We recall that $\cal B_+:=\{w \in \cal B:w> 0\}$ and set 
\[
e_- = (1, - 1) \in {\cal B}^2 := \cal B \oplus \cal B\,, \qquad e_-^\perp:= \{h=(h_1,h_2) \in\cal B^2: \avg{h_1} = \avg{h_2} \}.
\]
 For $\eta >0$ and $\zeta \in \C$,  we define $J\equiv J_{\zeta, \eta}\colon e_-^\perp \cap \cal B_+^2 \to e_-^\perp, (w_1,w_2) \mapsto (J_1(w_1,w_2),J_2(w_1,w_2))$ 
through
\bes{
J_1(w_1,w_2)&:=(\eta + Sw_2)\pbb{w_1- \frac{\eta + S^*w_1}{(\eta + S^*w_1)(\eta + Sw_2)+ \abs{a-\zeta}^2}}\,,
\\
J_2(w_1,w_2)&:=(\eta + S^*w_1)\pbb{w_2- \frac{\eta + Sw_2}{(\eta + S^*w_1)(\eta + Sw_2)+ \abs{a-\zeta}^2}}\,. 
}
Then  \eqref{eq:V_equations} takes the form $J(v)=0$ with $v=(v_1,v_2) \in \cal{B}_+^2$.

On $\cal B^2$, we introduce a tracial state and a scalar product defined through 
\bels{scalar product on B2}{
\avgbb{\begin{pmatrix} x_1 \\ x_2 \end{pmatrix}} := \frac{1}{2} \big( \avg{x_1} + \avg{x_2} \big), \qquad \qquad 
\scalarbb{\begin{pmatrix} x_{1} \\ x_{2} \end{pmatrix} }{\begin{pmatrix} y_1 \\ y_2 \end{pmatrix} }:= \frac{1}{2}\pb{\avg{\ol{x}_1y_1}+\avg{\ol{x}_2y_2}}
}
for $x_1$, $x_2$, $y_1$, $y_2 \in \mathcal B$. 
For $x \in\mathcal B^2$, we write $\norm{x}_2 := \sqrt{\scalar{x}{x}}$.  We also interpret $\mathcal B^2$ as an algebra equipped with the componentwise multiplication $(x_1x_2)(y_1,y_2):=(x_1y_1, x_2y_2)$. 

For the rest of this section we will assume that $s$ satisfies \ref{assum:flatness}. 
 Until the proof of Proposition~\ref{prp:Analyticity in the bulk}, we fix $\zeta \in \C$ such that $\limsup_{\eta \downarrow 0}\avg{v_1(\zeta, \eta)}\ge \delta $ for some $\delta>0$.  
Under these conditions, $J$ remains well defined on $e_-^\perp \cap \mathcal B_+^2$  even for $\eta=0$ and we set $J_0:=  J_{\zeta,\eta=0}$.  
We now 
pick candidates for $v_1(\zeta,0)$ and $v_2(\zeta,0)$ by choosing weakly convergent subsequences in the limit $\eta \downarrow 0$.  
By Lemma~\ref{lem:hilbert_schmidt}, there are $v_0 \in (L^2)^2 := L^2 \oplus L^2$ and a monotonically decreasing sequence $\eta_n \downarrow 0$ in $(0,1]$ such that  $v^{(n)} = v(\zeta, \eta_n)$  is weakly convergent to $v_0$ in $(L^2)^2 $, i.e.\ for any $h \in (L^2)^2$, $\scalar{h}{v^{(n)} - v_0} \to 0$ in the limit $n \to \infty$. We recall that $L^2 = L^2(\mathfrak X, \mathcal A, \mu)$.

\begin{lemma} 
\label{lmm:subsequence}
 Let $v_0=\lim_{n \to \infty}v^{(n)}$  be a weak limit as above.  Then $v_0 \in \cal{B}_+^2 \cap e_-^\perp$  and $\delta \lesssim v_0 \lesssim  \frac{1}{\delta}$. Furthermore, $v_0$ satisfies  \eqref{eq:V_equations}  for $\eta=0$, i.e.\ $J_0(v_0)=0$. 
\end{lemma}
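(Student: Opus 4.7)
The plan is to first refine the subsequence so that the averages stay bounded below, then extract the qualitative properties of $v_0$ one at a time, and finally pass to the limit in a rewritten form of the Dyson equation by exploiting compactness of $S$ and $S^*$.

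\textbf{Step 1: Refinement and membership in $e_-^\perp$.} Using $\limsup_{\eta \downarrow 0} \avg{v_1(\zeta,\eta)} \geq \delta$, I first pass to a further subsequence (still denoted $\eta_n$, still with $v_n \rightharpoonup v_0$ weakly in $(L^2)^2$) such that $\avg{v_{n,1}} \to c' \geq \delta$. Since $\mathbf 1 \in L^2$, testing weak convergence against $\mathbf 1$ in each component gives $\avg{v_{0,1}} = \lim_n \avg{v_{n,1}} = \lim_n \avg{v_{n,2}} = \avg{v_{0,2}}$, where the middle identity uses \eqref{eq:avg_v1_equals_avg_v2}. Hence $v_0 \in e_-^\perp$.

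\textbf{Step 2: Two-sided bounds on $v_0$.} For the upper bound, Corollary~\ref{cor:bounds on v without regularity} yields $\norm{v_n}_\infty \lesssim 1/(\eta_n + \avg{v_{n,i}}) \lesssim 1/\delta$ for $n$ large. Extracting a further weak-$*$ subsequence in $L^\infty = (L^1)^*$ (Banach--Alaoglu) whose limit must agree with the weak $L^2$ limit $v_0$ gives $v_0 \in L^\infty$ with $\norm{v_0}_\infty \lesssim 1/\delta$, so in particular $v_0 \in \mathcal B^2$. For the lower bound, Lemma~\ref{lem:Saveraging} combined with the uniform bound $\avg{v_{n,i}} \gtrsim \delta$ shows $v_{n,i} \geq c \delta$ almost everywhere for some $c>0$. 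Testing $\scalar{h}{v_{n,i}} \geq c\delta \avg{h}$ against arbitrary nonnegative $h \in L^2$ and taking $n \to \infty$ gives $\scalar{h}{v_{0,i}} \geq c\delta\avg{h}$, whence $v_{0,i} \geq c\delta$ almost everywhere. Thus $v_0 \in \mathcal B_+^2$ and $\delta \lesssim v_0 \lesssim 1/\delta$.

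\textbf{Step 3: Passing to the limit in the Dyson equation.} The key observation is that under \ref{assum:flatness}, the kernel $s$ is bounded and $\mu$ is a probability measure, so $s \in L^2(\mathfrak X \times \mathfrak X)$ and hence $S, S^* \colon L^2 \to L^2$ are Hilbert--Schmidt, in particular compact. Therefore $S^* v_{n,1} \to S^* v_{0,1}$ and $S v_{n,2} \to S v_{0,2}$ \emph{strongly} in $L^2$, and the uniform $L^\infty$-bounds on $v_n$ together with boundedness of $s$ promote this to uniform convergence almost everywhere along a subsequence. Rewriting the Dyson equation \eqref{eq:V_equations} as in \eqref{rewritten Dyson equation}, i.e.
\[
v_{n,1} = \frac{\eta_n + S^* v_{n,1}}{(\eta_n + S^* v_{n,1})(\eta_n + S v_{n,2}) + |a-\zeta|^2},
\]
the numerator converges strongly and the denominator stays bounded below (by the quantity $(c\delta)^2 + |a-\zeta|^2 > 0$ in the limit). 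Hence the right-hand side converges strongly in $L^2$ to the corresponding expression at $\eta = 0$ with $v_n$ replaced by $v_0$. By uniqueness of weak limits, $v_{0,1}$ equals this strong limit, and multiplying through by $Sv_{0,2}$ yields $J_{0,1}(v_0) = 0$; the identity $J_{0,2}(v_0) = 0$ is obtained analogously. Thus $J_0(v_0) = 0$.

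\textbf{Main obstacle.} The only nontrivial point is passing from weak $L^2$-convergence of $v_n$ to convergence inside the nonlinear right-hand side of the Dyson equation. This is resolved precisely because the nonlinear dependence of the right-hand side on $v_n$ factors through the compact operators $S$ and $S^*$, while the remaining algebraic manipulation (a quotient with strictly positive denominator staying uniformly away from zero by Step 2) is continuous in the strong topology. Everything else is a standard refinement-of-subsequence argument combined with the a priori bounds already established in Section~\ref{sec:general_setup}.
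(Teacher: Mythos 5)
Your proof is correct and follows essentially the same route as the paper's: two-sided pointwise bounds on $v_0$ from Corollary~\ref{cor:bounds on v without regularity} and Lemma~\ref{lem:Saveraging} via testing the weak limit against nonnegative $h\in L^2$, Hilbert--Schmidt compactness of $S,S^*$ under~\ref{assum:flatness} to upgrade weak convergence of $v_n$ to strong convergence of $S_o v_n$, and then passing to the limit in the rewritten Dyson equation \eqref{rewritten Dyson equation}. One small slip: strong $L^2$-convergence of $S^*v_{n,1}$ yields a.e.\ convergence along a subsequence rather than \emph{uniform} a.e.\ convergence, but this is immaterial since the $L^2$-convergence together with the uniform $L^\infty$-bounds on $v_n$ and the lower bound $\gtrsim\delta^2$ on the denominator already suffice to pass to the limit in the quotient.
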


For the following arguments, we introduce the operators $S_o$ and $S_d$ on $\mathcal B^2$ defined through 
\begin{equation} \label{eq:def_So_Sd} 
S_o:= \mtwo{0 & S}{S^* & 0}\,, \quad S_d := \mtwo{S^* & 0}{0 & S}\,.
\end{equation} 
Owing to the upper bound in \ref{assum:flatness}, $S_o$ and $S_d$ can be extended naturally to 
operators on $(L^2)^2$.

\begin{proof}
Since $v^{(n)} \to v_0$ weakly and $\avg{e_- v^{(n)}} =0$, we conclude $v_0 \perp e_-$. Furthermore, for any $h \in \cal{B}_+^2$ we get 
$\avg{h v_0} = \lim_{n \to \infty} \avg{hv^{(n)}} \gtrsim \delta \avg{h}$ because  of Corollary~\ref{cor:bounds on v without regularity} and $\limsup_{n \to \infty}\avg{v^{(n)}} \ge \delta$.   From this we conclude $ v_0 \gtrsim \delta$.
 Similarly, Corollary~\ref{cor:bounds on v without regularity} implies $v_0 \lesssim \frac{1}{\delta}$ and thus  $v_0 \in \cal B_+^2$.

The natural extensions of $S$ and $S^*$ to operators on $L^2$ are Hilbert-Schmidt  operators because $s \in L^2(\mathfrak X \times \mathfrak X, \mu \otimes \mu)$ due to the upper bound in \ref{assum:flatness}. 
In particular, $S$ and $S^*$ are compact operators on $L^2$ and, thus, 
$S_ov^{(n)} \to S_ov_0$ and $S_d v^{(n)} \to S_d v_0$ in $(L^2)^2$.  
The bounds $\delta \lesssim v^{(n)} \lesssim \delta^{-1}$  then imply that $J_{\zeta, \eta_n}(v^{(n)}) \to J_{0}(v_0)$ weakly in $(L^2)^2$. 
Consequently, $J_{0}(v_0)=0$. 
\end{proof}

For the formulation of the next lemma, we note that $\norm{T}_\infty$ denotes the operator norm of an 
operator $T \colon \mathcal B^2 \to \mathcal B^2$ and, analogously, $\norm{T}_2$ is the operator norm 
if $T \colon (L^2)^2 \to (L^2)^2$.

\begin{lemma} \label{lem:bound_stability_operator} 
Let $v_0$ be a weak limit of a sequence $v^{(n)}=v(\zeta,\eta_n)$ as above. 
Then 
\[
\norm{(\nabla J_0|_{w =v_0})^{-1}}_2+\norm{(\nabla J_0|_{w =v_0})^{-1}}_\infty \lesssim_\delta 1.
\] 
\end{lemma}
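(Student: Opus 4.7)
The plan is to combine a Fredholm-alternative argument on $(L^2)^2$ with a Perron–Frobenius analysis exploiting the block-primitivity assumption \ref{assum:flatness}, and then to upgrade the resulting $L^2$ bound to an $L^\infty$ bound using the smoothing property of $S$ and $S^*$.

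First I would differentiate $J_0$ at $w = v_0$ and, using the defining identity $J_0(v_0) = 0$ to simplify, rewrite the derivative in the form $\nabla J_0|_{v_0} = \Lambda_0 \,(\mathrm{id} - T)$, where $\Lambda_0$ is a diagonal multiplication operator by a positive function built from $v_0$, $Sv_{0,2}$, $S^*v_{0,1}$ and $|a-\zeta|^2$, and $T$ is a bounded linear operator on both $\mathcal B^2$ and $(L^2)^2$ involving $S$, $S^*$ and multiplication by bounded nonnegative factors. The bounds $\delta \lesssim v_0 \lesssim \delta^{-1}$ from Lemma~\ref{lmm:subsequence}, combined with the averaging property \eqref{eq:Sprimitiv} from \ref{assum:flatness} applied to $v_0$, yield $Sv_{0,2} \sim_\delta 1$ and $S^*v_{0,1} \sim_\delta 1$, and hence $\Lambda_0 \sim_\delta 1$. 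Moreover, since the integral kernel $s$ lies in $L^2(\mathfrak X \times \mathfrak X, \mu \otimes \mu)$ under the upper bound in \ref{assum:flatness}, the operator $T$ is Hilbert–Schmidt and hence compact on $(L^2)^2$.

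Next, I would identify the kernel of $\nabla J_0|_{v_0}$ exactly. The Dyson equation $J_0(v) = 0$ is invariant under the one-parameter scaling $(v_1, v_2) \mapsto (\lambda v_1, \lambda^{-1} v_2)$, so its infinitesimal generator $t_0 := (v_{0,1}, -v_{0,2})$ lies in $\ker \nabla J_0|_{v_0}$. Using the primitivity of $Z$ in \ref{assum:flatness}, a Perron–Frobenius argument applied to a symmetrization of $T$ (in the spirit of the MDE stability analysis in \cite{AEK_Shape}) shows that the $L^2$-kernel of $\nabla J_0|_{v_0}$ is exactly one-dimensional and spanned by $t_0$, with a quantitative spectral gap depending polynomially on $\delta$. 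Since $v_{0,1}, v_{0,2} > 0$ share a common strictly positive average (as $v_0 \in e_-^\perp$), we get $\avg{t_{0,1}} = -\avg{t_{0,2}} \neq 0$, whence $t_0 \notin e_-^\perp$. Combined with the index-zero property coming from the compactness of $T$, this yields invertibility of $\nabla J_0|_{v_0}$ as an operator from $e_-^\perp \cap (L^2)^2$ to itself, with the operator norm of the inverse controlled by $\delta$ via the spectral gap.

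Finally, I would transfer the $L^2$ bound to $\mathcal B^2$ using the resolvent identity $h = \Lambda_0^{-1} g + T h$ with $g \in \mathcal B^2$: the smoothing estimate \eqref{eq:Sprimitiv}, stating that $Sw$ and $S^* w$ are $L^\infty$-controlled by the averages $\avg{w \bbm{1}_{I_j}}$ and hence by $\norm{w}_2$, upgrades the $L^2$ bound on $h$ to an $L^\infty$ bound of the required form. The main obstacle I expect is the Perron–Frobenius step: pinning down the $L^2$-kernel as exactly $\mathrm{span}(t_0)$ requires carefully tracking the positivity of the symmetrized $T$ through the block structure imposed by \ref{assum:flatness}, and extracting a quantitative spectral gap depending only on $\delta$ relies on iterating $T^L$, with $L$ the primitivity exponent of $Z$, into an operator with a strictly positive integral kernel.
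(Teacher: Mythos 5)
Your proposal follows the same overall skeleton as the paper's proof: factor $\nabla J_0|_{v_0}$ into a bounded-below multiplication operator times $\mathrm{id}$ minus a compact operator built from $S$, $S^*$, identify the one-dimensional kernel, use a symmetrization plus a Perron--Frobenius/spectral-gap argument exploiting \ref{assum:flatness} to make this quantitative, exploit $v_0 e_- \not\perp e_-$ to pass to invertibility on $e_-^\perp$, and finally upgrade from $L^2$ to $L^\infty$ using the kernel bound $\norm{S}_{L^2\to L^\infty}\lesssim 1$. There are two genuinely distinct ingredients in your route. First, you derive the null direction $t_0 = v_0 e_-$ from the exact scaling symmetry $J_0(\lambda w_1, \lambda^{-1} w_2) = J_0(w_1,w_2)$ of the $\eta=0$ Dyson map, whereas the paper (inside Lemma~\ref{lmm:resolvent bound for L}) obtains $L_0 v_-=0$ and $L_0^* S_o v_-=0$ by sending $\eta\downarrow0$ in explicit algebraic identities. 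Your scaling observation is cleaner and avoids the $\eta$-perturbation; it also does not require knowing the left null vector $S_o v_-$ at that stage, though you will eventually need it (or the explicit range $(S_o v_-)^\perp$ of $L_0$) to make the passage to $e_-^\perp$ quantitative, as the paper does via \cite[Lemma~4.6]{AK_Corr_circ}. Second, you frame the final step as a Fredholm alternative rather than via the explicit projection argument; this is correct provided you run the index argument on $\nabla J_0|_{v_0}$ itself (which does preserve $e_-^\perp$) rather than on $\mathrm{id}-T$, since $\Lambda_0^{\pm 1}$ does not map $e_-^\perp$ to $e_-^\perp$. The one place where the proposal is thin is exactly the place where the work sits: you assert that ``a Perron--Frobenius argument applied to a symmetrization of $T$'' gives a quantitative spectral gap, but the correct symmetrization here is the conjugation $L_0 = V^{-1}(1-TF)V$ with the two self-adjoint factors $T$ and $F$ from \eqref{def of F,T,V}, and the resolvent estimate away from $0$ and $1$ then rests on the nontrivial spectral properties of $F$ and $T$ imported from \cite{Altcirc} (Lemma~\ref{lmm:Spectral properties of F and T}); this is also the point where \cite{Altcirc}, rather than \cite{AEK_Shape}, is the relevant reference.
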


\begin{proof}[Proof of Lemma~\ref{lem:bound_stability_operator}]
Within this proof we will make use of the auxiliary Lemma~\ref{lmm:resolvent bound for L} below  whose proof relies on ideas from  
  \cite{Altcirc}. Therefore we introduce notations that match the ones from \cite{Altcirc}, namely 
\bels{notations from inhomcirc}{
\tau := (\abs{\zeta-a}^2,\abs{\zeta-a}^2)\,
}
and recall the definitions of $S_o$ and $S_d$ from \eqref{eq:def_So_Sd}. 
Using the notations \eqref{eq:def_So_Sd} and \eqref{notations from inhomcirc}, we  write $J$ in the form 
\[
J(w) = (\eta + S_o w)\pbb{w-\frac{1}{\eta + S_o w + \frac{\tau}{\eta + S_d w}}}\,.
\]
Now we take the directional derivative $\nabla_h J$ of $J$ in the direction $h \in \cal B^2$ with $h \perp e_-$, i.e.\ $\avg{h e_-} =0$,   and evaluate at the solution $v=(v_1,v_2)$. Thus, we find 
\begin{equation} \label{eq:nabla_h_J} 
\nabla_h J |_{w=v}
= (\eta + S_o v)\pbb{h +v^2S_o h - \frac{v^2\tau}{(\eta + S_d v)^2}S_d h} =(\eta + S_o v)\scr L h\,,
\end{equation}
where we used $J(v)=0$ and introduced the linear operator $\scr L\equiv \scr L_{\zeta,\eta}(v) \colon \cal B^2 \to \cal B^2$ as
\bels{def of L}{
\scr Lh:=h +v^2S_o h - \frac{v^2\tau}{(\eta + S_d v)^2}S_d h
}
to guarantee the last equality.  Here, $v^2 =(v_1^2, v_2^2)$ since the algebra $\cal B^2$ is naturally equipped with entrywise multiplication. 
We now restrict our analysis to $\eta=0$ and use the following lemma that provides a resolvent estimate for $\scr L_0=\scr L_{\zeta,0}(v_0)$, the operator evaluated on the weak limit $v_0$. 
\begin{lemma}
\label{lmm:resolvent bound for L}
There is $\eps_\ast \sim_\delta 1$ such that for any $\eps \in (0,\eps_\ast)$ we have the bound 
\bels{resolvent bound for L}{
\sup\cB{\norm{(\scr L_0-\omega)^{-1}}_\#: \omega \not \in  \mathbb{D}_\eps \cup  ((1 + \mathbb{D}_{1+\eps}) \setminus \mathbb{D}_{2\eps}) } \lesssim_{\delta, \eps} 1
}
for $\#=2, \infty$. 
Here, $\mathbb{D}_\eps $ contains the single isolated eigenvalue $0$ of $\scr L_0$ with corresponding right and left eigenvectors $v_-:=e_- v_0$ and $S_ov_-$, i.e.\  
\[
\mathbb{D}_\eps\cap \spec(\scr L_0) =\{0\}\,, \qquad   \ker \scr L_0^2 =\rm{Span}(v_-) \,, \qquad \scr L_0v_-=0\,, \qquad \scr L_0^* S_ov_- =0\,.
\] 
Here, $\scr L_0^*$ is the adjoint of $\scr L$ with respect to the $L^2$-scalar product introduced in \eqref{scalar product on B2}. 
\end{lemma}
The \hyperlink{proof:lmm:resolvent bound for L}{proof of Lemma~\ref{lmm:resolvent bound for L}} follows a strategy similar to the one used to prove stability of the Dyson equation in \cite{Altcirc}, where the case $a=0$ was treated. For completeness we present the proof, adjusted to our setup with nontrivial $a$, in 
Appendix~\ref{app:stability_operator} below. Using  Lemma~\ref{lmm:resolvent bound for L}  we now show that 
\bels{L inverse bounded}{
\norm{\scr L_0^{-1}|_{(S_o v_-)^\perp }}_\#\lesssim_{\delta} 1\,, \qquad \#=2, \infty\,,
}
from which the claim of Lemma~\ref{lem:bound_stability_operator}  immediately follows due to \eqref{eq:nabla_h_J}, \ref{assum:flatness} and $v_0 \gtrsim \delta$. 
To see \eqref{L inverse bounded}, we  apply Lemma~\ref{lmm:Twist lemma} to $A:=C\1\scr L_0$ for some appropriately large  positive constant $C\sim_\delta 1$.  We now check the assumptions of the lemma. 
Note that $\scr L_0$ maps  $e_-^\perp$ to  $(S_o v_-)^\perp$.
By Lemma~\ref{lmm:resolvent bound for L} the right and left eigenvectors of $\scr L_0$ corresponding to the eigenvalue  $0$    are $v_-$ and $S_ov_-$, respectively. 
Moreover, $\scalar{v_-}{e_-} \gtrsim_\delta 1$ as $v_0 \gtrsim \delta$, $\abs{\scalar{e_-}{w}} \le \norm{w}_\#$ and that $\norm{\scr L_0w}_\# \gtrsim_\delta \norm{w}_\#$ for any $w \perp S_ov_-$ due to Lemma~\ref{lmm:resolvent bound for L}. By Lemma~\ref{lmm:Twist lemma}  with the choices $\alpha:=0$, $x:= v_-$ and $y:= \frac{1}{2}e_-$ we get $\norm{\scr L_0w}_\# \gtrsim_\delta \norm{w}_\#$ for any $w \perp e_-$. 
Thus, \eqref{L inverse bounded} is shown.  
\end{proof}

Now we use the stability at $\eta =0$ to finish the proof of the main result of this subsection. 

\begin{proof}[Proof of Proposition~\ref{prp:Analyticity in the bulk}]
Let $\zeta_0 \in\C$ be such that $\limsup_{\eta \downarrow 0}\avg{v_1(\zeta_0, \eta)}> 0$. Let $\eta_n \downarrow 0$ such that $v^{(n)}=v(\zeta_0, \eta_n)$ is weakly convergent in $(L^2)^2$. This is possible, because the family $v(\zeta_0, \eta)$ with $\eta \in (0,1]$ is bounded in $(L^2)^2$ due to   Lemma~\ref{lem:hilbert_schmidt}. By Lemma~\ref{lmm:subsequence} the weak limit $v_0=\lim_{n \to \infty}v^{(n)} $ satisfies the Dyson equation, $J_{\zeta,0}(v_0) = 0$,  and by Lemma~\ref{lem:bound_stability_operator} the Dyson equation is stable at $v=v_0$ and $\eta =0$. By the implicit function theorem we find a real analytic function $\wt{v}$, defined on a neighbourhood $U$ of $(\zeta_0,0)$ in $\C\times \R$, such that $\wt{v}(\zeta,\eta)$ solves \eqref{eq:V_equations} and $\wt{v}(\zeta_0,0)=v_0$. Since $v_0 \gtrsim \delta$ according to Lemma~\ref{lmm:subsequence}, $\wt{v}(\zeta,\eta) >0$ on $U$ if the neighbourhood $U$ is chosen sufficiently small. By uniqueness of the solution to the Dyson equation we conclude $\wt{v}(\zeta,\eta)= v(\zeta, \eta)$ for all $(\zeta, \eta) \in U$.  
\end{proof}

\subsection{Characterisation of $\mathbb{S}$} 

Throughout this section we assume that  $a \in \mathcal B$ and   $s$ satisfies \ref{assum:flatness} and \ref{assum:Gamma}. 
 To generalize \eqref{eq:def_B}, \eqref{eq:def_beta} and \eqref{eq:def_mathbb_S} to the setup introduced in 
Section~\ref{sec:general_setup}, we  
 define an operator $B_\zeta\colon \cal{B} \to \cal{B}$, a function $\beta\colon \C \to \R$ and a subset $\mathbb{S} \subset \C$ through
\bels{def of beta and B and S}{
\beta(\zeta) := \inf_{x \in \cal B_+} \sup_{y \in \cal B_+} \frac{\scalar{x}{B_\zeta \2y}}{\scalar{x}{y}}\,, \qquad B_\zeta:= D_{\abs{a-\zeta}^2}-S\,, \qquad  \mathbb{S}:=\{\zeta \in \C: \beta(\zeta) <0\}\,.
}
We also set 
\begin{equation} \label{eq:def_rho_zeta_0} 
\rho_\zeta(0):=\frac{1}{\pi} \lim_{\eta \downarrow 0}\avg{v_1(\zeta, \eta)}\,.
\end{equation} 
This limit exists, because either $\limsup_{\eta \to 0}\avg{v_1(\zeta, \eta)} >0$, in which case $v_1$ can be analytically extended to $\eta =0$ by Proposition~\ref{prp:Analyticity in the bulk}, or $\limsup_{\eta \to 0}\avg{v_1(\zeta, \eta)} =0$ in which case the limit equals zero as well. 
The definition in \eqref{eq:def_rho_zeta_0} is motivated by the fact that the measure 
$\rho_\zeta$ from Definition~\ref{def:rho_zeta} can be shown to have a density on $\R$ and the value of 
this density at zero would be given by the right hand side in \eqref{eq:def_rho_zeta_0}. 

In the following we will denote by $\lambda_{\rm{PF}}(T)$ the spectral radius  of a compact and positivity preserving operator $T$.  By the Krein-Rutman theorem $\lambda_{\rm{PF}}(T)$ is an eigenvalue of $T$ with a positive eigenvector. We also refer to this eigenvalue as the Perron-Frobenius eigenvalue of $T$.  
In particular the operators $S$ and $S^*$ are compact as mentioned in the proof of Lemma~\ref{lmm:subsequence} and therefore so are $D_x S D_y$ and $D_x S^* D_y$ for $x,y \in \cal{B}$. We use this fact in the statement of the following proposition.

\begin{proposition} 
\label{prp:S characterisation}
Let $a \in \mathcal B$ and  $s$ satisfy \ref{assum:flatness} and \ref{assum:Gamma}.  
The following relations between $\beta$, $\mathbb{S}$,  $\mathbb{S}_\eps$ and $\rho_\zeta$ apply. 
\begin{enumerate}[label=(\roman*)] 
\item \label{continuity of beta}
The function $\C \ni \zeta \mapsto \beta(\zeta)$ is continuous and satisfies $\lim_{\zeta \to \infty} 
\beta(\zeta ) = + \infty$.  In particular, $\mathbb{S}$ is bounded.
\item 
\label{item:spec a in S}
The spectrum of $D_a$ lies inside $\mathbb{S}$, i.e.\ 
\bels{spec a in S}{
\spec (D_a) \subset {\mathbb{S}}\,.
}
\item \label{relation beta and lambda}  The sign of $\beta$ satisfies 
\bels{sign beta = sign lambda}{
\sign{\beta(\zeta)} = \rm{sign} \Big(1-\lambda_{\rm{PF}} \Big(SD_{\abs{a-\zeta}}^{-2}\Big)\Big) \,, \qquad \zeta \in \C\,.
}
\item 
\label{characterisation beta>0}
For any $\zeta \in \C$ with $\beta(\zeta)>0$ the operator $B_\zeta$ is invertible. Furthermore, all such $\zeta$ are characterised by
\bels{eq:characterisation beta>0}{
\{\zeta \in \C: \beta(\zeta) >0\} = \{\zeta \in \C: \dist(0, \supp \rho_\zeta)>0\}=\C \setminus \mathbb S_0\,.
}
\item 
\label{characterisation of S}
The set $\mathbb{S}$ is characterised by having a positive singular value density at the origin, i.e.\  
\bels{eq:characterisation of S in terms of rho}{
\mathbb{S}= \{\zeta  \in \C : \rho_\zeta(0)>0\}\,.
}
\end{enumerate}
\end{proposition}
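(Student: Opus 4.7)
I plan to prove the five items in the order (iii), (ii), (i), (iv), (v), because (iii) provides the central bridge between $\beta$ and the Perron--Frobenius spectrum of $SD_{|a-\zeta|^{-2}}$, and the remaining items become translations of that identity to different regimes. The starting observation, valid throughout, is the explicit evaluation of the inner supremum in the Birkhoff--Varga formula: for fixed $x \in \cal B_+$,
\[
\sup_{y \in \cal B_+} \frac{\scalar{x}{B_\zeta y}}{\scalar{x}{y}} = \esssup_z \pbb{|a(z) - \zeta|^2 - \frac{(S^*x)(z)}{x(z)}},
\]
so that $\beta(\zeta) = \inf_{x \in \cal B_+} \esssup_z(|a-\zeta|^2 - S^*x/x)$, using $\scalar{x}{B_\zeta y} = \scalar{B_\zeta^T x}{y}$ with $B_\zeta^T x = |a-\zeta|^2 x - S^* x$. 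For $\zeta \notin \spec(D_a)$, $\phi := |a-\zeta|^{-2}$ lies in $\cal B$, and by \ref{assum:flatness} together with $s \in L^2(\mu \otimes \mu)$ the operator $T_\zeta := SD_\phi$ and its adjoint $D_\phi S^*$ are compact positivity-preserving with a simple positive Perron--Frobenius eigenvalue. The Collatz--Wielandt characterisation translates $\beta(\zeta) < 0$ into the existence of $x \in \cal B_+$ with $D_\phi S^* x \geq (1+\delta) x$, equivalent to $\lambda_{\rm PF}(T_\zeta) > 1$; the case $\beta(\zeta) > 0$ is handled symmetrically via the Perron eigenvector of $D_\phi S^*$. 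This establishes (iii) on $\C \setminus \spec(D_a)$.

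For (ii), given $\zeta_0 \in \spec(D_a)$, I exhibit $x_\delta := (|a-\zeta_0|^2 + \delta)^{-1} \in \cal B_+$ as a witness for $\beta(\zeta_0) < 0$; the corresponding ratio is $(B_{\zeta_0}^T x_\delta)/x_\delta = |a-\zeta_0|^2 - (|a-\zeta_0|^2 + \delta) S^* x_\delta$. Since $\zeta_0 \in \spec(D_a)$ ensures $\mu(E_\delta) > 0$ for $E_\delta := \{|a-\zeta_0|^2 \leq \delta\}$, which by \ref{assum:flatness} can be arranged within a single partition block $I_{j_0}$, I have $x_\delta \geq (2\delta)^{-1}$ on $E_\delta$ and $(S^*x_\delta)(z) \gtrsim \mu(E_\delta \cap I_{j_0})/\delta$ on $E_\delta$; the primitivity of $Z$ propagates the lower bound off $E_\delta$ via iterated applications of $S^*$. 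Choosing $\delta$ sufficiently small produces a uniform strictly negative upper bound on the essential supremum, hence $\beta(\zeta_0) < 0$. Formula (iii) then extends naturally with the convention $\lambda_{\rm PF}(SD_{|a-\zeta_0|^{-2}}) = +\infty$. Part (i) now follows: off $\spec(D_a)$, continuity of $\beta$ comes from (iii) and continuity (in fact analyticity, by the implicit function theorem applied to the characteristic equation, anticipating Corollary~\ref{crl:beta as eigenvalue of B}) of the simple eigenvalue $\lambda_{\rm PF}(T_\zeta)$; at $\zeta_0 \in \spec(D_a)$, upper semicontinuity of the $\inf_x$ formula combined with the strict negative upper bound from (ii) gives continuity. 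The divergence $\beta(\zeta) \to +\infty$ as $|\zeta| \to \infty$ follows from the diagonal lower bound $\beta(\zeta) \geq \inf_x \scalar{B_\zeta^T x}{x}/\scalar{x}{x} \geq \essinf|a-\zeta|^2 - \norm{S}_{L^2 \to L^2}$ obtained by restricting to $y = x$.

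For (iv) and (v), I connect $\beta$ to the Dyson equation asymptotics as $\eta \downarrow 0$. The forward direction of (v) is immediate: if $\rho_\zeta(0) > 0$, i.e.\ $\limsup_\eta \avg{v_1} > 0$, then Proposition~\ref{prp:Analyticity in the bulk} extends $v_1, v_2$ real analytically to $\eta = 0$ with $v_1 > 0$, and at $\eta = 0$, \eqref{eq:v1} rearranges to $(|a-\zeta|^2 v_1 - S^*v_1)(z) = -v_1(z)(S^*v_1)(z)(Sv_2)(z) < 0$ a.e.\ (all factors strictly positive by \ref{assum:flatness}), giving $\beta(\zeta) \leq \esssup_z(|a-\zeta|^2 - S^*v_1/v_1) < 0$. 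For the converse, suppose $\avg{v_1(\zeta, \eta_n)} \to 0$ along some $\eta_n \downarrow 0$; Lemma~\ref{lem:v_scaling} yields $v_1 \to 0$ uniformly in $L^\infty$. Inserting the leading-order ansatz $v_1 = \phi w + o(v_1)$ with $w := \eta + S^*v_1$ into \eqref{eq:v1} produces $(I - S^*D_\phi) w = \eta + o(w)$; normalising $\hat w_n := w_n/\avg{w_n}$, extracting a subsequential weak $L^2$-limit, and pairing with the Perron eigenvector $u$ of $D_\phi S$ (with $\phi Su = \lambda^\ast u$, $\lambda^\ast := \lambda_{\rm PF}(SD_\phi)$) yields $(1 - \lambda^\ast)\scalar{u}{\hat w_n} = \eta_n \avg{u}/\avg{w_n} + o(1)$. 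Since the right-hand side is nonnegative while the left has sign $\sign(1 - \lambda^\ast)$, this forces $\lambda^\ast \leq 1$, i.e.\ $\beta(\zeta) \geq 0$ by (iii). Part (iv) is analogous: $\beta(\zeta) > 0$ gives $\lambda^\ast < 1$, so $(I - S^*D_\phi)$ has a bounded positive inverse, the Dyson equation gives $v_1 \sim \eta \phi$, and $M(\zeta, \cdot)$ extends analytically across $w = 0$, producing the spectral gap $\dist(0, \supp \rho_\zeta) > 0$ by standard MDE theory as in \cite{AEK_Shape}.

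The main obstacle I anticipate is the sign-contradiction argument in (v): justifying that the $o(\cdot)$ remainders in the expansion of \eqref{eq:v1} are asymptotically negligible relative to $(I - S^*D_\phi)\hat w_n$ when tested against $u$, and identifying the weak subsequential limit of $\hat w_n$ within the Perron--Frobenius eigenspace. This rests on the uniform $L^\infty$ bounds of Corollary~\ref{cor:boundedness} and the averaging estimate $S^*v_1 \sim \avg{v_1}$ from Lemma~\ref{lem:Saveraging}, and requires care in passing to the weak limit while preserving the Perron--Frobenius structure. A secondary technical point is the construction of the witness $x_\delta$ in (ii) when $\zeta_0$ is a non-atomic essential value of $a$ and $\mu(E_\delta)/\delta$ does not diverge; there the primitivity of $Z$ must be exploited to iterate $S^*$ on the test function and still produce a uniform negative bound.
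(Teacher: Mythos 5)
Your proof of (iii) goes through a genuinely different and potentially more elementary route than the paper's: after writing out the inner supremum of the Birkhoff--Varga minimax as $\esssup_z(|a-\zeta|^2 - S^*x/x)$, you relate $\beta(\zeta)$ to $\lambda := \lambda_{\rm{PF}}(SD_{|a-\zeta|^{-2}})$ through the Perron eigenvectors supplied by Krein--Rutman. The paper's $\eps$-regularised estimates \eqref{beta lambda inequality 1}--\eqref{beta lambda inequality 2} only give the \emph{upper} bound $\beta \le C(1-\lambda)$ when $\lambda<1$, which forces the paper into a Dyson-equation argument to rule out the case $\beta=0,\lambda<1$. Plugging the Perron eigenvector $u$ of $D_{\phi}S$ (with $\phi=|a-\zeta|^{-2}$, $u\sim 1$ by \ref{assum:flatness}) into the \emph{dual} formula $\beta \geq \sup_y\essinf_z(|a-\zeta|^2 - Sy/y)$ yields the lower bound $\beta \geq (1-\lambda)\essinf|a-\zeta|^2 > 0$ directly, sidestepping the detour. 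Do make this dual step explicit, though: "handled symmetrically via the Perron eigenvector of $D_\phi S^*$" only supplies upper bounds on $\beta$ and does not by itself give $\lambda<1 \Rightarrow \beta>0$.

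The genuine gap is in (ii). Your direct-witness argument with $x_\delta = (|a-\zeta_0|^2+\delta)^{-1}$ cannot succeed without invoking \ref{assum:Gamma}; what you flag at the end as a "secondary technical point" is in fact the crux, because the statement is false without \ref{assum:Gamma}. Concretely, take $\mathfrak X=[0,1]$ with Lebesgue measure, $s\equiv 1$ (so $S = \avg{\,\cdot\,}$ and \ref{assum:flatness} holds trivially) and $a(t) = 2t^{1/4}$. Then $0 = a(0) \in \spec(D_a)$, but if there were $x>0$ with $4\sqrt{t}\,x(t) < \avg{x}$ a.e., averaging would give $\avg{x} < \tfrac12\avg{x}$, a contradiction; hence $\beta(0)\ge 0$ and $0\notin\mathbb{S}$, so \eqref{spec a in S} fails. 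Consistently, $\lambda_{\rm{PF}}(SD_{|a|^{-2}}) = \avg{|a|^{-2}} = \tfrac14\int_0^1 t^{-1/2}\,\dd t = \tfrac12<1$. This $a$ violates \ref{assum:Gamma} (indeed $\Gamma_{a,s}(\tau)$ stays bounded at $x_0=0$ as $\tau\to\infty$). The quantitative input your witness argument needs — essentially $\int|a(x_0)-a|^{-2}\dd\mu=\infty$ for $x_0$ witnessing $\zeta_0\in\spec(D_a)$ — is exactly what \ref{assum:Gamma} enforces, and you never bring it in. The paper avoids this altogether by proving (ii) \emph{last}, deducing it from (v) together with Lemma~\ref{lem:v_scaling}, both of which rely on \ref{assum:Gamma} through Corollary~\ref{cor:boundedness}. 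Your ordering, proving (ii) second and directly, would require a self-contained quantitative consequence of \ref{assum:Gamma} that the proposal does not supply.
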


Before proving Proposition~\ref{prp:S characterisation}, we state a corollary, which follows directly from Proposition~\ref{prp:Analyticity in the bulk} and Proposition~\ref{prp:S characterisation} \ref{characterisation of S}. 

\begin{corollary}[Existence and uniqueness for \eqref{eq:V_equations at eta=0}] \label{cor:uniqueness_v_equations_eta=0} 
Let $a \in \mathcal B$ and $s$ satisfy \ref{assum:flatness} and \ref{assum:Gamma}.
Then for each $\zeta \in \mathbb S$, the relations \eqref{eq:V_equations at eta=0} have a unique solution $(v_1, v_2) \in \mathcal B_+ \times \mathcal B_+$. 
\end{corollary}

\begin{proof}
Proof of \ref{continuity of beta}: 
The continuity of $\zeta \mapsto \beta(\zeta)=\beta$ with $B = B_\zeta$ is a consequence of   the bound  
\[
\absbb{\inf_{x \in \cal B_+} \sup_{y \in \cal B_+}\frac{\scalar{x}{(B+D_w)y}}{\scalar{x}{y}} - \beta }
\le \sup_{x \in \cal B_+} \sup_{y \in \cal B_+}\frac{\scalar{x}{D_{\abs{w}}y}}{\scalar{x}{y}} \le \norm{w}_\infty
\]
for any real valued $w \in \cal B$  since $B-B_{\zeta + \omega} = D_w$ with $w = a-\zeta|^2-|a-\zeta + \omega|^2$  and $\norm{w}_\infty \to 0$ for  $\omega \in \C$ with  $|\omega| \to 0$.  This bound follows from 
\[
\frac{\scalar{x}{(B+D_w)y}}{\scalar{x}{y}}  \le \frac{\scalar{x}{By}}{\scalar{x}{y}} +\sup_{\wt x \in \cal B_+} \sup_{\wt y \in \cal B_+}\frac{\scalar{\wt x}{D_{|w|}\wt y}}{\scalar{\wt x}{\wt y}}  \le   \frac{\scalar{x}{By}}{\scalar{x}{y}} + \norm{w}_\infty
\]
for every $x,y \in \cal B_+$, as well as 
\[
\frac{\scalar{x}{(B+D_w)y}}{\scalar{x}{y}}  \ge  \frac{\scalar{x}{By}}{\scalar{x}{y}} - \sup_{\wt x \in \cal B_+} \sup_{\wt y \in \cal B_+}\frac{\scalar{\wt x}{D_{|w|}\wt y}}{\scalar{\wt x}{\wt y}}  \ge    \frac{\scalar{x}{By}}{\scalar{x}{y}} - \norm{w}_\infty,
\]
and then taking the supremum over $y \in \cal B_+$ and the infimum over $x \in \cal B_+$ in both inequalities. 
The statement $\beta(\zeta ) \to + \infty$ as $\zeta \to \infty$ is obvious.

Before we start with the proof of other individual statements of the proposition, we show that $\mathbb{S}$ can  be classified in terms of the Perron-Frobenius eigenvalue of  $SD_{\abs{a-\zeta}}^{-2}$ in the  sense that
\bels{beta lambda same sign}{
\mathbb{S} = \{\zeta \in \C :\lambda(\zeta)>1\} \,,
}
where we introduced $\lambda : \C \to [0,\infty]$ as the limit of a strictly increasing  sequence via
\[
\lambda(\zeta):= \lim_{\eps\downarrow 0} \lambda_\eps(\zeta)\,, \qquad \lambda_\eps(\zeta):= \lambda_{\rm{PF}} \pb{S(\eps+D_{ \abs{a-\zeta}^2})^{-1}}.
\]
 We recall that $\lambda_{\rm{PF}}(T)$ denotes the Perron-Frobenius eigenvalue of $T$ and refer to the comments before the statement of the proposition for its definition. 

To show \eqref{beta lambda same sign} let $\eps \in (0,1)$, $D:=D_{\abs{\zeta-a}^2 } $, $\lambda_\eps=\lambda_\eps(\zeta)$ 
 and $C>0$ such that $1+\abs{\zeta-a}^2 \le C$. 
For $\zeta \in \C$ with $\beta(\zeta) \ge 0$ we get 
\bes{
\beta + \eps \le C\inf_{x\in \mathcal B_+}\sup_{y\in \mathcal B_+} \frac{\scalar{x}{(\eps+B) y}}{\scalar{x}{(\eps+D) y}} = C\pbb{1-  \sup_{x\in \mathcal B_+}\inf_{y\in \mathcal B_+}\frac{\scalar{x}{S(\eps+D)^{-1}y}}{\scalar{x}{y}}}=C\pb{1-\lambda_\eps} ,
}
where we used $\eps + \abs{\zeta-a}^2 \le C$ in the first inequality, 
and conclude 
\bels{beta lambda inequality 1}{
\beta \le C(1-\lambda) \qquad \text{ if } \beta \ge 0\,.
}
For $\zeta \in \C$ with $\beta(\zeta) < 0$ we use 
\[
-\beta-\eps =  \sup_{x\in \mathcal B_+}\inf_{y\in \mathcal B_+} \frac{-\scalar{x}{(\eps +B) y}}{\scalar{x}{ y}}
\]
for sufficiently small $\eps>0$ and  find analogously that 
\bels{beta lambda inequality 2}{
\beta\ge 
C\pb{1-\lambda}  \qquad  \text{ if } \beta < 0\,.
}
From \eqref{beta lambda inequality 1} and \eqref{beta lambda inequality 2} we conclude \eqref{beta lambda same sign}.

We also  show that 
\bels{spec a in closure S}{
\spec (D_a) \subset \{\zeta \in \C: \beta(\zeta) \le 0\}.
} We will improve this to \eqref{spec a in S} below. 
Let $\zeta \in \spec (D_a)$. Then $\essinf \abs{\zeta-a} =0$. Thus, for any $\eps>0$ we find $x \in \mathcal B\setminus \{ 0\}$ with $x \geq 0$ such that $ \abs{\zeta-a}^2 x \le \eps x$. 
 In the definition of $\beta$ from \eqref{def of beta and B and S} we can take the supremum over all $x \in \ol{\cal B}_+$, Thus, we get  
\[
\beta \le \eps-  \inf_{y\in \mathcal B_+} \frac{\scalar{x}{Sy}}{\scalar{x}{y}} \le \eps\,.
\]
Since $\eps>0$ was arbitrarily small, we conclude $\beta \le 0$.  

Proof of \ref{characterisation beta>0}: 
Let $\zeta \in \C$ such that $\beta(\zeta)=\beta >0$. Then \eqref{beta lambda inequality 1} implies $\lambda_{\rm{PF}} (SD^{-1}) <1$ with $D=D_{\abs{\zeta-a}^2 }$. Here, $D$ is invertible because $\essinf \abs{\zeta-a} >0$ due to \eqref{spec a in closure S}.
In particular,  $B= (1- SD^{-1})D$ is invertible.  

Now we show $\dist(0, \supp \rho_\zeta)>0$ to see one inclusion in the characterisation \eqref{eq:characterisation beta>0}. 
The Dyson equation in the matrix representation,  \eqref{general MDE} is solved by 
\bels{def of M0}{
M_0:= \mtwo{0 & (\overline{a - \zeta})^{-1}}{(a-\zeta)^{-1} &0 }
}
at $w=0$. Furthermore, the associated stability operator (cf. \eqref{eq:def_stability_operator})
\bels{MDE stability operator outside}{
\cal{L}_0\colon   \cal B^{2\times 2} \to \cal B^{2\times 2},  \quad R \mapsto M_0^{-1}RM_0^{-1} - \Sigma R = \mtwo{\abs{a-\zeta}^2r_{22}- Sr_{22} & (a-\zeta)^2r_{21}}{(\overline{a-\zeta})^{2}r_{12} & \abs{a-\zeta}^2r_{11}- S^{ *}r_{11} }
}
is invertible because $B_\zeta$ is invertible  and $\essinf \abs{a-\zeta} >0$.  Therefore \eqref{general MDE} can be uniquely solved for sufficiently small $w$ as an analytic function $w \mapsto M(\zeta , w)$ with $M(\zeta,0)=M_0$ and we get
\[
\cal{L}_0[\partial_w M(\zeta,w)|_{w=0} ]= 1\quad \text{and} \quad 
\partial_\eta M(\zeta,  \ii \eta) |_{\eta =0} = \ii\1\cal{L}_0^{-1} [1]
\]
In particular, $\im M(\zeta, \ii \eta)$ is positive definite for sufficiently small $\eta >0$ because $\cal{L}_0^{-1}$ is positivity preserving, as can be seen from a Neumann series expansion using that  $\lambda_{\rm{PF}}(SD^{-1}) < 1$. Therefore $M(\zeta, \ii \eta)$ is the unique solution of \eqref{general MDE} with $m_{11}(\zeta,\ii \eta) =\ii v_1(\zeta, \eta)$ and $m_{22}(\zeta,\ii \eta)=\ii v_2(\zeta, \eta)$. Since $m_{11}|_{\eta=0} =m_{22}|_{\eta=0}=0$ we conclude that $\rho_\zeta(0) = \avg{v_1(\zeta,0)}=0$. The invertibility of $\cal{L}_0$ also implies  analyticity of $M(\zeta, w)$ in $w$ in a small neighbourhood of zero. Thus, $\rho_\zeta ( [-\eps,\eps])=0$ for $\eps>0$ sufficiently small 
 and  $\dist(0, \supp \rho_\zeta )>0$. 

To see the other inclusion in \eqref{eq:characterisation beta>0}, let $\zeta \in \C$ be such that $\dist(0, \supp \rho_\zeta )\ge \delta$ for some $\delta>0$. 
From \cite[Lemma~D.1 (iv)]{AEK_Shape} we know that $M=M(\zeta, \ii \eta)$ is locally a real analytic function of $\eta$ with an expansion $M= M_0 + \ii\1\eta \2M_1 + O(\eta^2)$, where $M_0=M_0^*$.  
Taking the imaginary part of \eqref{general MDE} at $w=\ii \eta$, dividing  both sides by $\eta$  shows that
\bels{Keta equation}{
(M^*)^{-1}K_\eta M^{-1} = 1 + \Sigma[K_\eta]\,,
}
where $K_\eta = \frac{1}{\eta} \im M= \re M_1 + O(\eta)$. In particular, $K_0:= \lim_{\eta \downarrow 0}K_\eta $ exists and since $\im M(\zeta, \ii \eta) \sim_\delta \eta$ by Lemma~\ref{lem:v_away_support}  we get $K_0 \sim_\delta 1$. 
Evaluating \eqref{Keta equation} at $\eta=0$ yields
\bels{K0 equation}{
M_0(\Sigma K_0) M_0 = K_0 -1\,.
}
Taking the scalar product of \eqref{K0 equation} with the left Perron-Frobenius eigenvector of $R \mapsto M_0 (\Sigma R ) M_0$ and using $K_0 \sim_\delta 1$ we see that 
  $\lambda_{\rm{PF}}(R \mapsto M_0 (\Sigma R ) M_0) <1$. This
 is equivalent to $\lambda:=\lambda_{\rm{PF}}(SD^{-1})<1$ with $D:=D_{\abs{a-\zeta}^2}$. Now let $u \in \mathcal B_+$ be the Perron-Frobenius eigenvector of $SD^{-1}$. Since $\eps:=\essinf \abs{a-\zeta}>0$ we get with $y_0:=D^{-1}u$ that
\[
(D -S)y_0=(1-\lambda)Dy_0 >0\,.
\]
Thus, 
\[
\beta = \inf_{x >0} \sup_{y >0} \frac{\scalar{x}{B y}}{\scalar{x}{y}} \ge \inf_{x >0} \frac{\scalar{x}{B y_0}}{\scalar{x}{y_0}} \ge (1-\lambda)\1\eps^2>0\,.
\]
This finishes the proof of \eqref{eq:characterisation beta>0},  i.e.\ of \ref{characterisation beta>0}.   

Proof of \ref{relation beta and lambda}: 
We have now collected enough information to improve \eqref{beta lambda same sign} to \eqref{sign beta = sign lambda}.
Indeed, by \eqref{beta lambda inequality 1} and \eqref{beta lambda inequality 2} it remains to show that $\lambda<1$ implies $\beta >0$. Due to \eqref{beta lambda same sign} we already know $\beta \ge 0$ in  case $\lambda<1$.  
Now let  $\beta=0$ and $\lambda \le 1$. Then we show that $\lambda=1$.
Indeed by the characterisation \eqref{eq:characterisation beta>0}  we have $0 \in \supp \rho_\zeta$. Now we consider the identity 
\[
B_\zeta v_2 
=  \eta-v_2(\eta + S^*v_1)(\eta +Sv_2)
\]
which follows from \eqref{eq:v2}. For some $\eps>0$ we add $\eps\1 v_2$ to both sides and apply  the inverse of $\eps+D$ with $D=D_{\abs{a-\zeta}^2}$. Then we take the scalar product with the right Perron-Frobenius eigenvector $x_\eps \in \mathcal B_+$  of $S^*(\eps+D)^{-1}$ corresponding to its Perron-Frobenius eigenvalue $\lambda_\eps>0$.  Note that the Perron-Frobenius eigenvalues of $S^*(\eps+D)^{-1}$, $(\eps+D)^{-1}S$ and $S(\eps+D)^{-1}$ all coincide. 
Thus we get 
\bels{scalar with x eps}{
(1-\lambda_\eps) \avg{x_\eps v_2} = \eta \1 \avg{(\eps + D)^{-1}x_\eps} -\avg{x_\eps (\eps+D)^{-1}(v_2(\eta + S^*v_1)(\eta +Sv_2)-\eps\1 v_2)}\,.
}
From \cite[Corollary~D.2]{AEK_Shape} and $\avg{v_i} \sim v_i$  by Lemma~\ref{lem:v_scaling}\ref{item:v_i_sim_avg_v_i}  we see that $\eta/\avg{v_i} \to 0$ for $\eta \downarrow 0$. Thus, dividing \eqref{scalar with x eps} by $\avg{v_2}$, taking the limit $\eta\downarrow 0$ and using   \eqref{eq:Saveraging}  reveals
\[
( 1-\lambda_\eps)\avg{x_\eps }\sim ( 1-\lambda_\eps)\avg{x_\eps k} =  \eps \avg{x_\eps(\eps+D)^{-1}k}\sim \eps \avg{x_\eps(\eps+D)^{-1}S1}=
 \eps \avg{S^*(\eps+D)^{-1}x_\eps}=\eps \1 \lambda_\eps\avg{x_\eps }\,,
\]
where $k:= \limsup_{\eta \downarrow 0} \frac{v_2}{\avg{v_2}}\sim 1$. Letting $\eps \downarrow 0$ shows $\lambda=1$.
Thus,  \eqref{sign beta = sign lambda} is proven. 

Proof of \ref{characterisation of S}: By \eqref{eq:characterisation beta>0} we know that $\rho_\zeta(0)>0$ implies $\beta(\zeta) \le 0 $. 
 Thus, it suffices to show that for $\zeta \in \C$ with $\beta(\zeta) \le 0$ we get $\beta(\zeta) =0$ if and only if $ \rho_\zeta(0)=0$. 
 Now let $\beta=\beta(\zeta) \le 0$.
 As above, we consider the identity \eqref{scalar with x eps}. 
First, suppose $\rho_\zeta(0)>0$, i.e.\ we can analytically extend $v$ to $\eta=0$ by Proposition~\ref{prp:Analyticity in the bulk} and have $v|_{\eta=0}>0$.   Then in the limit $\eta \downarrow 0$ we find 
\[
(\lambda_\eps-1) \avg{x_\eps v_2} = \avg{x_\eps (\eps+D)^{-1}(v_2 (S^*v_1Sv_2-\eps))}
\]
Using $v_2 \sim \avg{v_2}\sim \rho_\zeta(0)$ for small enough $\eps>0$ the right hand side satisfies 
\[
\avg{x_\eps (\eps+D)^{-1}(v_2 (S^*v_1Sv_2-\eps))}\sim \rho_\zeta(0)^3 \avg{x_\eps (\eps+D)^{-1}S1} \sim \lambda_\eps\rho_\zeta(0)^3 \avg{x_\eps}.
\]
Since $\avg{x_\eps v_2}  \sim  \rho_\zeta(0) \avg{x_\eps}$ we infer $\lambda_\eps-1 \sim  \lambda_\eps\2\rho_\zeta(0)^2$. Thus, $\lambda>1$ and by \eqref{sign beta = sign lambda} therefore $\beta < 0$.

Conversely, let $v|_{\eta=0}=0$. Then we know from Lemma~\ref{lem:v_scaling} \ref{item:zeta_minus_a_near_edge} that $\delta:=\essinf \abs{a-\zeta}>0$. Since $\beta(\zeta)\le 0$ the characterisation \eqref{eq:characterisation beta>0} implies $0 \in \supp \rho_\zeta$ and by \eqref{sign beta = sign lambda} we have $\lambda \ge 1$. Since  $\eta/\avg{v} \to 0$ for $\eta \downarrow 0$ by \cite[Corollary~D.2]{AEK_Shape} we get, dividing  \eqref{scalar with x eps} by $\avg{v_2}$ and taking the limit $\eta\downarrow 0$,  the scaling behaviour
\[
1-\lambda_\eps \sim \eps \1 \lambda_\eps\,.
\]
This implies $\lambda_\eps \le 1 $, thus $\lambda=1$, and completes the proof of \ref{characterisation of S}.  

Proof of \ref{item:spec a in S}: Let $\zeta \in \spec(D_a)$. By \eqref{spec a in closure S} we know $\beta(\zeta) \le 0$. Suppose $\beta(\zeta)=0$. Then \eqref{eq:characterisation of S in terms of rho} would imply $\rho_\zeta(0)=0$. However, this contradicts $\zeta \in \spec(D_a)$ because of    Lemma~\ref{lem:v_scaling}  \ref{item:zeta_minus_a_near_edge} and the definition of $\rho_\zeta(0)$ in \eqref{eq:def_rho_zeta_0}.   
This finishes the proof of the proposition.
\end{proof}

\subsection{Edge expansion of $v_1$ and $v_2$}

In this section we expand the solution $(v_1,v_2)$ of \eqref{eq:V_equations} around any $\zeta_0 \in \C$ 
with $\beta(\zeta_0) =0$. 
We will see later in Lemma~\ref{lem:beta_zero_and_critical_point} below that the set of these points coincides with $\partial \mathbb{S}$, 
which will turn out to be the regular edge and singular points of the Brown measure $\sigma$. 
Therefore we consider in this section a fixed $\zeta_0 \in \C$ with $\beta(\zeta_0)=0$.
The expansion of $v_1,v_2$ around  $\zeta_0$ is based on  analytic perturbation theory for $\beta$. Throughout this section we will always assume $\abs{\zeta-\zeta_0} + \eta \le c$ for some sufficiently small positive constant $c\sim 1$, i.e.\ we assume that $(\zeta,\eta)$ lies within a small neighbourhood of $(\zeta_0,0)$. 
We assume \ref{assum:flatness} and \ref{assum:Gamma} throughout the remainder of this section. 

To shorten notation, we denote $v_i =v_i(\zeta,\eta)$. 
The identities 
\bels{edge identities}{
&
B_\zeta \1v_2 =  \eta-v_2(\eta + S^*v_1)(\eta +Sv_2),  \qquad  
B_\zeta^* \1v_1 =  \eta-v_1(\eta + S^*v_1)(\eta +Sv_2), 
}
 which follow from \eqref{eq:v1} and \eqref{eq:v2}, respectively,  
are used  to expand $v_1$ and $v_2$ in a neighbourhood of $\zeta_0$. 
We will see in Corollary~\ref{crl:beta as eigenvalue of B} below that   the function $\zeta \mapsto \beta(\zeta)$, used to define $\mathbb{S}$ in \eqref{def of beta and B and S}, coincides locally around   $\zeta_0 $ with the isolated non-degenerate eigenvalue of $B_\zeta$ closest to zero.  
We denote by $b=b_\zeta \in \cal B_+$ and $\ell=\ell_\zeta  \in \cal B_+$   the right and left eigenvectors of $B=B_\zeta$, 
corresponding to the eigenvalue $\beta =\beta(\zeta)$ with normalisation $\avg{b} = \avg{\ell}=1$, i.e.\  
\bels{B eigenvectors}{
 B b = \beta \2b, \qquad B^* \ell = \beta \1\ell\,. 
 }
 The existence and uniqueness of $b$ and $\ell$ is a consequence of analytic perturbation theory and Lemma~\ref{lmm:Properties of B} below. This lemma also implies that $\zeta \mapsto b_\zeta$ and $\zeta \mapsto \ell_\zeta$ are real analytic functions. 
The main result of this section is the following proposition.

\begin{proposition}\label{prp:v edge expansion} Let $s$ and $a$ satisfy \ref{assum:flatness} and \ref{assum:Gamma}. Furthermore, let $\zeta_0 \in \C$ such that $\beta(\zeta_0) = 0$.
Then there is an open neighbourhood 
$U\subset \C\times \R^2$  of $( \zeta_0,0,0)$, an open neighbourhood $V\subset\C\times  \R$ of $(\zeta_0,0)$
and  real analytic functions $\wt{w}_1,\wt{w}_2\colon U \to \cal{B}$ such that 
\[
v_1(\zeta,\eta) = \vartheta(\zeta,\eta)\ell_\zeta+ \wt{w}_i(\zeta, \eta, \vartheta(\zeta,\eta))\,, \qquad 
v_2(\zeta,\eta) = \vartheta(\zeta,\eta)b_\zeta+ \wt{w}_i(\zeta, \eta, \vartheta(\zeta,\eta))
\]
for $(\zeta,\eta) \in V$ and $\eta>0$. Furthermore, 
 $\vartheta=\vartheta(\zeta,\eta)$ satisfies 
\bels{cubic for vartheta}{
\vartheta^3\avgb{\ell b (S^*\ell)(Sb) }+\beta\1\vartheta\1 \avg{\ell \1b}-\eta=  g(\zeta, \eta,\vartheta)\,,
}
for all $(\zeta,\eta) \in V$ 
where  $\ell=\ell_\zeta$, $b=b_\zeta, \beta =\beta(\zeta)$ and $g\colon U \to \R$  is a real analytic function, such that
\[
g(\zeta, \eta,x)=O( \abs{\eta \1x}^2 +\abs{x}^5 )\,, \qquad (\zeta, \eta,x) \in U\,.
\]
\end{proposition}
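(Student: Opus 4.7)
The strategy is a Lyapunov-Schmidt reduction around the simple Perron-Frobenius eigenvalue $\beta(\zeta)$ of $B_\zeta$, which passes through zero at $\zeta_0$. Since Proposition~\ref{prp:S characterisation} identifies the sign of $\beta$ with $1-\lambda_{\rm{PF}}(SD_{\abs{a-\zeta}^2}^{-1})$, the hypothesis $\beta(\zeta_0)=0$ forces $B_{\zeta_0}$ to have $0$ as an isolated, simple eigenvalue. Standard analytic perturbation theory then produces real analytic maps $\zeta \mapsto (\beta(\zeta), b_\zeta, \ell_\zeta)$ on a neighbourhood of $\zeta_0$ with $B_\zeta b_\zeta = \beta\1 b_\zeta$, $B_\zeta^* \ell_\zeta = \beta\1 \ell_\zeta$, and $\avg{b_\zeta} = \avg{\ell_\zeta} = 1$. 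The associated spectral projection $P_\zeta := b_\zeta \avg{\ell_\zeta\,\cdot\,}/\avg{\ell_\zeta b_\zeta}$ commutes with $B_\zeta$, and the restrictions of $B_\zeta$ and $B_\zeta^*$ to $\ker P_\zeta$ and $\ker P_\zeta^*$ respectively are uniformly invertible for $\zeta$ near $\zeta_0$.

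Next I substitute the ansatz $v_1 = x\1\ell_\zeta + w_1$, $v_2 = x\1 b_\zeta + w_2$ into the edge identities \eqref{edge identities}, treating $x \in \R$ as a free scalar parameter and imposing the orthogonality conditions $\avg{b_\zeta w_1} = 0$, $\avg{\ell_\zeta w_2} = 0$. Each edge identity splits into a scalar part (its projection onto the spectral eigenspace) and a vector part (its projection onto the complementary subspace). The Fréchet derivative of the coupled vector system at $(x, w_1, w_2, \eta) = (0,0,0,0)$ reduces to $B_\zeta^*|_{\ker P_\zeta^*}$ acting on $w_1$ and $B_\zeta|_{\ker P_\zeta}$ acting on $w_2$, both invertible by the previous step. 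The analytic implicit function theorem therefore provides real analytic functions $\tilde w_1, \tilde w_2 \colon U \to \cal{B}$ on a neighbourhood $U \subset \C \times \R^2$ of $(\zeta_0, 0, 0)$, vanishing at $(\zeta_0, 0, 0)$, solving the two vector equations for every $(\zeta, \eta, x) \in U$.

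Substituting $\tilde w_1, \tilde w_2$ back into the two remaining scalar equations and invoking the algebraic identity $v_2(\eta + S^* v_1) = v_1(\eta + S v_2)$ from \eqref{eq:v2_St_v1_equals_v1_S_v2} together with $\avg{v_1} = \avg{v_2}$ collapses them to a single scalar equation for $x$. Taylor expanding around $(x,\eta) = (0,0)$ using the leading behaviour $v_1 \approx x\1 \ell$, $v_2 \approx x\1 b$ extracts the cubic $x^3\avg{\ell b(S^*\ell)(Sb)}$ from $\avg{\ell v_2 (S^*v_1)(Sv_2)}$, the linear term $\beta\1 x\1 \avg{\ell b}$ from the projection of $B_\zeta v_2$ onto $\ell$, and the inhomogeneity $-\eta$; the remainder defines the real analytic error $g(\zeta, \eta, x)$ on $U$ and yields \eqref{cubic for vartheta}. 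For $(\zeta,\eta) \in V \subset \C\times \R$ with $\eta > 0$, the cubic has a distinguished positive real root, which defines $\vartheta(\zeta,\eta)$ and completes the decomposition.

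The main technical obstacle will be establishing the sharp error bound $g(\zeta, \eta, x) = O(\abs{\eta x}^2 + \abs{x}^5)$. A naive Taylor expansion produces apparent mixed contributions at orders $\eta^2 x$ and $\eta x^2$ that would violate this estimate; these must cancel once the $\eta$-linear and $x^2$-quadratic components of the implicit corrections $\tilde w_1, \tilde w_2$ are inserted. The cancellations are driven by the adjoint symmetry exchanging $(S, b) \leftrightarrow (S^*, \ell)$ and by the averaging identity $\avg{v_1} = \avg{v_2}$, which together force the low-order structure of $\tilde w_i$ into precisely the shape needed to annihilate the spurious mixed terms. Carrying out this bookkeeping through the second-order expansion of the IFT formula for $\tilde w_i$ is the main calculation of the proof.
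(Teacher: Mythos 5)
Your Lyapunov--Schmidt ansatz is over-constrained and does not contain the actual solution. At the degenerate point $(\zeta_0, 0)$ the linearization of the edge identities \eqref{edge identities} is $B_{\zeta_0}$ (resp.\ $B_{\zeta_0}^*$) acting on $v_2$ (resp.\ $v_1$), whose kernel is \emph{two}-dimensional: $v_1 \in \linspan(\ell_0)$ and $v_2 \in \linspan(b_0)$ with independent coefficients. Your ansatz $v_1 = x\ell_\zeta + w_1$, $v_2 = x b_\zeta + w_2$ with a shared $x$ and the constraints $\avg{b_\zeta w_1}=0$, $\avg{\ell_\zeta w_2}=0$ covers only the diagonal of that kernel. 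For the actual Dyson solution the two spectral coefficients $\vartheta_1 = \avg{bv_1}/\avg{\ell b}$ and $\vartheta_2 = \avg{\ell v_2}/\avg{\ell b}$ from \eqref{def:vartheta_i} are in general \emph{not} equal; the paper's \eqref{vartheta identity} shows only $\vartheta_1 - \vartheta_2 = O(\eta + \alpha^3)$. Hence your parameterization excludes $(v_1,v_2)$, and after solving the vector equations you are left with two scalar equations for one unknown $x$. They differ by $\avg{(b w_1 - \ell w_2)(\eta + S^*v_1)(\eta + Sv_2)}$, which is not forced to vanish by the vector equations. Moreover, the identities $v_2(\eta + S^*v_1) = v_1(\eta + Sv_2)$ and $\avg{v_1} = \avg{v_2}$ that you invoke to ``collapse'' the scalar equations are equivalent reformulations of the full Dyson equation, so they do not hold for the intermediate candidates produced by your vector step; using them to prove the remaining scalar equation is circular.

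The paper sidesteps all of this by keeping $\vartheta_1$ and $\vartheta_2$ as a priori independent components of the decomposition \eqref{vi split}, bounding $\wt{v}_i$ via \eqref{tilde v equations} and \eqref{bound on tilde alpha}, deducing $\vartheta_1 - \vartheta_2 = O(\eta + \alpha^3)$ from $\avg{v_1}=\avg{v_2}$, and taking the \emph{arithmetic mean} of the two scalar projections to obtain the single cubic \eqref{vartheta cubic} for $\vartheta := \tfrac{1}{2}(\vartheta_1 + \vartheta_2)$. The functions $\wt{w}_i$ in the proposition statement therefore carry small components along $\ell$ and $b$, absorbing the offsets $\vartheta_i - \vartheta$; no orthogonality is claimed or needed for them. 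To repair your proof, either introduce two independent scalar parameters and solve the two scalar equations as a $2\times 2$ system, or drop the strict orthogonality constraints and parameterize by $\vartheta$ alone. Your remark that cancellations are required to reach $g = O(\abs{\eta x}^2 + \abs{x}^5)$ is a genuine piece of the bookkeeping, but it cannot be carried out within a parameterization that cannot represent the solution.
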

The proof of  Proposition~\ref{prp:v edge expansion}  is the content of the remainder of this section and will be summarised at its end. 
We remark that as a solution to the cubic equation \eqref{cubic for vartheta} the quantity $\vartheta$ and with it $v_1,v_2$ are not analytic at $\zeta=\zeta_0$ and $\eta=0$.

The following lemma collects spectral properties of $B_{\zeta_0}$. These properties yield corresponding properties of $B_\zeta$ for sufficiently small $\abs{\zeta-\zeta_0}$, using analytic perturbation theory. We will use this idea throughout the remainder of this section after the statement  of Lemma~\ref{lmm:Properties of B}. 
\begin{lemma}[Properties of $B$]
\label{lmm:Properties of B}
Let $\zeta_0 \in \C$ with $\beta(\zeta_0)=0$ and $B_0:=B_{\zeta_0}$. Then there is a constant $\eps>0$ with $\eps\sim 1$ such that  
\bels{B0 resolvent control}{
\sup\cB{\norm{(B_0-\omega)^{-1}}_{\#}+\norm{(B_0^*-\omega)^{-1}}_{\#}: \omega \in \D_{2\eps}\setminus \D_{\eps}} \lesssim 1
}
for $\#=2,\infty$. Here $\D_{\eps}$ contains a single isolated non-degenerate eigenvalue $0$ of $B_0$, i.e,
\bels{Nondegenerate beta}{
\D_\eps \cap \spec(B_0) =\{0\}\,, \qquad \dim \rm{ker}B_0^2 =1\,.
}
Moreover, the  right and left eigenvectors, $b_0 \in \cal{B_+}$ and $\ell_0 \in \cal{B_+}$,  corresponding to this eigenvalue with normalisation $\avg{b_0} =\avg{\ell_0}=1$ satisfy the bounds $\ell_0\sim b_0 \sim 1$. Furthermore, if 
\[
P_0:= \frac{\avg{\ell_0\,\cdot \,}}{\avg{\ell_0\1 b_0}}b_0\,, \qquad Q_0:=1-P_0
\]
denote the associated spectral projections then 
\bels{Q0B0 inverse bound}{
\norm{B_0^{-1}Q_0}_\#+\norm{(B_0^{*})^{-1}Q_0^*}_\# \lesssim 1\,.
}
\end{lemma}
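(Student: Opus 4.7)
The strategy is to deduce the spectral properties of $B_0 = D - S$, with $D := D_{\abs{a-\zeta_0}^2}$, from Perron--Frobenius theory for the compact positivity-preserving operator $D^{-1}S$, using the primitivity encoded in \ref{assum:flatness}. First I would observe that since $\zeta_0 \in \partial \mathbb S$ lies outside the open set $\mathbb S$, while Proposition~\ref{prp:S characterisation}\ref{item:spec a in S} places $\spec(D_a) \subset \mathbb S$, one has $\essinf \abs{a - \zeta_0} > 0$ and therefore $c \le D \le C$ for constants $c, C \sim 1$. The upper bound in \ref{assum:flatness} makes $S$ Hilbert--Schmidt on $L^2$ and compact on $\cal B$, so Weyl's theorem yields $\spec_{\mathrm{ess}}(B_0) = \spec(D) \subset [c, C]$; near $0$ the spectrum of $B_0$ therefore consists only of isolated eigenvalues of finite algebraic multiplicity, and analogously for $B_0^*$.

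Next I would establish the Perron--Frobenius structure. From $\beta(\zeta_0) = 0$ and Proposition~\ref{prp:S characterisation}\ref{relation beta and lambda} we have $\lambda_{\rm{PF}}(SD^{-1}) = 1$. By \ref{assum:flatness}, some power $(D^{-1}S)^L$ has an integral kernel that is uniformly bounded from above and from below (off a nullset) by a block-piecewise constant kernel determined by the primitive matrix $Z^L$, whose entries are strictly positive. Consequently $D^{-1}S$ is a primitive compact positivity-preserving operator, and the Krein--Rutman theorem gives that $1$ is algebraically simple in $\spec(D^{-1}S)$, that the remainder of the spectrum lies in $\D_{1-\delta}$ for some $\delta > 0$, and that the right and left Perron--Frobenius eigenvectors are strictly positive. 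Via the invertibility of $D$ these produce right and left eigenvectors $b_0, \ell_0 \in \cal B_+$ of $B_0$ and $B_0^*$ at eigenvalue $0$, normalised by $\avg{b_0} = \avg{\ell_0} = 1$. Algebraic simplicity of $0$ for $B_0$ then follows from $\avg{\ell_0 b_0} > 0$ via the Fredholm alternative: $b_0 \notin \mathrm{range}(B_0)$, so $\ker B_0^2 = \ker B_0$. The pointwise equivalences $b_0 \sim \ell_0 \sim 1$ are obtained by inserting $b_0$ into $S b_0 = \abs{a-\zeta_0}^2 b_0$, invoking \eqref{eq:Sprimitiv} to conclude $\mathbbm{1}_{I_i} b_0 \sim \mathbbm{1}_{I_i} \sum_j z_{ij} \avg{b_0 \mathbbm{1}_{I_j}}$ on each block, and iterating $L$ times so that $Z^L > 0$ propagates the normalisation $\avg{b_0}=1$ to a uniform two-sided pointwise bound; the argument for $\ell_0$ is symmetric.

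For the resolvent bounds I would pick $\eps \sim 1$ small enough that $\spec(B_0) \cap \D_{2\eps} = \{0\}$ and analogously for $B_0^*$. On the annulus $\D_{2\eps}\setminus \D_\eps$ the resolvent $(B_0-\omega)^{-1}$ is analytic, so the $L^2$ part of \eqref{B0 resolvent control} follows from the Riesz--Dunford calculus on the compact annulus. To bootstrap to $L^\infty$ I would use the identity $(B_0 - \omega)^{-1} = (D - \omega)^{-1} + (D - \omega)^{-1} S (B_0 - \omega)^{-1}$, in which $(D-\omega)^{-1}$ is a multiplication operator of $L^\infty$-norm $\lesssim 1$ on the annulus and $S \colon L^2 \to L^\infty$ is bounded by \eqref{boundedness of S}, yielding $\norm{(B_0 - \omega)^{-1}}_\infty \lesssim 1 + \norm{(B_0 - \omega)^{-1}}_2$. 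The analogous identity with $S^*$ handles $B_0^*$. Finally \eqref{Q0B0 inverse bound} follows from the contour representation $B_0^{-1} Q_0 = \frac{1}{2\pi \ii} \oint_{\abs{\omega} = 3\eps/2} \omega^{-1} (B_0 - \omega)^{-1} \dd \omega$ together with the resolvent bounds along the contour.

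The principal obstacle is upgrading \emph{geometric} to \emph{algebraic} simplicity of the $0$-eigenvalue of $B_0$; this is where the primitivity in \ref{assum:flatness} is essential, via Krein--Rutman, and where the positivity pairing $\avg{\ell_0 b_0} > 0$ closes the Fredholm argument. A secondary technical point is extracting the $L^\infty$ resolvent bound from $L^2$ spectral theory, which rests on the $L^2 \to L^\infty$ smoothing of $S$ built into \eqref{boundedness of S}.
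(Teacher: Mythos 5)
Your overall strategy — reduce the spectral analysis of $B_0 = D - S$ to Perron--Frobenius/Krein--Rutman theory for the compact positivity-preserving operator $D^{-1}S$, use $\essinf\abs{a-\zeta_0} > 0$ from Proposition~\ref{prp:S characterisation}\ref{item:spec a in S}, close the algebraic-simplicity argument via $\avg{\ell_0 b_0} > 0$, and iterate block averages to get $b_0 \sim \ell_0 \sim 1$ — matches the paper in substance, and the Weyl-theorem remark about the essential spectrum and your $L^2 \to L^\infty$ bootstrap $(B_0 - \omega)^{-1} = (D-\omega)^{-1} + (D-\omega)^{-1}S(B_0-\omega)^{-1}$ are both valid and correct.

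The gap is in the \emph{quantitative} $L^2$ resolvent bound. You invoke ``Riesz--Dunford calculus on the compact annulus'', but analyticity of $\omega \mapsto (B_0 - \omega)^{-1}$ on a compact annulus only gives a finite supremum for each fixed $\zeta_0$; it does not yield a bound $\lesssim 1$ with constants controlled by the model parameters in \ref{assum:flatness}--\ref{assum:Gamma}, which is what the statement $\eps \sim 1$ and $\lesssim 1$ demands. Two quantitative ingredients are missing. First, you state ``the remainder of the spectrum lies in $\D_{1-\delta}$ for some $\delta > 0$'' without arguing $\delta \sim 1$; without a uniform spectral gap for $D^{-1}S$ you cannot choose $\eps \sim 1$ in the first place. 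Second, even with a uniform gap, for a non-self-adjoint operator a spectral gap alone does not control the resolvent norm on the annulus — one needs to bound the resolvent of the compact part and the norm of the spectral projection explicitly, and also perform an analytic perturbation step to translate a gap of $D^{-1}S$ around $1$ into a gap of $B_0$ around $0$. The paper does exactly this: it writes the resolvent as $(B_0 - \omega)^{-1} = D^{-1}\big[(1 - SD^{-1} - \omega D^{-1})^{-1}\wt{Q}_\omega + (1 - SD^{-1} - \omega D^{-1})^{-1}\wt{P}_\omega\big]$, bounds the $\wt{Q}_\omega$ part via the quantitative resolvent estimate for $SD^{-1}$ supplied by Lemma~\ref{lmm:resolvent bound for S}, and controls the $\wt{P}_\omega$ part through the analytic expansion $\wt\beta(\omega) = -\omega \avg{\ell_0 b_0}/\avg{\ell_0 D b_0} + O(\abs{\omega}^2)$, which is bounded below by $\gtrsim \abs{\omega} \gtrsim \eps$ on the annulus. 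You would need to supply an equivalent quantitative argument — Lemma~\ref{lmm:resolvent bound for S} or a substitute giving explicit constants in the gap and the resolvent norm of $D^{-1}S$ — before the $\eps \sim 1$ and the $L^2$ bound in \eqref{B0 resolvent control} are actually established.
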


\begin{proof}
Here we present the  proofs of the bounds \eqref{B0 resolvent control} and \eqref{Q0B0 inverse bound} for $B_0$. The corresponding bounds for $B_0^*$ follow analogously. 
From Proposition~\ref{prp:S characterisation} \ref{item:spec a in S} and since $\mathbb{S}$ is bounded  we know that $\abs{a-\zeta_0} \sim 1$.  
Thus,  
$b_0$ is the right eigenvector of $D^{-1}S$ with eigenvalue $1$ and $\ell_0$ is the right eigenvector of $D^{-1}S^*$ with eigenvalue $1$, where $D:=D_{\abs{a-\zeta_0}^2}$. In particular,  $b_0,\ell_0 \in \cal{B}_+$ by the Krein-Rutman theorem and the geometric multiplicity of the eigenvalue $0$ of $B_0$ is $1$. 
Furthermore, the non-degeneracy of the eigenvalue $0$ is a consequence of $b_0,\ell_0 \in \cal{B}_+$. Indeed, suppose we had $\dim \rm{ker}B^2>1$. Then there would be a generalised eigenvector $x$ with $Bx=b_0$ and 
$
\avg{\ell_0\1 b_0} = \avg{\ell_0 \1 B_0x} =0
$ which contradicts $\ell_0>0$ and $b_0>0$.
 This proves \eqref{Nondegenerate beta}, which together with \eqref{B0 resolvent control} implies 
 \eqref{Q0B0 inverse bound}. The relation $b_0 \sim \ell_0 \sim 1$ is a direct consequence of $\abs{a-\zeta_0} \sim 1$ and Assumption~\ref{assum:flatness}. 

We are left with proving \eqref{B0 resolvent control}. 
Instead of controlling the resolvent of $B_0$, it suffices to bound the inverse of $ 1- SD^{-1} - \omega D^{-1}$ 
because
\bels{resolvent of B in terms of S}{
\frac{1}{B_0-\omega} =\frac{1}{D}\pbb{\frac{1}{1-SD^{-1}-\omega \1D^{-1}}\wt{Q}_\omega
+\frac{1}{1-SD^{-1}-\omega \1D^{-1}}\wt{P}_\omega}\,,
}
where $\wt{P}_\omega$ and $\wt{Q}_\omega:=1-\wt{P}_\omega$ are the analytic spectral projections associated with $SD^{-1}-\omega D^{-1}$ such that  
\[
\wt{P}_0 = \frac{\avg{\ell_0\,\cdot \,}}{\avg{\ell_0 D b_0}}Db_0\,.
\]
Analytic perturbation theory can be applied to $SD^{-1}$ because of Lemma~\ref{lmm:resolvent bound for S}, which shows that the resolvent of the operator  $SD^{-1}$ is bounded in annulus around its isolated eigenvalue $1$. 
Consequently, the first summand in \eqref{resolvent of B in terms of S} is bounded for sufficiently small $\abs{\omega}$. The second summand admits the expansion
\[
\frac{1}{1-SD^{-1}-\omega \1D^{-1}}\wt{P}_\omega
= \frac{1}{\wt{\beta}(\omega)}\wt{P}_\omega\,,
\qquad 
\wt{\beta}(\omega) = 
-\omega\2\frac{\avg{\ell_0\1b_0}}{\avg{\ell_0\1 Db_0}} + O(\abs{\omega}^2)
\]
by standard analytic perturbation formulas, see e.g. \cite[Lemma~C.1]{AEK_Shape}.
Therefore the second summand is bounded for  $\omega \in \C \setminus \D_\eps$ for sufficiently small $\eps$.
\end{proof}

\begin{corollary}\label{crl:beta as eigenvalue of B}
Let $\zeta_0 \in \C$ with $\beta(\zeta_0)=0$. Then $0 \in \spec(B_{\zeta_0})$, $\essinf\abs{a-\zeta_0}>0$ and $\lambda_{\rm{PF}} (SD_{\abs{a-\zeta_0}^2}^{-1}) =1$. Furthermore, there is $\eps>0$ such that  $ \beta(\zeta )$ is an isolated non-degenerate eigenvalue of $B_{\zeta }$ for all $\zeta \in \zeta_0+\mathbb{D}_\eps $. In particular $\zeta_0 + \mathbb{D}_\eps \ni \zeta \mapsto \beta(\zeta)$ is real analytic and has the expansion  
\bels{beta expansion at edge}{
\beta(\zeta) &= -2 \rm{Re} \sbb{\frac{\avg{\ell_0\1b_0\1 (a-\zeta_0)}}{\avg{\ell_0\1b_0}}\ol{(\zeta -\zeta_0)}} 
+\pbb{1- 2 \re \sbb{\frac{\avg{\ell_0  \ol{(a-\zeta_0)} B_0^{-1}Q_0[b_0 (a-\zeta_0)]}}{\avg{\ell_0\2b_0}}} } \abs{\zeta-\zeta_0}^2
\\
&\quad \qquad -
2 \re \sbb{\frac{\avg{\ell_0  (a-\zeta_0)B_0^{-1}Q_0[b_{0} (a-\zeta_0)]}}{\avg{\ell_0\2b_0}}\ol{(\zeta-\zeta_0)}^2}+ O(\abs{\zeta-\zeta_0}^3)\,,
}
which implies the formulas
\bels{derivatives of beta}{
\partial_\zeta\beta(\zeta_0)=-\frac{\avg{\ell_0\1b_0\1 \ol{(a-\zeta_0)}}}{\avg{\ell_0\1b_0}} \,, \qquad 
\partial_\zeta \partial_{\ol{\zeta}}\beta(\zeta_0)=
1- 2 \re \sbb{\frac{\avg{\ell_0  \ol{(a-\zeta_0)} B_0^{-1}Q_0[b_0 (a-\zeta_0)]}}{\avg{\ell_0\2b_0}}}
}
for the derivatives of $\beta$ at $\zeta=\zeta_0$. 
\end{corollary}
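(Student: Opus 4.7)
The plan is to deduce all four claims from Proposition~\ref{prp:S characterisation} and Lemma~\ref{lmm:Properties of B}, combined with standard analytic perturbation theory applied to the real-analytic family $\zeta \mapsto B_\zeta$. First I would handle the three pointwise statements at $\zeta_0$ itself. Since $\beta(\zeta_0)=0$, Proposition~\ref{prp:S characterisation}\ref{item:spec a in S} yields $\zeta_0 \notin \spec(D_a)$, hence $\essinf\abs{a-\zeta_0}>0$. Consequently $D_{\abs{a-\zeta_0}^2}$ is invertible and the factorisation
\[
 B_{\zeta_0} = \pb{1 - SD_{\abs{a-\zeta_0}^2}^{-1}} D_{\abs{a-\zeta_0}^2}
\]
shows that $0 \in \spec(B_{\zeta_0})$ is equivalent to $\lambda_{\rm{PF}}(SD_{\abs{a-\zeta_0}^2}^{-1})=1$, which in turn is exactly what the sign identity in Proposition~\ref{prp:S characterisation}\ref{relation beta and lambda} gives at $\zeta=\zeta_0$.

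Next I would set up analytic perturbation theory. As $|a-\zeta|^2$ is a polynomial in $\zeta$ and $\bar\zeta$ with coefficients in $\mathcal B$, the map $\zeta \mapsto B_\zeta = D_{\abs{a-\zeta}^2} - S$ is real analytic. Lemma~\ref{lmm:Properties of B} provides an annulus $\D_{2\eps}\setminus \D_\eps$ that is free of $\spec(B_{\zeta_0})$ and surrounds the non-degenerate isolated eigenvalue $0$ with one-dimensional eigenspaces spanned by $b_0, \ell_0>0$. By continuity of the resolvent estimate \eqref{B0 resolvent control} in $\zeta$, the annulus remains resolvent-bounded for $\zeta \in \zeta_0 + \D_\eps$ after possibly shrinking $\eps\sim 1$. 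The contour integral representation of the spectral projection therefore defines real-analytic families $\zeta \mapsto \wt{P}_\zeta$, $b_\zeta$, $\ell_\zeta$ and a real-analytic simple eigenvalue $\wt\beta(\zeta)$ which is the unique element of $\spec(B_\zeta)\cap \D_\eps$, with $b_\zeta, \ell_\zeta>0$ by continuity from $b_0, \ell_0 > 0$.

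The key identification step is to argue that $\wt\beta(\zeta) = \beta(\zeta)$ on $\zeta_0 + \D_\eps$, which is what I expect to be the most delicate point of the argument, since $\beta$ is defined via a Birkhoff--Varga inf-sup rather than as an eigenvalue. I would use the test functions $x = \ell_\zeta$ and $y = b_\zeta$ directly in \eqref{def of beta and B and S}: since $B_\zeta^* \ell_\zeta = \wt\beta(\zeta)\ell_\zeta$, one has $\scalar{\ell_\zeta}{B_\zeta y}= \wt\beta(\zeta)\scalar{\ell_\zeta}{y}$ for all $y\in\cal B_+$, giving $\sup_y \scalar{\ell_\zeta}{B_\zeta y}/\scalar{\ell_\zeta}{y}=\wt\beta(\zeta)$ and hence $\beta(\zeta) \le \wt\beta(\zeta)$. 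The opposite inequality follows from choosing $y=b_\zeta$, since $\scalar{x}{B_\zeta b_\zeta}/\scalar{x}{b_\zeta} = \wt\beta(\zeta)$ for every $x \in \cal B_+$.

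Finally, the expansion~\eqref{beta expansion at edge} follows from the standard analytic-perturbation formulas for simple eigenvalues, applied to the explicit splitting
\[
 B_\zeta = B_0 - D_{\ol{(\zeta-\zeta_0)}(a-\zeta_0) + (\zeta-\zeta_0)\ol{(a-\zeta_0)}} + \abs{\zeta-\zeta_0}^2 I,
\]
which separates the first- and second-order pieces in $\zeta-\zeta_0$. The first-order coefficient, read off from $V_1$, gives $\partial_\zeta \beta(\zeta_0)$ as claimed. For the second-order coefficients I would invoke the standard reduction formula (as in \cite[Lemma~C.1]{AEK_Shape}), using that the reduced resolvent at the eigenvalue $0$ is precisely $B_0^{-1}Q_0$; the diagonal perturbation $\abs{\zeta-\zeta_0}^2$ contributes the $1$ in $\partial_\zeta\partial_{\bar\zeta}\beta(\zeta_0)$, while the bilinear $V_1$-terms produce the sandwiched contribution $\avg{\ell_0 \ol{(a-\zeta_0)} B_0^{-1}Q_0[b_0(a-\zeta_0)]}/\avg{\ell_0 b_0}$ and its conjugate, yielding the stated real parts. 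The purely holomorphic coefficient of $\ol{(\zeta-\zeta_0)}^2$ is obtained analogously from the square of the $V_1^{(\zeta)}$-piece. The error bound $O(\abs{\zeta-\zeta_0}^3)$ is inherited from the Taylor expansion of the real-analytic function $\wt\beta$.
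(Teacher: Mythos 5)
Your proposal is correct and takes essentially the same approach as the paper: derive the pointwise facts at $\zeta_0$ from Proposition~\ref{prp:S characterisation} (the paper leans on Lemma~\ref{lmm:Properties of B} rather than your direct factorisation $B_{\zeta_0}=(1-SD^{-1})D$, but the content is the same), then use analytic perturbation theory on the real-analytic family $B_\zeta$ together with the Birkhoff--Varga test functions $x=\ell_\zeta$, $y=b_\zeta$ to identify the perturbed eigenvalue with $\beta(\zeta)$, and finally apply \cite[Lemma~C.1]{AEK_Shape} with the decomposition $B_\zeta = B_0 + E$, $E = D_{\abs{a-\zeta}^2 - \abs{a-\zeta_0}^2}$. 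The one point you gloss over that the paper states explicitly is why the perturbed eigenvalue and eigenvectors are \emph{real} (so that $b_\zeta,\ell_\zeta$ are legitimate test functions in \eqref{def of beta and B and S}): this follows because $B_{\zeta_0}$ and $B_\zeta-B_{\zeta_0}$ commute with complex conjugation.
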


\begin{proof}
Let $\zeta_0\in \C$ be such that $\beta(\zeta_0)=0$. By Lemma~\ref{lmm:Properties of B} we have $0 \in \spec(B_{\zeta_0})$ and by  Proposition~\ref{prp:S characterisation} \ref{item:spec a in S} we get $\essinf \abs{a-\zeta_0}>0$. The fact that $\lambda_{\rm{PF}} (SD_{\abs{\zeta_0-a}^2}^{-1})=1$ was shown in \eqref{sign beta = sign lambda}. 

Now we show that $\beta(\zeta)$ is an eigenvalue of $ B_{\zeta}$ for sufficiently small $\abs{\zeta-\zeta_0}$. Using analytic perturbation theory, let $b(\zeta)$ and $\ell(\zeta)$ be the right and left eigenvectors of $B_{\zeta}$ corresponding to the isolated non-degenerate eigenvalue $\wt{\beta}(\zeta)$ with $\wt{\beta}(\zeta_0)=0$ that depends real analytically on $\zeta$. 
As $\wt{\beta}(\zeta_0)$ is a real isolated eigenvalue and $B_{\zeta_0}$ as well as $B_{\zeta} - B_{\zeta_0}$ are invariant under complex conjugation, $\wt{\beta}(\zeta)$, $b(\zeta)$ and $\ell(\zeta)$ are also real. 
Since $\ell(\zeta_0) \sim b(\zeta_0) \sim 1$ we have $b(\zeta)$, $\ell(\zeta)\in \cal{B}_+$ for sufficiently small $\abs{\zeta-\zeta_0}$. Therefore 
\[
\wt{\beta}(\zeta)=\inf_{x >0} \frac{\scalar{x}{B_{\zeta }b(\zeta)}}{\scalar{x}{b(\zeta)}}\le \beta(\zeta) \le  
\sup_{y >0} \frac{\scalar{\ell(\zeta)}{B_{\zeta }y}}{\scalar{\ell(\zeta)}{y}}=\wt{\beta}(\zeta)\,,
\]
which proves $\wt{\beta}=\beta$.

The expansion \eqref{beta expansion at edge} is now a direct consequence of analytic perturbation theory, as we see
e.g.\ by using \cite[Lemma~C.1]{AEK_Shape} 
with $B = B_0 + E$ and $E = D_{\abs{a-\zeta}^2} - D_{\abs{a-\zeta_0}^2} = D_{\abs{\zeta- \zeta_0}^2 - 2 \Re( \overline{(a - \zeta_0)} ( \zeta - \zeta_0))}$. 
\end{proof}

Due to analytic perturbation theory with $\zeta$ in a small neighbourhood of $\zeta_0$ and 
by Lemma~\ref{lmm:Properties of B} we have $b \sim \ell \sim 1$. 
We split  $v_1$ and $v_2$  according to the spectral decompositions  of $B^*$ and  $B$, namely 
\bels{vi split}{ 
v_1 = \vartheta_1 \1\ell + \wt{v}_1, \qquad v_2 = \vartheta_2 \1b + \wt{v}_2 
}
with the contributions $\vartheta_i =\vartheta_i(\zeta,\eta)$ to the eigendirections  $\ell$ and $b$ of $B^*$ and $B$ as well as  their complements  $\wt{v}_i =\wt{v}_i(\zeta,\eta)$ given as 
\bels{def:vartheta_i}{
\vartheta_1 :=\frac{ \avg{b\1v_1}}{\avg{\ell\1 b}}\,, \qquad \vartheta_2 :=\frac{ \avg{\ell\1v_2}}{\avg{\ell\1 b}}\,, 
\qquad \wt{v}_1:=Q^*v_1\,, \qquad \wt{v}_2:=Qv_2\,,\qquad Q:=1-\frac{\avg{\ell\,\cdot \,}}{\avg{\ell\1 b}}b\,.
}
To quantify the error terms we introduce 
\[
\alpha:= \norm{v_1}_\infty + \norm{v_2}_\infty\,.
\]
Projecting the identities \eqref{edge identities} with $Q$  and $Q^*$, respectively,  leads to 
\bels{tilde v equations}{
B_\zeta \1\wt{v}_2 
=  O(\eta + \alpha^3), \qquad 
B_\zeta^* \1\wt{v}_1 
=  O(\eta + \alpha^3)\,.
}
Using  $\norm{B^{-1}Q}_\infty\lesssim 1$, a consequence of \eqref{Q0B0 inverse bound}  and analytic perturbation theory , we find 
\bels{bound on tilde alpha}{
\norm{\wt{v}_1}_\infty+\norm{\wt{v}_2}_\infty =O(\eta  +\alpha^3)\,.
}
Because of $\avg{v_1}=\avg{v_2}$,  i.e.\ by \eqref{eq:avg_v1_equals_avg_v2},  \eqref{vi split}  and the normalisation $\avg{b}=\avg{\ell}=1$, \eqref{bound on tilde alpha} implies 
\bels{vartheta identity}{
\vartheta_1 =\vartheta_2+O(\eta  +\alpha^3)\,.
}
Inserting the decomposition \eqref{vi split} into \eqref{edge identities} and using \eqref{bound on tilde alpha}, as well as \eqref{vartheta identity}, leads to 
\bes{
&\beta\1\vartheta_2\1 b+B\wt{v}_2= \eta -\vartheta^3b (Sb)(S^*\ell) + O(\eta \alpha^2 +\alpha^5 )\,,
\\
&\beta\1\vartheta_1\1 \ell+B^*\wt{v}_1= \eta -\vartheta^3\ell (Sb)(S^*\ell) + O(\eta \alpha^2 +\alpha^5 )\,,
}
where we set $\vartheta:=\frac{1}{2}(\vartheta_1 + \vartheta_2)$. 
Now we average the first equation against $\ell$ and the second equation against $b$, use $\avg{b}=\avg{\ell}=1$  and then take the  arithmetic mean of the resulting equations to find
\bels{vartheta cubic}{
\vartheta^3\avgb{\ell b (S^*\ell)(Sb) }+\beta\1\vartheta\1 \avg{\ell \1b}-\eta=  O(\eta \1\alpha^2 +\alpha^5 )\,.
}
From this approximate cubic equation we conclude the scaling behaviours 
\begin{equation} \label{eq:scaling_alpha_in_terms_of_lambda_and_eta} 
\alpha \sim \vartheta \sim \sqrt{\max\{0, -\beta\}}+\frac{\eta}{\eta^{2/3} + \abs{\beta}} \,, \qquad \norm{\wt{v}_1}_\infty+\norm{\wt{v}_2}_\infty\sim \eta + \max\{0, -\beta\}^{3/2},
\end{equation} 
in the regime of sufficiently small $\alpha$, 
where we used $\vartheta \ge 0$ and $\vartheta > 0$ for $\eta >0$ to choose the correct branch of the solution. The corresponding argument is summarised in Lemma~\ref{lmm:scaling of cubic} in the appendix. 

To apply this lemma we absorb the $O(\alpha^5)$-term on the right hand side of \eqref{vartheta cubic} into the cubic term in $\vartheta$ on the left hand side, i.e.\ we write $O(\alpha^5)= \gamma\2 \vartheta^3 $ for some $\gamma= O(\alpha^2)$, which we absorb into the coefficient of the $\vartheta^3$-term. Such rewriting is possible since  $\alpha= O(\vartheta)$ in the regime where $\alpha$ is sufficiently small. This holds because $\vartheta \gtrsim \eta $ and   $\alpha = O(\vartheta + \eta + \alpha^3)$ by \eqref{vi split}, \eqref{vartheta identity} and \eqref{bound on tilde alpha}. 
Now we see that $\alpha$ is indeed small for $(\zeta, \eta)$ in a neighbourhood of $(\zeta_0,0)$. 
Due to the characterisation of $\mathbb{S}$ in \eqref{eq:characterisation of S in terms of rho} we have $\lim_{\eta\downarrow 0} \alpha |_{\zeta = \zeta _0}=0$. With $\beta(\zeta_0)=0$ and because  $ \alpha$ is a continuous function of $\eta$ when $\eta>0$, the scaling  \eqref{eq:scaling_alpha_in_terms_of_lambda_and_eta} implies $\alpha |_{\zeta = \zeta _0} \sim \eta^{1/3}$. Since $\zeta \mapsto \beta(\zeta)$ is continuous by Proposition~\ref{prp:S characterisation} \ref{continuity of beta} and $\alpha$ is a continuous function of $\zeta$ for any $\eta>0$ the behaviour \eqref{eq:scaling_alpha_in_terms_of_lambda_and_eta} holds as long as $\eta+ \abs{\zeta-\zeta_0}$ is sufficiently small. 

We now summarise our insights by finishing the proof of Proposition~\ref{prp:v edge expansion}. 
\begin{proof}[Proof of Proposition~\ref{prp:v edge expansion}]
By following the computation leading to \eqref{tilde v equations}  we easily see that the right hand side of these equations are real analytic functions of $\vartheta$, $\eta$, $\zeta$ and $\wt{v}_i$. By the implicit function theorem  and the invertibility of $B$ on 
the range of $ Q$ the   $\wt{v}_i$ are  real analytic functions of $\vartheta, \eta$ and  $\zeta$. Similarly, the right hand side of \eqref{vartheta cubic} is a real analytic function of   $\vartheta, \eta$ and  $\zeta$. Together, we have proved Proposition~\ref{prp:v edge expansion} with 
\[
\wt{v}_i(\zeta,\eta) = \wt{w}_i(\vartheta(\zeta,\eta), \eta, \zeta)\,. \qedhere
\]
\end{proof}

\section{Properties of the Brown measure $\sigma$} \label{sec:Properties of the Brown measure}

In this section, we show our main results about the existence and properties of the Brown measure $\sigma$, Proposition~\ref{prp:Brown_measure_construction}, Theorem~\ref{thr:properties_sigma_general} and Theorem~\ref{thr:Sing classification}.  We start by proving that the Brown measure has an explicit construction as a distributional derivative of the function $L$ defined in \eqref{eq:def_L} through the solution of the Dyson equation as stated in Proposition~\ref{prp:Brown_measure_construction}.  
The \hyperlink{proof:pro:definition_sigma}{proof of Proposition~\ref{prp:Brown_measure_construction}} is presented in Subsection~\ref{subsec:Representation of Brown measure}, the proofs of Theorem~\ref{thr:properties_sigma_general} and Theorem~\ref{thr:Sing classification} at the end of this section,  Section~\ref{sec:Properties of the Brown measure}.

\subsection{Characterisation of the Brown measure $\sigma$} \label{subsec:Representation of Brown measure}

Here, we present the \hyperlink{proof:pro:definition_sigma}{proof of Proposition~\ref{prp:Brown_measure_construction}}. 
The main idea of this proof is to show that 
 $-L$ from \eqref{eq:def_L} is subharmonic and, therefore, the distribution $- \frac{1}{2\pi} \Delta L$ is induced by a measure. 
Before we present this proof, we establish a few necessary ingredients.  
The next lemma, in particular, implies that $L$ is well-defined.

\begin{lemma}[Integrating $\avg{v_1}$ with respect to $\eta$] \label{lmm:Integrating v}
Let  $a \in \mathcal B$ and  $s$ satisfy \ref{assum:flatness}. 
Then, uniformly for $\zeta \in \C$ 
and $\eta>0$, we have 
\begin{equation} \label{eq:avg_M_bounded} 
 0 \leq \avg{v_1(\zeta,\eta)} \lesssim \frac{1}{1 + \eta}. 
\end{equation} 

Furthermore, uniformly for any $T >0 $ and $\zeta \in \C$, we have 
\begin{equation} \label{eq:integral_bounds} 
\int_0^T \absbb{\avg{v_1 (\zeta, \eta)} - \frac{1}{1 + \eta} } \dd \eta 
\lesssim \min \Big\{ T , \sqrt{1 + \abs{\zeta}} \Big\}, 
\qquad 
 \int_T^\infty \absbb{\avg{v_1(\zeta,\eta)} - \frac{1}{1 + \eta} } \dd \eta 
\lesssim \frac{1+\abs{\zeta}}{T}. 
\end{equation}  
\end{lemma} 

\begin{proof} 
From \eqref{eq:v_simple_bounds} and Lemma~\ref{lem:hilbert_schmidt}, we immediately conclude \eqref{eq:avg_M_bounded}. 
The bounds in \eqref{eq:integral_bounds} follow directly from \eqref{eq:avg_M_bounded} and \eqref{eq:v_1_large_eta}. 
\end{proof} 

\begin{proof}[\linkdest{proof:pro:definition_sigma}{Proof of Proposition~\ref{prp:Brown_measure_construction}}] 
We first note that each of the identities \eqref{eq:sigma_L_identity} and \eqref{sigma formula in terms of y explicit} 
uniquely characterises a probability measure on $\C$. 
We now show that \eqref{eq:sigma_L_identity} and \eqref{sigma formula in terms of y explicit} characterise the same probability measure on $\C$. 
Let $f \in C_0^2(\C)$. By dominated convergence and \eqref{eq:integral_bounds}, we obtain 
\begin{align*} \label{eq:relation L and y}
\int_{\C} \Delta f(\zeta)  W  (\zeta) \dd^2 \zeta 
& = \lim_{\eps \downarrow 0} \int_{\C} \Delta f(\zeta) \int_\eps^\infty \bigg( \avg{v_1(\zeta, \eta)} - \frac{1}{1 + \eta} \bigg) \dd \eta \dd^2 \zeta \\ 
& = 2 \lim_{\eps \downarrow 0} \int_{\C} \partial_{\zeta} f(\zeta) \int_\eps^\infty \partial_\eta \avg{\ol{y(\zeta,\eta)}}\dd \eta \dd^2 \zeta \\ 
& = - 2 \lim_{\eps\downarrow 0} \int_{\C} \partial_{\zeta} f(\zeta) \avg{\ol{y(\zeta,\eps)}} \dd^2 \zeta. 
\end{align*} 
Here, in the second step, we integrated by parts, exchanged differentiation and integration and 
used $\partial_{\bar\zeta} \avg{v_1} = - \partial_\eta \avg{\ol{y}}/2$ due to \eqref{eq:relations_derivatives}, 
\eqref{eq:avg_v1_equals_avg_v2} and \eqref{eq:structure_M}. To see that the $\partial_\zeta$-derivative and the integral in the second step can be exchanged we use the bound $\abs{\partial_{\bar\zeta} \avg{v_1}} \lesssim \eta^{-2}$ that follows from \eqref{eq:derivatives_M} and Lemma~\ref{lem:stability_eta_positive}.
The third step is a consequence of $\lim_{\eta \to \infty} \avg{y(\zeta, \eta)} = 0$ for all $\zeta \in \C$, 
which follows from the definition of $y$ in \eqref{eq:B_relation} and the upper bound in \eqref{eq:preliminary_bounds_v}. 
 Owing to \eqref{eq:B_relation}, this  
 shows that the right-hand sides of \eqref{eq:sigma_L_identity} and \eqref{sigma formula in terms of y explicit} coincide. 

What remains is to prove that \eqref{sigma formula in terms of y explicit} characterises the Brown measure of $a + \mathfrak c$. 
From \eqref{eq:Brown_measure}, we see that the Brown measure $\sigma=\sigma_{a + \mathfrak c}$ of $a + \mathfrak c$ 
coincides with $\frac{1}{2\pi} \Delta \log D(a + \mathfrak c - \zeta)$, where $\Delta$ denotes the distributional 
Laplacian with respect to $\zeta$. 
On the other hand, the properties of the Fuglede-Kadison determinant from \eqref{eq:def_determinant} 
 imply 
\[ D(a + \mathfrak c -\zeta) = D(\abs{a + \mathfrak c - \zeta}) = \lim_{\eta \downarrow 0}  
(D( \abs{a + \mathfrak c - \zeta}^2 + \eta^2))^{1/2}. \] 
Hence, integration by parts, 
\eqref{eq:y_from_determinant} and \eqref{eq:structure_M} yield 
\begin{align*}  
\int_{\C} \Delta f(\zeta) \sigma(\dd^2 \zeta)  
= \lim_{\eta \downarrow 0} \frac{1}{4\pi} \int_{\C} \Delta f (\zeta) \log D( \abs{a + \mathfrak c - \zeta}^2 + \eta^2) \dd^2 \zeta  
= \lim_{\eta \downarrow 0} \frac{1}{\pi} \int_{\C} (\partial_{\bar \zeta} f(\zeta)) \avg{y(\zeta,\eta)} \dd^2\zeta. 
\end{align*}  
Note that the Fuglede-Kadison determinant here is monotone in $\eta$, justifying the exchange of limit and integration.   
Thus,  owing to \eqref{eq:B_relation},  \eqref{sigma formula in terms of y explicit} characterises the Brown measure $\sigma_{a + \mathfrak c}$, 
which completes the proof of Proposition~\ref{prp:Brown_measure_construction}. 
\end{proof}

\subsection{Strict positivity in the bulk} \label{sec:proof_positivity_derivative_y} 

In this subsection we show that the Brown measure has strictly positive density in the bulk, i.e.\  inside $\mathbb S$ as defined in \eqref{def of beta and B and S}. 

\begin{proposition} \label{pro:derivative_y_positive} 
Let  $a \in \mathcal B$ and  $s$ satisfy \ref{assum:flatness}. 
Then there is $C \sim 1$ such that $- \partial_{\bar \zeta} \avg{y(\zeta, \eta)}  \in (0,C]$ 
for all $\zeta \in \C$ and $\eta >0$. 
 \end{proposition}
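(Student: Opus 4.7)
The plan is to interpret the solution $M(\zeta, \ii\eta)$ of \eqref{general MDE} as the partial $\cal B$-valued expectation of the resolvent of the Hermitization of $X := a + \frak{c} - \zeta \in \cal A$, and then to extract strict positivity of $-\partial_{\bar \zeta} \avg{y}$ from cyclicity and faithfulness of the trace on $\cal A$. Concretely, I would first identify
\[
M(\zeta, \ii \eta) = (E \otimes \mathrm{id})[G], \qquad G := (\tilde H - \ii \eta)^{-1}, \qquad \tilde H := \begin{pmatrix} 0 & X \\ X^* & 0 \end{pmatrix} \in \cal A^{2\times 2}.
\]
This follows from the uniqueness of the solution to \eqref{general MDE} with positive imaginary part (Lemma~\ref{lem:existence_uniqueness} together with \cite{HeltonRashidiFarSpeicher2007}): the right-hand side is easily checked to solve \eqref{general MDE} at $w=\ii \eta$ via the Schwinger-Dyson recursion for the $\cal B$-valued semicircular element on $\cal A^{2\times 2}$ associated with $\frak{c}$. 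A Schur complement computation, using self-adjointness of $\tilde H$, then yields the explicit block entries
\[
G_{11} = \frac{\ii \eta}{\eta^2 + XX^*}, \qquad G_{22} = \frac{\ii \eta}{\eta^2 + X^* X}, \qquad G_{21} = \frac{X^*}{\eta^2 + XX^*}, \qquad G_{12} = \frac{X}{\eta^2 + X^* X}.
\]
Comparing with \eqref{eq:structure_M} gives $y = M_{21} = E[G_{21}]$, and hence $\avg{y} = \tau(G_{21})$, where $\tau := \avg{E[\,\cdot\,]}$ is the faithful tracial state on $\cal A$.

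Next I would differentiate in $\bar \zeta$. Since $\partial_{\bar \zeta}(\tilde H - \ii \eta) = -E_{21}$ (with $E_{21}$ acting trivially on $\cal A$), the resolvent identity gives $\partial_{\bar \zeta} G = G E_{21} G$, whose $(2,1)$-block reads $(G E_{21} G)_{21} = G_{22} G_{11}$. Inserting the Schur expressions and applying $\tau$ produces the key formula
\[
\partial_{\bar \zeta} \avg{y} \;=\; \tau(G_{22} G_{11}) \;=\; - \eta^2 \, \tau\pBB{ \frac{1}{\eta^2 + X^*X} \cdot \frac{1}{\eta^2 + XX^*} }.
\]

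To conclude, set $A := (\eta^2 + XX^*)^{-1}$ and $B := (\eta^2 + X^*X)^{-1}$; both are strictly positive bounded elements of $\cal A$ since $\eta > 0$. Cyclicity of $\tau$ gives $\tau(BA) = \tau(A^{1/2} B A^{1/2})$, and because $A^{1/2} B A^{1/2}$ is a nonzero positive element of $\cal A$ and $\tau$ is faithful, this trace is strictly positive. Hence
\[
-\partial_{\bar \zeta}\avg{y} \;=\; \eta^2 \, \tau\pb{A^{1/2} B A^{1/2}} \;>\; 0.
\]
The main obstacle is the first step, namely rigorously justifying the Hermitization representation $M = (E \otimes \mathrm{id})[G]$ within the abstract operator-valued framework used in the paper, since that framework defines $M$ purely through \eqref{general MDE} rather than as a moment object. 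Once this representation is in hand, the remainder is a routine resolvent computation followed by the standard $\tau(AB) = \tau(A^{1/2} B A^{1/2})$ positivity argument.
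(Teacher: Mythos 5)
Your proof is correct, but it takes a genuinely different route from the paper's. You realise the MDE solution $M(\zeta,\ii\eta)$ as the partial expectation $\mathrm{id}\otimes E$ of the resolvent of the Hermitization of $X=a+\mathfrak c-\zeta$, compute its blocks via a Schur complement, differentiate with the resolvent identity, and conclude from faithfulness and cyclicity of $\tau$. The step you flag as the ``main obstacle'' is already furnished by the paper: Lemma~\ref{lmm:circular_element_Dyson} gives exactly this representation and Proposition~\ref{pro:represenation_as_Brown_measure} supplies the realising $W^*$-probability space; neither relies on Proposition~\ref{pro:derivative_y_positive}, so there is no circularity. The paper's own proof instead stays entirely inside the MDE/stability-operator framework: it starts from $-\partial_{\bar\zeta}\avg{y}=-2\scalar{E_{21}}{\cal L^{-1}[E_{21}]}$ via \eqref{eq:derivatives_M} and, through the symmetrisation $\wh v$, $V$, $T$, $F$, $X$, $K$ in \eqref{def of F,T,V}--\eqref{def:F and K}, rewrites it as a manifestly positive quadratic form involving $(1-K)^{-1}$ restricted to the range of the averaging projection $E$. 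Your argument is shorter and more transparent for the positivity at a fixed $\eta>0$ (and in fact does not even use the primitivity part of \ref{assum:flatness}). What the paper's more algebraic route buys is the representation \eqref{rep of sigma with K}, which passes to the limit $\eta\downarrow 0$ and is reused in Proposition~\ref{pro:Strict positivity of Brown measure} to prove strict positivity of the density in the bulk; by contrast, your identity $-\partial_{\bar\zeta}\avg{y}=\eta^2\,\tau\pb{(\eta^2+X^*X)^{-1}(\eta^2+XX^*)^{-1}}$ becomes an indeterminate $0\cdot\infty$ on $\mathbb S$ as $\eta\downarrow 0$ and would require a separate analysis there.
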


Differentiating the last identity in \eqref{eq:y_from_determinant} and  
a straightforward computation using \eqref{eq:structure_M} and $Y(Y^*Y +\eta^{2})^{-1} Y^* = 1 - \eta^2 ( YY^* + \eta^2)^{-1}$ with $Y = a + \mathfrak c - \zeta$ yield 
\begin{align*} 
 - \partial_{\ol{\zeta}} \avg{y(\zeta,\eta)} & = \eta^2 \avg{E[ ( \abs{a + \mathfrak c - \zeta}^2 + \eta^2)^{-1}
(\abs{a + \mathfrak c - \zeta}_*^2 + \eta^2)^{-1}]} \\ 
& = \eta^2 \avg{E[ ( \abs{a + \mathfrak c - \zeta}^2 + \eta^2)^{-1/2}( \abs{a + \mathfrak c - \zeta}_*^2 + \eta^2)^{-1} ( \abs{a + \mathfrak c - \zeta}^2 + \eta^2)^{-1/2}]} , 
\end{align*}  
where we used the abbreviations $\abs{Y}^2 = Y^*Y$ and $\abs{Y}^2_* = Y Y^*$. 
This implies 
\begin{equation} \label{eq:bounds_partial_y} 
 0 < - \partial_{\ol{\zeta}} \avg{y(\zeta,\eta)} \leq \eta^{-2}. 
\end{equation} 
The lower bound $- \partial_{\ol{\zeta}}\avg{y(\zeta,\eta)} \ge 0$ is equivalent to  $\zeta \mapsto \log D( \abs{a + \mathfrak c - \zeta}^2 + \eta^2) $ being subharmonic, which was observed in  \cite[Lemma 2.8]{MR2339369}.

\begin{proof}[\linkdest{proof:pro:derivative_y_positive}{Proof of Proposition~\ref{pro:derivative_y_positive}}]
The main work in this proof is to represent $-\partial_{\ol{\zeta}} \avg{y(\zeta,\eta)}$ as the quadratic form of a self-adjoint operator, which we show to be positive  and bounded. 
For $\eta>0$ and $\zeta \in \C$, we start from the second identity in \eqref{eq:derivatives_M} and compute 
\bes{
 \partial_{\ol{\zeta}} \scalar{E_{21}}{M} 
=\scalarB{E_{21}} {\cal{L}^{-1}E_{21}}
=\scalarB{\cal{C}_{M}^*E_{21}} {E_{21}}+\scalarB{\cal{C}_{M}^*E_{21}} {(1-\Sigma\cal{C}_M)^{-1}\Sigma\cal{C}_ME_{21}}\,,
}
where $\cal{C}_{M}R:= MRM $  and $\cal{C}_{M}^* = \cal{C}_{M^*}$. Note that $1-\Sigma\cal{C}_M$ is invertible since $1-\Sigma\cal{C}_M=\cal L\cal{C}_M$,  $\mathcal{C}_M$ is invertible  and $\cal L$ is invertible by Lemma~\ref{lem:stability_eta_positive}.  
With
\[
\Sigma\cal{C}_ME_{21} = 
\mtwo{\ii S( v_2\ol{y})& 0}{0 &\ii S^*( v_1\ol{y})}\,, \qquad \cal{C}_{M^*}E_{21} = 
 \mtwo{-\ii v_1\ol{y}& \ol{y}^2}{-v_1v_2 & -\ii v_2\ol{y}}\,,
\]
and the action of $\Sigma\cal{C}_M$ on diagonal matrices in $\cal{B}^{2 \times 2}$ given by 
\[
\Sigma \cal{C}_M\mtwo{r_1 & 0}{0& r_2} = 
\mtwo{S(\abs{y}^2 r_{1} -v_2^2r_{2}) &0}{0&S^*(- v_1^2 r_{1} + \abs{y}^2r_{2})}
\]
this  simplifies to 
\bels{sigma formula via Y}{
- \partial_{\ol{\zeta}}  \scalar{E_{21}}{M}  =  \frac{1}{2}\avg{v_1v_2}+\scalarbb{\vtwo{v_1\ol{y}}{v_2\ol{y}}}{(1-Y)^{-1}\vtwo{S(v_2\ol{y})}{S^*(v_1\ol{y})}}\,,
}
where the
scalar product on $\cal{B}^2$  is the one from  \eqref{scalar product on B2} and 
  $Y \colon \cal{B}^2 \to \cal{B}^2$ is defined as
\[ Y \begin{pmatrix} r_1 \\ r_2 \end{pmatrix} = \begin{pmatrix} S(\abs{y}^2 r_{1} -v_2^2r_{2})\\ 
S^*(- v_1^2 r_{1} + \abs{y}^2r_{2}) \end{pmatrix}
=\mtwo{SD_{\abs{y}}^2&  -SD_{v_2}^2}{-S^*D_{v_1}^2 & S^* D_{\abs{y}}^2} \begin{pmatrix} r_1 \\ r_2 \end{pmatrix} \,.\]

Now we introduce a symmetrisation of $Y$. 
For this purpose we define $\wh{v} \in \cal{B}$ via
\bels{def: v}{
\wh{v}:= \sqrt{v_1 (\eta +Sv_2)} = \sqrt{v_2 (\eta +S^{*}v_1)}\,,
}
where the second identity is due to \eqref{eq:v2_St_v1_equals_v1_S_v2},
and  $V, F,T  \in \cal{B}^{2 \times 2}$ as
 \bels{def of F,T,V}{
 T:= \mtwo{-\hat{v}^2 & \abs{a-\zeta}^2  \frac{v_1v_2}{\hat{v}^2}}{\abs{a-\zeta}^2 \frac{v_1v_2}{\hat{v}^2} & -\hat{v}^2 }\,,\quad
 V:= \mtwo{\frac{\hat{v}}{v_1}& 0}{0 & \frac{\hat{v}}{v_2}}\,, \quad 
 F := V^{-1}S_oV^{-1}
 }
   analogous to  \cite[(3.27)]{Altcirc}.  Note that   $ |a-\zeta|^2\frac{v_1 v_2}{\hat{v}^2}=\frac{|y|^{2}}{v_1v_2}$ by the definition of $y$ in \eqref{eq:B_relation}. 
Then $VF TV^{-1} = Y$ and represented in terms of $F$ and $T$ the formula \eqref{sigma formula via Y} reads
\bels{rep of sigma via F}{
-2\partial_{\ol{\zeta}} \scalar{E_{21}}{M}  
=
\scalarbb{\vtwo{\wh{v}\2\ol{y}}{\wh{v}\2\ol{y}}}{\pbb{\frac{1}{{X}}+\frac{2}{1-{F}{T}}\2{F}}\vtwo{ \wh{v}\2\ol{y}}{\wh{v}\2\ol{y}}}\,,
}
where we introduced
\bels{def of X}{
{X}:=\mtwo{ D\pb{\frac{\hat{v}^{2}}{v_1v_2}\abs{y}^2} & 0}{0 & D\pb{ \frac{\hat{v}^{2}}{v_1v_2}\abs{y}^2}}\,.
}

In particular, \eqref{sigma formula via Y} is  the quadratic form of a self-adjoint operator, evaluated on a vector in the subspace of $\cal{B}^2$ with identical entries in the first and second component.
With the orthogonal projection onto this subspace represented by 
\[
{E}:= \frac{1}{2}\mtwo{1 & 1}{1 & 1} \in \cal{B}^{2 \times 2}\,,
\]
we have ${T}{E} = {E} {T} $,  ${X}{E} = {E} {X} $ and 
\bels{rep of T}{
{T} = -1 +  2{X}{E}\,.
}
The representation \eqref{rep of T} of $T$ holds because of 
\bels{hat v relation}{
1  = \wh{v}^2+ \frac{\wh{v}^2}{v_1v_2}\abs{y}^2\,,
}
which follows directly from  \eqref{eq:V_equations}, 
\eqref{eq:B_relation} and the definition of $\wh{v}$ in 
\eqref{def: v}. 

Inserting \eqref{rep of T} into \eqref{rep of sigma via F}, we see that proving positivity of the right hand side of \eqref{rep of sigma via F} reduces to proving that the operator 
\bels{expression through K}{
{E}\pbb{\frac{1}{{X}}+\frac{2}{1+{F}-2{F}{X}{E}}\2{F}}{E} &= {E}\frac{1}{\sqrt{{X}}}
\pbb{1+\frac{2}{1+\wt{{F}}\frac{1}{{X}}-2\wt{{F}}{E}}\wt{{F}}E}
\frac{1}{\sqrt{{X}}}{E}
\\
 &=
{E}\frac{1}{\sqrt{{X}}}
\pbb{1+ \frac{1}{1- \frac{2}{1+ \wt F\frac{1}{X} } \wt FE}\frac{2}{1+ \wt F\frac{1}{X} }\wt FE}
\frac{1}{\sqrt{{X}}}{E}
\\
&= {E}\frac{1}{\sqrt{{X}}}
\pbb{1-\frac{2}{1+ \wt{{F}}\frac{1}{{X}}}\wt{{F}}{E}}^{-1}
\frac{1}{\sqrt{{X}}}{E}
= {E}\frac{1}{\sqrt{{X}}}
\frac{1}{1-{K}}
\frac{1}{\sqrt{{X}}}{E}
}
on $\cal{B}^2$ is positive definite on the image of $E$, where we introduced 
\bels{def:F and K}{
\wt{{F}}:= \sqrt{{X}}\2 {F} \sqrt{{X}}\qquad \text{ and } \qquad {K}:= {E}\frac{2}{1+ \wt{{F}}\frac{1}{{X}}}\wt{{F}}{E}
={E}\sqrt{{X}} \frac{2{F}}{1+ {F}}\sqrt{{X}}{E}
}
in the calculation and used $E^2=E$  as well as $EX=XE$. 
Combining \eqref{rep of sigma via F}, \eqref{rep of T}, \eqref{def of X} and \eqref{expression through K})
yields 
\begin{equation} \label{eq:y_with_K} 
-2\partial_{\ol{\zeta}} \scalar{E_{21}}{M}  
= \scalarbb{\vtwo{ \ol{e}_y\sqrt{v_1v_2}}{\ol{e}_y\sqrt{v_1v_2}}}{
\frac{1}{1-{K}}
\vtwo{ \ol{e}_y\sqrt{v_1v_2}}{\ol{e}_y\sqrt{v_1v_2}}}, 
\end{equation} 
where $e_y := \frac{y}{\abs{y}} \in \cal{B}$. 
We now split the self-adjoint operator ${F} = {F}_+- {F}_-$ into its positive and negative parts 
and estimate 
\bels{lower bound on 1-K}{
 \sqrt{{X}}\pbb{\frac{2{F}_+}{1+ {F}_+}-\frac{2{F}_-}{1- {F}_-}} \sqrt{{X}}
\le  \sqrt{{X}}\pbb{\frac{2{F}_+}{1+ {F}_+}} \sqrt{{X}} \leq 1 - D_{\wh{v}}^2 \,,
}
where we used $\norm{{F}}<  1$ for the first inequality.  To see this,
from the definitions of $F$ in \eqref{def of F,T,V}  and $\wh v$ in \eqref{def: v} we read off 
\[
F\vtwo{\wh v}{\wh v} =\vtwo{\wh v}{\wh v} -\eta \vtwo{{v_1}/{\hat{v}}}{{v_2}/{\hat{v}}} \,.
\]
Since $F$ is a positivity preserving compact operator, there is a positive eigenvector $f = (f_1, f_2)$ with $Ff = \norm{F} f$. Taking the scalar product with $f$ yields
\[
\norm{F} =1 -\eta \frac{\avg{f_1 v_1 /\wh v}+\avg{f_2 v_2 /\wh v}}{\avg{ f_1 \wh v}+\avg{ f_2 \wh v}} <1\,.
\]
The final inequality in \eqref{lower bound on 1-K} follows from $\frac{2{F}_+}{1+ {F}_+} < 1$ because $0 \leq F_+ \leq \norm{F} < 1$  
and \eqref{hat v relation}. 
Hence, $1 - K>0$, which implies $- \partial_{\bar \zeta} \avg{y(\zeta, \eta)} >0$. Moreover, 
we plug \eqref{lower bound on 1-K} into \eqref{eq:y_with_K} and use \eqref{def: v} as well as \eqref{eq:Saveraging}
 to obtain 
\[ 
-2\partial_{\ol{\zeta}} \scalar{E_{21}}{M}  \leq \avgbb{\frac{v_2}{\eta + S v_2}} \lesssim 1, 
\] 
which proves the upper bound on $- \partial_{\bar \zeta} \avg{y(\zeta, \eta)}$. 
This completes the proof of Proposition~\ref{pro:derivative_y_positive}. 
\end{proof}

The next corollary follows from Proposition~\ref{prp:Brown_measure_construction} and the upper bound in Proposition~\ref{pro:derivative_y_positive}.  
We recall the definition of $\mathbb S_\eps$ from \eqref{eq:def_S_eps}.

\begin{corollary}\label{cor:supp_sigma_subset_S_0} 
Let  $a \in \mathcal B$,  $s$ satisfy \ref{assum:flatness} and $\sigma$ be the measure from Proposition~\ref{prp:Brown_measure_construction}. 
Then $\supp \sigma \subset \mathbb{S}_0$ 
and the measure $\sigma$ satisfies the identity
\bels{sigma formula in terms of y}{
\sigma(\zeta) = -\lim_{\eta \downarrow 0}\frac{1}{ \pi}  \partial_{\ol{\zeta}}\avg{ y(\zeta, \eta)}
}
in the sense of distributions on $\C$, where  $y  $ is the $(2,1)$ component of $M$ from \eqref{eq:structure_M}. 
Moreover, $\sigma$ is absolutely continuous with respect to the Lebesgue measure on $\C$ and its density is bounded. 
\end{corollary}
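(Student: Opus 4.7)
The two assertions both follow by isolating ingredients that already appeared in the proof of Proposition~\ref{prp:Brown_measure_construction}, so the plan is to reorganize that material rather than perform new computations.

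For the inclusion $\supp \sigma \subset \mathbb S_0$, I would argue as follows. By \cite[Lemma~D.1]{AEK_Shape} together with Proposition~\ref{prp:S characterisation}~\ref{characterisation beta>0}, the diagonal entries $v_1$ and $v_2$ of the solution $M$ of the Matrix Dyson Equation extend analytically, locally uniformly, from $\zeta \in \C \setminus \mathbb{S}_0$ and $\eta > 0$ down to $\eta = 0$ by setting $v_i(\zeta,0) = 0$ there. Hence, by dominated convergence and \eqref{eq:def_L}, one has on $\C \setminus \mathbb S_0$ the representation $L(\zeta) = -\avg{\log|a-\zeta|} + C$ (with $C=0$, as noted in the proof of Proposition~\ref{prp:Brown_measure_construction}), which is harmonic there because $\spec(D_a) \subset \mathbb{S}_0$ by Proposition~\ref{prp:S characterisation}~\ref{item:spec a in S} and \ref{characterisation beta>0}. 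Consequently $\Delta L \equiv 0$ on the open set $\C \setminus \mathbb S_0$, and the defining relation \eqref{eq:sigma_L_identity} forces $\sigma$ to vanish there.

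For the distributional identity \eqref{sigma formula in terms of y}, the plan is to pass to the limit in the approximating quantities $L_\eps$. Fix $f \in C_0^\infty(\C)$. By \eqref{eq:sigma_L_identity} and since $L_\eps \to L$ uniformly on $\C$ (Lemma~\ref{lem:L_and_L_eps}),
\begin{equation*}
\int_{\C} f(\zeta)\,\sigma(\dd \zeta) = -\frac{1}{2\pi}\int_{\C} \Delta f(\zeta)\, L(\zeta)\,\dd^2\zeta = -\frac{1}{2\pi}\lim_{\eps \downarrow 0}\int_{\C} \Delta f(\zeta)\, L_\eps(\zeta)\,\dd^2\zeta.
\end{equation*}
Because $L_\eps$ is $C^\infty$ for $\eps > 0$ by Lemma~\ref{lem:L_and_L_eps}, we may integrate by parts twice and use the identity $\Delta = 4\partial_\zeta \partial_{\bar \zeta}$ together with the first derivative formula \eqref{eq:L_eps_first_derivative} to obtain $\Delta L_\eps = 2\partial_{\bar \zeta}\avg{y(\cdot,\eps)}$. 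Inserting this into the display and integrating by parts once more yields
\begin{equation*}
\int_{\C} f(\zeta)\,\sigma(\dd \zeta) = -\frac{1}{\pi}\lim_{\eps \downarrow 0}\int_{\C} f(\zeta)\,\partial_{\bar \zeta}\avg{y(\zeta,\eps)}\,\dd^2\zeta,
\end{equation*}
which is exactly \eqref{sigma formula in terms of y} in the distributional sense.

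The only subtlety, and the main thing to verify carefully, is the legitimacy of exchanging $\lim_{\eps\downarrow 0}$ with the integral against $\Delta f$ in the step above. This is immediate here because $L_\eps \to L$ uniformly on the compact $\supp f$, so no delicate tail estimate is required; alternatively, one could invoke the general principle that uniform convergence of functions implies convergence of all distributional derivatives. The rest of the argument is bookkeeping from previously established identities, so I expect no further difficulties.
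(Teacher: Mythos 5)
Your proposal follows the same overall structure as the paper's (the corollary is extracted from the proof of Proposition~\ref{prp:Brown_measure_construction}), and the second half — passing from \eqref{eq:sigma_L_identity} through $L_\eps$, using \eqref{eq:L_eps_first_derivative} and the uniform convergence $L_\eps\to L$ to arrive at \eqref{sigma formula in terms of y} in the distributional sense — is correct.

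The weak point is the first half. The step ``by dominated convergence and \eqref{eq:def_L}, one has on $\C\setminus\mathbb S_0$ the representation $L(\zeta) = -\avg{\log\abs{a-\zeta}}+C$'' does not hold as stated: knowing $v_i(\zeta,0)=0$ for $\zeta$ off $\mathbb S_0$ tells you nothing directly about the value of the $\eta$-integral in \eqref{eq:def_L}, and there is no limit to which dominated convergence would be applied there. What the paper actually does (and what you need) is to send $\eps\downarrow 0$ in $\partial_\zeta L_\eps=\tfrac12\avg{y(\zeta,\eps)}$ from \eqref{eq:L_eps_first_derivative}, using the algebraic identity \eqref{eq:y_identities} and $\norm{v_1}+\norm{v_2}\to 0$ to conclude $y(\zeta,\eta)\to(a-\zeta)^{-1}$, yielding $\partial_\zeta L(\zeta)=\tfrac12\avgb{(a-\zeta)^{-1}}$ and $\partial_{\bar\zeta}L(\zeta)=\tfrac12\avgb{(\bar a-\bar\zeta)^{-1}}$ on $\C\setminus\mathbb S_0$ (this is \eqref{eq:derivatives_L_wrt_zeta_outside_S_0}). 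From these, $\Delta L=4\partial_\zeta\partial_{\bar\zeta}L=0$ away from $\spec(D_a)\subset\mathbb S_0$ follows immediately — you do not even need the explicit logarithmic potential to get harmonicity. With this repair your argument coincides with the paper's.
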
 

Owing to \eqref{eq:y_with_K} and \eqref{sigma formula in terms of y}, we see 
that the Brown measure admits the representation   
\bels{rep of sigma with K}{
\pi\1 \sigma  = \lim_{ \eta \downarrow 0} 
\scalarbb{\vtwo{ \ol{e}_y\sqrt{v_1v_2}}{\ol{e}_y\sqrt{v_1v_2}}}{
\frac{1}{1-{K}}
\vtwo{ \ol{e}_y\sqrt{v_1v_2}}{\ol{e}_y\sqrt{v_1v_2}}}
}
in a distributional sense, where $K$ is defined in \eqref{def:F and K}.

\begin{proposition}[Strict positivity of Brown measure on $\mathbb S$]   \label{pro:Strict positivity of Brown measure}
Let  $a \in \mathcal B$ and  $s$ satisfy \ref{assum:flatness} and \ref{assum:Gamma}. 
Then the density of the Brown measure $\sigma$ (cf.\ Corollary~\ref{cor:supp_sigma_subset_S_0}) is strictly positive and real analytic on $\mathbb S$.  
\end{proposition} 

For the proof of Proposition~\ref{pro:Strict positivity of Brown measure}
we compute  the  Brown measure   through the formula \eqref{sigma formula in terms of y}, i.e.\ the distributional identity $\pi\sigma=-2\lim_{\eta \downarrow 0}\ol{\partial}_{\zeta} \scalar{E_{21}}{M}$.
First we will see that the right hand side in \eqref{sigma formula in terms of y} is nonnegative and is in fact positive when evaluated at $\eta>0$, i.e.\ we prove Proposition~\ref{pro:derivative_y_positive}. After that we will see that under assumption \ref{assum:Gamma} the right hand side can be continuously extended to $\eta=0$ away from $\partial \mathbb{S}$ and remains a bounded function of $\zeta$, i.e.\ $\sigma$ has a density.

\begin{proof} [Proof of Proposition~\ref{pro:Strict positivity of Brown measure}]
For the proof of analyticity of $\sigma$, we recall the definition of $y$ from \eqref{eq:B_relation}. 
We conclude from Proposition~\ref{prp:S characterisation} \ref{characterisation of S}, \eqref{eq:def_rho_zeta_0} 
and Proposition~\ref{prp:Analyticity in the bulk} that $\mathbb{S} \to \C$, $\zeta \mapsto y(\zeta,0)$ 
is real analytic.  Therefore, \eqref{sigma formula in terms of y} implies that $\sigma$ is real analytic on $\mathbb{S}$. 

To prove a lower bound on $\sigma$, we use \eqref{rep of sigma with K} and see that $1-K$ remains bounded on the image of $E$ as $\eta \downarrow 0$. Indeed by  the identity in  \eqref{lower bound on 1-K} the only contribution to $K$ that may potentially be unbounded is the one associated with $F_-$. However, $EF_- E |_{\eta =0}\le 1-\eps$ for some $\eps>0$ 
 because of the spectral gap of $F$ above $-1$ in  Lemma~\ref{lmm:Spectral properties of F and T}  and the fact that $(\wh{v},-\wh{v})$, 
the eigenvector corresponding to eigenvalue $-1$, is mapped to zero by $E$.
\end{proof}

\subsection{Edge behaviour of the Brown measure} 
\label{sec:proof_properties_sigma_general} 

Here we show that $\sigma$ can be continuously extended to the boundary of $\mathbb{S}$ and compute its boundary values. 
Throughout this subsection we assume \ref{assum:flatness} and \ref{assum:Gamma}. 

\begin{proposition}[Boundary values of $\sigma$]
\label{prp: Boundary values of sigma}
There exists a real analytic extension of $\sigma|_{\mathbb{S}}$ to a neighbourhood of  $\ol{\mathbb{S}}$. 
The extension satisfies 
\bels{sigma edge value}{
\sigma(\zeta_0) =\frac{1}{\pi} \frac{\abs{\avg{(a-\zeta_0)\ell_0\1 b_0}}^2}{\avg{\abs{a-\zeta_0}^4 \ell_0^2\1b_0^2}}
= \frac{\avg{\ell_0\1b_0}^2\abs{\partial_{{\zeta}}\2\beta(\zeta_0)}^2}{\pi\1\avg{\abs{a-\zeta_0}^4 \ell_0^2\1b_0^2}}
}
for any $\zeta_0\in \partial\mathbb{S}$, where $\ell_0:=\ell|_{\zeta=\zeta_0}$ and $b_0:=b|_{\zeta=\zeta_0}$. Furthermore, for $\zeta_0 \in \rm{Sing}$ a singular boundary point the extension satisfies $\partial_\zeta \sigma (\zeta_0)=0$ and 
\bels{Delta sigma}{
\Delta \sigma (\zeta_0) = \frac{ 32|\avg{\ell_0  (a-\zeta_0) B_0^{-1}[b_0 (a-\zeta_0)]}|^2+\avg{\ell_0 b_0}^2(\Delta\beta(\zeta_0))^2}{2\1\pi\1\avg{|a-\zeta_0|^4\ell_0^2 b_0^2 }}\,.
}
\end{proposition}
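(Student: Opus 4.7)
The plan is to use the Brown measure formula \eqref{sigma formula in terms of y}, namely $\pi\sigma(\zeta) = -\lim_{\eta\downarrow 0}\partial_{\bar\zeta}\avg{y(\zeta,\eta)}$, combined with the edge expansion of $v_1$, $v_2$ from Proposition~\ref{prp:v edge expansion}. Starting from the identity $y = (1 - v_1(\eta + Sv_2))/(a-\zeta)$ in \eqref{eq:y_identities} at $\eta = 0$, substituting $v_1 = \vartheta \ell_\zeta + \wt w_1$ and $v_2 = \vartheta b_\zeta + \wt w_2$ with $\wt w_i = O(\vartheta^3)$ at $\eta = 0$, and using the eigenvector relation $S b_\zeta = (\abs{a-\zeta}^2 - \beta(\zeta))b_\zeta$, one obtains, with $\xi := \vartheta^2$,
\[
\avg{y}\big|_{\eta=0} = \avgbb{\frac{1}{a-\zeta}} - \xi\avg{(\bar a - \bar\zeta)\ell_\zeta b_\zeta} + \xi\1\beta\1\avgbb{\frac{\ell_\zeta b_\zeta}{a-\zeta}} + O(\xi^2).
\]

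The real analyticity of the extension follows from dividing the cubic \eqref{cubic for vartheta} by $\vartheta$ at $\eta = 0$, which gives $\xi\avg{\ell b (S^*\ell)(Sb)} + \beta\avg{\ell b} = O(\xi^2)$. Since the coefficient $\avg{\ell_0 b_0 (S^*\ell_0)(Sb_0)} = \avg{\abs{a-\zeta_0}^4 \ell_0^2 b_0^2}$ is strictly positive, the implicit function theorem produces a real analytic extension $\xi = \xi(\zeta)$ through $\partial \mathbb{S}$ with $\partial_{\bar\zeta}\xi|_{\zeta_0} = -\partial_{\bar\zeta}\beta(\zeta_0)\avg{\ell_0 b_0}/\avg{\abs{a-\zeta_0}^4 \ell_0^2 b_0^2}$, which in turn yields the desired real analytic extension of $\sigma$ to a neighborhood of $\ol{\mathbb{S}}$. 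Differentiating the above expansion in $\bar\zeta$ and evaluating at $\zeta_0$ where $\beta = \xi = 0$ gives $-\pi\sigma(\zeta_0) = -(\partial_{\bar\zeta}\xi|_{\zeta_0})\avg{(\bar a - \bar\zeta_0)\ell_0 b_0}$. Inserting the expression for $\partial_{\bar\zeta}\xi|_{\zeta_0}$ together with $\partial_\zeta\beta(\zeta_0) = -\avg{\ell_0 b_0 \overline{(a-\zeta_0)}}/\avg{\ell_0 b_0}$ from Corollary~\ref{crl:beta as eigenvalue of B} and $\partial_{\bar\zeta}\beta = \overline{\partial_\zeta\beta}$ (as $\beta$ is real) gives both equalities in \eqref{sigma edge value}.

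For $\zeta_0 \in \rm{Sing}$, the condition $\partial_\zeta\beta(\zeta_0) = 0$ is equivalent to $\avg{\ell_0 b_0 \overline{(a-\zeta_0)}} = 0$, and the boundary formula immediately yields $\sigma(\zeta_0) = 0$. Then $\partial_\zeta\sigma(\zeta_0) = 0$ follows by one more differentiation: every surviving term in $\partial_\zeta\partial_{\bar\zeta}\avg{y}|_{\zeta_0}$ carries a factor of $\xi(\zeta_0)$, $\partial_\zeta\xi(\zeta_0)$, $\partial_{\bar\zeta}\xi(\zeta_0)$, or $\avg{\ell_0 b_0 (\bar a - \bar\zeta_0)}$, all of which vanish at a singular point (the $\xi$-factors via the relation $\xi = -\beta\avg{\ell b}/\avg{\abs{a-\zeta}^4\ell^2 b^2} + O(\beta^2)$ together with $\partial_\zeta\beta(\zeta_0) = \partial_{\bar\zeta}\beta(\zeta_0) = 0$).

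Finally, $\Delta\sigma(\zeta_0) = -4\partial_\zeta\partial_{\bar\zeta}^2 \avg{y}|_{\zeta_0}/\pi$ demands pushing the expansion one order further and computing second-order derivatives. Two independent contributions survive: one proportional to $(\Delta\beta(\zeta_0))^2$, arising from $\partial_\zeta\partial_{\bar\zeta}\xi$ combined with mixed derivatives of the coefficient $\avg{(\bar a - \bar\zeta)\ell_\zeta b_\zeta}$, and one proportional to $|\partial_\zeta^2\beta(\zeta_0)|^2$, arising from $\partial_{\bar\zeta}^2\xi$. Using the formula $\partial_\zeta^2\beta(\zeta_0) = -2\,\overline{\avg{\ell_0(a-\zeta_0)B_0^{-1}Q_0[b_0(a-\zeta_0)]}}/\avg{\ell_0 b_0}$ read off from the expansion \eqref{beta expansion at edge}, and observing that $b_0(a-\zeta_0) \in \mathrm{range}(Q_0)$ at singular points (so that $B_0^{-1}$ may replace $B_0^{-1}Q_0$), one arrives at \eqref{Delta sigma}. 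The main technical obstacle is this last step: carefully collecting all second-order contributions from the Taylor expansions of $\xi$, $\beta$, and the real analytic eigenvectors $\ell_\zeta$, $b_\zeta$ provided by analytic perturbation theory, and tracking cancellations to recover the explicit numerical coefficients displayed in \eqref{Delta sigma}.
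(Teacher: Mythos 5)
Your proposal follows essentially the same strategy as the paper: represent $\sigma$ via \eqref{sigma formula in terms of y}, expand $\avg{y}$ using $y = (1-v_1Sv_2)/(a-\zeta)$, substitute the edge decomposition, and replace the leading-order coefficient by a square variable that plays the role of $v_1v_2$ along the eigendirections. The boundary formula \eqref{sigma edge value} is correctly derived along these lines. However, the proof has two genuine gaps.

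First, the analyticity argument is not complete. Dividing the cubic from Proposition~\ref{prp:v edge expansion} by $\vartheta$ at $\eta=0$ yields $\xi\avg{\ell b(S^*\ell)(Sb)} + \beta\avg{\ell b} = g(\zeta,0,\vartheta)/\vartheta$, and the right-hand side is of order $\vartheta^4 = \xi^2$ \emph{in magnitude}, but nothing in the stated properties of $g$ prevents it from containing \emph{odd} powers of $\vartheta$. An odd power is not a real analytic function of $\xi$, so the implicit function theorem applied in $(\zeta,\xi)$ does not directly produce a real analytic $\xi(\zeta)$. The paper sidesteps this by using the multiplicative decomposition $v_1 = \vartheta_1(\ell - \theta w_1)$, $v_2 = \vartheta_2(b - \theta w_2)$ from \eqref{vi split with wi} with $\theta=\vartheta_1\vartheta_2$, and by projecting the $\eta=0$ Dyson equation to obtain the system \eqref{beta theta equation}--\eqref{wi theta equation}. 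These equations manifestly involve only $\theta$, never $\vartheta_1$ or $\vartheta_2$ separately (reflecting the scaling symmetry $(v_1,v_2)\mapsto(\lambda v_1,\lambda^{-1}v_2)$ of the $\eta=0$ equation), so analyticity of $\beta$, $w_1$, $w_2$ in $\theta$ is immediate. If you want to keep the $\xi=\vartheta^2$ formulation you must first establish this $\theta$-only dependence; otherwise the argument breaks.

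Second, the proof of \eqref{Delta sigma} is not carried out; you identify the two contributions and their schematic origin but explicitly defer "tracking cancellations to recover the explicit numerical coefficients." This is precisely the nontrivial content of the last half of the proposition. The paper does this via the explicit expansion \eqref{sigma expansion in terms of beta}, using $\avg{a\ell_0 b_0}=0$ at singular points to justify dropping the projection $Q_0$ in $B_0^{-1}Q_0$, substituting the formulas for $\partial_{\ol{\zeta}}b$ and $\partial_{\ol{\zeta}}\ell$ from \eqref{derivatives of ell and b}, and combining with the $\beta$-expansion \eqref{beta expansion at edge}. Along the way, the vanishing $\partial_\zeta\sigma(\zeta_0)=0$ is read off directly from \eqref{sigma expansion in terms of beta} rather than from an inventory of vanishing factors. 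Without this computation your argument does not yet establish \eqref{Delta sigma}.
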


\begin{proof}
We use the identity \eqref{sigma formula in terms of y}
to compute $\sigma$ at some $\zeta$ in a sufficiently small neighbourhood of $\zeta_0 \in \partial \mathbb{S}$ in terms of $y$.  We expand $y$ in terms of $v_1,v_2$ with the help of \eqref{eq:y_identities} and  expand $v_1,v_2$ in terms of $\beta$.  In this proof we set $\eta=0$ and by shifting the spectrum we assume without loss of generality that $\zeta_0=0$. 
In this case, instead of the decomposition~\eqref{vi split}, 
it is more convenient to write
\bels{vi split with wi}{ 
v_1 = \vartheta_1 \1(\ell -\theta  w_1), \qquad v_2 = \vartheta_2 \1(b -\theta w_2 )
}
where we introduced $\theta:= \vartheta_1 \vartheta_2$ and where due to \eqref{bound on tilde alpha} the vectors $w_1$ and $w_2$ remain bounded, i.e. 
$\norm{w_1}_\infty+\norm{w_2}_\infty\lesssim 1$. We already know $\alpha \sim \vartheta_1 \sim \vartheta_2$ due to \eqref{eq:scaling_alpha_in_terms_of_lambda_and_eta} and \eqref{vartheta identity} and that $\vartheta_i$ are real analytic functions of $\zeta$ for $\zeta \in \mathbb{S}$ by Proposition~\ref{prp:v edge expansion} even for $\eta=0$. Furthermore, $\vartheta=0$ at $\eta=0$ for $\zeta \in \C \setminus \ol{\mathbb{S}}$. 
For $\zeta \in \overline{\mathbb{S}}$ and $\eta=0$, the first equation in \eqref{edge identities} becomes
\bels{beta theta equation}{
\beta \1 \avg{\ell b}+\theta\avg{\ell (b-\theta w_2)  S^*(\ell -\theta w_1)S(b-\theta w_2)}=0,  
}
after projecting  onto the left eigenvector $\ell =\ell_\zeta$  of $B=B_\zeta$ and 
\bels{wi theta equation}{
 w_1&=  (B^{-1})^*Q^*[(\ell -\theta w_1) S^*(\ell -\theta w_1)S(b-\theta w_2)], 
 \\
w_2 &=  B^{-1}Q[(b-\theta w_2)  S^*(\ell -\theta w_1)S(b-\theta w_2)],    
}
after projecting \eqref{edge identities} onto the complement of the left and right  eigendirections, where we recall the projection $Q$ from \eqref{def:vartheta_i}. 
Owing to the implicit function theorem, we  
 see from the structure of the equations \eqref{beta theta equation} and  \eqref{wi theta equation} that $\beta$, $w_1$ and $w_2$ are locally analytic function of $\theta$ for fixed  $B$, and therefore fixed $b, \ell, Q$. 
 Furthermore,  as $\norm{w_1}_\infty + \norm{w_2}_\infty \lesssim 1$, 
\eqref{beta theta equation} shows
\bels{beta theta relation}{
\beta \1 \avg{\ell b}+\theta\avg{\ell b  (S^*\ell) (Sb)}  
 = O(\theta^2)\,. 
}
Combining \eqref{eq:y_identities} with \eqref{sigma formula in terms of y} we get the formula 
\bels{sigma derivative formula}{
\pi\1 \sigma = \avgbb{\frac{1}{a-\zeta} \partial_{\ol{\zeta}} (v_1Sv_2)}  \,.
}
Since  $v_1Sv_2 
 = \theta \ell S b + O(\theta^2)$  is an analytic function of $\theta$ for fixed $B$, by the relation \eqref{beta theta relation} between $\theta$ and $\beta$, as well as the fact that $\beta$ is locally real analytic around $\zeta_0=0$ due to Corollary~\ref{crl:beta as eigenvalue of B}, 
we see that $\sigma$ is the $\ol{\zeta}$-derivative of an analytic function of $\beta$ for fixed $B$. Since $B$ also analytically depends on $\zeta$, we conclude that the right hand side of \eqref{sigma derivative formula} can be analytically extended to $\zeta$ in a neighbourhood of $0$. 
We denote this extension again by $\sigma$. 
Furthermore, we have
\begin{equation} \label{eq:partial_ol_zeta_v_1_S_v_2} 
\partial_{\ol{\zeta}}(v_1Sv_2 ) = (\ell S b)\1 \partial_{\ol{\zeta}}\theta + \theta \1\partial_{\ol{\zeta}}(\ell S b) +O( | \partial_{\ol{\zeta}}\theta|\theta+ \theta^2)\,.
\end{equation}
Evaluating at $\zeta=0$ yields 
\[
\pi\1 \sigma |_{\zeta=0} = \avgbb{\frac{1}{a}(\ell_0 S b_0)\1 \partial_{\ol{\zeta}}\theta |_{\zeta=0}} =-\avg{\ol{a}\ell_0 b_0}\frac{\avg{\ell_0 b_0}}{\avg{\ell_0^2b_0^2 |a|^4}}\partial_{\ol{\zeta}}\beta |_{\zeta=0}
\]
where we used \eqref{beta theta relation} 
and $B_0^* \ell_0=0= B_0b_0$. With \eqref{derivatives of beta} the claim \eqref{sigma edge value} follows. 

Now we assume $\zeta_0=0 \in \rm{Sing}$ is a singular boundary point. 
 Since $\beta$ is locally analytic in a neighbourhood of $0$ and $\partial_{\ol{\zeta}}\beta(0)=0 $, we have $\beta =O(|\zeta|^2)$ and   $\partial_{\ol{\zeta}}\beta = O(|\zeta|)$. Thus, using 
\eqref{eq:partial_ol_zeta_v_1_S_v_2}, 
\eqref{beta theta relation} 
and $Bb =\beta b$ we get 
\bels{derivative vSv}{
\partial_{\ol{\zeta}}(v_1Sv_2 ) =  |\zeta-a|^2 \ell b\2 \partial_{\ol{\zeta}}\theta + \theta \partial_{\ol{\zeta}}(\ell S b) +O( |\zeta|^3)\,.
}
Since $\partial_{\ol{\zeta}}B  = D_{\zeta-a}$, $\beta(\zeta) =0$, $\partial_\zeta \beta(\zeta) = 0$
and $\ell$, $b \in \mathcal B_+$, we have  
\bels{derivatives of ell and b}{
\partial_{\ol{\zeta}} b|_{\zeta=0} &=\ol{\partial_{{\zeta}} b}|_{\zeta=0}=  B_0^{-1}[a b_0]\,, \qquad  \partial_{\ol{\zeta}}\ell |_{\zeta=0}=\ol{\partial_{{\zeta}}\ell} |_{\zeta=0}=  (B_0^*)^{-1}[{a} \ell_0]\,,
}
where all quantities with index $0$ are evaluated at $\zeta=\zeta_0=0$ and we used $ \avg{a \ell_0b_0}=0$ because $\partial_\zeta \beta(0)=0$ (cf. \eqref{derivatives of beta}) to guarantee the inverses of $B_0$ and $B_0^*$ can be applied. 
Furthermore, 
\bels{expand first term}{
\avgb{\ol{(a-\zeta)}\ell b}&=-\ol{\zeta}\avg{\ell_0b_0}  +2 \avg{\ol{a}  \re [\zeta\partial_\zeta (\ell b)|_{\zeta=0}]} +O(|\zeta|^2)
\\
&=-\ol{\zeta}\avg{\ell_0b_0}  +2 \avg{\ol{a}  \re [\zeta\ell_0 B_0^{-1}[\ol{a} b_0]+\zeta b_0 (B_0^*)^{-1}[\ol{a} \ell_0]]} +O(|\zeta|^2)
}
where we used $ \avg{a \ell_0b_0}=0$ again. 
Inserting \eqref{derivative vSv} into  \eqref{sigma derivative formula} and using \eqref{derivatives of ell and b}, \eqref{beta theta relation} and \eqref{expand first term}
we find 
\bels{sigma expansion in terms of beta}{
\pi\1 \sigma &= \avgb{\ol{(a-\zeta)}\ell b} \partial_{\ol{\zeta}}\theta + \avgbb{\frac{1}{a-\zeta} \partial_{\ol{\zeta}}(\ell S b)}\theta+O( |\zeta|^3) 
\\
&= 
\frac{ \avg{\ell_0 b_0}}{\avg{\ell_0^2 b_0^2 |a|^4}}\bigg(
\pb{\ol{\zeta}\avg{\ell_0b_0}  -2 \avg{\ol{a}  \re [\zeta\ell_0\ B_0^{-1}[\ol{a} b_0]+\zeta b_0 (B_0^*)^{-1}[\ol{a}\ell_0]]} } \partial_{\ol{\zeta}}\beta 
\\
& \qquad - \avgbb{\frac{1}{a} \pb{(S b_0) (B_0^*)^{-1}[a \ell_0] + \ell_0 S B_0^{-1}[a b_0]}} \beta\bigg)+O( |\zeta|^3)
}
In particular, $\partial_\zeta \sigma |_{\zeta=0}=0$. 
By \eqref{beta expansion at edge} the derivative of $\beta$ satisfies 
\[
\partial_{\ol{\zeta}}\beta  = \pbb{1- 2 \re \sbb{\frac{\avg{\ell_0  \ol{a} B_0^{-1}[b_0 a]}}{\avg{\ell_0\2b_0}}} }\zeta - 2
\frac{\avg{\ell_0  aB_0^{-1}[b_{0} a]}}{\avg{\ell_0\2b_0}} \ol{\zeta} +O(|\zeta|^2)\,.
\]
Using this, $B_0b_0 = 0$ and the expansion of $\beta$ from \eqref{beta expansion at edge} in \eqref{sigma expansion in terms of beta}, 
we conclude 
\bes{
\frac{\pi\1\avg{\ell_0^2 b_0^2 |a|^4}}{ 8\avg{\ell_0 b_0}^2}\Delta \sigma|_{\zeta=0} 
&= 2|R[a,a]|^2+(1-R[a,\ol{a}]- R[\ol{a},a])^2= 2|R[a,a]|^2+(\partial_\zeta \partial_{\ol{\zeta}}\beta(0))^2,
}
where we introduced
\[
R[x,y]:= \frac{\avg{\ell_0  x B_0^{-1}[b_0 y]}}{\avg{\ell_0\2b_0}}\,, \qquad x,y \in \{a,\ol{a}\}\,.
\]
With \eqref{derivatives of beta} this proves \eqref{Delta sigma}. 
\end{proof}

\begin{lemma} \label{lem:beta_zero_and_critical_point} 
Let $\zeta \in \C$ such that $\beta(\zeta)=0$ and $\partial_\zeta\beta(\zeta)=0$. Then $\Delta\beta(\zeta) < 0$. In particular, 
\bels{boundary of S = zero of beta}{
 \partial\mathbb{S}=\{\zeta \in \C: \beta(\zeta)=0\}.
} 
\end{lemma}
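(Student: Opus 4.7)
The plan is to prove $\Delta\beta(\zeta_0) < 0$ first, and deduce the identity $\partial\mathbb{S} = \{\beta = 0\}$ as a consequence. For the latter, the inclusion $\partial\mathbb{S}\subseteq\{\beta = 0\}$ is immediate from continuity of $\beta$ (Proposition~\ref{prp:S characterisation}\ref{continuity of beta}) together with $\mathbb{S} = \{\beta < 0\}$. For the converse, let $\zeta_0$ satisfy $\beta(\zeta_0) = 0$: if $\partial_\zeta\beta(\zeta_0) \neq 0$ then $\beta$ has nonvanishing gradient and takes negative values in every neighbourhood of $\zeta_0$, while if $\partial_\zeta\beta(\zeta_0) = 0$ the strict negativity $\Delta\beta(\zeta_0) < 0$ forces the real Hessian of the real-analytic function $\beta$ (see Corollary~\ref{crl:beta as eigenvalue of B}) to have at least one strictly negative eigenvalue, so $\beta$ decreases quadratically along the corresponding direction; either way $\zeta_0 \in \partial\mathbb{S}$.

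For the main claim, set $\widetilde{a} := a - \zeta_0$. The hypothesis $\partial_\zeta\beta(\zeta_0) = 0$ combined with \eqref{derivatives of beta} gives $\avg{\ell_0 b_0\widetilde{a}} = 0$, so $b_0\widetilde{a} \in \Ran B_0$ and the projection $Q_0$ may be dropped from the second formula in \eqref{derivatives of beta}; writing $u := B_0^{-1}[b_0\widetilde{a}]$ (normalized by $\avg{\ell_0 u} = 0$), the goal reduces to proving $\re\avg{\ell_0\overline{\widetilde{a}}u}/\avg{\ell_0 b_0} > 1/2$. The crucial reformulation is to expose an auxiliary Markov chain structure. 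Define the positivity-preserving operator $K^\dagger := D_{1/(|\widetilde{a}|^2 b_0)}SD_{b_0}$; the eigenvector identities $Sb_0 = |\widetilde{a}|^2 b_0$ and $S^*\ell_0 = |\widetilde{a}|^2\ell_0$ (from $B_0 b_0 = 0 = B_0^*\ell_0$) imply that $K^\dagger 1 = 1$ and that $\ell^\dagger := \ell_0 b_0 |\widetilde{a}|^2$ is an invariant density, so $K^\dagger$ is a Markov operator on $\mathcal{B}$ with stationary probability $d\nu := \ell^\dagger d\mu/\avg{\ell^\dagger}$. Setting $w := u/b_0$ and $h := 1/\widetilde{a}$, dividing $B_0 u = b_0\widetilde{a}$ by $|\widetilde{a}|^2 b_0$ transforms the equation into $(I - K^\dagger)w = \overline{h}$, with $\overline{h}$ mean-zero under $\nu$ (precisely because $\avg{\ell_0 b_0\widetilde{a}} = 0$). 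A direct translation of the pairings then shows
\[
\frac{\avg{\ell_0\overline{\widetilde{a}}u}}{\avg{\ell_0 b_0}} = \frac{\scalar{\overline{h}}{w}_\nu}{\|\overline{h}\|^2_\nu}, \qquad \scalar{f}{g}_\nu := \frac{\avg{\ell^\dagger \overline{f} g}}{\avg{\ell^\dagger}}.
\]

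The final ingredient is an energy identity. Multiplying $(I-K^\dagger)w = \overline{h}$ by $\overline{w}$, integrating against $\nu$, and adding the complex conjugate yields
\[
2\re\scalar{\overline{h}}{w}_\nu - \|\overline{h}\|^2_\nu = \|w\|^2_\nu - \|K^\dagger w\|^2_\nu,
\]
whose right-hand side is nonnegative since every Markov operator is an $L^2(\nu)$-contraction by Jensen's inequality; this already gives the weak bound $\re\avg{\ell_0\overline{\widetilde{a}}u}/\avg{\ell_0 b_0} \geq 1/2$. The main obstacle, and the sole place where \ref{assum:flatness} is used, is upgrading this to a strict inequality. Equality in Jensen would force $w$ to be constant on the support of $K^\dagger(x,\cdot)$ for $\nu$-almost every $x$; combined with the lower bound $K^\dagger(x,y) \gtrsim 1$ for $(x,y) \in I_i \times I_j$ whenever $z_{ij} = 1$ and the primitivity of $(z_{ij})$, this constancy propagates from block to block until $w$ must be globally constant on $\mathfrak{X}$. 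But then $\overline{h} = (I - K^\dagger)w = 0$, contradicting $\overline{h} = 1/\overline{\widetilde{a}} \neq 0$, which holds because $\beta(\zeta_0) = 0$ forces $\zeta_0 \notin \spec(D_a)$ by Proposition~\ref{prp:S characterisation}\ref{item:spec a in S} and therefore $\essinf|\widetilde{a}| > 0$. Hence the inequality is strict and $\Delta\beta(\zeta_0) < 0$, as claimed.
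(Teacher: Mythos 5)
Your proof is correct; it is the same spectral argument as the paper's, recast in a probabilistic (Markov) normalization instead of the symmetric one. Both proofs start from the formula $\partial_\zeta\partial_{\overline\zeta}\beta(\zeta_0) = 1 - 2\re\,\avg{\ell_0\overline{\widetilde{a}}\,B_0^{-1}[b_0\widetilde{a}]}/\avg{\ell_0 b_0}$ and both conjugate $D_{|\widetilde{a}|^2}^{-1}S$ so as to expose a Perron--Frobenius operator of spectral radius one. The paper's choice is the symmetric conjugate $K = D_{\sqrt{\ell}/(|\widetilde{a}|\sqrt{b})}\,S\,D_{\sqrt{b}/(|\widetilde{a}|\sqrt{\ell})}$, which has the same left and right PF eigenvector $k = |\widetilde{a}|\sqrt{\ell b}$, and the key identity is $\partial_\zeta\partial_{\overline\zeta}\beta = -\avg{|x|^2}^{-1}\scalar{v}{(1-K^*K)v}$ with $v = (1-K)^{-1}x$. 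Your $K^\dagger = D_{k^{-1}}KD_k$ is the stochastic normalization, and your energy identity $\|w\|^2_\nu - \|K^\dagger w\|^2_\nu = 2\re\scalar{\overline{h}}{w}_\nu - \|\overline{h}\|^2_\nu$ is the same algebraic fact translated to those coordinates. What the Markov picture buys is transparency at the two delicate spots: nonnegativity becomes Jensen's inequality for the transition kernel rather than an abstract operator-norm bound, and you treat the equality case of Jensen explicitly, propagating constancy of $w$ from block to block via the primitivity of $Z$ in Assumption~\ref{assum:flatness} and arriving at the contradiction $\overline{h}\equiv 0$; the paper's phrase ``thus $1-K^*K$ is strictly positive definite on $k^\perp$'' is correct but leaves the role of primitivity (in producing a simple, isolated PF eigenvalue of $K^*K$) implicit. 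One small expository slip: the energy identity does not actually follow from pairing $(I-K^\dagger)w=\overline{h}$ with $\overline{w}$ and conjugating, which only yields the tautology $\|w\|^2_\nu - \re\scalar{w}{K^\dagger w}_\nu = \re\scalar{\overline{h}}{w}_\nu$; it drops out directly from expanding $\|K^\dagger w\|^2_\nu = \|w - \overline{h}\|^2_\nu$. This does not affect the correctness of the argument.
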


\begin{proof}
Let $\zeta\in \C$ with $\beta(\zeta)=0$ and $\partial_\zeta\beta(\zeta)=0$.
From \eqref{derivatives of beta} we read off 
\bels{derivatives of beta at edge}{
\avg{\ell\1b\1 (a-\zeta)} = 0\,, \qquad 
\partial_\zeta \partial_{\ol{\zeta}}\beta=
1- 2 \re \sbb{\frac{\avg{\ell  \ol{(a-\zeta)} B^{-1}b (a-\zeta)}}{\avg{\ell\2b}}} \,,
}
where we evaluated the expressions in \eqref{derivatives of beta} at $\zeta_0=\zeta$ and omitted the projection $Q_0$ in the formula for $\partial_\zeta \partial_{\ol{\zeta}}\beta$ since $\avg{\ell\1b  (a-\zeta)}=0$ implies $Q_0[b (a-\zeta) ] =b (a-\zeta)$.

We write $\partial_\zeta \partial_{\ol{\zeta}}\beta$ in terms of 
\[
K:=D\pbb{\frac{\sqrt{\ell}}{\abs{a-\zeta}\sqrt{b}}}SD\pbb{\frac{\sqrt{b}}{\abs{a-\zeta}\sqrt{\ell}}}\,, \qquad
x := \sqrt{\ell \1b} \frac{a-\zeta}{\abs{a-\zeta}}
\]
and arrive at 
\[
\partial_\zeta \partial_{\ol{\zeta}}\beta=1- 2 \re \sbb{\frac{\avg{\ol{x} (1-K)^{-1}x}}{\avg{\abs{x}^2}}}
= -\frac{1}{\avg{\abs{x}^2}}\scalarbb{\frac{1}{1-K}x }{(1-K^*K)\frac{1}{1-K}x}\,.
\]
 Here we used that $(1- K)^{-1}x$ is well-defined since $x \perp k$ due to \eqref{derivatives of beta at edge}, 
 where $k:= \abs{a-\zeta}\sqrt{\ell \1 b}$ is the right and left Perron-Frobenius eigenvector of $K$, i.e.\ $(1-K)k=0=(1-K^*)k$ due to \eqref{B eigenvectors} with $\beta =0$. Furthermore, $(1-K^*K)k =0$ implies that $k$ is the Perron-Frobenius eigenvector of $K^*K$ and thus $1-K^* K$ is strictly positive definite on $k^\perp$, implying $\Delta \beta < 0$. 
Since $\beta$ is real analytic in a neighbourhood of $\zeta$ with $\beta(\zeta) = \partial_\zeta \beta(\zeta)=0$ according to Corollary~\ref{crl:beta as eigenvalue of B} and such $\zeta$ cannot be  a local minimum of $\beta$ due to $\Delta\beta(\zeta) <0$ we infer \eqref{boundary of S = zero of beta}.
\end{proof}

As a consequence of Lemma~\ref{lem:beta_zero_and_critical_point}, the definition of $\mathbb{S}$ in \eqref{def of beta and B and S} and Proposition~\ref{prp:S characterisation} \ref{characterisation beta>0} 
 yield 
\begin{equation} \label{eq:overline_S_equal_intersection_S_eps} 
\mathbb S_0= \overline{\mathbb S}. 
\end{equation}

We now have all ingredients to prove Theorems~\ref{thr:properties_sigma_general} and~\ref{thr:Sing classification}.

\begin{proof}[\linkdest{proof_thr:properties_sigma_general}{Proof of Theorem~\ref{thr:properties_sigma_general}}] 
Items \ref{item:prop_sigma_ii} and \ref{item:prop_sigma_iii} are proved in Propositions~\ref{pro:Strict positivity of Brown measure} 
and~\ref{prp: Boundary values of sigma}.
For the proof of \ref{item:prop_sigma_iv}, we conclude $\supp \sigma \subset \mathbb{S}_0 = \overline{\mathbb{S}}$ 
from Corollary~\ref{cor:supp_sigma_subset_S_0} and \eqref{eq:overline_S_equal_intersection_S_eps}. 
Moreover, $\mathbb{S} \subset \supp \sigma$ follows from \ref{item:prop_sigma_iii}, which completes the proof of \ref{item:prop_sigma_iv} due to 
 Proposition~\ref{prp:S characterisation} \ref{item:spec a in S}.
Note that $\partial \mathbb{S}$ is a real analytic variety due to \eqref{boundary of S = zero of beta} and 
Corollary~\ref{crl:beta as eigenvalue of B}.  
The dimension of $\partial \mathbb{S}$ is at most one as $\Delta \beta(\zeta)  \neq 0 $ if $\partial_\zeta \beta(\zeta)  =0$ by Lemma~\ref{lem:beta_zero_and_critical_point}. 
This shows~\ref{item:prop_sigma_v}. 
Part~\ref{item:prop_sigma_vi} follows from Proposition~\ref{prp: Boundary values of sigma} 
and the fact that $\ell_0$ and $b_0$ and, thus, $g$ are real analytic by Corollary~\ref{crl:beta as eigenvalue of B}.  
\end{proof} 

\begin{proof}[\linkdest{proof_thr:Sing classification}{Proof of Theorem~\ref{thr:Sing classification}}] Let $\zeta \in \rm{Sing}$ a singular point of $\sigma$, i.e.\ $\beta(\zeta) =0$ and $\partial_\zeta \beta(\zeta) = 0$. 
By Lemma~\ref{lem:beta_zero_and_critical_point} we have $\Delta \beta(\zeta)  < 0 $ and by \eqref{Delta sigma} also $\Delta \sigma(\zeta)>0 $.  Now we apply the following lemma about critical points of  real analytic functions in two dimensions  with positive Laplacian, which simplifies to the well-known \emph{Morse lemma} 
 (see e.g.\ \cite[Lemma~2.2]{MilnorMorseTheoryBook})  if the critical point is non-degenerate.  
\begin{lemma} [Higher order Morse lemma] \label{lmm:possible singuarity types}
Let $f: U \to \R$ be a real analytic function on an open set $U \subset \C$ and $\zeta_0 \in U$. Suppose that $\partial_\zeta f (\zeta_0)=0$ and $\Delta f (\zeta_0)>0 $. Then there is are a real analytic diffeomorphism $\Phi: V \to U_0$ from an open neighbourhood $V\subset \C$ of zero to an open neighbourhood $U_0 \subset U$ of $\zeta_0=\Phi(0)$, a natural number $K \ge 2$ and $\tau \in \{0,\pm 1\}$,  such that 
\[ f \circ \Phi(z) = f(\zeta_0) + (\Re z)^2 + \tau (\Im z)^K  , \qquad \text{ for all }z \in V. \] 
\end{lemma}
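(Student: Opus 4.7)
The plan is to reduce $f$ to the stated normal form by a sequence of local real analytic coordinate changes around $\zeta_0$. After absorbing a translation into $\Phi$, I may assume $\zeta_0 = 0$ and $f(\zeta_0) = 0$. The condition $\partial_\zeta f(\zeta_0) = 0$ is equivalent to the vanishing of the real gradient of $f$, so $0$ is a critical point. Let $H$ denote the real Hessian of $f$ at $0$. Then $\mathrm{tr}(H) = \Delta f(0) > 0$, so $H$ has at least one strictly positive eigenvalue. The argument now splits according to whether $H$ is non-degenerate.

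If $\det H \ne 0$, the classical analytic Morse lemma (see e.g.\ \cite[Lemma~2.2]{MilnorMorseTheoryBook}) supplies real analytic coordinates in which $f \circ \Phi(z) = \pm (\Re z)^2 \pm (\Im z)^2$. The only signatures compatible with $\mathrm{tr}(H) > 0$ are $(+,+)$ and $(+,-)$, which after possibly relabeling coordinates give the claim with $K = 2$ and $\tau \in \{+1, -1\}$.

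If $\det H = 0$, then $H$ has rank one, and after an orthogonal linear change of variables I may assume $\partial_x^2 f(0) = \lambda > 0$ while $\partial_x \partial_y f(0) = \partial_y^2 f(0) = 0$. I then apply the analytic splitting lemma to peel off the non-degenerate $x$-direction. Since $\partial_x^2 f(0) > 0$, the implicit function theorem produces a unique real analytic $\phi$ with $\phi(0) = 0$ and $\partial_x f(\phi(y), y) \equiv 0$ near $y = 0$. Setting $\tilde f(x,y) := f(x + \phi(y), y)$, Hadamard's lemma yields $\tilde f(x,y) = g(y) + x^2 h(x,y)$ with $g(y) := \tilde f(0,y)$ real analytic and $h(0,0) = \lambda/2 > 0$. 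Differentiating $\partial_x f(\phi(y), y) = 0$ in $y$ and using the assumed vanishing $\partial_x \partial_y f(0) = \partial_y^2 f(0) = 0$ together with $\phi(0) = 0$ shows $g(0) = g'(0) = g''(0) = 0$. The positivity of $h$ near the origin permits an analytic square root, and hence $\tilde x := x \sqrt{h(x,y)}$ is, for each $y$ near $0$, an analytic diffeomorphism in $x$; it converts the term $x^2 h$ into $\tilde x^2$, yielding the normal form $f = \tilde x^2 + g(y)$.

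It remains to normalize $g$. If $g$ vanishes identically near $0$, take $\tau = 0$ and $K = 2$. Otherwise, real analyticity forces a smallest $K \ge 3$ and a constant $c_K \ne 0$ with $g(y) = c_K y^K(1 + O(y))$. Since $c_K(1 + O(y))$ does not vanish at $0$, one extracts a real analytic $K$-th root: for $K$ even set $w := y\, |c_K(1+O(y))|^{1/K}$, obtaining $g = \mathrm{sign}(c_K)\, w^K$; for $K$ odd set $w := y\, (c_K(1+O(y)))^{1/K}$ using the real $K$-th root, obtaining $g = w^K$ (with $\tau = -1$ absorbed by $w \mapsto -w$ if desired). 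In either case this is a real analytic coordinate change near $0$, and composing it with the splitting lemma coordinates produces the required diffeomorphism $\Phi$ with $\tau \in \{0, \pm 1\}$. The main technical step, and the only place where some care is required, is the parametrized completion of the square producing the splitting $f = \tilde x^2 + g(y)$; everything else is a routine application of the implicit function theorem and analytic root extraction.
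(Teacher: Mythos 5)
Your proof is correct and essentially matches the paper's: both diagonalize the Hessian and rescale so the $x^2$-coefficient is positive, treat the non-degenerate case by the classical Morse lemma, and in the degenerate case ($\beta = 0$) shift the $x$-variable along the critical curve $x = \phi(y)$ (the paper's shear $\Phi_1(x,y) = (x + y^2\psi_1(y),y)$ is precisely your $\phi(y) = y^2\psi_1(y)$, the factor $y^2$ coming from $\phi(0)=\phi'(0)=0$), then split off an $\tilde{x}^2$-block via a parametrized completion of the square and normalize the residual one-variable function to $\tau y^K$. The only organizational difference is that you complete the square (the paper's $\Phi_3$) before normalizing the residual (the paper's $\Phi_2$), which is immaterial.
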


The \hyperlink{proof:lmm:possible singularity types}{proof of Lemma~\ref{lmm:possible singuarity types}} is given 
in Appendix~\ref{app:proof_higher_order_morse_lemma}. 
Since $-\beta$ satisfies the assumptions of Lemma~\ref{lmm:possible singuarity types} at $\zeta$ and $\beta(\zeta)=0$ we conclude  that $-\beta$ is of singularity type $x^2 +\tau y^K$ at $\zeta$, where we used the terminology from Definition~\ref{def:singularity type}. If $\tau=-1$ or $\tau = 1$ and odd $K$, then $\zeta \in \partial\ol{ \mathbb{S}} = \partial \supp \sigma$. 
In this case $\zeta \in \rm{Sing}_{K-1}^{\rm{edge}}$. 

Otherwise, i.e.\ if $\tau = 0$ or $\tau = 1$ and even $K$, $\zeta$ lies in the interior of $\ol{\mathbb{S}}$.  
In this case $\sigma$ satisfies the conditions of Lemma~\ref{lmm:possible singuarity types} at $\zeta$ with $\sigma(\zeta)=0$, as 
$\partial_\zeta \sigma(\zeta) = 0$ because of Proposition~\ref{prp: Boundary values of sigma}. 
Thus, $\sigma$ is either of  singularity type $x^2 +  y^{2n}$ for some $n \in \N$ or of singularity type $x^2$ at $\zeta$. We note that the singularity types $x^2 -  y^{n}$ and $x^2 + y^{2n+1}$ are not possible since $\sigma$ is a real analytic function in a neighbourhood of $\zeta$ by Proposition~\ref{prp: Boundary values of sigma}, which is 
nonnegative at $\zeta$ as $\zeta$ lies in the interior of $\ol{\mathbb{S}}$. 
In case $\sigma$ is of singularity type $x^2 +  y^{2n}$, we have $\zeta \in \rm{Sing}^{\rm{int}}_{2n}$ and 
$\zeta$ is an isolated point of $\C\setminus \mathbb{S}$. 
If $\sigma$ is of singularity type $x^2$, then $\zeta \in \rm{Sing}^{\rm{int}}_{\infty}$. In this case the fact that  $\sigma$ is a restriction of a real analytic function on an open neighbourhood of $\ol{\mathbb{S}}$ by Theorem~\ref{thr:properties_sigma_general} \ref{item:prop_sigma_iii} implies that the connected component of $\rm{Sing}^{\rm{int}}_{\infty}$ containing $\zeta$ is a closed analytic path without self-intersections. 
\end{proof}

\section{Existence of singularity types} \label{sec:existence_singularity_types}

In this section we  provide examples that show that all possible singularity types that are allowed by Theorem~\ref{thr:Sing classification} appear in the Brown measures of suitably chosen deformations of standard circular elements, i.e. we prove Theorem~\ref{thr:all_singularities_appear}. The proof of the theorem is summarised before Subsection~\ref{subsec:Singularity types of even order}. We work with the case $\cal{B} = L^\infty[0,1]$ and such that $\frak{c}$ is a standard circular element, i.e. $E[\frak{c}^* x \frak{c}] = E[\frak{c} x \frak{c}^*] = \avg{x}$ for $x \in \cal{B}$. 
We fix $a \in \mathcal B$. Hence, $a$ and $\frak{c}$ are ordinary $*$-free in $(\mathcal A, \avg{ \cdot})$ and $a$ is automatically normal.  
In this setup  $B_\zeta x= |a-\zeta|^2 x - \avg{x}$ for all $x \in \mathcal B$
and the Dyson equation \eqref{eq:V_equations} simplifies to 
\begin{equation} \label{eq:scalar_dyson_S_equal_avg} 
\kappa= \eta+ \avgbb{ \frac{\kappa} {\kappa^2 +   |\zeta-a|^2}}\,, \qquad 
v_1  =v_2 =  \frac{\kappa} {\kappa^2 +  |\zeta-a|^2}\,, 
\end{equation}
where $\kappa =\kappa(\zeta,\eta)= \eta + \avg{v_1(\zeta,\eta)}\ge 0$. To apply our theory, 
we will choose $a \colon [0,1] \to \C$ such that 
\[
\essinf_{x \in \mathfrak [0,1]} \int_{[0,1]} \frac{\dd y}{ \abs{a(x) - a(y)}^2 } =\infty \,.
\]
For $\zeta \in \mathbb{S}$ we have $\kappa_0=\kappa(\zeta,0)>0$ and get 
\[
\beta(\zeta)=\inf_{x \in \cal B_+} \sup_{y \in \cal B_+}\frac{\scalar{x}{(B_\zeta+\kappa_0^2)y}}{\scalar{x}{y}} -\kappa_0^2\ge -\kappa_0^2
\]
for $\beta$ from \eqref{eq:def_beta}, by choosing for $y$ the function $b := (|a-\zeta|^2 + \kappa_0^2)^{-1}$, for which  $(B_\zeta+\kappa_0^2)b=0$ due to \eqref{eq:scalar_dyson_S_equal_avg} and $\eta =0$. 
Similarly, we get  $\beta(\zeta) \le -\kappa_0^2$ by choosing $x=b$. Thus, $\beta(\zeta) = -\kappa(\zeta,0)^2$ for $\zeta \in \mathbb{S}$ and 
\bels{beta for circular}{
\avgbb{\frac{1}{|a-\zeta|^2-\beta(\zeta)}}=1\,.
}
In particular,
\bels{partial beta for circular}{
\avgbb{\frac{1}{(|a-\zeta|^2-\beta)^2}}\partial_{\ol{\zeta}}\beta = \avgbb{\frac{\zeta-a}{(|a-\zeta|^2-\beta)^2}} \,.
}
Thus, we  express the Brown measure $\sigma$ on $\mathbb{S}$ in terms of $\beta$, using \eqref{sigma derivative formula}, \eqref{beta for circular} and \eqref{partial beta for circular}, through 
\bels{sigma and beta for circular}{
\pi\1\sigma &=-\avgbb{\frac{1}{a-\zeta} \partial_{\ol{\zeta}} \frac{\beta} {|a-\zeta|^2-\beta}}  
\\
&= \avgbb{\frac{-\beta}{({|a-\zeta|^2-\beta})^2}}-
\avgbb{\frac{1}{a-\zeta}\pbb{ \frac{1} {|a-\zeta|^2-\beta} + \frac{\beta} {(|a-\zeta|^2-\beta)^2}   } }\partial_{\ol{\zeta}}\beta
\\
&= \avgbb{\frac{1}{({|a-\zeta|^2-\beta})^2}}\pb{-\beta + |\partial_{\ol{\zeta}}\beta|^2}\,.
}
 The previous formulas have been derived earlier, see e.g.\ \cite{BelinschiYinZhong2024,BordenaveCaputoChafai2014,Khoruzhenko1996,Zhong2021}.  
From \eqref{beta for circular} and \eqref{eq:def_mathbb_S}, we see that $\mathbb{S}$ is given by the equation $\mathbb{S}=\{\zeta\in \C: f(\zeta) >1\}$, where 
\bels{def: f for deformed circular}{
f(\zeta):= \int \frac{\nu(\dd \omega)}{ \abs{\zeta-\omega}^2}  =\avgbb{\frac{1}{|\zeta-a|^2}}
}
and $\nu$ is the spectral measure of $a$, i.e. $\nu$ is the unique measure on $\C$ for which $\int \zeta^k\ol{\zeta}^l \nu(\dd \zeta) = \avg{a^k\ol{a}^l}$. We interpret $f(\zeta)=\infty$ for $\zeta \in \supp \nu$. We note that 
\[
\Delta f(\zeta) = \avgbb{\frac{4}{|a-\zeta|^4}}>0
\] for all $\zeta \not \in \supp \nu$.

In the following we consider deformations $a$ such that $\nu$ is symmetric with respect to the imaginary axis. In this case $f$, $\beta$ and $\sigma$ inherit this symmetry from $\nu$. In particular,  $f(\zeta) = f(- \ol{\zeta})$ for all $\zeta \in \C$. We will make use of the following corollary of Lemma~\ref{lmm:possible singuarity types}, which is proven at the end of Appendix~\ref{app:proof_higher_order_morse_lemma}. 
 \begin{corollary}[Higher order Morse lemma with symmetry]\label{crl:singularity with symmetry}
Let $f: U \to \R$ be a real analytic function on an open set $U \subset \C$ with $0 \in U$. Suppose that $\partial_\zeta f (0)=0$ and $\Delta f (0)>0 $. 
Assume furthermore, that $f$ is symmetric with respect to the imaginary axis,  i.e.\ $f(\zeta) = f(-\ol{\zeta})$ for all $\zeta \in U$ such that $-\ol{\zeta} \in U$. 
If $\partial_{\re \zeta}^2 f(0) >0$, then the diffeomorphism $\Phi$ from Lemma~\ref{lmm:possible singuarity types} 
with $\Phi(0) = 0$  can be chosen such that $D\Phi(0)$ is positive definite and diagonal.  
Furthermore, the exponent $K\ge 2$ and the sign $\tau$ in the statement of Lemma~\ref{lmm:possible singuarity types} are 
\[
K := 1+\sup\{k \in \N: \partial_{\im \zeta}^l f(0) =0\text{ for all } l \le k \}\,, \qquad 
\tau = 
\begin{cases}
\sign \partial_{\im \zeta}^K f(0) & \text{ if } \; K< \infty\,,
\\
0 & \text{ if } \; K= \infty\,.
\end{cases}
\]
\end{corollary}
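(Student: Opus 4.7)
The plan is to exploit the symmetry $f(\zeta) = f(-\ol{\zeta})$---equivalently, evenness in $\re\zeta$---to factor $f$ through the map $(x,y) \mapsto (x^2, y)$, and then to construct the diffeomorphism $\Phi$ by two decoupled applications of the implicit function theorem: one in the variable $v$ alone and one in $(u,v)$. Concretely, since $f$ is real analytic and even in $x := \re\zeta$, reindexing the power series yields a real analytic function $\tilde f$ on a neighborhood of $(0,0) \in \R^2$ with $f(x,y) = \tilde f(x^2, y)$. Setting $A := \partial_s \tilde f(0,0) = \tfrac12 \partial_{\re\zeta}^2 f(0) > 0$ and $H(y) := \tilde f(0,y) - \tilde f(0,0)$, I decompose $\tilde f(s,y) = \tilde f(0,0) + s\,G(s,y) + H(y)$ with $G$ real analytic and $G(0,0) = A$. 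The hypothesis $\partial_\zeta f(0) = 0$ combined with the symmetry forces $H'(0) = 0$, so $H(y) = O(y^2)$.

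With the integer $K$ as defined in the statement, if $K < \infty$ I write $H(y) = \tau c y^K (1 + r(y))$ with $\tau := \sign \partial_{\im\zeta}^K f(0) \in \{\pm 1\}$, $c := \abs{\partial_{\im\zeta}^K f(0)}/K! > 0$, and $r$ real analytic with $r(0) = 0$. I then solve $H(\psi(v)) = \tau v^K$ using the ansatz $\psi(v) = v\,h(v)$: the implicit function theorem applied to $c\,h(v)^K (1 + r(v\,h(v))) = 1$ near $h = c^{-1/K}$ produces a unique real analytic solution with $\psi'(0) = c^{-1/K} > 0$. If $K = \infty$, then $H \equiv 0$ on a connected real analytic neighborhood of $0$, so I set $\tau := 0$ and $\psi(v) := v$. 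Next, to enforce $\tilde f(P(u,v)^2, \psi(v)) - \tilde f(0, \psi(v)) = u^2$ I use the ansatz $P(u,v) = u\,k(u,v)$, reducing the requirement to $k(u,v)^2 G(u^2 k(u,v)^2, \psi(v)) = 1$. The implicit function theorem near $k = A^{-1/2}$ yields a unique real analytic solution with $k(0,0) = A^{-1/2} > 0$; by uniqueness, $k$ is even in $u$, so $P$ is odd in $u$ and $P^2$ is even, consistent with the symmetry.

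Setting $\Phi(u,v) := (P(u,v), \psi(v))$ then gives $f\circ \Phi(u + \ii v) - f(0) = P^2 G(P^2, \psi) + H(\psi) = u^2 + \tau v^K$, and $D\Phi(0) = \diag(A^{-1/2}, \psi'(0))$ is diagonal with strictly positive entries, so the inverse function theorem makes $\Phi$ a local real analytic diffeomorphism fixing $0$. To identify $K$ and $\tau$ as in the statement, I restrict to $u = 0$: since $\psi$ is a real analytic diffeomorphism of a neighborhood of $0$ fixing $0$ with $\psi'(0) > 0$, the order of vanishing of $y \mapsto f(0,y) - f(0,0)$ at $0$ equals that of $v \mapsto \tau v^K$, namely $K$, and the sign of the leading Taylor coefficient is preserved by the positive linear term of $\psi^{-1}$.

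The main conceptual step is the lifting $f(x,y) = \tilde f(x^2, y)$; once this is in place, the two implicit function problems are genuinely decoupled and their leading coefficients $A$ and (when $K < \infty$) $c$ are strictly positive, so no stability issue arises. I do not foresee a serious obstacle: the only subtle constraint is that $\Phi$ respect the $x$-symmetry, and this is automatically built into the product structure $\Phi = (P, \psi)$ with $P$ odd in $u$ and $\psi$ independent of $u$.
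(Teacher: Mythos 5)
Your argument is correct and, at bottom, implements the same idea as the paper's proof: exploit the evenness of $f$ in $\re\zeta$ to realize $f$ as a real analytic function of $\bigl((\re\zeta)^2,\,\im\zeta\bigr)$, then normalize the two coordinates separately. The paper obtains this by re-inspecting the proof of Lemma~\ref{lmm:possible singuarity types}: the symmetry makes the $x\,y^2$-coefficient $j(y)$ in \eqref{f representation} vanish identically, so the cross-correction $\Phi_1$ there is the identity, and one can then write $\Phi^{-1}(x,y)=\bigl(x\sqrt{p(x^2,y)},\,y\,\psi_2(y)^{1/K}\bigr)$ directly from the representation $f(x,y)=f(0)+x^2p(x^2,y)+y^3k(y)$. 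Your version is self-contained: rather than tracing through the unsymmetric proof, you construct $\Phi$ itself through two decoupled implicit-function-theorem steps, one producing $\psi$ with $H(\psi(v))=\tau v^K$ and one producing $P$ with $P^2\,G(P^2,\psi)=u^2$, and you do not need to treat the non-degenerate Hessian case separately. The two constructions yield the same diffeomorphism up to inversion — your $\psi$ is the inverse of $y\mapsto y\,\psi_2(y)^{1/K}$. One small point worth making explicit: the symmetry by itself already gives $\partial_{\re\zeta}f(0)=0$, so the assumption $\partial_\zeta f(0)=0$ is used only to conclude $\partial_{\im\zeta}f(0)=0$, i.e.\ $H'(0)=0$, which is exactly how you then use it.
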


The following lemma connects the singularity types of $f$, $\beta$ and $\sigma$ in this case.  It shows that in order to prove Theorem~\ref{thr:all_singularities_appear},  it suffices to show that $\nu$ from \eqref{def: f for deformed circular} can be chosen such that $f-1$ has a given singularity type of the form $x^2 + \tau \1 y^K$ with $\tau \in \{0,\pm 1\}$ and $K \in \N$ with $K \ge 2$ at $0$. 

\begin{lemma}\label{lmm:singularity types sigma f and beta}
Let $f$ be defined as in \eqref{def: f for deformed circular}, assume that $f(\zeta) = f(- \ol{\zeta})$ for all $\zeta \in \C$, that $\partial_{\re \zeta}^2 f(0)>0$ and that $f(0)=1$.   Then $0 \in \partial \mathbb{S}$ and the singularity types of $f-1$ and $-\beta$ agree at $0$. If $0 \in \rm{Sing}$ and  $-\beta$ is of singularity type $x^2 + y^{2n}$ at $0$ for some $n \in \N$, then $\sigma$ is also of singularity type $x^2 + y^{2n}$. 
\end{lemma}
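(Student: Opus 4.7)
The plan is to derive both assertions from the implicit relation \eqref{beta for circular} combined with the explicit Brown-measure formula \eqref{sigma and beta for circular}. The key observation is that if $G(\zeta, t) := \avg{(|a - \zeta|^2 - t)^{-1}} - 1$, then $G(\zeta, \beta(\zeta)) = 0$, $G(\zeta, 0) = f(\zeta) - 1$, and $\partial_t G(0,0) = \avg{|a|^{-4}} > 0$, with $G(0,0) = 0$ since $f(0) = 1$. The implicit function theorem then yields a real-analytic solution $\beta$ near $0$ with $\beta(0) = 0$, so $0 \in \partial \mathbb{S}$ by \eqref{boundary of S = zero of beta}. Writing $G(\zeta, t) = (f(\zeta) - 1) + t\,\tilde c(\zeta, t)$ with $\tilde c(\zeta, t) := \int_0^1 \avg{(|a - \zeta|^2 - st)^{-2}}\,ds$ and substituting $t = \beta(\zeta)$ produces the key product decomposition
\bes{
f(\zeta) - 1 = -\beta(\zeta)\,q(\zeta), \qquad q(\zeta) := \tilde c(\zeta, \beta(\zeta)), \qquad q(0) = \avg{|a|^{-4}} > 0,
}
with $q$ real analytic. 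Uniqueness in the implicit function theorem together with $f(\zeta) = f(-\bar\zeta)$ transfers the symmetry to $\beta$ and $q$.

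For the equality of singularity types I would split on the value of $\partial_\zeta f(0)$. If $\partial_\zeta f(0) \ne 0$, then $\partial_\zeta \beta(0) \ne 0$ also (from the product decomposition, $\beta(0) = 0$ and $q(0) > 0$), so both $f - 1$ and $-\beta$ are non-critical at $0$ and rectify to the type $x$. Otherwise both are critical at $0$, in which case I would apply Corollary~\ref{crl:singularity with symmetry}: its hypotheses hold for $f - 1$ by assumption ($\partial_{\re\zeta}^2 f(0) > 0$) together with the automatic $\Delta f(0) = 4\avg{|a|^{-4}} > 0$, and they lift to $-\beta = q(f-1)$ because $q(0) > 0$ and $(f-1)(0)=\partial_\zeta(f-1)(0) = 0$. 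Because all $\im\zeta$-derivatives of $f - 1$ vanish at $0$ up to the critical order $K$, the product rule gives $\partial_{\im\zeta}^K(-\beta)(0) = q(0)\,\partial_{\im\zeta}^K(f-1)(0)$, and the analogous statement holds for $\partial_{\re\zeta}^2$, so the invariants $(K, \tau)$ from Corollary~\ref{crl:singularity with symmetry} coincide for $f - 1$ and $-\beta$, forcing the two singularity types to agree.

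For the second claim I would exploit \eqref{sigma and beta for circular} in the form $\pi\sigma = c_0\,(-\beta + |\partial_{\bar\zeta}\beta|^2)$, where $c_0(\zeta) := \avg{(|a - \zeta|^2 - \beta(\zeta))^{-2}}$ is real analytic with $c_0(0) > 0$. Let $\Phi$ be the real-analytic diffeomorphism supplied by Corollary~\ref{crl:singularity with symmetry} applied to $-\beta$: $\Phi(0) = 0$, $D\Phi(0)$ positive diagonal, $-\beta \circ \Phi = x^2 + y^{2n}$, and the symmetry becomes $(x, y) \mapsto (-x, y)$. Transforming the gradient via $\nabla_{\xi,\eta}\beta = (D\Phi)^{-T}\nabla_{x,y}(\beta \circ \Phi)$ and using $|\partial_{\bar\zeta}\beta|^2 = \tfrac14 |\nabla_{\xi,\eta}\beta|^2$ yields
\bes{
4\,|\partial_{\bar\zeta}\beta|^2\big|_{\Phi(x,y)} = 4 x^2 m_{11} + 8 n\, x y^{2n-1} m_{12} + 4 n^2 y^{4n-2} m_{22},
}
where $M = (m_{ij}) := (D\Phi^T D\Phi)^{-1}$ is real analytic, symmetric, positive definite, with $m_{11}(0), m_{22}(0) > 0$ and $m_{12}(0) = 0$ by the diagonality of $D\Phi(0)$. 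Consequently $\tilde\sigma := -\beta + |\partial_{\bar\zeta}\beta|^2$ pulls back to $x^2(1 + m_{11}) + 2 n\, x y^{2n-1} m_{12} + y^{2n} + n^2 y^{4n-2} m_{22}$, from which I read off $\tilde\sigma(0) = 0$, $\nabla \tilde\sigma(0) = 0$, $\partial_x^2 \tilde\sigma(0) = 2(1 + m_{11}(0)) > 0$, the preserved symmetry $(x, y) \mapsto (-x, y)$, and $\partial_y^l \tilde\sigma(0) = 0$ for $0 \le l < 2n$ with $\partial_y^{2n}\tilde\sigma(0) = (2n)! > 0$ (noting $4n - 2 > 2n$ for $n \ge 2$; for $n = 1$ one is in the standard Morse situation, and the positive-definite Hessian already gives type $x^2 + y^2$). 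Corollary~\ref{crl:singularity with symmetry} then assigns $\tilde\sigma$ the singularity type $x^2 + y^{2n}$, and multiplication by the positive real-analytic factor $c_0$ preserves the type by the same argument used in the first part.

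The main technical point is the correct transformation of $|\partial_{\bar\zeta}\beta|^2$ under the merely real-analytic (not holomorphic) diffeomorphism $\Phi$ together with the verification that $m_{12}(0) = 0$, which prevents the cross term $x y^{2n-1} m_{12}$ from disturbing the leading $y^{2n}$ behavior when $n \ge 2$; this vanishing is exactly what Corollary~\ref{crl:singularity with symmetry} delivers by allowing us to choose $\Phi$ with positive diagonal $D\Phi(0)$.
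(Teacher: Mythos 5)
Your proof is correct and uses the same essential tools as the paper---equations \eqref{beta for circular}, \eqref{partial beta for circular}, \eqref{sigma and beta for circular}, and Corollary~\ref{crl:singularity with symmetry}---but you organize the argument rather differently. For the relation between $f-1$ and $-\beta$, the paper differentiates \eqref{partial beta for circular} repeatedly in $y$ at $\zeta=0$ to match the invariants $(K,\tau)$, whereas you extract the cleaner product factorization $f-1=-\beta\,q$ with $q$ real analytic and $q(0)=\avg{|a|^{-4}}>0$ from the integral form of the defining identity; this makes the transfer of the Corollary's invariants an immediate consequence of the Leibniz rule and is arguably more transparent. For the relation between $\tilde\sigma=-\beta+|\partial_{\bar\zeta}\beta|^2$ and $-\beta$, the paper again computes $\partial_y^k$ of \eqref{sigma and beta for circular} directly at $\zeta=0$; you instead pull both terms back through the rectifying diffeomorphism $\Phi$ from Corollary~\ref{crl:singularity with symmetry} and read off the normal form, the crucial observation being that $D\Phi(0)$ diagonal kills the cross term via $m_{12}(0)=0$ and the extra $y^{4n-2}m_{22}$ contribution is subleading for $n\ge 2$ and harmless for $n=1$. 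Both computations deliver the same conclusion. The only point you treat a bit loosely is the transition from the derivative data of $\tilde\sigma\circ\Phi$ at the origin back to the derivative data of $\tilde\sigma$ itself (needed for the final Leibniz step against $c_0$): this relies on $\Phi$ preserving the $y$-axis with $\Phi(0,y)=(0,\varphi(y))$ and $\varphi'(0)>0$, which does hold by the construction in the proof of Corollary~\ref{crl:singularity with symmetry} (its diffeomorphism is of the form $\Phi^{-1}(x,y)=(x\sqrt{p},\,y\psi_2(y)^{1/K})$), but you should flag that this property is being used rather than inferring it silently from "$D\Phi(0)$ diagonal." With that clarification, the argument is complete.
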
 

\begin{proof} For  $f(0)=1$ we have   $0 \in \partial \mathbb{S}$ by definition of $\mathbb{S}$. Then \eqref{partial beta for circular} implies 
\bels{partial f and partial beta}{
\avgbb{\frac{1}{|a-\zeta|^4}}\partial_\zeta \beta = - \partial_\zeta f\,.
}
Evaluating at $\zeta=0$ shows that $\partial_\zeta \beta =0$ if and only if $\partial_\zeta f =0$, proving that the singularity types   of $f-1$ and $-\beta$ agree for a regular edge point $0 \in \rm{Reg}$. We now denote  $x=\re \zeta$ and $y=\im \zeta$ and assume $0 \in \rm{Sing}$.  Then $f$ and $-\beta$ satisfy the assumptions in Corollary~\ref{crl:singularity with symmetry}. By applying the derivative $\partial_y$ repeatedly to \eqref{partial beta for circular} and evaluating at $\zeta=0$, we see that the exponent $K$ and the sign $\tau$ from Corollary~\ref{crl:singularity with symmetry} for $f$ and $-\beta$ agree. 

Finally, we show that the singularity types of $\sigma$ and $-\beta$ at $0$ agree.  From 
\eqref{sigma and beta for circular} we see that 
\bels{second partial sigma}{
\partial_\zeta \sigma(0) =0 \,, \qquad 
\partial_w^2 \sigma(0)=\frac{1}{\pi}\pbb{- \partial_w^2 \beta(0)+\frac{1}{4}(\partial_w^2 \beta(0))^2}\,,
}
where $w=x,y$ since $\beta$ is an even function of $x$ and $f(0)=1$. In particular $\Delta\sigma(0)>0$ and  $\partial_x^2 \sigma(0)>0$ because  $-\partial_x^2 \beta(0)=  \partial_x^2 f(0)>0$  by \eqref{partial beta for circular}. 
We conclude that $\sigma$ also satisfies the assumptions of Corollary~\ref{crl:singularity with symmetry}. Suppose $-\beta$ has singularity type $x^2 +  y^2$ at $0$, then $\partial_y^2 \sigma(0)>0$ by \eqref{second partial sigma}. Now suppose $-\beta$ has singularity type $x^2 +  y^{2n}$ at $0$ for some $n \ge 2$. By repeated application of the derivative $\partial_y$ on \eqref{sigma and beta for circular} and evaluating at $\zeta =0$ we see that 
\[
\partial_y^k \sigma (0) =- \frac{1}{\pi} \partial_y^k \beta (0)\,, \qquad k =3, \dots, 2n\,,
\]
which implies that $\sigma$ is of singularity type $x^2 +  y^{2n}$ at $0$. 
\end{proof}

To construct singularity types of $f$ at $0$ of the form $x^2 \pm y^K$ with a finite $K \ge 2$, we will choose $\nu$ with finite support, i.e. $\supp \nu = \{a_1, \ldots, a_N \}$ with distinct nonzero $a_i \in \C$ and $\nu_i=\nu(\{a_i\})$. In this case 
\bels{discrete f}{ f(\zeta) = \sum_{i = 1}^N \frac{\nu_i}{\abs{\zeta- a_i}^2}\,. }
To guarantee symmetry of $f$ with respect to the imaginary axis we write $f$ as a sum of functions of the form
\begin{equation} \label{eq:def_g_a} 
g_a(\zeta) := \frac{1}{2 }\pbb{\frac{1}{\abs{\zeta+1/a}^2}+\frac{1}{\abs{\zeta-1/\ol{a}}^2}}\,,
\end{equation} 
with $a\in \C_{>}:=\{z \in \C: \re z>0, \im z >0\}$ a complex number in the first quadrant. For even $K$ we can even restrict to $f$ being a sum of functions of the form  
\bels{Def of f_a}{
f_a(\zeta) :=\frac{1}{2}\pb{g_a(\zeta)+g_{\ol{a}}(\zeta)}
}
with  $a \in \C_{>}$. With these definitions any convex combination of functions of the form $g_a$ and $f_a$ are of the form \eqref{discrete f}. 

In the following subsections we construct an example for each singularity type that is allowed by Theorem~\ref{thr:Sing classification}. 
The result of these constructions is summarised in the following lemma.

\begin{lemma} 
\label{lmm:Examples exist}
The following examples exist.  
\begin{enumerate}[label=(\alph*)] 
\item \label{lmm:Examples 1} Quadratic singularity types:  With the choice $\nu = \frac{1}{2}(\delta_1+\delta_{-1})$ the function $f-1$, where $f$ is from \eqref{def: f for deformed circular}, has singularity type $x^2-y^2$ at $0$. The function $f-1$ with  
$f(\zeta) :=f_a(\zeta)$ and $a:=\frac{1}{\sqrt{2}}(1+\ii)$
has singularity type $x^2+y^2$ at $0$. 
\item \label{lmm:Examples 2}Singularity types of even $y$-power: Let $n \in \N$ with $n \ge 2$ and $\tau \in \{\pm 1\}$. There are $c_1, \dots, c_n>0$ with $\sum_{i=1}^n c_i=1$ and distinct $a_1, \dots, a_n \in \C_>$, such that the function $f-1$ with 
\[
f(\zeta):= \sum_{i=1}^n c_i f_{a_i}(\zeta)
\]
has singularity type $x^2 +\tau \1y^{2n}$ at $0$. 
\item \label{lmm:Examples 3}Singularity types of odd $y$-power: Let $n \in \N$. There are $c_1, \dots, c_{2n+2}>0$ with $\sum_{i=1}^{2n+2} c_i=1$ and distinct $a_1, \dots, a_{2n+2} \in \C_>$, such that the function $f-1$ with 
\[
f(\zeta):= \sum_{i=1}^{2n+2} c_i g_{a_i}(\zeta)
\]
has singularity type $x^2 +y^{2n+1}$ at $0$. 
\item \label{lmm:Examples 4}Singularity types of infinite order: There is a choice $r>0$ and $\alpha>0$ such that for $\nu=\frac{1}{2} \nu_{\mathrm{c}} + \frac{1}{2} \delta_{\ii \alpha}$, where $\nu_{\mathrm{c}}$ is the uniform distribution on the  circle $\ii\1\alpha + r \2S^1$, and $f$ from \eqref{def: f for deformed circular} the function $f-1$ has singularity type $x^2$ at $0$. 
\end{enumerate}
\end{lemma}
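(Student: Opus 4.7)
The plan is to handle the four parts separately, in each case invoking Lemma~\ref{lmm:singularity types sigma f and beta} to reduce the singularity type of $\sigma$ (or $\beta$) at the origin to that of $f - 1$, where $f$ is the function from~\eqref{def: f for deformed circular}. In all cases the normalization $\sum c_i = 1$ together with the choice of atom positions gives $f(0) = 1$, and the symmetry $f(\zeta) = f(-\bar\zeta)$, which in particular forces $\partial_{\re\zeta} f(0) = 0$, is built into the definitions of $g_a$ and $f_a$.

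For part~(a), I would carry out the Taylor expansions directly. The geometric series $|\zeta \mp 1|^{-2} = \sum_{k, l \ge 0}(\pm 1)^{k+l}\2 \zeta^k \bar\zeta^l$ yields $f(\zeta) - 1 = 3(\re\zeta)^2 - (\im\zeta)^2 + O(|\zeta|^4)$ in the first example, while summing $|\zeta - w|^{-2}$ over the four-point orbit $w \in \{\pm e^{\pm\ii\pi/4}\}$ with weight $\tfrac14$ each produces $f(\zeta) - 1 = (\re\zeta)^2 + (\im\zeta)^2 + O(|\zeta|^4)$ in the second, after a short computation of geometric sums over roots of unity. Corollary~\ref{crl:singularity with symmetry} then identifies the singularity types as $x^2 - y^2$ and $x^2 + y^2$.

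For part~(d), I would exploit the rotational symmetry about $\ii\alpha$. The Poisson integral identity $\int_0^{2\pi} |w - r e^{\ii\theta}|^{-2}\2 d\theta/(2\pi) = |r^2 - |w|^2|^{-1}$ gives $f(\zeta) = F(|\zeta - \ii\alpha|)$ with $F(\rho) = \tfrac{1}{2(r^2 - \rho^2)} + \tfrac{1}{2\rho^2}$ for $\rho < r$. Imposing $F(\alpha) = 1$ (so that $0$ lies on the level set) and $F'(\alpha) = 0$ (so that $0$ is a critical point of $f$) uniquely determines $\alpha = 1$, $r = \sqrt{2}$, and one checks $F''(1) > 0$. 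The level set $\{f = 1\}$ is then the circle $|\zeta - \ii| = 1$ through the origin with horizontal tangent, and in curvilinear coordinates straightening this circle to a vertical line, $f - 1$ becomes a real-analytic function of the transverse coordinate alone with a nondegenerate quadratic minimum, i.e.\ singularity type $x^2$.

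Parts~(b) and~(c) constitute the main work, and I would treat them by an inductive construction. The base cases of minimal admissible order are produced explicitly, following~(a) for case~(b) and by an analogous direct construction for case~(c). For the inductive step, given a configuration $\nu_n$ realizing singularity type $x^2 + \tau(\im\zeta)^K$ with $K = 2n$ or $2n+1$, I would add one new term $c\2 f_{\wt a}$ (resp.\ $c\2 g_{\wt a}$) and simultaneously perturb the existing parameters, solving by the implicit function theorem for a new configuration that satisfies the additional vanishing condition $\partial_{\im\zeta}^K f(0) = 0$ (one extra equation in (b), two in (c), since odd-order $\im\zeta$-derivatives vanish automatically for $f_a$ by reflection symmetry) with $\partial_{\im\zeta}^{K+2} f(0)$ of the prescribed sign. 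Each new atom contributes three additional real degrees of freedom, strictly exceeding the new equations, so the count is favorable and a generic choice of $\wt a$ fixes the sign of the leading nonvanishing derivative. The hard part will be verifying non-degeneracy of the relevant Jacobian at each inductive step, which reduces to showing that the maps $\wt a \mapsto \partial_{\im\zeta}^k g_{\wt a}(0)$ (resp.\ $f_{\wt a}$) span enough directions modulo the constraints inherited from $\nu_n$; a failure would impose an algebraic constraint on $\nu_n$ that can be avoided by a generic perturbation within its analytic moduli, itself justified by a further application of the implicit function theorem in a lower-dimensional slice.
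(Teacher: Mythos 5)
Your treatment of parts~(a) and~(d) matches the paper's. For~(a) the paper leaves the Taylor expansions as a ``simple calculation''; for~(d) the paper reduces by shifting to Example~\ref{example:x2} and reads off from the explicit form of $\mathbb{S}$ that $\partial\mathbb{D}_1$ consists of critical zeros of $\beta$, whereas you compute the radial profile $F(\rho)$ via the Poisson integral and solve $F(\alpha)=1$, $F'(\alpha)=0$; both routes give $\alpha=1$, $r=\sqrt{2}$ with the level curve a circle through $0$, so these are equivalent.

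The genuine gap is in parts~(b) and~(c), and you flag it yourself. Your inductive scheme---add a new atom, perturb the old ones, close the constraints by the implicit function theorem---is indeed the paper's strategy, but you never establish the non-degeneracy of the relevant Jacobian; you only assert that a vanishing Jacobian ``imposes an algebraic constraint on $\nu_n$ that can be avoided by a generic perturbation,'' which is not a proof: nothing rules out that the bad locus contains the \emph{entire} set of configurations satisfying your induction hypothesis, and a further implicit-function-theorem appeal ``in a lower-dimensional slice'' is circular without naming the slice and verifying transversality. The paper resolves this concretely. Using \eqref{derivatives of fa} and \eqref{derivatives of ga} the vanishing conditions become a truncated moment problem on the power sums $\sum_i c_i a_i^m$; the Jacobian with respect to the old atom positions is then an explicit Vandermonde determinant (in $a_i^2$ for~(b), in $a_i$ for~(c)) that is manifestly nonzero as long as the atoms stay distinct. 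The new atom is placed at $a_n(r)=\omega/r$ with $r\downarrow 0$ and weight $c_n(r)\sim r^{2n-1}$, and the unimodular angle $\omega$ is tuned so that the one newly-dominant moment has the prescribed sign; the scaling forces the new atom's contribution to all lower moments to vanish as $r\downarrow 0$, so the old configuration is only slightly perturbed and the Vandermonde stays away from zero. Your sketch has the overall shape of this argument but lacks the essential ingredients---the moment-problem reformulation, the Vandermonde computation, and the $\omega/r$ scaling---without which the inductive step is not closed.
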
 

Lemma~\ref{lmm:Examples exist}~\ref{lmm:Examples 1} is easily verified by a simple calculation. The other parts 
of Lemma~\ref{lmm:Examples exist} are proven in each of the following subsections, i.e. we prove 
Part~\ref{lmm:Examples 2} in Subsection~\ref{subsec:Singularity types of even order}, Part~\ref{lmm:Examples 3} in Subsection~\ref{subsec:Singularity types of odd order}  and Part~\ref{lmm:Examples 4} in Subsection~\ref{subsec:Singularity types of infinite order}. 
With these ingredients we now prove our main result about the existence of all singularity types.

\begin{proof}[\linkdest{proof_thr:all_singularities_appear}{Proof of Theorem~\ref{thr:all_singularities_appear}}] Since all examples from Lemma~\ref{lmm:Examples exist} have $\nu$ which is symmetric with respect to the imaginary axis, we can apply  Lemma~\ref{lmm:singularity types sigma f and beta} in each of these examples. To see the existence of a singularity of the type $\rm{Sing}_{1}^{\rm{int}}$, we take the example with $f(\zeta) :=f_a(\zeta)$ and $a:=\frac{1}{\sqrt{2}}(1+\ii)$ from Lemma~\ref{lmm:Examples exist}~\ref{lmm:Examples 1}. By Lemma~\ref{lmm:singularity types sigma f and beta} the singularity type of $f-1$ coincides with the singularity type of $-\beta$ and also with the singularity type of $\sigma$, showing $0 \in \rm{Sing}_{1}^{\rm{int}}$. To see an example with  $ 0 \in \rm{Sing}_{k}^{\rm{int}}$ and order $k \ge 2$, we consider Lemma~\ref{lmm:Examples exist}~\ref{lmm:Examples 2} with $\tau=+1$. Again by  Lemma~\ref{lmm:singularity types sigma f and beta} the singularity type of $f-1$ coincides with the singularity type of $\sigma$. 

Now we show that  $\rm{Sing}_{k}^{\rm{edge}}$ with order $k \in \N$ is not empty for some example. For the case $k=1$ the example is taken from Lemma~\ref{lmm:Examples exist}~\ref{lmm:Examples 1} with the choice  $\nu = \frac{1}{2}(\delta_1+\delta_{-1})$, for odd $k>1$ from  Lemma~\ref{lmm:Examples exist}~\ref{lmm:Examples 2} with $\tau=-1$ and for even $k>1$ from  Lemma~\ref{lmm:Examples exist}~\ref{lmm:Examples 3}. In each case  the singularity types of $f-1$ and $-\beta$ at $0$ coincide due to Lemma~\ref{lmm:singularity types sigma f and beta}. 

Finally  $0 \in \rm{Sing}_{\infty}^{\rm{int}}$ for the example from Lemma~\ref{lmm:Examples exist}~\ref{lmm:Examples 4} since here the singularity types of $f-1$ and $\sigma$ coincide by Lemma~\ref{lmm:singularity types sigma f and beta}. 
\end{proof}

\subsection{Singularity types of even $y$-power} \label{subsec:Singularity types of even order}
Here we prove Lemma~\ref{lmm:Examples exist}~\ref{lmm:Examples 2}.  Since $f_{c\1 a}(0) = c^2 f_{a}(0)$ for $f_{a}$ defined as in \eqref{Def of f_a} and $c>0$, it suffices to show that $f-f(0)$ has singularity type $x^2+\tau\1 y^{2n}$ at $0$, i.e. we can drop the condition $f(0)=1$. Furthermore, we can consider $f$ of the form  
\[
f (\zeta):= \sum_{i=1}^n c_i \frac{\re a_i}{|a_i|^2}f_{a_i}(\zeta)\,,
\]
with $a_i \in \C_{>}$, $c_i >0$ and show that this $f$ has singularity type $x^2 + \tau \1y^{2n}$ at $0$. This is sufficient since such $f$ can be transformed into a convex combinations of the $f_{a_i}$ by dividing by $\sum_{i=1}^n c_i \frac{\re a_i}{|a_i|^2}$ and without changing the singularity type. Properties of the function $f_a$ are summarised in Lemma~\ref{lem:properties_f_a}. 
The symmetries of $f_a$ (see \eqref{symmetries of f}) in particular imply that its Hessian at $\zeta=0$ is diagonal. Furthermore,  $f$ satisfies $\partial_y^{2k+1} f (0) =0$ for all $k \in \N_0$ due to \eqref{derivatives of fa}, where $\zeta=x+\ii \1y$. It remains to show that $a_i$ and $c_i$ can be chosen such that $\partial_y^{2k}f(0)=0$ for all $k=1, \dots, n-1$ and $\sign \partial_y^{2n}f(0)=\tau$ for a given $\tau \in \{\pm1\}$. Together with the symmetry of $f(\zeta) = f(-\ol{\zeta})$ around the imaginary axis  and since $\Delta f(0)>0$ this is enough to guarantee the local behaviour because of Corollary~\ref{crl:singularity with symmetry}.   By the formula \eqref{derivatives of fa} for $\partial_y^{2k}f_a(0)$ the choices $a_i$ and $c_i$ defining $f$ have to be made such that 
\[
 \sign \pbb{(-1)^n\re \sum_{i=1}^n c_i  a_i^{2n+1}} =\tau\,, \qquad \re\sum_{i=1}^n c_i   a_i^{2k+1} =0 \,, \qquad k=1, \dots, n-1\,.
\]
That such a choice exists is ensured by the following lemma, where we replaced $\tau$ by $(-1)^n\tau$ to avoid tracking the additional sign.

\begin{lemma}[Discrete truncated moment problem for even case]
For each $n \in \N$ and $\tau \in \{\pm 1\}$ there are positive numbers $c_1, \dots, c_n>0$ and distinct complex numbers  $z_1, \dots, z_n \in \C_{>}$  such that 
\[
\re \sum_{i=1}^n c_i z_i^{2n+1} = \tau\,, \qquad \re \sum_{i=1}^n c_i z_i^{2k+1} =0    \,, \qquad  \text{ for all } \; k=1, \dots, n-1\,.
\]
\end{lemma}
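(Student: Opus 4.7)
My plan is to parametrize $z_j = e^{\ii \phi_j}$ with $\phi_j \in (0, \pi/2)$, so that $\re z_j^{2k+1} = \cos((2k+1)\phi_j) = T_{2k+1}(\cos\phi_j)$ for the Chebyshev polynomial $T_{2k+1}$ of the first kind. Since $T_{2k+1}$ is odd, writing $T_{2k+1}(x) = x\, Q_k(x^2)$ yields a polynomial $Q_k$ of degree exactly $k$ (leading coefficient $4^k$, as follows from the Chebyshev recurrence), and substituting $x_j := \cos\phi_j$, $y_j := x_j^2 \in (0,1)$ and $d_j := c_j x_j$, the problem reduces to exhibiting $n$ distinct $y_j \in (0,1)$ and positive $d_j > 0$ with
\begin{equation*}
\sum_{j=1}^n d_j\, Q_k(y_j) = 0 \;\text{for}\; k = 1, \dots, n-1, \qquad \sum_{j=1}^n d_j\, Q_n(y_j) = \tau.
\end{equation*}
Once such a quadrature is in hand, setting $z_j := \sqrt{y_j} + \ii \sqrt{1-y_j} \in \C_{>}$ and $c_j := d_j/\sqrt{y_j} > 0$ will give distinct complex numbers satisfying the desired identities, since $c_j \re z_j^{2k+1} = d_j Q_k(y_j)$.

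To produce the quadrature, I plan to first build a strictly positive weight $w$ on $(0,1)$ whose $Q_k$-moments match the prescribed values, and then discretise $w$ by the classical $n$-point Gaussian quadrature (which automatically gives distinct nodes in $(0,1)$, positive quadrature weights $d_j$, and exact integration of polynomials of degree $\le 2n-1 \ge n$). The key analytic ingredient is that $\{Q_k\}_{k\ge 0}$ is orthogonal on $(0,1)$ with respect to $h(y) := \sqrt{y/(1-y)}$; this follows by the change of variables $y = x^2$ from the standard orthogonality of $\{T_{2k+1}\}_{k \ge 0}$ on $(-1,1)$ with weight $(1-x^2)^{-1/2}$. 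I then define
\begin{equation*}
w(y) := \bigl(C + \gamma\, Q_n(y)\bigr)\, h(y), \qquad \gamma := \tau/\norm{Q_n}_h^2,
\end{equation*}
with $C$ chosen larger than $|\gamma|\sup_{[0,1]}|Q_n|$ to guarantee $w > 0$ on $(0,1)$. Orthogonality then makes the $Q_k$-moments of $w$ explicit: for $k \in \{1,\dots,n\}$ the $C$-term drops out by orthogonality of $Q_k$ against $Q_0 \equiv 1$, while the $\gamma Q_n$-term contributes $\gamma\norm{Q_n}_h^2\,\delta_{kn} = \tau \delta_{kn}$, producing exactly the required values.

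I expect no serious obstacles. The only computations needing care are the algebraic identity $T_{2k+1}(x) = x\, Q_k(x^2)$ with $\deg Q_k = k$ (readily checked from the Chebyshev recurrence) and the change-of-variables transfer of the Chebyshev orthogonality to the $(Q_k, h)$-orthogonality on $(0,1)$; the remainder is an assembly of standard facts about Gaussian quadrature and the transparent positivity bound on $w$ for large $C$.
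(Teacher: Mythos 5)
Your proof is correct, and it takes a genuinely different route from the paper. The paper's argument is inductive: starting from a solution for $n-1$, it adds one new point $z_n = \omega/r$ of large modulus, tunes the direction $\omega$ and a matching weight $c_n \sim r^{2n-1}$ so that the new $(2n+1)$-st moment dominates with the prescribed sign, and then uses the implicit function theorem (via the Vandermonde-type Jacobian of $w\mapsto \sum_i c_i w_i^{2k+1}$) to perturb the old points back onto the zero set of the lower moment equations. Your argument replaces all of this with a single explicit construction: put every $z_j$ on the unit circle so that $\re z_j^{2k+1}=T_{2k+1}(\cos\phi_j)$, exploit that $T_{2k+1}(x)=x\,Q_k(x^2)$ reduces the system to a moment problem for the degree-$k$ orthogonal family $\{Q_k\}$ with weight $h(y)=\sqrt{y/(1-y)}$ on $(0,1)$, produce a strictly positive density $w=(C+\gamma Q_n)h$ whose $Q_k$-moments are exactly $\tau\delta_{kn}$, and discretise with the $n$-point Gaussian quadrature for $w$, which by construction has $n$ distinct nodes in $(0,1)$, strictly positive weights, and integrates polynomials of degree $\le 2n-1 \ge n$ exactly. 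All the steps check out: $Q_0\equiv 1$, so orthogonality kills the $C$-term against $Q_k$ for $k\ge 1$; $\deg Q_k=k$ and the quadrature exactness transfer the continuous moments to the discrete sum; and $y_j\mapsto \sqrt{y_j}+\ii\sqrt{1-y_j}$ is injective into $\C_{>}$ with $c_j=d_j/\sqrt{y_j}>0$. Your approach is cleaner and essentially constructive, whereas the paper's inductive/IFT scheme is more flexible and is reused almost verbatim for the harder odd-power companion lemma (which also involves even-power moments and a strict inequality, and would require nontrivial modification of your quadrature set-up).
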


\begin{proof}
We perform the proof by induction over $n$. For $n=1$ we choose $z_1 \in \C$ with $\im z_1 >0$, $\re z_1 >0$ and $ \re z_1^3  =\tau $. Then we set $c_1:=1$. Now suppose for $n\in \N$ with $n \ge 2$ we have positive numbers $c_1, \ldots, 
c_n \in (0,\infty)$ and distinct complex numbers $\wh{z}_1, \ldots, \wh{z}_{n-1} \in \C_{>}$ such that 
\bels{hat z system even case}{
\re \sum_{i=1}^{n-1} {c}_i \wh{z}_i^{2n-1} = \pm 1\,, \qquad \re \sum_{i=1}^{n-1} {c}_i \wh{z}_i^{2k+1} =0    \,, \qquad k=1, \dots, n-2\,.
}
The construction works for either choice of sign in the induction hypothesis. We will cover both choices simultaneously.  
Since we can always normalise the coefficients, it suffices to show that there are $c_n >0$ and   $z_i \in \C_{>}$ in the first quadrant such that 
\bels{final choice of c and z}{
\sign \re \sum_{i=1}^n c_i z_i^{2n+1} = \tau\,, \qquad \re \sum_{i=1}^n c_i z_i^{2k+1} =0    \,, \qquad k=1, \dots, n-1\,.
}
To see that this is possible we pick $\omega \in \C$ with $\abs{\omega}=1$, $\re \omega>0$ and $\im \omega >0$ such that $\sign \re \omega^{2n-1} =\mp1 $   and 
$\sign \re \omega^{2n+1} =\tau$. Then we set 
\[
z_n(r) := \frac{ \omega}{r}\,, \qquad c_n(r):=\frac{r^{2n -1}}{ \abs{\re \omega^{2n-1}}}
\]
 for  $r>0$, as well as 
 \[
 F_k(w):= \sum_{i=1}^{n-1} c_i w_i^{2k+1} \,, \qquad G_k(w,r):= F_k(w)+r^{2n-2k-2}\frac{\omega^{2k+1}}{\abs{\re \omega^{2n-1}}}
 \]
 for $w=(w_1, \dots, w_{n-1})$, $k \in \db{n-1} $ and $r\ge 0$. In particular, $G_k(w,r):= F_k(w)+ c_n(r)z_n(r)^{2k+1}$ for $r>0$. For small enough $r>0$ we now solve the system 
 \[
 G_k(w,r) = G_k(\wh{z},0)\,, \qquad k \in \db{n-1}
 \]
 for $w=z(r)$. Since $\pt_{w_i} G_k(w) = \pt_{w_i} F_k(w)$ this system is locally uniquely solvable by the implicit function theorem due to
 \[
\det (\partial_{w_i} F_k(w))_{i,k=1}^{n-1}= \det( (2k+1)c_i w_i^{2k})_{i,k=1}^{n-1} = \prod_{k=1}^{n-1}(2k+1) \prod_{i=1}^{n-1}c_i w_i^2 \prod_{i<j}(w_j^2-w_i^2),
\] 
which does not vanish for $w=\wh{z}$, because by assumption the $\wh{z}_i$ are distinct. With $z_i =z_i(r)$ the system of equations  \eqref{final choice of c and z} is now satisfied. Indeed,  $\re G_k(w,r) = \re F_k(\wh{z},0)=0$ for $k \in \db{n-2}$ by \eqref{hat z system even case}, as well as $\re G_{n-1}(w,r) = \re G_{n-1}(\wh{z},0)=0$ by \eqref{hat z system even case} and the choice of the sign of $\re \omega^{2n-1}$. Finally 
\[
\sign \re \sum_{i=1}^{n} c_i z_i(r)^{2n+1}  = \sign \re \pbb{O(1) + \frac{1}{r^2}\frac{\re \omega^{2n+1}}{\abs{\re \omega^{2n-1}}} } =\tau
\]
by the choice of the sign of $\re \omega^{2n+1}$ and for sufficiently small $r>0$. For small $r>0$ the $z_i(r)$ with $i \in \db{n}$ are also distinct. 
\end{proof}

\subsection{Singularity types of odd $y$-power} \label{subsec:Singularity types of odd order}

Now we prove prove Lemma~\ref{lmm:Examples exist}~\ref{lmm:Examples 3}. As in the treatment of singularity types of even order, it suffices to show that $f-f(0)$ with $f$ of the form
\bels{f ansatz odd case}{
f(\zeta) := \sum_{i=1}^{2n+2} c_i  \frac{\re a_i}{|a_i|^2} g_{a_i}(\zeta) + \frac{1}{\abs{\zeta+ \ii }^2}
}
where $a_1, \dots, a_{2n+2} \in \C_{>}$  and $c_1, \dots, c_{2n+2}>0$ has singularity type $x^2 -y^{2n+1}$ at $0$. The function $g_a$ is defined in \eqref{eq:def_g_a} and its properties are summarised in Lemma~\ref{lem:properties_f_a}. 

To construct such singularity type we choose $f$ such that it satisfies
\bels{conditions on f odd case}{
\partial_y^{2n+1} f(0) < 0\,, \qquad \partial_y^{k} f(0) = 0\,, \qquad k=1, \dots, 2n
}
and apply Corollary~\ref{crl:singularity with symmetry}. The symmetry $f(\zeta) = f(-\ol{\zeta})$ follows from our ansatz \eqref{f ansatz odd case} and $\Delta f(0) >0$ because $\Delta g_a(0) = 4 \abs{a}^4 $ by Lemma~\ref{lem:properties_f_a} and $\Delta \abs{z+\ii}^{-2}|_{z = 0} = 4$. With
\[
\partial_y^{2k-1} \frac{1}{\abs{z+ \ii }^2}\bigg|_{z=0}=- (2k)!\,, \qquad \partial_y^{2k} \frac{1}{\abs{z+ \ii}^2}\bigg|_{z=0}= (2k+1)!\,,
\]
as well as the formulas for the derivatives of $g_a$ from \eqref{derivatives of ga} the conditions \eqref{conditions on f odd case} on our ansatz \eqref{f ansatz odd case} translate to 
\[
(-1)^{n}\im  \sum_{i=1}^n c_i a_i^{2(n+1)} <  2(n +1)\,, \quad(-1)^{k+1}\im  \sum_{i=1}^n c_i a_i^{2k} = 2\1k \,, \quad(-1)^{k+1} \re \sum_{i=1}^n c_i a_i^{2k+1} =2k+1    
\]
 for all $k \in \db{n}$.  
Such a choice of $c_i$ and $a_i$ exists due to the following lemma. 

\begin{lemma}[Discrete truncated moment problem for odd case]
For each $n \in \N$ there are positive numbers $c_1, \dots, c_{2n+2}>0$ and distinct complex numbers  $z_1, \dots, z_{2n+2} \in \C_{>}$ such that
\bels{odd case identities}{
 (-1)^{k+1}\im  \sum_{i=1}^{2n+2} c_i z_i^{2k} = 2\1k \,, \qquad(-1)^{k+1} \re \sum_{i=1}^{2n+2} c_i z_i^{2k+1} =2k+1    \,, \qquad k=1, \dots, n\,,
}
as well as  
\bels{odd case inequality}{
(-1)^{n}\im  \sum_{i=1}^{2n+2} c_i z_i^{2(n+1)} <  2(n +1)\,.
}
\end{lemma}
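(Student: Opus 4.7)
The plan is to proceed by induction on $n \ge 1$, adding two new points at each inductive step (in contrast to the even case, where only one point is added, reflecting the fact that the odd case carries two new moment constraints at each level). For the base case $n = 1$ one needs four distinct points $z_1, \ldots, z_4 \in \C_>$ with positive coefficients $c_i$ matching $\im \sum c_i z_i^2 = 2$ and $\re \sum c_i z_i^3 = 3$ together with $-\im \sum c_i z_i^4 < 4$; this is constructed explicitly, for instance by placing the $z_i$ on a ray $z_i = t_i \ee^{\ii \theta}$ with a generic angle $\theta$ of suitable signs for $\sin(2\theta)$, $\cos(3\theta)$ and $-\sin(4\theta)$, which reduces the two equalities to a linear system in four positive unknowns $c_i$, leaving the strict inequality automatically satisfied for the resulting values.

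For the inductive step from $n-1$ to $n$, I would take an inductive solution $(\hat c_i, \hat z_i)_{i=1}^{2n}$ and append two new points $z_{2n+1}(r) := \omega_A/r$ with coefficient $c_{2n+1}(r) := \alpha_A \2 r^{2n}$ and $z_{2n+2}(r) := \omega_B/r$ with coefficient $c_{2n+2}(r) := \beta_B \2 r^{2n+1}$, parametrised by a small $r>0$. These scalings produce contributions $\alpha_A \omega_A^k r^{2n-k}$ and $\beta_B \omega_B^k r^{2n+1-k}$ to the $k$-th moment $\sum_i c_i z_i^k$, which are $O(1)$ precisely at $k=2n$ for point $A$ and at $k=2n+1$ for point $B$, vanish at all lower moments as $r \downarrow 0$, and would otherwise blow up at higher moments. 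The unit vector $\omega_A \in \C_>$ is chosen with $\omega_A^{2n+1} \in \ii \R$, which eliminates the blowing-up contribution of $A$ to $\re \sum c_i z_i^{2n+1}$ exactly, together with $\sign \im \omega_A^{2n} = (-1)^{n+1}$. Existence of such $\omega_A$ reduces to the arithmetic observation that among the $n$ angles $\theta_k := (2k+1)\pi/(2(2n+1))$, $k = 0, \ldots, n-1$, solving $\cos((2n+1)\theta) = 0$ in $(0, \pi/2)$, both signs of $\sin((2k+1)n\pi/(2n+1))$ occur for $n \ge 2$ (as one checks by examining the residues of $(2k+1)n$ modulo $4n+2$), while for $n = 1$ the unique candidate already has the required sign $+$. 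Analogously, $\omega_B \in \C_>$ is chosen with $\sign \re \omega_B^{2n+1}$ matching the sign of the residual $(-1)^{n+1}(2n+1) - \re \sum \hat c_i \hat z_i^{2n+1}$, which is possible since $\cos((2n+1)\psi)$ attains both signs on $(0, \pi/2)$. The coefficients $\alpha_A, \beta_B > 0$ are then uniquely determined by matching the two level-$n$ moment equations at $r = 0$, positivity of $\alpha_A$ being guaranteed by the inductive inequality $(-1)^{n-1} \im \sum \hat c_i \hat z_i^{2n} < 2n$.

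For small $r > 0$, I would apply the implicit function theorem to perturb $\hat z \to z(r) \in \C_>^{2n}$ while keeping $\hat c$ fixed, so that all $2n$ equalities remain exact. The relevant Jacobian factors through the $\C$-linear map $(\Delta z_i)_i \mapsto \pb{\sum_i k \hat c_i \hat z_i^{k-1} \Delta z_i}_{k=2,\ldots,2n+1}$, whose matrix $(k \hat c_i \hat z_i^{k-1})_{k,i}$ reduces to a Vandermonde matrix in the $\hat z_i$ after extracting the factors $k, \hat c_i, \hat z_i$ and hence has nonvanishing determinant for distinct $\hat z_i$; the subsequent $\R$-linear projection onto the $2n$ real/imaginary components appearing in the statement is surjective, and so is the composed Jacobian from $\R^{4n}$ onto $\R^{2n}$. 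The strict inequality $(-1)^n \im \sum c_i z_i^{2n+2} < 2(n+1)$ is then automatic for sufficiently small $r > 0$, since the dominant contribution $(-1)^n \alpha_A \im \omega_A^{2n+2}/r^2$ tends to $-\infty$: the trigonometric identity $\sin((2k+1)(n+1)\pi/(2n+1)) = \sin((2k+1)n\pi/(2n+1))$, valid when $\omega_A^{2n+1} \in \ii\R$, ensures $\sign \im \omega_A^{2n+2} = \sign \im \omega_A^{2n} = (-1)^{n+1}$. The main obstacle will be rigorously establishing the arithmetic sign condition on $\omega_A$ for every $n$ and handling the degenerate subcase in which the residual for $\omega_B$ vanishes identically, which is resolved by a small generic perturbation of the inductive configuration $(\hat c_i, \hat z_i)$ that preserves all inductive constraints while breaking the degeneracy before $\omega_B$ is chosen.
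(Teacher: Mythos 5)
Your proposal is essentially correct but takes a genuinely different route from the paper in the inductive step. The paper adds a \emph{single} new point $z_{2n+1}(r)=\frac{1}{r}\ee^{\ii\varphi(r)}$ with coefficient $\alpha(r)r^{2n}$, where $\varphi(r)=\varphi_\ast+\sigma\beta(r)r$ is an $O(r)$ perturbation of the ``zeroing'' angle $\varphi_\ast=\frac{\pi}{2}-\frac{\pi}{2n+1}$ (so $\cos((2n+1)\varphi_\ast)=0$); the angle perturbation, multiplied by the $\sim 1/r$ prefactor in the $(2n+1)$-st moment, produces the $O(1)$ contribution that cancels the residual in the odd moment, while the $2n$-th moment is handled by $\alpha(r)$. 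The paper then has to append a dummy $(2n+2)$-nd point with tiny coefficient $\delta$ and re-solve to reach the target count. Your approach instead adds \emph{two} points at fixed angles: $\omega_A$ with $\omega_A^{2n+1}\in\ii\R$ so that its divergent $\sim 1/r$ term has no real part at all, and a subdominantly scaled $\omega_B$ (coefficient $\beta_B r^{2n+1}$) that supplies the residual for the $(2n+1)$-st moment. This is a cleaner mechanism in two respects: the new-point contribution to $\re\sum c_iz_i^{2n+1}$ is well-defined at $r=0$ with no hidden $1/r$ blow-up, and the point count lands on $2n+2$ automatically without a dummy-point step.

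Two gaps remain in your write-up, both fixable. First, the sign condition $\sign\im\omega_A^{2n}=(-1)^{n+1}$ is asserted via an unproven residue argument; the cleanest fix is to simply take $\omega_A=\ee^{\ii\varphi_\ast}$ with $\varphi_\ast=\frac{\pi}{2}-\frac{\pi}{2n+1}$ (the choice $k=n-1$ in your parametrization), for which one computes directly $(-1)^{n+1}\sin(2n\varphi_\ast)=\sin\!\big(\tfrac{2n}{2n+1}\pi\big)>0$, as the paper does. Second, the degenerate case where the $(2n+1)$-residual vanishes is a genuine issue for you but not for the paper (which just sets $\sigma=0$), because you require $\beta_B>0$; the ``generic perturbation of the inductive configuration'' would need to be performed along the $(2n+2)$-dimensional solution manifold of the lower constraints, which is plausible but unargued. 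A more economical fix is to choose $\omega_B$ with $\re\omega_B^{2n+1}=0$ in the degenerate case (e.g.\ at angle $\frac{3\pi}{2(2n+1)}$, distinct from $\varphi_\ast$ for $n\ge 2$), letting $\beta_B>0$ be arbitrary. Your base-case construction on a ray is different from the paper's (one point plus three dummies) but sound, after checking that the pair $(\sum c_it_i^2,\sum c_it_i^3)$ lies in the cone generated by $(t_i^2,t_i^3)_{i=1}^4$, which holds for a suitable choice of $t_i$.
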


\begin{proof} We perform an induction over $n$. For the case $n=1$ we first find $z \in \C_{>}$  and $c>0$ such that 
\[
c\1 \im z^2 = 2\,, \qquad c\1\re z^3 = 3\,, \qquad -c\1 \im z^4 < 4\,.
\]
The choice $z = \frac{3}{4}\sqrt{2} \ee^{\ii \pi/12}$ and $c = \frac{32}{9}$ satisfies these conditions. 
Now we set $c_1:=c$ and fix three additional complex numbers $z_2,z_3,z_4 \in \C_{>}$ in the first quadrant that are all distinct and distinct from $z_1$.  We also set $c_i:=\delta$ for $i=2,3,4$, where $\delta>0$ is chosen sufficiently small. Then the relations  
\[
 \im  \sum_{i=1}^{4}  c_iz_i^{2} = 2 \,, \qquad  \re \sum_{i=1}^{4} c_iz_i^{3} =3    \,, \qquad -\im  \sum_{i=1}^{4}c_i  z_i^{4} <4
\]
with $z_1=z_1(\delta)$ are satisfied for $\delta=0$ with $z_1(0)=z$ and by the implicit function theorem can also be satisfied for small $\delta>0$. In fact the two identities have the form $c\im z_1^2 =2 +O(\delta)$ and $c\re z_1^3 =3 +O(\delta)$ and the map $z \mapsto (\im z^2, \re z^3)=(2xy, x^3 - xy^2)$ with $z=x +\ii\1 y$ has the non-vanishing Jacobian
\[ 
\det \begin{pmatrix} 2y & 2x \\ 3x^2 - 3y^2 & - 6xy\end{pmatrix} = -6x(x^2 + y^2) \,.
\]

For the  induction step we assume that $c_1, \ldots, c_{2n}>0$  and $\wh{z}_1, \ldots, \wh{z}_{2n} \in \C_{>}$ are given such that 
\bels{IH for odd moment problem}{
 (-1)^{k+1}\im \sum_{i=1}^{2n}c_i\1\wh{z}_i^{2k} =2 k\,, \quad (-1)^{k+1}\re \sum_{i=1}^{2n}c_i\1\wh{z}_i^{2k+1} = 2k+1\,, \quad 
 (-1)^{n-1}\im \sum_{i=1}^{2n}c_i \wh{z}_i^{2n}<2n
}
for all $k= 2, \dots, n-1$. 
With the short hand notation $w=(w_1, \dots, w_{2n})$ we define 
\[
F_k(w):= \sum_{i=1}^{2n} c_i w_i^{k} \,,\qquad k=2, \dots, 2n+2\,.
\]
We now  construct an additional $c_{2n+1}(r)= \alpha(r)\1 r^{2n}>0$  and $z_{2n+1}(r) = \frac{1}{r} \ee^{\ii \varphi(r)}$ with functions $\varphi=\varphi(r) \in (0,\pi/2)$  and $\alpha=\alpha(r)>0$ for $r>0$ that we choose appropriately such that 
\bels{I-Step for odd moment problem}{
{(-1)^{n+1}\pb{\im F_{2n}(\wh{z}) +  \alpha\1 \sin(2n\1\varphi)}}\!=\!2 n\,, \quad  {(-1)^{n+1}\pB{\re F_{2n+1}(\wh{z}) +  \frac{\alpha}{r} \cos((2n+1)\varphi)}}\!=\!2 n+1 \,,
}
as well as 
\bels{I-Step for odd moment problem inequality}{
{(-1)^{n}\pB{\im F_{2n+2}(\wh{z}) +  \frac{\alpha}{r^2} \sin((2n+2)\1\varphi)}}<2 n+1
}
are satisfied for sufficiently small $r>0$. 
To do so we define $\varphi_\ast:=\frac{\pi}{2} - \frac{1}{2n+1}\pi $. Then $\cos((2n+1)\varphi_*)=0 $
 and $\sin(2n \varphi_*) =\sin((2n+2) \varphi_*)$ because $2n \varphi_* + (2n+2) \varphi_* = \pi(2n-1) $ is an odd multiple of $\pi$. Furthermore,  we have the inequality  $(-1)^{n+1}\sin(2n \varphi_*)=\sin\pb{ \frac{2n}{2n+1}\pi }>0$. 
Now let 
\[
\sigma:= \sign \pb{(-1)^{n+1}{\re F_{2n+1}(\wh{z}) }-(2 n+1)}\,.
\]
We set 
$\varphi(r):= \varphi_\ast + \sigma \1 \beta(r)\1 r$ for some appropriately chosen $\beta(r)>0$ that is bounded and bounded away from zero.  In particular, we have the expansion $(-1)^{n+1}\cos((2n+1)\varphi(r)) =- \sigma (2n+1) r + O(r^2)$. The identities \eqref{I-Step for odd moment problem} become 
\bels{alpha beta equation 1}{
\alpha(r)= \frac{2n - (-1)^{n-1}\im F_{2n}(\wh{z}) }{(-1)^{n+1}\sin(2n\1\varphi(r))}
}
and 
\bels{alpha beta equation 2}{
(-1)^{n+1}\re F_{2n+1}(\wh{z})-(2 n+1) - \sigma (2n+1) \beta(r) \alpha(r)=
\begin{cases}
0 & \text{ if } \sigma=0\,,
\\
O(r)& \text{ if } \sigma \ne 0\,.
\end{cases}
}
For $\sigma=0$ there is no need to choose $\beta$ since $\varphi(r)= \varphi_*$ and we set $\alpha(r):= \alpha_*>0$, satisfying 
\bels{alpha star}{
\alpha_*= \frac{2n - (-1)^{n-1}\im F_{2n}(\wh{z}) }{(-1)^{n+1}\sin(2n\1\varphi_*)}\,,
}
in this case. 
That $\alpha_*$ is positive follows from the  choice of $\varphi_*$ and \eqref{IH for odd moment problem}. 
For $\sigma \ne 0$ we solve the system \eqref{alpha beta equation 1} and \eqref{alpha beta equation 2}. By the definition of $\sigma$, $\varphi_*$ and \eqref{IH for odd moment problem} there is a solution $\alpha(0) = \alpha_*>0$ and $\beta(0) = \beta_*>0$ for $r=0$. 
In fact, $\alpha_*$ is determined by \eqref{alpha star} and $\beta_*$ then by evaluating \eqref{alpha beta equation 2} on $r=0$. 
By the implicit function theorem this solution can be extended to sufficiently small $r>0$. 
Thus, \eqref{I-Step for odd moment problem} is satisfied for our choice of $\alpha(r)$ and $\varphi(r)$. Since $\alpha(0)=\alpha_*>0$ and $(-1)^n\sin((2n+2)\varphi(0)) = (-1)^n\sin (2n\varphi_*)<0$, by the choice of $\varphi_*$, the inequality \eqref{I-Step for odd moment problem inequality} is  also satisfied  for sufficiently small $r>0$.

Now we define 
\[
G_k(w,r):= F_k(w) +c_{2n+1}(r) z_{2n+1}(r)^k\,,\qquad k=2, \dots, 2n+2
\]
for sufficiently small $r>0$. By the choices of the functions $\varphi$ and $\alpha$ above, we can analytically extend  $G_k(w,r)$ to a neighbourhood of $r=0$ for $k =2, \dots, 2n+1$. 
By construction we have 
\[
(-1)^{n+1}\im G_{2n}(\wh{z},r) =2n \,, \qquad (-1)^{n+1}\re G_{2n+1}(\wh{z},r) = 2n+1\,, \qquad (-1)^n\im G_{2n+2}(\wh{z},r)< 2n+1
\]
and $G_{k+1}(w,0)= F_{k+1}(w)$ for $k\in \db{2n-2}$. Now we solve the system of equations $G_{k+1}({z}(r),r)= F_{k+1}(\wh{z})$ for $k \in \db{2n-2}$ together with $G_{2n}({z}(r),r)= G_{2n}(\wh{z},0)$ and $G_{2n+1}({z}(r),r)= G_{2n+1}(\wh{z},0)$ for a solution ${z}=({z}_1, \dots, {z}_{2n})$ with ${z}(0)=\wh{z}$. 
This is possible in a neighbourhood of $r=0$ by the implicit function theorem since 
\bels{system stability odd case}{
\det (\partial_{w_i} F_{k+1}(w))_{i,k=1}^{2n}= \det( (k+1)c_i w_i^{k})_{i,k=1}^{2n} = \prod_{k=1}^{2n}(k+1) \prod_{i=1}^{2n}c_i w_i \prod_{i<j}(w_j-w_i)\,,
}
which does not vanish when evaluated at $w = \wh{z}$. 

Altogether we have found distinct ${z}_1, \dots , z_{2n+1} \in \C_{>}$ and $c_{2n+1}>0$ such that \eqref{odd case identities} and \eqref{odd case inequality} are satisfied with $c_{2n+2}=0$. Finally we add $z_{2n+2} \in \C_{>}$ distinct from all ${z}_1, \dots , z_{2n+1}$. We set $z_*:=({z}_1, \dots , z_{2n})$  and  $c_{2n+2}=\delta$ and find a choice $z(\delta) \in \C_{>}^{2n}$  for $\delta\ge 0$ with $z(0)= z_*$ such that tis choice also satisfies \eqref{odd case identities} and \eqref{odd case inequality}. The choice of $c_{2n+1} $ and $z_{2n +1}$ remains independent of $\delta$. That such choice exists follows again from \eqref{system stability odd case} and the implicit function theorem.
\end{proof}

\subsection{Singularity types of infinite order} \label{subsec:Singularity types of infinite order}
In this subsection we  prove Lemma~\ref{lmm:Examples exist}~\ref{lmm:Examples 4}. As before it suffices to show that $f-f(0)$ is of singularity type $x^2$, i.e. we drop the condition $f(0)=1$. Furthermore, by shifting $\nu$ we can place the singularity at $\ii \in \rm{Sing}_\infty^{\rm{int}}$. We follow Example~\ref{example:x2} and choose $a$ as in that example. 
 A short computation using \eqref{beta for circular} reveals that $\mathbb{S}$ satisfies \eqref{eq:example_x2_mathbb_S}. 
Thus, since $\partial\mathbb{D}_1 \cap \mathbb{S} = \emptyset$ while points on both sides of $\partial \mathbb{D}_1$ 
belong to $\mathbb{S}$, we conclude from the analyticity of $\beta$ that $\partial_\zeta \beta(\zeta) = \partial_{\bar \zeta} \beta(\zeta) = 0$ for all $\zeta \in \partial \mathbb{D}_1$. 
Hence,  $\partial\mathbb{D}_1 \subset \rm{Sing}_\infty^{\rm{int}}$. 
Furthermore, using \eqref{eq:V_equations} and \eqref{sigma formula in terms of y}, we find that 
$\sigma$ is given by \eqref{eq:sigma_for_x2_example} in this example. 
 We refer to Example~\ref{example:x2} for more explanations and to Figure~\ref{fig:two} for a plot of the 
boundary of $\mathbb{S}$ and some sampled eigenvalues.

\section{Circular element with general deformation} 
\label{sec:circular_element_general_deformation}

In this section, we consider the Brown measure $\sigma$ of $a + \mathfrak c$, where $\mathfrak c$ 
is a standard circular element and $a$ is a general operator in a tracial von Neumann algebra that is $\ast$-free from $\mathfrak c$. We prove the classification of singular points  stated in Remark~\ref{rmk:general_deformation}.
Here we use the notation $|x|$ and $|x|_*$ for the positive semidefinite self-adjoint elements defined by 
$|x|^2 =xx^*$ and $|x|_*^2 = x^*x$, where $x$ is a general operator.
The Brown measure in this setup has a density which satisfies the  identity 
\bels{sigma kappa equation}{
\pi\2\sigma(\zeta)=\avgbb{\frac{1}{|a-\zeta|^2+\kappa^2}\frac{\kappa^2}{|a-\zeta|_*^2+\kappa^2}}+ \avgbb{\frac{1}{(|a-\zeta|^2+\kappa^2)^2}}^{-1}\absbb{\avgbb{\frac{1}{(|a-\zeta|^2+\kappa^2)^2}(a-\zeta)}}^2,
}
for all $\zeta \in \mathbb{S}$, which was derived earlier, see e.g.\ \cite[equation (16)]{Khoruzhenko1996}, \cite[Section 4.4]{BordenaveCaputoChafai2014}, \cite[Theorem~7.10]{BelinschiYinZhong2024} 
 and \cite[equation (4.25)]{ErdosJi2023}. 
Here $\kappa =\kappa(\zeta)\ge 0$ is determined by the well-known equation 
\bels{kappa equation}{
\avgbb{\frac{1}{|a-\zeta|^2+\kappa^2}}=1
}
for $\zeta \in \mathbb{S}$, see e.g.\ \cite[equation (17)]{Khoruzhenko1996}, \cite[Lemma~3.6]{Zhong2021}  and \cite[equation (4.19)]{ErdosJi2023}. 
 We note that $|\zeta- a|^2 \geq \norm{(a-\zeta)^{-1}}^{-2}$ in the sense of quadratic forms 
for all $\zeta \in \mathbb{C} \setminus \spec (a)$. 
 
 The boundary of $\supp \sigma$ is contained in $\partial \mathbb{S} =  \{ \zeta \in \C \colon f(\zeta) = 1\}$ with $f(\zeta) =\avg{|a - \zeta|^{-2}}$, i.e.\ in this setup $f(\zeta)-1$ plays the same role as $-\beta(\zeta)$ in our setup above. Since $\Delta f(\zeta) >0$ for any $\zeta \not \in \spec(a)$, the classification of edge points $\zeta_0 \in \partial \ol{\mathbb{S}}\cap \rm{Sing} = \bigcup_{n=2}^\infty\rm{Sing}^{\rm{edge}}_n$ from Remark~\ref{rmk:general_deformation} follows from
Lemma~\ref{lmm:possible singuarity types}. In particular, for such $\zeta_0$ the function  $f-1$ is of   singularity type $x^2 - y^n $ at $\zeta_0$.

To see that a point $\zeta_0 \in \partial\mathbb{S}$ in the interior of $\ol{\mathbb{S}}$ satisfies $\zeta_0 \in \rm{Sing}_{k}^{\rm{int}} \cup  \rm{Sing}_{\infty}^{\rm{int}}$, i.e.\ $\sigma$ is of singularity type $x^2 +y^{2k}$ or $x^2$ at $\zeta_0$, we realise that $\Delta \sigma (\zeta_0)>0$ at such $\zeta_0$ and use Lemma~\ref{lmm:possible singuarity types} again. 
Indeed, 
as $\zeta \mapsto \kappa(\zeta)$ is real analytic on $\mathbb{S}$ by \cite[Lemma~3.6]{Zhong2021}, 
differentiating \eqref{kappa equation} shows 
\bels{kappa derivative}{
\avgbb{\frac{1}{(|a-\zeta|^2+\kappa^2)^2}}\partial_\zeta \kappa^2=\avgbb{\frac{1}{(|a-\zeta|^2+\kappa^2)^2}(a-\zeta)^*}\,
}
for $\zeta \in \mathbb{S}$. 

By the implicit function theorem, we conclude from \eqref{kappa equation} and \eqref{eq:assumption_spec_a_subset_mathbb_S} 
that $\zeta\mapsto \kappa(\zeta)^2$ can be extended to a real analytic function in a neighbourhood of $\ol{\mathbb S}$. 
The nonnegativity of $\kappa^2$ on $\ol{\mathbb S}$ and $\kappa(\zeta_0) = 0$ imply that $\kappa^2$ has a local 
minimum at $\zeta_0$ and, thus, $\partial_\zeta \kappa^2(\zeta_0) = 0$. Hence, evaluating \eqref{kappa derivative} 
at $\zeta_0 \in \partial \mathbb{S}\setminus \partial \ol{\mathbb{S}}$ yields that the right hand side of \eqref{kappa derivative} vanishes. Therefore, differentiating \eqref{kappa derivative} with respect to $\ol{\zeta}$ shows   
\bes{
\avgbb{\frac{1}{|a-\zeta_0|^4}}\partial_\zeta \partial_{\ol{\zeta}}\kappa^2(\zeta_0)=\avgbb{\frac{1}{|a-\zeta_0|^2}\frac{1}{|a-\zeta_0|_*^2}}
}
and, thus, $\Delta\kappa^2(\zeta_0) >0$. 
Moreover, differentiating \eqref{sigma kappa equation} with respect to $\zeta$ and $\ol{\zeta}$ and using $\partial_\zeta \kappa^2(\zeta_0)=0=\kappa^2(\zeta_0)$, as well as \eqref{kappa derivative} shows 
\[
\pi\partial_\zeta \partial_{\ol{\zeta}}\sigma (\zeta_0)
=\avgbb{\frac{1}{|a-\zeta_0|^2}\frac{1}{|a-\zeta_0|_*^2}}\partial_\zeta \partial_{\ol{\zeta}}\kappa^2 (\zeta_0)
+\avgbb{\frac{1}{|a-\zeta|^4}}^{-1}\absbb{\partial_\zeta\avgbb{\frac{1}{(|a-\zeta|^2+\kappa^2)^2}(a-\zeta)}}^2_{\zeta=\zeta_0}. 
\] 
Hence, $\Delta \sigma(\zeta_0) = 4 \partial_\zeta \partial_{\bar \zeta} \sigma(\zeta_0) > 0$ 
as $\Delta \kappa^2(\zeta_0) >0$.

\appendix 

\section{Proof of higher order Morse lemma} 
\label{app:proof_higher_order_morse_lemma} 

\begin{proof}[\linkdest{proof:lmm:possible singularity types}{Proof of Lemma~\ref{lmm:possible singuarity types}} ]

Without loss of generality we assume  $\zeta_0=0$ and $f(0)=0$. We denote $x=\re \zeta$ and $y =\im \zeta$. 
Furthermore, by rotating the coordinates $(x,y)$ so that they coincide with  the eigendirections  of the Hessian $\rm{Hess}(f)$ of $f$, we may assume $f(x,y) = \alpha \1x^2+ \beta \1 y^2 + O(\abs{x}^3 + \abs{y}^3) $, where $2\alpha$ and $2\beta$ are the eigenvalues of $\rm{Hess}(f)$. Here we used $f(0)=0$, $\partial f (0)=0$. Since $\Delta f(0)>0$, we may assume $\alpha>0$ by potentially exchanging the roles of $x$ and $y$. By rescaling we can also assume $\alpha =1$.
 In case $\beta\ne 0$ we use the Morse lemma (see e.g.\ \cite[Lemma~2.2]{MilnorMorseTheoryBook})  
to bring $\wt{f}:= f \circ \Phi$  into the form $\wt{f} = x^2 \pm y^2$. 

For the remainder of the proof, we assume $\beta = 0$ and tacitly shrink the neighbourhood of $0$ whenever necessary.
In the case when $\beta =0$ we see that 
\bels{f representation}{
f(x,y) = x^2(1 +x g(x,y) + y h(y)) + x \1y^2 j(y) + y^3 k(y)\,,
}
where $g,h,j,k$ are real analytic functions. Now we define a local real analytic  diffeomorphism through the ansatz $\Phi_1(x,y):=(x + y^2 \psi_1(y),y )$, where the real analytic function $\psi_1$ will be chosen appropriately below. 
Then $f_1:= f \circ \Phi_1$ has the form
\[
f_1(x,y) = x^2(1 +x g_1(x,y) + y h_1(y)) + x \1y^2 j_1(y) + y^3 k_1(y)
\] 
with 
\begin{align*} 
g_1(x,y) & = g(x+y^2 \psi_1(y),y) +  3y^2 \psi_1(y)  (\pt_1 g)(y^2 \psi_1(y),y) + 3y^2( \psi_1(y) x + y^2 \psi_1(y)^2)(\pt_1^2g)(y^2 \psi_1(y),y) /2 
\\ 
& \phantom{=} + ( 3y^2 \psi_1(y) x^2 + 3y^4 \psi_1(y)^2 x + y^6 \psi_1(y)^3 ) \wt{g}(x,y),  
\\ 
h_1(y) & = h(y) + 3y \psi_1(y) g(y^2 \psi_1(y),y) + 3y^3 \psi_1(y)^2 (\pt_1 g)(y^2 \psi_1(y),y) + y^5\psi_1(y)^3 (\pt_1^2g)(y^2 \psi_1(y),y)/2, 
\\
 j_1(y)&=  j(y) +\psi_1(y) (2 + 2 y h(y) + 3 y^2 g(y^2 \psi_1(y),y)\psi_1(y)+ y^4 \psi_1(y)^2\partial_1 g(y^2 \psi_1(y),y))\,, 
 \\
  k_1(y)&=k(y)+ y\psi_1(y)(j(y)+\psi_1(y)(1+yh(y)+y^2g(y^2\psi_1(y),y)\psi_1(y)))\,,
\end{align*} 
where $\partial_1 $ denotes the partial derivative  with respect to the first argument  and $\wt{g}$ is defined through the expansion
\[ g(x+y^2\psi_1(y),y) = g(y^2\psi_1(y),y) + x (\pt_1 g)(y^2\psi_1(y),y) 
+ x^2 (\pt_1^2g)(y^2 \psi_1(y),y)/2 + x^3 \wt{g}(x,y). \]
By the implicit function theorem, we find a real analytic function $\psi_1(y)$ such that $j_1(y) = 0$ for all small enough $y$. 
In particular, $\psi_1(y) =  (-\frac{1}{2}+O(y))j(y)$. 

Hence, $f_1$ simplifies to 
\[
f_1(x,y) = x^2(1 +x g_1(x,y) + y h_1(y)) + y^3 k_1(y)\,,
\] 
where $k_1(y)= \tau y^{K-3}\psi_2(y)$ for some $\tau \in \{0, \pm1\}$, $K \ge 3$ and a real analytic function $\psi_2$ with $\psi_2(0)>0$. 
We define the local real analytic diffeomorphism $\Phi_2(x,y ):=(x, y\psi_2(y)^{1/K})$.
Then $f_2:= f \circ \Phi_2^{-1}$ 
is of the form
\[
f_2(x,y) = x^2(1 +x g_2(x,y) + y h_2(y)) + \tau \1 y^K
\] 
 for some real analytic functions $g_2$ and $h_2$.  
Finally, we choose the  local real analytic diffeomorphism $\Phi_3(x,y ):=(x(1 +x g_2(x,y) + y h_2(y))^{1/2} , y)$ such that $\wt{f}:= f_2 \circ \Phi_3^{-1}$ 
 is of the form claimed in the lemma. The diffeomorphism from the statement of the lemma is  $\Phi := \Phi_1 \circ \Phi_2^{-1} \circ \Phi_3^{-1}$. 
\end{proof}

\begin{proof}[Proof of Corollary~\ref{crl:singularity with symmetry}]
As in the proof of Lemma~\ref{lmm:possible singuarity types} the case $\beta \ne 0$ and $\alpha \ne 0$ is clear, where $2\alpha$ and $2\beta$ are the two eigenvalues of $\rm{Hess}(f)$. Thus, by rescaling it suffices to consider $\beta = 0$ and $\alpha = 1$. As $f$ is symmetric with respect to the imaginary 
axis,  we have the representation
\[
f(x,y) = f(0)+x^2 p(x^2,y) + y^3k(y)\,,
\]
for real analytic functions $p$ and $k$ with  $p(0,0)=1$. 
In particular, $j(y)=0$ in the representation \eqref{f representation}. Hence, from the proof of Lemma~\ref{lmm:possible singuarity types} we know that 
$\psi_1 \equiv 0$ and $k(y)=k_1(y) =\tau y^{K-3}\psi_2(y)$ with some  real analytic $\psi_2$ such that $\psi_2(0)>0$. Choosing the diffeomorphism 
$
\Phi^{-1}(x,y):= (x \sqrt{p(x^2,y)}, y \2\psi_2(y)^{1/K})
$ shows the claim. 
\end{proof}

\section{Stability of Dyson equation}

\label{app:stability_operator}

Let $M$ be the solution of \eqref{eq:mde}. The stability operator of \eqref{eq:mde} is given by 
\eqref{eq:def_stability_operator}. 
\begin{lemma} \label{lem:stability_eta_positive}
Let $a\in \mathcal B$, $s \colon \mathfrak X \times \mathfrak X \to [0,\infty)$  a bounded measurable function and $\cal L$ defined as in  \eqref{eq:def_stability_operator}. Then $\cal L$ is invertible for any $\eta >0$ and satisfies 
\begin{equation} \label{eq:stability_operator_inverse_bound} 
\norm{\cal L^{-1}}_{\infty}\le \frac{1}{\eta^2}\,.
\end{equation} 
\end{lemma} 

\begin{proof} 
We write $\cal L$ as a directional derivative of $M:=M_R(\zeta,\ii \eta)$, where 
\[
M_R(\zeta,\ii \eta):= E\bigg[ \bigg(\mtwo{-\ii \eta & Y}{Y^* & -\ii \eta}-R\bigg)^{-1} \bigg], \qquad Y:= a + \frak c -\zeta\,.
\]
We note that for $\eta >0$ the map $R \mapsto M_R(\zeta,\ii \eta)$ is differentiable in a neighborhood of $R=0$. 
 Let $\wt{R} \in \mathcal B^{2\times 2}$ be arbitrary.  
Taking 
  the directional derivative $\nabla_{\wt R}$ with respect to $R$ in the direction $\wt R$ yields
 \begin{equation} \label{eq:Linv representation}
\nabla_{\wt R} M = E \bigg[ \mtwo{-\ii \eta & Y}{Y^* & -\ii \eta}^{-1}\wt R \mtwo{-\ii \eta & Y}{Y^* & -\ii \eta}^{-1}\bigg]\,.
\end{equation}
On the other hand 
 $M$ satisfies a modified version of the Dyson equation \eqref{general MDE}, namely 
\[
-\frac{1}{M} = R+\begin{pmatrix} \ii \eta & \zeta - a \\  \ol{\zeta - a} & \ii \eta \end{pmatrix} + \Sigma[M ]\,.
\]
Taking the directional derivative $\nabla_{\wt R}$ of this equation shows $\cal L \nabla_{\wt R} M =\wt R$. 
Since $\wt{R}$ was arbitrary, we conclude that $\cal L$ is invertible and $\cal L^{-1}\wt R = \nabla_{\wt R} M$. 
The bound \eqref{eq:stability_operator_inverse_bound} now follows from the representation \eqref{eq:Linv representation}. 
\end{proof}

\begin{proof}[\linkdest{proof:lmm:resolvent bound for L}{Proof of Lemma~\ref{lmm:resolvent bound for L}}]
We recall that $\zeta \in \C$ is fixed such that $\limsup_{\eta \downarrow 0} \avg{v_1(\zeta,\eta)} \ge \delta  $ for some $\delta>0$ and that $v^{(n)}=v(\zeta, \eta_n) \to v_0$ weakly in $(L^2)^2$, where $v_0 \sim_\delta 1$.  
First we use the identities
\[
\scr Lv_- = -\eta  \frac{\tau\2v^2}{(\eta + S_d v)^2}\,, \qquad \scr L^*(e_-(\eta + S_ov)) = \eta \1 e_-
\]
with $v_-=v e_-$ and $v=v^{(n)}$. 
In the limit $\eta \downarrow  0$ we see $\scr L_0v_-=0$  and $\scr L_0^*S_ov_-=0$.  
Here we used that $v = v_0$ satisfies the Dyson equation, \eqref{eq:V_equations} at $\eta =0$.   
For the rest of this proof we drop the $0$-index from our notation. We introduce $T$, $V$, $F: \cal B^2 \to \cal B^2$ as in \eqref{def of F,T,V}, evaluated at $v=v_0$ and $\eta =0$.
In terms of $T,F$ and $V$ we obtain
\begin{equation} \label{eq:L_in_terms_of_V_T_F} 
\scr L = V^{-1}(1-TF)V\,.
\end{equation} 
We consider the natural extensions $F\colon (L^2)^2 \to (L^2)^2$ and $T\colon (L^2)^2 \to (L^2)^2$. These operators are self-adjoint  
and we use their spectral properties presented in Lemma~\ref{lmm:Spectral properties of F and T} below. The proof of this lemma is postponed to after the proof of Lemma~\ref{lmm:resolvent bound for L}.

\begin{lemma}[Spectral properties of $F$ and $T$] \label{lmm:Spectral properties of F and T}
The Hermitian operator $F\colon (L^2)^2 \to (L^2)^2$ satisfies the following properties:
\begin{enumerate}[label=(\roman*)] 
\item $F$ has non-degenerate isolated eigenvalues at $\pm 1$ and a spectral gap $\eps\sim_\delta1$, i.e.\  
\begin{equation}\label{eq:F spectral gap}
\spec(F) \subset \{-1\} \cup [-1+\eps, 1-\eps]\cup \{1\}\,.
\end{equation}
\item 
The eigenvectors corresponding to the eigenvalues $\pm 1$ are  
\begin{equation}\label{eq:F eigenvectors}
F Vv = Vv \,, \qquad  F Vv_- = -V v_-\,,
\end{equation}
where $v=(v_1, v_2)$ and $v_- =(v_1, -v_2)$. 
\end{enumerate}
The Hermitian operator $T\colon (L^2)^2 \to (L^2)^2$ satisfies the following properties:
\begin{enumerate}[label=(\roman*)] 
\setcounter{enumi}{2} 
\item The spectrum is bounded away from $1$ by a gap of size $\eps \sim_\delta 1$, i.e.\  
\[
\spec(T) \subset [-1,1-\eps]\,.
\]
\item \label{item 4} For $x \in \{(y,-y): y \in L^2\} $ we have $Tx=-x$. 
\end{enumerate}
\end{lemma}

 We now prove Lemma~\ref{lmm:resolvent bound for L} by tacitly using the properties of $F$ and $T$ from Lemma~\ref{lmm:Spectral properties of F and T}.  
Since $f_-:=Vv_- \in \{(y,-y): y \in \cal B\}$ we see that $F$ and $T$ both leave the subspace $(f_-)^\perp$ invariant.  
Now,  using \eqref{eq:L_in_terms_of_V_T_F},  we rewrite the resolvent of $\scr L$ as
\[
\frac{1}{\scr L-\omega} = V^{-1}\frac{1}{1 - TF-\omega} V\,.
\]
The operator $V$ and its inverse satisfy the bound $\norm{V}_\#+\norm{V^{-1}}_\# \lesssim_\delta 1$ for $\# = 2, \infty$, where we used $\delta \lesssim v \lesssim \frac{1}{\delta}$ from Lemma~\ref{lmm:subsequence}.
Thus, it suffices to show \eqref{resolvent bound for L} with $\scr L$ replaced by $1-TF$. 
Furthermore, we can restrict to the case $\# =2$, since \cite[Lemma~4.5]{AK_Corr_circ} is applicable  because  
\[
\norm{TF}_\infty + \norm{TF}_{\infty \to 2} \norm{TF}_{2 \to \infty}  \lesssim_\delta 1\,.
\]
 Here, $\norm{TF}_{\infty \to 2}$ and $\norm{TF}_{2\to \infty}$ denote the operator norms of 
$TF$ viewed as operator $\mathcal B^2 \to (L^2)^2$ and $(L^2)^2 \to \mathcal B^2$, respectively.  

Since $\norm{TF}_2 \le 1$ we have the bound $\norm{(1-TF-\omega)^{-1}}_2\lesssim_{\eps, \delta} 1$  for any $\omega \not \in 1+\mathbb{D}_{1+\eps}$  and any $\eps >0$. Now we can decompose 
\bels{split of TF resolvent}{
\frac{1}{1-TF-\omega} = (1-P)\frac{1}{1-TF-\omega}(1-P) - \frac{1}{\omega}P\,,
}
where $P$ is the orthogonal projection onto the span of $f_-$. Provided $\eps>0$ is chosen sufficiently small, the first summand in \eqref{split of TF resolvent} is bounded for $\omega \in \mathbb{D}_{2\eps}$. Indeed $1-TF$ has a bounded inverse on $f_-^\perp$ by applying 
Lemma~\ref{lem:rotation_inversion} with $H = (L^2)^2$. 
Finally, the second summand  in \eqref{split of TF resolvent} is bounded on $\C\setminus \mathbb{D}_\eps$.   Altogether the bound \eqref{resolvent bound for L} is proven. 
The non-degeneracy of the eigenvector $v_-$ of $\scr L$ also follows from the decomposition \eqref{split of TF resolvent}. 
\end{proof}

\begin{proof}[Proof of Lemma~\ref{lmm:Spectral properties of F and T}] 
The identities \eqref{eq:F eigenvectors} are verified by using the definitions of $V$ and $F$ from  \eqref{def of F,T,V}. That the eigenvalues $\pm 1$ are non-degenerate and the existence of the spectral gap in \eqref{eq:F spectral gap} are seen by studying 
\[
F^2 = \mtwo{ RR^*& 0}{0 & R^*R}\,, \qquad  R:= D_{\frac{v_1}{\hat{v}}}SD_{\frac{v_2}{\hat{v}}} \,.
\]
As we now show, both $RR^*$ and $R^*R$ have spectrum contained in $[0,1-\eps]\cup \{1\}$ for some $\eps\sim_\delta 1$, where $1$ is a non-degenerate eigenvalue. We show this for $RR^*$. The argument for $R^*R$ is analogous. The operator $RR^*: \cal{B}\to \cal{B}$ has a kernel representation
\[
(RR^*u)(x)= \int_{\mathfrak X} t_1(x,y) u(y) \mu(\dd y)
\]
with a symmetric non-negative kernel 
 \[
t_1(x,y) = \frac{v_1(x)}{\wh v (x)}\int_{\mathfrak X} s(x,q)\frac{v_2(q)^2}{\wh v (q)^2}s(y,q)\mu (\dd q)\frac{v_1(y)}{\wh v (y)}
\]
 because of \eqref{eq:def_operators_S_and_S_star}. Since $v = v_0 \sim_\delta 1$ in our regime due to Lemma~\ref{lmm:subsequence}, assumption \ref{assum:flatness} implies
\begin{equation}\label{eq:scaling t1}
t_1(x,y )\sim_\delta \sum_{i,j,l=1}^K z_{il}z_{jl} \bbm{1}_{I_i \times I_j}(x,y) \,.
\end{equation}
Now let $t_k$ be the kernel of $(RR^*)^k$ for any $k \in \N$. 
Since the matrix $Z=(z_{ij})_{i,j=1}^K$ is primitive with positive diagonal there is a $k \in \N$ such that $t_k (x,y)\sim_\delta 1$ for  $\mu$-almost all $x$, $y \in \mathfrak X$. 
Indeed, by \eqref{eq:scaling t1} we have
\[
t_k(x,y )\sim_\delta \sum_{i,j}^K ((ZZ^*)^k)_{ij} \bbm{1}_{I_i \times I_j}(x,y) \,,
\]
where the entries of $(ZZ^*)^k$ are all positive for large enough $k \in \N$ due to the primitivity of $Z$ and its positive diagonal. 
 By \cite[Lemma~5.7]{Ajanki_QVE_Memoirs}  with the necessary eigenfunction from \eqref{eq:F eigenvectors},  we conclude that $(RR^*)^k$ has a spectral gap  $\sim_\delta 1$  and therefore so does $RR^*$. A similar argument to the one presented here has been used in \cite[Lemma 3.3]{AltGram}. 

Now we show the spectral properties of $T$. The proof follows the strategy from  \cite[Lemma~3.6]{Altcirc}.
Since the argument is short we present it here for the convenience of the reader. The property \ref{item 4} is obvious from the definition of $T$ in \eqref{def of F,T,V}.
It remains to see that $\spec(T)\subset [-1,1-\eps]$. Since restricted to vectors $(y,-y)$, the operator $T$ is $-1$ it suffices to consider the restriction of $T$ to vectors $(y,y)$. Here we find $T(y,y)=(D_p y,D_py)$  with $p:=|a-\zeta|^2 \frac{v_1v_2}{\wh{v}^2}-\wh{v}^2$. Recalling the definition of $\wh{v}$ in \eqref{def: v} we see that 
\[
2\frac{|a-\zeta|^2v_2}{Sv_2}-1=p = 1- 2v_2S^{*}v_1 ,
\]
where we used the Dyson equation \eqref{eq:V_equations} at $\eta=0$. Since $v_i \sim_\delta 1 $ this shows the claim about the spectrum of $T$. 
\end{proof}

\begin{lemma}[Contraction-Inversion Lemma] \label{lem:rotation_inversion}
Let $\eps >0$ 
and $T, F $ be two bounded self-adjoint operators on a Hilbert space $H$ such that $ \norm{T} \leq 1$ and $ \norm{F} \leq 1$.
Suppose that there are normalized vectors $f_\pm \in H$ satisfying 
\begin{equation} \label{eq:properties_F}
F f_+ = \norm{F} f_+, \quad F f_- = - \norm{F} f_-, \quad \norm{F x} \leq (1 - \eps) \norm{x}
\end{equation}
for all $x \in H$ such that $x \perp \linspan\{f_+, f_-\}$. 

Furthermore, assume that 
\begin{equation} \label{eq:norm_Tf_plus_Tf_minus}
 \scalar{f_+}{T f_+} \leq 1-\eps, \quad Tf_-=-f_-. 
\end{equation}
Then $1-TF$ maps $f_-^\perp$ into itself and its restriction to $f_-^\perp$ is invertible such that   there is a constant $c >0$, depending only on  $\eps$, 
 with 
\begin{equation} 
 \norm{(1-TF)x}  \ge c \norm{x}. \label{eq:estimate_stability_lemma}
\end{equation}
for all $x \perp f_-$. 
\end{lemma}

\begin{proof} First we see that $TF$ maps $f_-^\perp$ into itself. Indeed, let $h \in H$ with $\norm{h}=1$ and $h \perp f_-$. Then $\scalar{f_-}{TF h} =-\scalar{f_-}{F h}=0$ because of \eqref{eq:norm_Tf_plus_Tf_minus}. Now we write 
$h= \beta f_+ + \gamma x $ for some normalized $x \perp \linspan\{f_+, f_-\}$ and $ \beta, \gamma \in \C$ with $ |\beta |^2 +|\gamma|^2 =1$. 
From here on we follow the proof of \cite[Lemma~3.7]{Altcirc} with the notational identifications ${\bf A} \leftrightarrow T$ and ${\bf B} \leftrightarrow F$.
\end{proof}

\begin{lemma}[Twist lemma]
\label{lmm:Twist lemma}
Let $H$ be a Hilbert space with a scalar product $\scalar{\2\cdot\2}{\2\cdot\2}$ and equipped with a norm $\norm{\2\cdot\2}_{\#}$ (not necessarily induced by the scalar product), $\eps \in (0,1)$ and $A $ a bounded linear operator on $(H,\scalar{\2\cdot\2}{\2\cdot\2})$ such that $\ol{\D}_\eps \cap \spec A = \{ \alpha \}$. 
We assume that $\alpha$ is a non-degenerate eigenvalue of $A$ and $A x = \alpha x$ for 
some $x \in H$ with $\norm{x}_\# = 1$. 
Let
\bels{Definition of projection P}{
P\,:=\,- \frac{1}{2\pi \ii}\oint_{\partial \D_{\eps}} \frac{\dd \zeta}{A-\zeta}\,=\, \scalar{p}{\2\cdot\2}\2x\,,
}
with some $p \in H$
be the corresponding spectral projection and $y \in H$ a vector such that
\bels{assumptions on orthogonality for a and b}{
\abs{\scalar{x}{y}}\,\ge\, 2\2 \eps\,,\qquad
\abs{\scalar{y}{w}}\,\le\, \norm{w}_{\#}\,,\qquad \forall\;w \in H\,.
}
Suppose that $A$ has a bounded inverse on the range of $1-P$ with respect to the norm $\norm{\2\cdot\2}_{\#}$, i.e.\
\bels{assumptions on lower bound of A}{
\norm{Aw}_{\#}\,\ge\, \norm{w}_{\#}\,,\qquad \forall\; w \perp p\,.
}
Then $A$ has a bounded inverse when restricted to $y^\perp$, namely
\bels{Bound on A inverse on b perp}{
\norm{Aw}_{\#}\,\ge\, \frac{\eps}{3}\2\norm{w}_{\#}\,,\qquad \forall\; w \perp y\,.
}
\end{lemma}
\begin{proof} The proof follows the one of \cite[Lemma~4.6]{AK_Corr_circ}  line by line, replacing $\C^d$ by $H$. 
\end{proof}

\section{Auxiliary results}

\begin{lemma} \label{lmm:scaling of cubic}
Let $y> 0$ be the unique solution to the equation $y^3 + \beta \1 y = x$ for  $x> 0$ and $\beta \in \R$. Then 
\[
y \sim \sqrt{\max\{0,-\beta\}} + \frac{x}{x^{2/3}+ \abs{\beta}}\,.
\]
\end{lemma}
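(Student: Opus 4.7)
The plan is to prove the asymptotic equivalence by a case analysis on the sign of $\beta$ and on the relative sizes of $x$ and $|\beta|^{3/2}$. As a preliminary I would note that for $x > 0$ the equation $y^3 + \beta y = x$ has a unique positive solution, and in the case $\beta < 0$ this solution necessarily satisfies $y > \sqrt{-\beta}$, since the cubic $y \mapsto y^3 + \beta y$ is nonpositive on $[0, \sqrt{-\beta}]$ and strictly increasing from $0$ to $\infty$ on $[\sqrt{-\beta}, \infty)$.

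In each of the four regimes $\{\beta \geq 0,\, x \leq |\beta|^{3/2}\}$, $\{\beta \geq 0,\, x \geq |\beta|^{3/2}\}$, $\{\beta < 0,\, x \leq |\beta|^{3/2}\}$, $\{\beta < 0,\, x \geq |\beta|^{3/2}\}$, I would show that both $y$ and the candidate scale $z := \sqrt{\max\{0,-\beta\}} + x/(x^{2/3}+|\beta|)$ are comparable to the same explicit quantity — respectively $x/\beta$, $x^{1/3}$, $\sqrt{-\beta}$ and $x^{1/3}$. Matching $z$ to these four scales in each regime is a routine consequence of $a+b \sim \max\{a,b\}$ for $a,b \geq 0$ together with the defining comparison between $x$ and $|\beta|^{3/2}$.

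For $\beta \geq 0$ the two positive terms $y^3$ and $\beta y$ in the equation are compared directly: in the first regime $\beta y \geq y^3$ forces $y \leq \sqrt{\beta}$ and $y \sim x/\beta$, while in the second $y^3 \geq \beta y$ gives $y \sim x^{1/3}$. For $\beta < 0$ with $x \geq |\beta|^{3/2}$ (the fourth regime) the inequality $y^3 \geq x$ gives the lower bound $y \geq x^{1/3}$, and a bootstrap obtained by splitting at $y = 2\sqrt{-\beta}$ and using $|\beta|y \leq y^3/4$ on the upper piece produces the matching upper bound.

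The main obstacle is the third regime ($\beta < 0$, $x \leq |\beta|^{3/2}$), which gives the edge-type scale $y \sim \sqrt{-\beta}$. The lower bound $y \geq \sqrt{-\beta}$ is immediate from the preliminary observation. For the matching upper bound $y \lesssim \sqrt{-\beta}$ I would argue by contradiction: if $y \geq 2\sqrt{-\beta}$, then $|\beta| y \leq y^3/4$, so the rewritten equation $y^3 = x + |\beta| y$ yields $y^3 \leq (4/3)x \leq (4/3)|\beta|^{3/2}$, contradicting $y \geq 2\sqrt{-\beta}$.
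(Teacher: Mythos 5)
Your proof is correct. The structure is slightly different from the paper's: for $\beta<0$ the paper substitutes $y=\sqrt{-\beta}(1+\eps)$, reduces the equation to the cubic $\eps^3+3\eps^2+2\eps=x\abs{\beta}^{-3/2}$, and reads off $\eps\sim\min\{x\abs{\beta}^{-3/2},\,x^{1/3}\abs{\beta}^{-1/2}\}$ directly, which handles both sub-regimes in one stroke. You instead stay with the original cubic, split the $\beta<0$ case at $x=\abs{\beta}^{3/2}$, and use a bootstrap via the threshold $y=2\sqrt{-\beta}$ (so that $\abs{\beta}y\le y^3/4$) to pin down each regime separately. The paper's substitution is tidier and makes the crossover between the two sub-scalings transparent, since everything is encoded in the single parameter $x\abs{\beta}^{-3/2}$; your four-case version is more explicit about which term dominates where and is arguably easier to verify line-by-line, at the cost of some duplication (the same $2\sqrt{-\beta}$ bootstrap appears twice). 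Both arguments are elementary and complete.
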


\begin{proof}
First we consider the case $\beta \ge 0$. 
Then clearly 
\[
y \sim \frac{x}{x^{2/3} + \abs{\beta}}\,.
\]
Now let  $\beta <0 $,   then we must have $y = \sqrt{-\beta}(1 + \eps)$ for some $\eps > 0$ since $y^2 + \beta \1 y >0$.  For this $\eps$ we get the equation
$
\eps^3 + 3 \eps^2 + 2 \eps = x\1 \abs{\beta}^{-3/2}\,.
$
Thus, we have the scaling 
\[
\eps \sim \min\cbb{\frac{x}{\abs{\beta}^{3/2}}, \frac{x^{1/3}}{\abs{\beta}^{1/2}}}\, . 
\]
Therefore we conclude 
\[
y = \sqrt{\abs{\beta}} +  \min\cbb{\frac{x}{\abs{\beta}}, x^{1/3}} \sim \sqrt{\abs{\beta}}+\frac{x}{x^{2/3} + \abs{\beta}}\,,
\]
which is the claim of the lemma. 
\end{proof}

\begin{lemma}\label{lmm:resolvent bound for S}
Let $S\colon \cal{B} \to \cal{B}$ be an integral operator as in \eqref{eq:def_operators_S_and_S_star} with a kernel $s: \mathfrak X^2 \to (0,\infty)$ that satisfies the bounds $\eps \le s(x,y)\le \frac{1}{\eps}$ for 
all $x$, $y \in \mathfrak{X}$ and some  constant $\eps>0$ and such that the spectral radius of $S$ is normalised to $\varrho(S)=1$. 
Then there are constants $\delta,C >0$, depending only on $\eps$, such that 
\[
\sup \Big\{ \norm{(S-z)^{-1}}_2 \colon z \not \in D_{1-\delta}(0) \cup D_\delta(1)\Big\} \le C
\]
and $\spec(S) \cap D_\delta(1) =\{1\} $ is non-degenerate. 
\end{lemma}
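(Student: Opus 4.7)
The plan is to combine Krein--Rutman with a quantitative Perron--Frobenius argument to get a uniform spectral gap, and then deduce the resolvent bound by a standard spectral projection decomposition.

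First I would observe that $S$, viewed as an operator on $L^2(\mathfrak X,\mu)$, is Hilbert--Schmidt since $s \in L^\infty$, hence compact. Because $s(x,y)\ge \eps>0$, the operator $S$ is positivity improving: $Sf>0$ pointwise whenever $f\ge 0$, $f\not\equiv 0$. The Krein--Rutman theorem then yields that $\varrho(S)=1$ is a simple eigenvalue with a strictly positive eigenvector $b\in\mathcal B_+$, and similarly $S^*$ has positive eigenvector $\ell\in\mathcal B_+$. Normalising $\avg b=1$, the identity $b=Sb$ together with the kernel bounds gives $\eps\le b(x)\le \eps^{-1}$, so $b\sim 1$ and, analogously, $\ell\sim 1$ and $\avg{\ell b}\sim 1$, all uniformly in terms of $\eps$ alone.

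Next, set $P:=\avg{\ell\,\cdot\,}b/\avg{\ell b}$, $Q:=1-P$. Then $P$ is a (not necessarily orthogonal) rank-one projection commuting with $S$, and from the uniform bounds on $b,\ell$ one immediately gets $\norm P_2+\norm Q_2\lesssim_\eps 1$. Assuming the key estimate
\begin{equation}\label{eq:spectral_gap_to_prove}
\norm{SQ}_2\le 1-\delta\qquad\text{for some }\delta=\delta(\eps)>0,
\end{equation}
the decomposition
\[
(S-z)^{-1}=\frac{1}{1-z}P+Q(SQ-z)^{-1}Q
\]
gives the desired bound: the first term is controlled by $|1-z|\ge\delta$ on $\C\setminus D_\delta(1)$, while the second is bounded by a Neumann series on $\C\setminus D_{1-\delta}(0)$ since $\norm{SQ}_2\le 1-\delta$. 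Non-degeneracy of the eigenvalue $1$ also follows from \eqref{eq:spectral_gap_to_prove} and the rank-one structure of $P$.

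The crux is therefore \eqref{eq:spectral_gap_to_prove} with $\delta$ depending only on $\eps$. This is the main obstacle. The qualitative statement $\norm{SQ}_2<1$ follows from the Jentzsch--Perron theorem: any eigenvalue $\lambda$ with $|\lambda|=1$ and eigenvector $v\perp\ell$ forces, by positivity, $S|v|\ge |v|$ and, comparing with $Sb=b$ via $\avg{\ell\,\cdot\,}$, actually $S|v|=|v|$; hence $|v|=cb$, and then $v=c'b$ by the strict positivity of $s$, contradicting $\avg{\ell v}=0$. To make this uniform in the kernel, I would proceed by Birkhoff--Hopf contraction: the bound $\eps\le s\le\eps^{-1}$ implies that the Birkhoff projective diameter of $S(\mathcal B_+)$ in the Hilbert projective metric is bounded by $\log(\eps^{-2})$, so $S$ contracts projective distances at a uniform rate $\tanh(\tfrac14\log(\eps^{-2}))<1$. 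Translating this projective contraction into an $L^2$ operator-norm contraction on $QL^2$ (using the comparability $b\sim\ell\sim 1$ to pass between the cone geometry and the Hilbert norm, and splitting a general $v\in QL^2$ into its positive and negative parts, which have comparable $\ell$-masses because $\avg{\ell v}=0$) yields \eqref{eq:spectral_gap_to_prove} with a constant $\delta$ depending only on $\eps$. An alternative route is a direct compactness/contradiction argument: any sequence of kernels violating \eqref{eq:spectral_gap_to_prove} admits, after extracting a weak-$*$ limit of kernels (still in $[\eps,\eps^{-1}]$) and using Hilbert--Schmidt compactness to pass from weak to strong convergence of the operators in operator norm, a limiting $S_\infty$ with an eigenvalue of modulus one distinct from the Perron eigenvalue, contradicting the qualitative statement.
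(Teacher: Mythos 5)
Your decomposition and the Birkhoff--Hopf strategy are reasonable in outline (the paper itself just transfers the finite-dimensional argument of its cited reference), but the key estimate you isolate, $\norm{SQ}_2 \le 1-\delta$, is \emph{false}: it conflates the spectral radius of $S$ on $\ker P$ --- which Birkhoff--Hopf does control --- with the $L^2$ operator norm of $SQ$, which it does not, since $P$ and $Q$ are oblique projections with $\norm{Q}_2>1$. A concrete counterexample: take $\mathfrak X = \{1,2\}$ with the normalised counting measure and kernel $s = \bigl(\begin{smallmatrix} 2-\eps & \eps \\ 2\eps & 2-2\eps\end{smallmatrix}\bigr)$, so $S$ acts on $L^2$ via the matrix $\bigl(\begin{smallmatrix} 1-\eps/2 & \eps/2 \\ \eps & 1-\eps\end{smallmatrix}\bigr)$ with $\varrho(S)=1$, $b=(1,1)$ and $\ell\propto(2,1)$. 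One computes $Qv = \frac{v_1-v_2}{3}(1,-2)$, hence $SQ = (1-\tfrac{3\eps}{2})Q$ and $\norm{Q}_2 = \sqrt{10/9}$, giving $\norm{SQ}_2 = (1-\tfrac{3\eps}{2})\sqrt{10/9} > 1$ for all sufficiently small $\eps$. Since both your resolvent bound and your non-degeneracy claim rest on a Neumann series for $(QSQ-z)^{-1}$ driven by that estimate, the argument as written does not close.

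The correct replacement is the weaker but sufficient geometric decay of powers $\norm{S^nQ}_2 \le C(\eps)\,\theta(\eps)^n$ for all $n$, with $\theta(\eps) = \tanh(\log\eps^{-1})<1$ the Birkhoff contraction rate and $C(\eps)$ an $\eps$-dependent constant absorbing the oblique angle. This is obtained by decomposing $h\in\ker P$ as $h_+-h_-$ with $\avg{\ell h_+}=\avg{\ell h_-}$, observing that $Sh_\pm$ are strictly positive with Hilbert projective distance at most $4\log\eps^{-1}$, iterating the projective contraction on $S^n h_+$ versus $S^n h_-$, and converting back to $L^\infty$ or $L^2$ norms using $b\sim\ell\sim 1$. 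The resolvent $(QSQ-z)^{-1}$ for $|z|\ge 1-\delta$ with $\delta<1-\theta$ then converges by the root test with a uniform bound $C'(\eps)$, and non-degeneracy of the eigenvalue $1$ follows as you intended. Your fallback compactness sketch has an independent gap as well: weak-$*$ convergence of kernels only gives weak operator convergence of $S_n$, not Hilbert--Schmidt or operator-norm convergence --- the set $\{s: \eps\le s\le \eps^{-1}\}$ is not $L^2(\mathfrak X^2)$-compact when $\mathfrak X$ is infinite --- and the constants in the lemma must be uniform over the measure space $(\mathfrak X,\mu)$ itself, so any compactness argument would also need to let the underlying space vary.
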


\begin{proof}
We follow line by line the arguments from the proof of \cite[Lemma~A.1]{doi:10.1137/17M1143125}, where the finite dimensional case with $\abs{\frak{X}} < \infty$ and $\mu$ the counting measure is carried out.  
\end{proof}

\begin{lemma}[Properties of $f_a$ and $g_a$] \label{lem:properties_f_a} 
Let $a \in \C_{>}$, as well as $f_a \colon \C \setminus\{ 1/a, 1/\ol{a}, -1/a, -1/\ol{a} \} \to \R$ and $g_a\colon \C \setminus\{ -1/a, 1/\ol{a} \} \to \R$ be
 defined as in  \eqref{Def of f_a} and \eqref{eq:def_g_a}, respectively.  
Then $f_a$ and $g_a$ are real analytic.  
Moreover, the Laplacian and derivatives of $f_a$ along the imaginary axis are 
\bels{derivatives of fa}{
 \Delta f_a(0) = 4 \abs{a}^4, \qquad \partial_y^{2k+1} f_a(0) = 0, \qquad \partial_y^{2k}f_a(0)=(-1)^k(2k)! |a|^2\frac{\re a^{2k+1} }{\re a}
 }
for all $k \in \N_0$, where $\zeta=x+\ii\1 y$. 
Furthermore, $f_a$ has the symmetries 
\bels{symmetries of f}{ 
f_a(\zeta) = f_a(-\zeta)= f_a(\ol{\zeta}) = f_a(-\ol{\zeta}) \,.
} 
The Laplacian and derivatives of $g_a$ along the imaginary axis are 
\bels{derivatives of ga}{ \Delta g_a(0) \!=\! 4 \abs{a}^4, \; \partial_y^{2k-1} g_a(0) \!=\! (-1)^{k+1}(2k-1)! |a|^2\frac{\im a^{2k}}{\re a}\,, \; \partial_y^{2k} g_a(0) \!=\!(-1)^k(2k)! |a|^2\frac{\re a^{2k+1}}{\re a}
}
and $g_a$ has the symmetry $g_a(\zeta) = g_a(-\ol{\zeta})$. 
\end{lemma}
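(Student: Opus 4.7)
The lemma is essentially a direct computation, so the plan is to organise the computation in the most efficient order. First, away from the four excluded points each summand in the definitions of $f_a$ and $g_a$ is of the form $|\zeta - p|^{-2} = [(\zeta - p)(\bar\zeta - \bar p)]^{-1}$, which is real analytic as a function of $(\re \zeta, \im \zeta)$; this settles the analyticity claim. The symmetry $g_a(-\bar\zeta) = g_a(\zeta)$ will then follow from the two identities $|-\bar\zeta + 1/a|^2 = |\zeta - 1/\bar a|^2$ and $|-\bar\zeta - 1/\bar a|^2 = |\zeta + 1/a|^2$, which simply swap the two summands of $g_a$. Since $f_a = (g_a + g_{\bar a})/2$ and the four points $\pm 1/a,\pm 1/\bar a$ form a set invariant under $\zeta \mapsto -\zeta$, $\zeta \mapsto \bar\zeta$ and $\zeta \mapsto -\bar\zeta$, the three symmetries \eqref{symmetries of f} are immediate.

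The Laplacian values at the origin I will obtain from the pointwise identity
$\Delta |\zeta - p|^{-2} = 4\partial_\zeta \partial_{\bar\zeta} [(\zeta-p)(\bar\zeta - \bar p)]^{-1} = 4|\zeta - p|^{-4}$, so that $\Delta g_a(0)$ and $\Delta f_a(0)$ reduce to averages of $4 |p|^{-4} = 4|a|^4$ over the relevant poles.

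The main work is the higher $y$-derivatives. The key step will be to restrict to the imaginary axis $\zeta = \ii y$ and compute explicitly. Using $|\ii y - p|^2 = y^2 - 2y \im p + |p|^2$, together with $\im(-1/a) = \im(1/\bar a) = -\im(1/a)$, the two summands of $g_a(\ii y)$ collapse to a single quadratic:
\[ g_a(\ii y) = \frac{1}{y^2 + 2y\2\im(1/a) + |a|^{-2}}. \]
The denominator factors as $(y - \ii/a)(y + \ii/\bar a)$, and a partial fraction decomposition gives
\[ g_a(\ii y) = \frac{|a|^2}{2\re a}\bigl( (1/a + \ii y)^{-1} + (1/\bar a - \ii y)^{-1}\bigr). \]
Each summand expands as a geometric series, $a\sum_{n\geq 0}(-\ii y a)^n$ and $\bar a \sum_{n\geq 0}(\ii y \bar a)^n$ respectively, so that
\[ \partial_y^n g_a(0) = \frac{n!\2|a|^2}{2\re a}\pb{(-\ii)^n a^{n+1} + \ii^n \bar a^{n+1}}. \]
Splitting into parities using $(-\ii)^n = (-1)^n \ii^n$ and the identities $z + \bar z = 2\re z$, $z - \bar z = 2\ii \im z$ will then yield precisely the formulas in \eqref{derivatives of ga}. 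Finally, \eqref{derivatives of fa} will follow by averaging with the corresponding identities for $g_{\bar a}$: the imaginary parts $\im(a^{2k})$ and $\im(\bar a^{2k})$ have opposite signs and therefore cancel, killing the odd-order derivatives of $f_a$, while $\re(a^{2k+1})$ and $\re(\bar a^{2k+1})$ agree so the even-order derivatives are preserved.

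The only place where care is required is the bookkeeping of the powers of $\ii$ in the parity split of the Taylor coefficients; this is the mild technical point of the argument but not a genuine obstacle. All other steps are elementary manipulations of modulus identities, partial fractions and geometric series.
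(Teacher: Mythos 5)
Your proof is correct, and it takes a genuinely different computational route from the one in the paper. The paper works with the general formula
\[
\partial_y^{k} \frac{1}{\abs{\zeta}^2} = \bigl(\ii(\partial_\zeta-\partial_{\ol\zeta})\bigr)^k \frac{1}{\abs{\zeta}^2}
\]
on all of $\C$, obtains a closed expression by summing a binomial series of mixed $\partial_\zeta,\partial_{\ol\zeta}$-derivatives, splits this into even and odd orders, and only then evaluates at the poles $\pm 1/a,\pm 1/\ol a$ via the definitions \eqref{Def of f_a}, \eqref{eq:def_g_a}. Your argument instead restricts to the imaginary axis from the outset, exploits the observation that the two summands of $g_a(\ii y)$ coincide there (since $\im(1/\ol a) = -\im(1/a)$), factorises the resulting quadratic denominator over $\C$, and reads off all Taylor coefficients at once from the geometric series $\partial_y^n g_a(0) = \tfrac{n!\,\abs{a}^2}{2\re a}\bigl((-\ii)^n a^{n+1} + \ii^n\ol a^{n+1}\bigr)$; the parity split and the symmetry $a\leftrightarrow\ol a$ then give \eqref{derivatives of ga} and \eqref{derivatives of fa}. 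I checked the parity bookkeeping — e.g.\ $(-\ii)^{2k-1}a^{2k}+\ii^{2k-1}\ol a^{2k}=(-1)^{k+1}\cdot 2\im(a^{2k})$ — and it reproduces the paper's signs exactly. Your collapse observation and the single partial-fraction step are a small economy over the paper's approach, which has to carry the general $k$-th derivative of $\abs{\zeta}^{-2}$ before specialising. Both are elementary and roughly comparable in length; the paper's version is slightly more systematic in that it produces the reusable identity for $\partial_y^k\abs{\zeta}^{-2}$, whereas yours is more tailored to the specific functions $f_a$, $g_a$.
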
 

\begin{proof} 
To verify the formulas for the derivatives of  $f_a$ and $g_a$  we first compute
\[
\partial_\zeta^k \frac{1}{\abs{\zeta}^2} = \frac{(-1)^k k! }{\ol{\zeta} \zeta^{k+1}} \,.
\]
From this we conclude 
\bes{
\partial_y^k \frac{1}{\abs{\zeta}^2} &= \pb{\ii (\partial_\zeta-\partial_{\ol{\zeta}})}^k \frac{1}{\abs{\zeta}^2} 
= \ii^k\sum_{l=0}^k \binom{k}{l}(-1)^l\partial_\zeta^{k-l}\partial_{\ol{\zeta}}^l\frac{1}{\abs{\zeta}^2}
\\
&= (-\ii)^kk!\sum_{l=0}^k\frac{(-1)^l}{\ol{\zeta}^{l+1}\zeta^{k-l+1}}
= \frac{(-\ii)^kk!}{\abs{\zeta}^2\zeta^k}\frac{1-(-\frac{\zeta}{\ol{\zeta}})^{k+1}}{1+\frac{\zeta}{\ol{\zeta}}}
= \frac{(-\ii)^kk!}{\abs{\zeta}^{2k+2}}\frac{\ol{\zeta}^{k+1}+(-1)^k \zeta^{k+1}}{\zeta+\ol{\zeta}}\,,
}
where $\zeta= x+\ii y$. 
For the cases of odd and even numbers of derivatives we get 
\bes{
\partial_y^{2k+1} \frac{1}{\abs{\zeta}^2} = (-1)^{k+1}\frac{(2k+1)!}{\abs{\zeta}^{4k+4}}\frac{\im \zeta^{2k+2}}{\re \zeta}\,, \qquad  \partial_y^{2k} \frac{1}{\abs{\zeta}^2} =
\frac{(-1)^k(2k)!}{\abs{\zeta}^{4k+2}}\frac{ \re \zeta^{2k+1}}{\re \zeta}
}
for $k \in \N_0$. Now we use the definition of $f_a$ in \eqref{Def of f_a} and of $g_a$ in \eqref{eq:def_g_a}, take the derivative and evaluate at $\zeta=0$ to get \eqref{derivatives of fa} and \eqref{derivatives of ga}, respectively. 
\end{proof}

\paragraph{Acknowledgments}
We thank the anonymous referee for additional references and many helpful comments
that simplified several proofs.

\providecommand{\bysame}{\leavevmode\hbox to3em{\hrulefill}\thinspace}
\providecommand{\MR}{\relax\ifhmode\unskip\space\fi MR }
\providecommand{\MRhref}[2]{%
  \href{http://www.ams.org/mathscinet-getitem?mr=#1}{#2}
}
\providecommand{\href}[2]{#2}

\end{document}